\newcommand{\bA}{\overline{A}} 
\newcommand{\balpha}{\overline{\alpha}} 
\newcommand{\ba}{\overline{a}} 
\newcommand{\bb}{\overline{b}}
\newcommand{\bv}{\overline{v}}
\newcommand{\bu}{\overline{u}}
\newcommand{\bx}{\overline{x}}
\newcommand{\by}{\overline{y}}
\newcommand{\nuina}{\overline{\nu^{-1} a}}
\newcommand{\HH}{\mathrm{HH}}
\newcommand{\Hh}{\mathrm{H}}
\newcommand{\Hom}{\mathrm{Hom}} 
\newcommand{\Ext}{\operatorname{Ext}\nolimits}
\newcommand{\id}{\operatorname{id}}
\renewcommand{\Im}{\mathrm{Im}\,}
\renewcommand{\labelenumi}{(\arabic{enumi})}
\newtheorem{defi}{Definition}[section]
\newtheorem{theo}[defi]{Theorem}
\newtheorem{lem}[defi]{Lemma}
\newtheorem{rem}[defi]{Remark}
\newtheorem{cor}[defi]{Corollary}
\newtheorem{prop}[defi]{Proposition}
\newtheorem{criterion}[defi]{Criterion}
\newtheorem{main}{Main result\hspace{-5pt}}
\title
[{\fontsize{7pt}{0pt}\selectfont A Batalin-Vilkovisky structure on the complete cohomology ring of a Frobenius algebra}]
{A Batalin-Vilkovisky structure on the complete cohomology ring of a Frobenius algebra}
\author[T.\,Itagaki]{Tomohiro Itagaki}
\address[T.\,Itagaki]{Faculty of Economics, Takasaki City University of Economics, 1300 Kaminamie-machi, Takasaki-shi, Gunma 370-0801, Japan}
\email{titagaki@tcue.ac.jp}
\author[K.\,Sanada]{Katsunori Sanada}
\address[K.\,Sanada]{Department of Mathematics,
					Tokyo University of Science, 1-3 Kagurazaka, Shinjuku-ku, Tokyo 162--8601, Japan}
\email{sanada@rs.tus.ac.jp}
\author[S.\,Usui]{Satoshi Usui}
\address[S.\,Usui]{\fontsize{10pt}{0pt}\selectfont 
						Department of Mathematics,
						Tokyo University of Science, 1-3 Kagurazaka, Shinjuku \hspace{-3pt}-ku, Tokyo 162--8601, Japan}
\email{1119702@ed.tus.ac.jp}
\keywords{Batalin-Vilkovisky algebra, complete cohomology, Gerstenhaber algebra, Frobenius algebra, Hochschild (co)homology, Tate-Hochschild cohomology}
\subjclass[2010]{
	16E40, 
	16E45
}
\begin{document}

\maketitle


\begin{abstract}
	We study the existence of a Batalin-Vilkovisky differential on the complete cohomology ring of a Frobenius algebra.
	We construct a Batalin-Vilkovisky differential on the complete cohomology ring in the case of Frobenius algebras 
	with diagonalizable Nakayama automorphisms. 
\end{abstract}


\section*{Introduction}

	In 1945, Hochschild \cite{Hochschild} introduced the {\it Hochschild cohomology group} $\mathrm{H}^{*}(A, M)$ of an associative algebra $A$ 
	with coefficients in an $A$-bimodule $M$.
	In the earlier of the 1960s, 
	Gerstenhaber \cite{Ger} discovered that there is a rich algebraic structure on Hochschild cohomology ring
	$\mathrm{H}^{\bullet}(A, A) := \bigoplus_{r \geq 0} \mathrm{H}^{r}(A, A)$.
	To be more precise, 
	$\mathrm{H}^{\bullet}(A, A)$ has a {\it Gerstenhaber structure}, that is, 
	there is a Lie bracket on Hochschild cohomology such that the Lie bracket satisfies 
	the graded Leibniz rule with respect to the cup product.
	The Lie bracket is also called the {\it Gerstenhaber bracket}.

	During recent several decades, a new algebraic structure, the so-called {\it Batalin-Vilkovisky (BV) structure}, on Hochschild cohomology 
	has been studied. 
	A BV structure gives rise to an operator on Hochschild cohomology which squares to zero 
	and which can generate the Gerstenhaber bracket together with the cup product. 
	Let us remark that BV structure is originally defined for a graded commutative algebra and gives rise to a Lie bracket making 
	the graded commutative algebra together with itself into a Gerstenhaber algebra (cf. \cite{Getzler94}). 
	From this context, the existence of BV operator generating 
	the Gerstenhaber bracket is particularly important. 
	It is known that there exists such a BV structure over Hochschild cohomology of certain classes of algebras, 
	such as Calabi-Yau algebras, 
	finite dimensional symmetric algebras, 
	finite dimensional Frobenius algebras whose Nakayama automorphisms are semisimple and so on 
	(cf. \cite{Ginzburg, Lam, Menichi, NielUlrich2014, Tradler}). 
	Here, 
	the semisimplicity of the Nakayama automorphism means that it is diagonalizable over the algebraic closure of the underlying field.

	In the 1980s, Buchweitz \cite{Buch} introduced the notion of  {\it singularity category} in order to provide a framework for Tate cohomology of Gorenstein algebras.
	In 2015, under this framework, Wang defined the $r$-th {\it Tate-Hochschild cohomology group} of $A$ as 
		\begin{align*}
			\underline{\mathrm{Ext}}_{A \otimes_{k} A^{\rm op}}^{r}(A, A) := \Hom_{\mathcal{D}_{{\rm sg}}(A \otimes_{k} A^{\rm op})}(A, A[r]),
		\end{align*}
	where $r$ is arbitrary integer and $\mathcal{D}_{{\rm sg}}( A \otimes_{k} A^{\rm op})$ is the singularity category of $A \otimes_{k} A^{\rm op}$
	(cf. \cite{wang,wang2}).
	Wang discovered in \cite{wang} that Tate-Hochschild cohomology $\underline{\mathrm{Ext}}_{A \otimes_{k} A^{\rm op}}^{\bullet}(A, A) $
	has a Gerstenhaber structure equipped with the Yoneda product $\smile_{{\rm sg}}$ and the Lie bracket $[ \ ,\ ]_{{\rm sg}}$. 
	Moreover, he determined in \cite{wang2018_squarezero} the Tate-Hochschild cohomology of radical square zero algebras and 
	their Gerstenhaber structures for some classes of such algebras. 
  
	Many authors have investigated Tate-Hochschild cohomology  
	in case of Frobenius algebras (cf. \cite{BerghJorgensen,EuSchedler2009,Nakayama,Nguyen,wang,wang2}).
	One of the first attempts was made by Nakayama \cite{Nakayama}.
	In the 1950s, as an analogy to Tate cohomology for a finite group, Nakayama introduced a {\it complete cohomology groups} 
	$\widehat{\mathrm{H}}^{*}(A, M)$ of a Frobenius algebra $A$ with coefficients in an $A$-bimodule $M$.
	Here, complete cohomology groups coincide with Tate-Hochschild cohomology groups (cf. \cite{Buch}).
	A complex which is used to compute complete cohomology groups $\widehat{\mathrm{H}}^{*}(A, M)$ is a \textit{complete complex}.
	Roughly speaking, the complete complex is an unbounded complex having the Hochschild cochain complex 
	$C^{\bullet}(A, M)$ in non-negative degrees and the Hochschild chain complex $C_{\bullet}(A, M_{\nu^{-1}})$ in negative degrees, 
	where $\nu$ is the Nakayama automorphism of the Frobenius algebra $A$.
	In 1992, Sanada \cite{Sanada1992} constructed a cup product on complete cohomology groups  by means of a diagonal approximation and 
	investigated a periodicity of  complete cohomology groups. 
	Recently, Wang have discovered in \cite{wang2} that there is a graded commutative 
	product, called {\it $\star$-product}, on complete cohomology groups such that the complete cohomology ring is isomorphic to 
	the Tate-Hochschild cohomology ring.
	Furthermore, as an application of BV differential on Hochschild cohomology, 
	he constructed a BV differential on $\widehat{\mathrm{H}}^{*}(A, A)$ in the case that $A$ is a finite dimensional symmetric algebra, 
	where the BV differential consists of Tradler's BV differential \cite{Tradler} and the Connes operator. 
	In particular, the induced Lie bracket on complete cohomology is isomorphic to Wang's bracket on Tate-Hochschild cohomology. 
	
	In this paper, we generalize Wang's result to the case of finite dimensional Frobenius algebras with diagonalizable Nakayama automorphisms. 
	Namely, the aim of the present paper is to prove the following result:
		\begin{main}\label{intro-maintheo} 
			Let $A$ be a finite dimensional Frobenius $k$-algebra. 
			If the Nakayama automorphism of $A$ is diagonalizable, then the complete cohomology ring $\widehat{\mathrm{H}}^{\bullet}(A, A)$ is a BV algebra
			together with a BV differential consisting of Lambre-Zhou-Zimmermann's BV differential \cite{Lam} and (twisted) Connes operator
			(see Theorem \ref{BV-theo2} and Corollary \ref{BV-cor2}). 
		\end{main}
	This paper is organized as follows: 
	In Section \ref{Preliminaries}, we recall some definitions and basic results on Tate-Hochschild cohomology and complete cohomology.
	Section \ref{Frobenius} is devoted to recalling the complete complex of a Frobenius algebra, and then to relating Tate-Hochschild cohomology groups 
	with complete cohomology groups.
	In Section \ref{Decomposition}, following Lambre-Zhou-Zimmermann \cite{Lam}, we define a subcomplex of the complete complex associated with 
	the product of eigenvalues of the Nakayama automorphism. 
	We then show that the cohomology groups of the subcomplex has nice properties when the Nakayama automorphism is diagonalizable.
	In Section \ref{BV}, we give the main result and the proof. 
    Our proof is different from Lambre-Zhou-Zimmermann \cite{Lam}. 
    More precisely, our proof uses  the Gerstenhaber bracket $[\ ,\ ]_{\rm sg}$ defined by Wang  \cite{wang}, while Lambre, Zhou and Zimmermann's proof makes use of the notions of Tamarkin-Tsygan calculi and calculi with duality. 
    Section \ref{Examples} contains three examples of BV structures on the complete cohomology rings of some self-injective Nakayama algebras over algebraically closed fields with diagonalizable Nakayama automorphisms.



\section{Preliminaries} \label{Preliminaries}
	Throughout this paper, let $k$ be a field, $A$ a finite dimensional, associative and unital $k$-algebra. Let  
	$A^{\rm e}$ be the enveloping algebra $A \otimes_{k} A^{\rm op}$ of $A$. Here we denote by $A^{\rm op}$ the opposite algebra of $A$. 
	We can identify an $A$-bimodule $M$ with a left (right) $A^{\rm e}$-module $M$ whose structure is given by 
	$(a \otimes_{k} b^{\circ}) m := a m b \ (m (a \otimes_{k} b^{\circ}) := b m a)$ for $m \in M$ and $a \otimes_{k} b^{\circ} \in A^{\rm e}$.
	For simplicity, we write $\otimes$ for $\otimes_{k}$  and $\Hom$ for $\Hom_{k}$.
	We denote by $\overline{A}$ the quotient space of $A$ by the subspace $k1_{A}$ generated by unit $1_{A}$. 
	Let $\sigma : A \rightarrow A$ be an algebra automorphism of $A$ and $\pi : A \rightarrow \overline{A}$ the canonical epimorphism of $k$-vector
	spaces. We denote by
	$\overline{a}$ 
	denote the image of $a \in A$ under  the epimorphism $\pi : A \rightarrow \overline{A}$. 
	We write 
	$a_{1, \,m} \in A^{\otimes m}$ 
	for
	$a_{1} \otimes \cdots \otimes a_{m} \in A^{\otimes m}$, 
	$\bb_{1,\,n} \in \bA^{\otimes n}$
	for
	$\bb_{1} \otimes \cdots \otimes \bb_{n} \in \bA^{\otimes n}$
	and 
	$\overline{\sigma c}_{1,\,l} \in \bA^{\otimes l}$
	for
	$\overline{\sigma (c_{1})} \otimes \overline{\sigma (c_{2})} \otimes \cdots \otimes \overline{\sigma (c_{l})} \in \bA^{\otimes l}$
	when no confusion occurs.
%
%
%
%
\subsection{Gerstenhaber algebras and Hochschild (co)homology} 
	Let us start with the definition of Gerstenhaber algebras.	
		\begin{defi} 
			A {\it Gerstenhaber algebra} is a graded $k$-module 
			$\mathcal{H}^{\bullet} = \bigoplus_{r \in \mathbb{Z}} \mathcal{H}^{r}$ 
			equipped with two bilinear maps: 	a cup product of degree zero 
				\begin{align*}
					\smile : \mathcal{H}^{|\alpha|} \otimes \mathcal{H}^{|\beta|} 
					\rightarrow
					\mathcal{H}^{|\alpha|+|\beta|},\ \ (\alpha, \beta) \longmapsto \alpha \smile \beta
				\end{align*}
			and a Lie bracket of degree $-1$, called the {\it Gerstenhaber bracket},
				\begin{align*} 
					[\ ,\ ] : \mathcal{H}^{|\alpha|} \otimes \mathcal{H}^{|\beta|} 
					\rightarrow 
					\mathcal{H}^{|\alpha|+|\beta|-1},\ \ (\alpha, \beta) \longmapsto [\alpha, \beta]
				\end{align*}
			such that 
				\begin{enumerate}
				\renewcommand{\labelenumi}{(\roman{enumi})}
					\item $(\mathcal{H}^{\bullet}, \smile)$ is a graded commutative algebra with unit 
							$1 \in \mathcal{H}^{0}$, in particular, 
							$\alpha \smile \beta 
							= (-1)^{|\alpha||\beta|}\beta \smile \alpha;$
					\item $(\mathcal{H}^{\bullet}[1], [\ ,\ ])$ is a graded Lie algebra 
							with components $(\mathcal{H}^{\bullet}[1])^{r} = \mathcal{H}^{r+1}$, that is,  
								\[
									 [\alpha, \beta] = -(-1)^{(|\alpha|-1)(|\beta|-1)} [\beta, \alpha] 
								\]	
							and
								\[
									 (-1)^{(|\alpha|-1)(|\gamma|-1)}[[\alpha, \beta], \gamma]
									 +(-1)^{(|\beta|-1)(|\alpha|-1)}[[\beta, \gamma], \alpha]
								\]
								\[
									 +(-1)^{(|\gamma|-1)(|\beta|-1)}[[\gamma, \alpha], \beta] = 0;
								\]
					\item The Lie bracket $[\ ,\ ]$ is compatible with the cup product $\smile$ :
								\begin{align*}
									[\alpha, \beta \smile \gamma] 
									= [\alpha, \beta] \smile \gamma + (-1)^{(|\alpha|-1)|\beta|}\beta \smile [\alpha, \gamma], 
								\end{align*}
				\end{enumerate}
 where $\alpha, \beta, \gamma$ are homogeneous elements in $\mathcal{H}^{\bullet}$ and we denote by $|\alpha|$ the degree of a homogeneous element $\alpha$ in $\mathcal{H}^{\bullet}$.
	\end{defi}
%
%
	One of examples of Gerstenhaber algebras is the Hochschild cohomology of a $k$-algebra $A$. 
	There is a projective resolution $\mathrm{Bar}_{\bullet}(A)$ of $A$ over $A^{\rm e}$, which is the so-called 
	{\it normalized bar resolution}:
	\begin{align*}
		\cdots \rightarrow 
		A \otimes \bA^{\otimes r} \otimes A \xrightarrow{d_{r}}
		A \otimes \bA^{\otimes r-1} \otimes A \rightarrow
		\cdots \rightarrow
		A \otimes \bA  \otimes A \xrightarrow{d_{1}}
		A \otimes A \xrightarrow{d_{0}}	
		A \rightarrow 
		0,
	\end{align*}
	where we set 
		\begin{align*} \lefteqn{	}	
				d_{r}(a_{0} \otimes \ba_{1,\,r} \otimes a_{r+1}) =&\  
				a_{0} a_{1} \otimes \ba_{2,\,r} \otimes a_{r+1} \\
				&+\sum_{i=1}^{r-1} (-1)^{i} 
				a_{0} \otimes \ba_{1,\,i-1} \otimes \overline{a_{i} a_{i+1}} \otimes \ba_{i+2,\,r} \otimes a_{r+1}  \\
				&+(-1)^{r} a_{0} \otimes \ba_{1,\,r-1} \otimes a_{r} a_{r+1}, \\
			     d_0(a_{0} \otimes a_{1}) =&\ a_{0} a_{1}.			
		\end{align*}
	We denote  $\overline{\Omega}^{r}(A) := \mathrm{Im}\, d_{r}$ for all $r \geq 0$.
%
%
	Given an $A$-bimodule $M$, consider the complex $C^{\bullet}(A, M) := \Hom_{A^{\rm e}}(\mathrm{Bar}_{\bullet}(A), M)$ 
	with differential $\Hom_{A^{\rm e}}(d_{\bullet}, M)$. Note that for any $r \geq 0$, we have
		\begin{align*}
			C^{r}(A, M) 
			= \Hom_{A^{\rm e}}(\mathrm{Bar}_{r}(A), M) 
			= \Hom_{A^{\rm e}}(A \otimes \bA^{\otimes r} \otimes A, M)
			\cong \Hom(\bA^{\otimes r}, M).
		\end{align*}
	We identify $C^{0}(A, M)$ with $M$. Thus, the complex $C^{\bullet}(A, M)$ is of the form
		\begin{align*} 
			0 \rightarrow 
			M \xrightarrow{\delta^{0}} 
			\Hom(\bA, M) \rightarrow 
			\cdots \rightarrow 
			\Hom(\bA^{\otimes r}, M) \xrightarrow{\delta^{r}} 
			\Hom(\bA^{\otimes r+1}, M) \rightarrow 
			\cdots 
		\end{align*}
	whose differentials $\delta^{r}$ are defined by 
		\begin{align*} 	 \lefteqn{} 
			\delta^{r}(f)(\ba_{1,\,r+1}) = &\ 
			a_{1} f(\ba_{2,\,r+1} )
			+\sum_{i=1}^{r} (-1)^{i+1}
			f( \ba_{1,\,i-1} \otimes \overline{a_{i} a_{i+1}} \otimes \ba_{i+2,\,r+1} )  \\
			&+(-1)^{r+1} f(\ba_{1,\,r}) a_{r+1} 
		\end{align*}
	for any $f \in \Hom(\bA^{\otimes r}, M)$ and $\ba_{1,\,r+1} \in \bA^{\otimes r+1}$. Then the $r$-th cohomology group 
		\begin{align*}
			\mathrm{H}^{r}(A, M) := \mathrm{H}^{r}(C^{\bullet}(A, M), \delta^{\bullet})
		\end{align*}
	is said to be the $r$-th {\it Hochschild cohomology group} of $A$ with coefficients in $M$. 
	We will write $\mathrm{HH}^{r}(A) := \mathrm{H}^{r}(A, A)$.	
	Since $A$ is projective over $k$, we get $\mathrm{H}^{r}(A, M) \cong \mathrm{Ext}_{A^{\rm e}}^{r}(A, M)$. 
	Namely, Hochschild cohomology groups do not depend on the choice of projective resolution of $A$.
%
%
	For $A$-bimodules $M$ and $N$, the cup product 
		\begin{align*}
			\smile : C^{m}(A, M) \otimes C^{n}(A, N) 	\rightarrow C^{m+n}(A, M \otimes_{A} N) 
		\end{align*}
	is defined by
		\begin{align*}
			(\alpha \smile \beta) (\ba_{1,\,m+n}) := \alpha(\ba_{1,\,m}) \otimes_{A} \beta(\ba_{m+1,\,m+n})
		\end{align*}
	for all $\alpha \in C^{m}(A, M), \beta \in C^{n}(A, N)$ and $\ba_{1,\,m+n} \in \bA^{\otimes m+n}$.	
	The cup product $\smile$ induces a well-defined operator
		\begin{align*}
			\smile : \mathrm{H}^{m}(A, M) \otimes \mathrm{H}^{n}(A, N) \rightarrow \mathrm{H}^{m+n}(A, M \otimes_{A} N).
		\end{align*}

	The Gerstenhaber bracket in the Hochschild cohomology $\mathrm{HH}^{\bullet}(A)$ is defined as follows: 
	let $\alpha \in C^{m}(A, A) $ and $\beta \in C^{n}(A, A) $. We define a $k$-bilinear map
		\begin{align*}
			[\ , \ ] : C^{m}(A, A) \otimes C^{n}(A, A) \rightarrow C^{m+n-1}(A, A)
		\end{align*}
	as 
		\begin{align*}
			[\alpha , \beta] := \alpha \circ \beta -(-1)^{(m-1)(n-1)} \beta \circ \alpha \in C^{m+n-1}(A, A) ,
		\end{align*}
	where we determine $\alpha \circ \beta$ by
	\begin{align*}
		\alpha \circ \beta(\ba_{1,\,m+n-1}) 
		:= \sum_{i=1}^{m} (-1)^{(i-1)(n-1)} \alpha(\ba_{1,\,i-1} \otimes \overline{\beta}(\ba_{i,\,i+n-1}) \otimes \ba_{i+n,\,m+n-1})
	\end{align*}
	with $\overline{\beta} := \pi \circ \beta$.
	This $k$-bilinear map $[\ , \ ]$ induces a well-defined operator
		\begin{align*}	
			[\ , \ ] : \mathrm{HH}^{m}(A) \otimes \mathrm{HH}^{n}(A) \rightarrow \mathrm{HH}^{m+n-1}(A).
		\end{align*}
	Gerstenhaber proved the following result.
%
%
	\begin{theo}[{\cite[page 267]{Ger}}]
		The Hochschild cohomology $\HH^{\bullet}(A)$ equipped with the cup product $\smile$ and the Lie bracket $[\ , \ ]$ is a Gerstenhaber algebra.
	\end{theo}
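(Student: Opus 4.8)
The plan is to verify the three axioms of a Gerstenhaber algebra directly on the normalized Hochschild cochain complex $C^{\bullet}(A, A)$ and then pass to cohomology, following Gerstenhaber's original argument. For axiom (i), associativity of $\smile$ on $C^{\bullet}(A, A)$ is immediate from the definition, since the cup product is concatenation of cochains, and $1_{A} \in A = C^{0}(A, A)$ is a strict two-sided unit. Graded commutativity, however, only holds up to homotopy: the key point is Gerstenhaber's identity expressing $\alpha \smile \beta - (-1)^{mn} \beta \smile \alpha$, for $\alpha \in C^{m}(A,A)$ and $\beta \in C^{n}(A,A)$, as $\pm\, \delta^{m+n-1}(\alpha \circ \beta)$ plus correction terms of the form $(\delta^{m}\alpha) \circ \beta$ and $\alpha \circ (\delta^{n}\beta)$, where $\circ$ is the composition product recalled above. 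Verifying this identity is a finite, if bookkeeping-heavy, computation with the bar differential; once it is in hand, graded commutativity on $\HH^{\bullet}(A)$ follows by setting $\delta\alpha = \delta\beta = 0$.

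For axiom (ii), graded antisymmetry of $[\ ,\ ]$ is built into its definition. The graded Jacobi identity is the heart of the matter, and the cleanest route is: (a) prove that the composition product $\circ$ satisfies the graded right pre-Lie identity on cochains, i.e. that the associator $(\alpha \circ \beta) \circ \gamma - \alpha \circ (\beta \circ \gamma)$ is graded symmetric under interchanging $\beta$ and $\gamma$ (with the sign $(-1)^{(|\beta|-1)(|\gamma|-1)}$); and (b) invoke the purely formal fact that the graded commutator of any graded pre-Lie product automatically satisfies the graded Jacobi identity. Step (a) is again combinatorial: when one expands both parenthesizations, the terms in which the two inserted cochains occupy disjoint argument slots contribute the potentially asymmetric part, but that contribution is visibly symmetric in $\beta,\gamma$, while the "nested" insertion terms cancel between the two sides. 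Since (a) and (b) take place already on $C^{\bullet}(A,A)$, the Jacobi identity holds at the cochain level, hence a fortiori on $\HH^{\bullet}(A)$; that $[\ ,\ ]$ descends to cohomology follows from the derivation property $\delta[\alpha,\beta] = [\delta\alpha,\beta] \pm [\alpha,\delta\beta]$, itself a consequence of the homotopy identity from axiom (i).

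For axiom (iii), one proves the Leibniz-type compatibility at the cochain level up to homotopy: $[\alpha, \beta \smile \gamma] - [\alpha,\beta]\smile\gamma - (-1)^{(|\alpha|-1)|\beta|}\beta \smile [\alpha,\gamma]$ equals $\delta$ of an explicit cochain together with terms involving $\delta\alpha$, $\delta\beta$, $\delta\gamma$. Conceptually this reflects the fact that inserting $\alpha$ into the concatenated cochain $\beta \smile \gamma$ breaks up according to whether the inserted slots lie entirely in the $\beta$-block (contributing $[\alpha,\beta]\smile\gamma$), entirely in the $\gamma$-block (contributing $\beta \smile [\alpha,\gamma]$), or straddle the two blocks (the homotopy term). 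Restricting to cohomology classes kills all the correction terms and yields the stated identity.

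The main obstacle throughout is purely the combinatorial and sign bookkeeping in the three homotopy/identity computations — in particular establishing the graded pre-Lie identity of step (a) with the correct signs — rather than anything conceptual: once the pre-Lie identity is available, the implication "pre-Lie $\Rightarrow$ Jacobi" is formal, and the remaining requirements are either immediate (associativity, unitality, antisymmetry) or follow by specializing the chain-level homotopy formulas to cocycles.
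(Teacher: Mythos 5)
The paper does not prove this statement at all — it is quoted directly from Gerstenhaber's 1963 paper — and your outline faithfully reproduces Gerstenhaber's original argument: homotopy-commutativity of $\smile$ via the $\circ$-product, the graded (right) pre-Lie identity for $\circ$ whose commutator then formally satisfies graded Jacobi, descent of $[\ ,\ ]$ to cohomology by antisymmetrizing the homotopy identity, and the block-decomposition proof of the Leibniz compatibility. So the proposal is correct and takes the same (indeed, the standard) route as the cited source; the only remaining work is the sign bookkeeping you already flag.
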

%
%
	For an $A$-bimodule $M$, consider a complex $C_{\bullet}(A, M) := M \otimes_{A^{\rm e}} \mathrm{Bar}_{\bullet}(A)$ with differential 
	$\mathrm{id}_{M} \otimes_{A^{\rm e}} d_{\bullet}$. Note that for any $r \geq 0$, we have
		\begin{align*}
			C_{r}(A, M) = M \otimes_{A^{\rm e}} \mathrm{Bar}_{r}(A) 
			= M \otimes_{A^{\rm e}} (A \otimes \bA^{\otimes r} \otimes A)
			\cong M \otimes \bA^{\otimes r}.
		\end{align*}
	We identify $C_{0}(A, M)$ with $M$. Thus, the complex $C_{\bullet}(A, M)$ is of the form
		\begin{align*}
			\cdots \rightarrow
			M \otimes \bA^{\otimes r+1} \xrightarrow{\partial_{r+1}} 
			M \otimes \bA^{\otimes r} \rightarrow 
			\cdots \rightarrow 			
			M \otimes \bA\xrightarrow{\partial_{1}} 
			M \rightarrow 
			0,	
		\end{align*}
	where the differentials $\partial_{r+1}$ are defined by 
		\begin{align*} \lefteqn{}
			\partial_{r+1}(m \otimes \ba_{1,\,r+1} ) =&\ 
			m a_{1} \otimes \ba_{2,\,r+1} 
			+\sum_{i=1}^{r} (-1)^{i}
			m \otimes \ba_{1,\,i-1} \otimes \overline{a_{i} a_{i+1}} \otimes \ba_{i+2,\,r+1} \\
			&+(-1)^{r+1} a_{r+1} m \otimes \ba_{1,\,r}  
		\end{align*}
	for all 	$m \otimes \ba_{1,\,r+1} \in M \otimes \bA^{\otimes r+1}$. Then the $r$-th homology group 
		\begin{align*}
			\mathrm{H}_{r}(A, M) := \mathrm{H}_{r}(C_{\bullet}(A, M), \partial_{\bullet})
		\end{align*}
	is said to be the $r$-th {\it Hochschild homology group} of $A$ with coefficients in $M$. 
	We will write $\HH_{r}(A) := \mathrm{H}_{r}(A, A)$. Since $A$ is projective over $k$, we get $\mathrm{H}_{r}(A, M) \cong \mathrm{Tor}_{A^{\rm e}}^{r}(A, M)$, 
	which means that Hochschild homology groups are independent of projective resolutions of $A$.

%
%
	There is an action of Hochschild cohomology on Hochschild homology, called the cap product.  
	For two $A$-bimodules $M$, $N$ and $r, p \geq 0$ with $r \geq p$, a $k$-bilinear map
		\begin{align*}
			\frown : C_{r}(A, M) \otimes C^{p}(A, N) \rightarrow C_{r -p}(A, M \otimes_{A} N)
		\end{align*}
	is defined by 
		\begin{align*}
			(m \otimes \ba_{1,\,r} ) \frown \alpha := m \otimes_{A} \alpha(\ba_{1,\,p}) \otimes \ba_{p+1,\,r}
		\end{align*}
	for all $m \otimes \ba_{1,\,r} \in C_{r}(A, M)$ and $\alpha \in C^{p}(A, N)$.
	The $k$-bilinear map $\frown$ induces a well-defined operator
		\begin{align*}
			\frown : \mathrm{H}_{r}(A, M) \otimes \mathrm{H}^{p}(A, N) \rightarrow \mathrm{H}_{r - p}(A, M \otimes_{A} N).
		\end{align*}

%
%
%
%
\subsection{Gorenstein algebras and complete resolutions }	\label{subsection-Gorenstein}
	This section is devoted to recalling some basic results on complete resolutions of a module over a Gorenstein algebra. 
	For more details, we refer the reader to \cite{AvramovMartsinkovsky,BerghJorgensen,CornockKropholler}.
	Recall that a finite dimensional algebra $A$ is a {\it Gorenstein algebra} of Gorenstein dimension $d$ 
	if the injective dimension of $A$, as a right and as a left $A$-module, are equal to $d$. 
	Assume that $A$ is a Gorenstein algebra of Gorenstein dimension $d$. It follows from \cite{AvramovMartsinkovsky} 
	that any finitely generated left $A$-module $M$ admits a {\it complete resolution}
		\begin{align*}
			\mathcal{T} : \cdots \rightarrow T_{2} \rightarrow T_{1} \rightarrow T_{0} \rightarrow T_{-1} \rightarrow T_{-2} \rightarrow \cdots
		\end{align*}
	satisfying the three conditions:
		\begin{enumerate}
		\renewcommand{\labelenumi}{(\roman{enumi})}
			\item $\mathcal{T}$ is an exact sequence of finitely generated projective $A$-modules. 
					\label{def-comsol-1}
			\item The $A$-dual complex $\Hom_{A}(\mathcal{T}, A)$ is acyclic.
					\label{def-comsol-2}
			\item There exist a projective resolution  $\mathcal{P}$ of $M$ and a chain map $f : \mathcal{T} \rightarrow \mathcal{P}$ such that
					$f_{r}$ is an isomorphism for $r \geq d$.
					\label{def-comsol-3}
		\end{enumerate}
	Given a left $A$-module $N$ and an integer $r$,  the $r$-th cohomology group of the complex $\Hom_{A}(\mathcal{T}, N)$ is said  to be 
	the $r$-th {\it Tate cohomology group} of $M$ with coefficients in $N$ and  is denoted by $\widehat{\mathrm{Ext}}_{A}^{r}(M, N)$. For any right $A$-module $N$,
	the $r$-th homology group of the complex $N \otimes_{A} \mathcal{T}$ is said  to be the $r$-th {\it Tate homology group} of $M$ 
	with coefficients in $N$ and  is denoted by $\widehat{\mathrm{Tor}}_{r}^{A}(M, N)$.
	It follows from \cite{CornockKropholler} that the Tate (co)homology groups do not depend on the choice of complete resolutions of $M$.
	The property (iii) implies that we have
				\begin{align*} \lefteqn{}
					\widehat{\mathrm{Ext}}_{A}^{r}(M, N) \cong \mathrm{Ext}_{A}^{r}(M, N), \quad \widehat{\mathrm{Tor}}_{r}^{A}(M, N) \cong \mathrm{Tor}_{r}^{A}(M, N)
				\end{align*}
	for all $r \geq d+1$. 
		\begin{defi}[Bergh-Jorgensen \cite{BerghJorgensen}]
			Let $A$ be a $k$-algebra such that the enveloping algebra $A^{\rm e}$ is Gorenstein, $N$ an $A$-bimodule and $r \in \mathbb{Z}$ arbitrary. 
			Then the $r$-th {\it complete cohomology group} $\widehat{\mathrm{HH}}^{r}(A, N)$ and the $r$-th {\it complete homology group} 
			$\widehat{\mathrm{HH}}_{r}(A, N)$ of $A$ with coefficients in $N$ are defined by
				\begin{align*} \lefteqn{}
					&\widehat{\mathrm{HH}}^{r}(A, N) := \widehat{\mathrm{Ext}}_{A^{\rm e}}^{r}(A, N),	\quad
					\widehat{\mathrm{HH}}_{r}(A, N) := \widehat{\mathrm{Tor}}_{r}^{A^{\rm e}}(A, N).
				\end{align*}
			We will write $\widehat{\mathrm{HH}}^{r}(A) := \widehat{\mathrm{HH}}^{r}(A, A)$ and $\widehat{\mathrm{HH}}_{r}(A) := \widehat{\mathrm{HH}}_{r}(A, A)$.
		\end{defi}
	In this paper, we only deal with complete cohomology groups. 
	Originally, Bergh-Jorgensen \cite{BerghJorgensen} called the complete cohomology groups Tate-Hochschild cohomology groups. 
	However, throughout the paper, we use the term 
	\lq\lq Tate-Hochschild\rq\rq \,for the cohomology groups defined by Wang \cite{wang, wang2} described in the next subsection. We remark that both of these cohomology groups coincide for an algebra whose enveloping algebra is Gorenstein of Gorenstein dimension zero. 


	A class of algebras which we will be studying is a class of Frobenius algebras (see Section \ref{Frobenius}). 
	Note that a Frobenius algebra is Gorenstein of Gorenstein dimension zero.
	 It follows from \cite[Corollary 3.3]{BerghJorgensen} that the property of being Frobenius algebras is preserved 
	under taking their enveloping algebras, 
	so that the complete cohomology groups $\widehat{\mathrm{HH}}^{*}(A, N)$ of a Frobenius algebra $A$ are defined, and there are isomorphisms 
				\begin{align*} \lefteqn{}
					\widehat{\mathrm{HH}}^{r}(A, N) \cong \Hh^{r}(A, N),\quad
					\widehat{\mathrm{HH}}^{r}(A) \cong \HH^{r}(A)  
				\end{align*}
	for all $r \geq 1$. 

	We will develop the theory of BV differentials on the complete cohomology rings of  Frobenius algebras and 
	give their examples. 
	For this purpose, we use appropriate complete resolutions.

%
%
%
%
\subsection{Tate-Hochschild cohomology and its Gerstenhaber structure}
	This section is devoted to recalling Tate-Hochschild cohomology groups and a Gerstenhaber structure on the Tate-Hochschild cohomology. 
	For more details, we refer the reader to \cite[Section 3 and 4]{wang}.
	Let us recall that for $r \geq 0$, the $r$-th (usual) Hochschild cohomology of $A$ with coefficients in an $A$-bimodule $M$ can be defined as 
	$\mathrm{H}^{r}(A, M) = \Hom_{\mathcal{D}^{b}(A^{\rm e})}(A, M[r])$, where $\mathcal{D}^{b}(A^{\rm e})$ is the bounded derived category of finitely generated 
	left $A^{\rm e}$-modules and the suspension functor $[-]$ denotes the degree shift. For any integer $r$, the $r$-th {\it Tate-Hochschild cohomology}
	group of $A$ is defined by 
		\begin{align*} 
			\underline{\mathrm{Ext}}_{A^{\rm e}}^{r}(A, A) := \Hom_{\mathcal{D}_{sg}(A^{\rm e})}(A, A[r]),
		\end{align*} 
	where $\mathcal{D}_{sg}(A^{\rm e})$ is the {\it singularity category} of $A^{\rm e}$. Recall that $\mathcal{D}_{sg}(A^{\rm e})$ is the Verdier quotient of 
	$\mathcal{D}^{b}(A^{\rm e})$ by the full subcategory of $\mathcal{D}^{b}(A^{\rm e})$ consisting of those complexes quasi-isomorphic to bounded complexes of
	finitely generated projective left $A^{\rm e}$-modules.

Recall that $\overline{\Omega}^{p}(A) = \Im d_p$, where $d_p:\mathrm{Bar}_{p}(A) \rightarrow \mathrm{Bar}_{p-1}(A)$ is the $p$-th differential of the normalized bar resolution $\mathrm{Bar}_{\bullet}(A)$.
We fix an integer $m$ and put ${\rm I}_{(m)} := \big\{ p \in \mathbb{Z} \, \big|\, p \geq 0, m+p \geq 0 \big\}$.
Consider an inductive system
\[\left\{ X_{p}^{(m)},\  \theta_{m+p,\,p}: X_{p}^{(m)} \rightarrow X_{p+1}^{(m)}\right\}_{ p\,\in\,{\rm I}_{(m)} },\]
where  
\begin{align*} 
X_{p}^{(m)} = \Ext_{A^{\rm e}}^{m+p} (A, \overline{\Omega}^{p}(A)), 
\end{align*} 
and $\theta_{m+p,\,p}: X_{p}^{(m)} \rightarrow X_{p+1}^{(m)}$ is the connecting homomorphism 
\begin{align} \label{Pre-Tate-eq1}
\theta_{m+p,\,p} : \Ext_{A^{\rm e}}^{m+p} (A, \overline{\Omega}^{p}(A)) \rightarrow \Ext_{A^{\rm e}}^{m+p+1} (A, \overline{\Omega}^{p+1}(A))
\end{align}   
induced by the short exact sequence
\begin{align*} 			
0	\longrightarrow 
\overline{\Omega}^{p+1} (A)	\longrightarrow
A \otimes \overline{A}^{\otimes p} \otimes A  \longrightarrow
\overline{\Omega}^{p} (A) 		\longrightarrow 
0.
\end{align*} 
Here, we regard $\Ext_{A^{\rm e}}^{m+p} (A, \overline{\Omega}^{p}(A))$ as  $\Hh^{m+p}(A, \overline{\Omega}^{p}(A))$, or equivalently, any element of $\Ext_{A^{\rm e}}^{m+p} (A, \overline{\Omega}^{p}(A))$ is represented by an element in $\Hom_k(\bA^{\otimes m+p}, \overline{\Omega}^{p}(A))$.
Note that the inductive system above has the form
\begin{align*} 
\Ext_{A^{\rm e}}^{m+i} (A, \overline{\Omega}^{i}(A))  
\xrightarrow{\theta_{ m+i,\,i }} 
\Ext_{A^{\rm e}}^{m+i+1}(A, \overline{\Omega}^{i+1} (A) )  
\xrightarrow{\theta_{ m+i+1,\,i+1 }} 		
\Ext_{A^{\rm e}}^{m+i+2}(A, \overline{\Omega}^{i+2} (A) )   
\rightarrow
\cdots, 
\end{align*}
where $i \geq 0$ is the least integer such that $m+i \geq 0$.

%
%
\begin{rem} {\rm
Using the explicit description of the connecting homomorphism (\ref{Pre-Tate-eq1}) in \cite[page 16]{wang}, we see that, for any $m \in \mathbb{Z}$ and $p \in {\rm I}_{(m)}$, the connecting homomorphism 
\begin{align*}
\theta_{m+p,\,p} :\Ext_{A^{\rm e}}^{m+p} (A, \overline{\Omega}^{p}(A)) \rightarrow \Ext_{A^{\rm e}}^{m+p+1} (A, \overline{\Omega}^{p+1}(A))
\end{align*}
sends an element $[f] \in \Ext_{A^{\rm e}}^{m+p} (A, \overline{\Omega}^{p}(A))$ represented by $f\in \Hom_k(\bA^{\otimes m+p}, \overline{\Omega}^{p}(A))$ to the element $[\theta_{m+p,\,p}(f)] \in \Ext_{A^{\rm e}}^{m+p+1} (A, \overline{\Omega}^{p+1}(A))$.
Here, $[\theta_{m+p,\,p}(f)]$  is represented by the $k$-linear map 
\[\theta_{m+p,\,p}(f): \bA^{\otimes m+p+1} \rightarrow \overline{\Omega}^{p+1}(A)\]
taking an element $\ba_{1,\,m+p+1}\in \bA^{\otimes m+p+1}$ into
\begin{align*}
 (-1)^{m+p}\,d_{p+1}(f(\ba_{1,\,m+p}) \otimes \ba_{m+p+1}\otimes 1) \in \Im d_{p+1}= \overline{\Omega}^{p+1}(A),
\end{align*}
 where $d_{p+1}:\mathrm{Bar}_{p+1}(A) \rightarrow \mathrm{Bar}_{p}(A)$ is the $(p+1)$-th differential of $\mathrm{Bar}_{\bullet}(A)$.
}\end{rem}

%
%
	\begin{prop}[{\cite[Proposition 3.1 and Remark 3.3]{wang}}]  \label{Pre-Tate-prop1}
		For any $m \in \mathbb{Z}$, there is an isomorphism  
			 \begin{align*}
					\lim_{\substack{ \longrightarrow \\ p\,\in\,{\rm I}_{(m)}}}
					 \Ext_{A^{\rm e}}^{m+p}(A, \overline{\Omega}^{p}(A))
						\cong
					\Hom_{\mathcal{D}_{\rm{sg}}(A^{\rm e})} (A, A[m]) = \underline{\mathrm{Ext}}_{A^{\rm e}}^{m}(A, A).
			\end{align*}
	\end{prop}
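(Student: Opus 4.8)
The plan is to realise the inductive system $\{\Ext^{m+p}_{A^{\rm e}}(A,\overline{\Omega}^p(A))\}_{p\in{\rm I}_{(m)}}$ inside the bounded derived category $\mathcal{D}^b(A^{\rm e})$ and to identify its colimit with a morphism space in the Verdier quotient $\mathcal{D}_{\mathrm{sg}}(A^{\rm e})=\mathcal{D}^b(A^{\rm e})/\operatorname{perf}(A^{\rm e})$. The basic input is that the short exact sequence $0\to\overline{\Omega}^{p+1}(A)\to A\otimes\bA^{\otimes p}\otimes A\to\overline{\Omega}^p(A)\to 0$ yields a distinguished triangle in $\mathcal{D}^b(A^{\rm e})$; rotating it produces a morphism $w_{p+1}\colon\overline{\Omega}^p(A)\to\overline{\Omega}^{p+1}(A)[1]$ whose cone is $(A\otimes\bA^{\otimes p}\otimes A)[1]$, a finitely generated projective $A^{\rm e}$-module placed in a single degree, hence a perfect complex. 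Shifting by $[m+p]$ and composing, we obtain a chain in $\mathcal{D}^b(A^{\rm e})$
\begin{align*}
A[m]\longrightarrow\overline{\Omega}^1(A)[m+1]\longrightarrow\overline{\Omega}^2(A)[m+2]\longrightarrow\cdots
\end{align*}
in which every structure map, and hence every composite starting from $A[m]$, has perfect cone. Applying $\Hom_{\mathcal{D}^b(A^{\rm e})}(A,-)$ to the arrow $\overline{\Omega}^p(A)[m+p]\to\overline{\Omega}^{p+1}(A)[m+p+1]$ yields a map $\Ext^{m+p}_{A^{\rm e}}(A,\overline{\Omega}^p(A))\to\Ext^{m+p+1}_{A^{\rm e}}(A,\overline{\Omega}^{p+1}(A))$ which, by the explicit description of $\theta_{m+p,\,p}$ recalled in the Remark (post-composition with $w_{p+1}$, with the sign $(-1)^{m+p}$), agrees with $\pm\theta_{m+p,\,p}$.

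I would then invoke the calculus of left fractions for the Verdier quotient, which presents $\Hom_{\mathcal{D}_{\mathrm{sg}}(A^{\rm e})}(A,Y)$ as the filtered colimit of $\Hom_{\mathcal{D}^b(A^{\rm e})}(A,Y')$ taken over morphisms $t\colon Y\to Y'$ with $\operatorname{cone}(t)\in\operatorname{perf}(A^{\rm e})$, with transition maps given by post-composition. The decisive point is a cofinality statement: with $Y=A[m]$, the chain above is cofinal among all such denominators $t$. To prove it, recall that a perfect complex $C$ is isomorphic in $\mathcal{D}^b(A^{\rm e})$ to a bounded complex of finitely generated projective modules, so it is concentrated in a bounded interval $[a_C,b_C]$ of cohomological degrees, and therefore $\Hom_{\mathcal{D}^b(A^{\rm e})}(C,N[n])=0$ for every module $N$ and every $n>-a_C$. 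Given $t\colon A[m]\to Y'$ with cone $C$, this vanishing applied with $N=\overline{\Omega}^p(A)$ and $n=m+p+1$ (which exceeds $-a_C$ once $p$ is large) forces the composite $C[-1]\to A[m]\to\overline{\Omega}^p(A)[m+p]$ to vanish, so the composite $A[m]\to\overline{\Omega}^p(A)[m+p]$ factors through $t$; this is precisely cofinality.

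Combining these, $\Hom_{\mathcal{D}_{\mathrm{sg}}(A^{\rm e})}(A,A[m])$ is the colimit of the system $\Hom_{\mathcal{D}^b(A^{\rm e})}(A,\overline{\Omega}^p(A)[m+p])=\Ext^{m+p}_{A^{\rm e}}(A,\overline{\Omega}^p(A))$ along the maps $\pm\theta_{m+p,\,p}$; since $\Ext^{m+p}_{A^{\rm e}}(A,\overline{\Omega}^p(A))$ vanishes when $m+p<0$, this colimit coincides with the one over the index set ${\rm I}_{(m)}$, which gives the desired isomorphism with $\underline{\mathrm{Ext}}^m_{A^{\rm e}}(A,A)=\Hom_{\mathcal{D}_{\mathrm{sg}}(A^{\rm e})}(A,A[m])$. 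I expect the cofinality step to be the principal obstacle: one must verify carefully that the syzygy chain is genuinely cofinal in the (filtered) category of denominators, and it is there that the structure of perfect complexes over $A^{\rm e}$ — boundedness and representability by bounded complexes of finitely generated projectives — enters essentially. The remaining ingredients (the fraction calculus for Verdier quotients, the identification of the transition maps with $\pm\theta_{m+p,\,p}$ via the Remark, and the vanishing of $\Ext$ in negative degrees) are routine.
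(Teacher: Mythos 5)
The paper offers no proof of this proposition: it is imported verbatim from Wang's work (Proposition 3.1 and Remark 3.3 of \cite{wang}), so there is no in-text argument to compare against. Your proposal is, in substance, the standard Buchweitz--Wang argument and it is correct: the rotated triangles coming from $0\to\overline{\Omega}^{p+1}(A)\to \mathrm{Bar}_p(A)\to\overline{\Omega}^{p}(A)\to 0$ have perfect cones, the resulting syzygy chain computes the left-fraction colimit presenting $\Hom_{\mathcal{D}_{\rm sg}(A^{\rm e})}(A,A[m])$, and the boundedness of perfect complexes supplies the vanishing $\Hom_{\mathcal{D}^b(A^{\rm e})}(C,N[n])=0$ for $n\gg 0$ that drives cofinality. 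The one place you should tighten is the cofinality step itself: you verify only the nonemptiness half (every denominator $t\colon A[m]\to Y'$ admits a map to some $s_p$), but for the induced map of colimits to be injective you also need the connectedness half, i.e.\ that two factorizations of $s_p$ and $s_q$ through a given $t$ (equivalently, a class killed by some denominator) become equal after pushing further along the chain. This follows from the same vanishing: any morphism out of $\overline{\Omega}^{p}(A)[m+p]$ annihilated by $s_p$ factors through the perfect complex $\operatorname{cone}(s_p)$, and post-composing with $\theta$ sufficiently many times kills it. With that addendum the argument is complete; the identification of the triangle-induced transition maps with $\pm\theta_{m+p,\,p}$ and the reduction to the index set ${\rm I}_{(m)}$ via vanishing of negative $\Ext$ are fine as stated.
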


	We now define a Gerstenhaber structure on Tate-Hochschild cohomology defined by Wang (\cite{wang}).
	Let $m, n, p $ and $q$ be integers such that $m, n, p, q \geq 0$. 
	A cup product 
		\begin{align*}
		\smile_{\rm{sg}} : 
			C^{ m } (A, \overline{\Omega}^{p}(A) ) \otimes C^{ n } (A, \overline{\Omega}^{q}(A) ) 	\rightarrow	C^{ m+n } (A, \overline{\Omega}^{ p+q }(A) )
		\end{align*}
	is defined  by
	\begin{align*}
		f \smile_{\rm{sg}} g (\bb_{1,\,m+n}) := \Phi_{p+q}(f(\bb_{1,\,m}) \otimes_{A} g(\bb_{m+1,\,m+n})),
	\end{align*}
	where $f \otimes g \in C^{m} (A, \overline{\Omega}^{p}(A) ) \otimes C^{n} (A, \overline{\Omega}^{q}(A) )$ and
	 $\Phi_{p+q} : \overline{\Omega}^{p}(A) \otimes_{A} \overline{\Omega}^{q}(A) \rightarrow \overline{\Omega}^{p+q}(A)$ is an isomorphism of $A$-bimodules determined by
	\begin{align*}
		\Phi_{p+q}(a_{0} \otimes \ba_{1,\,p} \otimes a_{p+1} \otimes_{A} b_{0} \otimes \bb_{1,\,q} \otimes b_{q+1})
		=a_{0} \otimes \ba_{1,\,p} \otimes \overline { a_{p+1} b_{0} } \otimes \bb_{1,\,q} \otimes b_{q+1}
	\end{align*}	
	for $a_{0} \otimes \ba_{1,\,p} \otimes a_{p+1} \in \overline{\Omega}^{p}(A)$ and $b_{0} \otimes \bb_{1,\,q} \otimes b_{q+1} \in \overline{\Omega}^{q}(A)$, 
	which is given in \cite[Lemma 2.6]{wang2}.

 	Let $m \in \mathbb{Z}_{> 0},\   p \in \mathbb{Z}_{\geq 0}$ and $f  \in C^{ m } (A, \overline{\Omega}^{p}(A) )$
	and let
		$\pi : A \rightarrow \bA$ be the canonical epimorphism.
	We set
		\begin{align*}  \lefteqn{}
			&\pi_{p}^{(l)} := \pi \otimes \mathrm{id}_{\bA}^{\otimes p-1} \otimes \mathrm{id}_{A} :
				A \otimes \bA^{\otimes  p-1} \otimes A
					\rightarrow
				\bA^{\otimes  p} \otimes A, \\
			&\pi_{p}^{(r)} := \mathrm{id}_{A}  \otimes \mathrm{id}_{\bA}^{\otimes p-1} \otimes \pi :
				A \otimes \bA^{\otimes  p-1} \otimes A
					\rightarrow
				A \otimes \bA^{\otimes  p}, \\
			&\pi_{p}^{(b)} := \pi \otimes \mathrm{id}_{\bA}^{\otimes p-1} \otimes \pi :
				A \otimes \bA^{\otimes  p-1} \otimes A
					\rightarrow
				 \bA^{\otimes  p+1} 
		\end{align*}	
	and then denote
		\begin{align*} 
			&f^{(l)} := \pi_{p}^{(l)}  f, \quad
			f^{(r)} := \pi_{p}^{(r)}  f, \quad
			f^{(b)} := \pi_{p}^{(b)}  f.
		\end{align*}	
	Let $m, n, p $ and $q$ be integers such that $m, n > 0$ and $p, q \geq 0$. 	We now define a bilinear map  	
		\begin{align*}
			[\ , \ ]_{\rm{sg}} :  
				C^{ m } (A, \overline{\Omega}^{p}(A) ) \otimes C^{ n } (A, \overline{\Omega}^{q}(A) ) 
					\rightarrow 
				C^{ m+n-1 } (A, \overline{\Omega}^{ p+q }(A) ).
		\end{align*}
 as follows: let 
\[f  \in C^{m}(A, \overline{\Omega}^{p}(A))= \Hom_{k}(\bA^{\otimes m}, \overline{\Omega}^{p}(A)) \] 
and 
\[ g \in  C^{ n } (A, \overline{\Omega}^{q}(A)) = \Hom_{k}(\bA^{\otimes n}, \overline{\Omega}^{q}(A)).\] 
We first define a $k$-linear map $f \bullet_{i} g \in C^{m+n-1} (A, \overline{\Omega}^{p+q}(A))$ for each integer $i$ with $1 \leq i \leq m$.
Consider the following four $k$-linear maps:
\begin{enumerate}

\item 
$\left(\mathrm{id}_{\bA}^{\otimes i-1} \otimes g^{(b)} \otimes \mathrm{id}_{\bA}^{\otimes m-i}\right): \bA^{\otimes m+n-1} \rightarrow \bA^{\otimes m+q}$  is given by 
\begin{align*} \quad \ba_{1,\,m+n-1} \mapsto \ba_{1,\,i-1} \otimes g^{(b)}(\ba_{i,\,i+n-1}) \otimes \ba_{i+n,\,m+n-1}; \end{align*}

\item 
$\left(f^{(r)} \otimes \mathrm{id}_{\bA}^{\otimes q}\right): \bA^{\otimes m+q} \rightarrow A \otimes \bA^{\otimes p+q}$  is given by 
\begin{align*} \quad \ba_{1,\,m+q} \mapsto   f^{(r)}(\ba_{1,\,m}) \otimes \ba_{m+1,\,m+q}; \end{align*}

\item 
$\left(\id_A \otimes \id_{\bA}^{\otimes p+q} \otimes 1 \right): A \otimes \bA^{\otimes p+q} \rightarrow A \otimes \bA^{\otimes p+q} \otimes A$  is given by 
\begin{align*} \quad a_0 \otimes \ba_{1,\,p+q} \mapsto   a_0 \otimes \ba_{1,\,p+q} \otimes 1; \end{align*}

\item $d_{p+q}:A \otimes \bA^{\otimes p+q} \otimes A \rightarrow A \otimes \bA^{\otimes p+q-1} \otimes A$ is the $(p+q)$-th differential of the normalized bar resolution $\mathrm{Bar}_{\bullet}(A)$.

\end{enumerate}
We  then define  a $k$-linear map  $f \bullet_{i} g \in C^{m+n-1} (A, \overline{\Omega}^{p+q}(A))$ by the composition of the above four maps 
\begin{align*}
    f \bullet_{i} g :&= d_{p+q} \circ \left(\id_A \otimes \id_{\bA}^{\otimes p+q} \otimes 1 \right) \circ \left(f^{(r)} \otimes \mathrm{id}_{\bA}^{\otimes q}\right) \circ   \left(\mathrm{id}_{\bA}^{\otimes i-1} \otimes g^{(b)} \otimes \mathrm{id}_{\bA}^{\otimes m-i}\right)  \\[1mm]
&= d_{p+q}( (f^{(r)} \otimes \mathrm{id}_{\bA}^{\otimes q})
(\mathrm{id}_{\bA}^{\otimes i-1} \otimes g^{(b)} \otimes \mathrm{id}_{\bA}^{\otimes m-i}) \otimes 1)
\end{align*}
for $1 \leq i \leq m$.
On the other hand, we assume that $q>0$. 
We also define a $k$-linear map $f \bullet_{-i} g \in C^{m+n-1} (A, \overline{\Omega}^{p+q}(A))$ for each integer $i$ with $1 \leq i \leq q$.
Consider the following four $k$-linear maps:

\begin{enumerate}

\item $\left(g^{(r)} \otimes \mathrm{id}_{\bA}^{\otimes m-1}\right): \bA^{\otimes m+n-1} \rightarrow A \otimes \bA^{\otimes m+q-1}$ is given by 
\begin{align*} \quad \ba_{1,\,m+n-1} \mapsto   g^{(r)}(\ba_{1,\,n}) \otimes \ba_{n+1,\,m+n-1}; \end{align*}

\item $\left(\id_{A} \otimes \id_{\bA}^{\otimes i-1} \otimes f^{(b)} \otimes \mathrm{id}_{\bA}^{\otimes q-i}\right): A\otimes\bA^{\otimes m+q-1} \rightarrow A\otimes\bA^{\otimes p+q}$ is given by 
\begin{align*} \quad a_0 \otimes \ba_{1,\,m+q-1} \mapsto  a_0 \otimes \ba_{1,\,i-1} \otimes  f^{(b)}(\ba_{i,\,i+m-1}) \otimes \ba_{i+m,\,m+q-1}; \end{align*}

\item $\left(\id_A \otimes \id_{\bA}^{\otimes p+q} \otimes 1 \right): A \otimes \bA^{\otimes p+q} \rightarrow A \otimes \bA^{\otimes p+q} \otimes A$ is  the same as above;

\item $d_{p+q}:A \otimes \bA^{\otimes p+q} \otimes A \rightarrow A \otimes \bA^{\otimes p+q-1} \otimes A$ is  the same as above.

\end{enumerate}

Then we define a $k$-linear map  $f \bullet_{-i} g \in C^{m+n-1} (A, \overline{\Omega}^{p+q}(A))$ by the   composition of the above four maps 
\begin{align*}
    f \bullet_{-i} g :&= 
    d_{p+q} \circ 
    \left(\id_A \otimes \id_{\bA}^{\otimes p+q} \otimes 1 \right) \circ 
    \left(\id_{A} \otimes \id_{\bA}^{\otimes i-1} \otimes f^{(b)} \otimes \mathrm{id}_{\bA}^{\otimes q-i}\right) \circ  
    \left(g^{(r)} \otimes \mathrm{id}_{\bA}^{\otimes m-1}\right)  \\[1mm]
&= d_{p+q}((\id_{A} \otimes \id_{\bA}^{\otimes i-1} \otimes f^{(b)} \otimes \mathrm{id}_{\bA}^{\otimes q-i})
(g^{(r)} \otimes \mathrm{id}_{\bA}^{\otimes m-1}) \otimes 1)
\end{align*}
for $1 \leq i \leq q$.
So far, the $k$-linear map $f \bullet_{i} g \in C^{m+n-1} (A, \overline{\Omega}^{p+q}(A))$ has been defined in the following way:
\begin{align*} 
f \bullet_{i} g = \begin{cases}
d_{p+q}( (f^{(r)} \otimes \mathrm{id}_{\bA}^{\otimes q})
(\mathrm{id}_{\bA}^{\otimes i-1} \otimes g^{(b)} \otimes \mathrm{id}_{\bA}^{\otimes m-i}) \otimes 1) 	& \mbox{if\ } 1 \leq i \leq m; \\[5pt]
d_{p+q}( (\mathrm{id} \otimes \mathrm{id}_{\bA}^{\otimes -i-1} \otimes f^{(b)} \otimes \mathrm{id}_{\bA}^{\otimes q+i} )  
( g^{(r)} \otimes \mathrm{id}_{\bA}^{\otimes m-1}) \otimes 1) 	& \mbox{if\ } q >0\  \mbox{and}\  -q \leq i \leq -1.
\end{cases}		
\end{align*}
Now, we define a $k$-linear map $f \bullet g \in C^{m+n-1} (A, \overline{\Omega}^{p+q}(A))$ by
\begin{align*} 
f \bullet g :=\begin{cases}
\sum_{i=1}^{m} (-1)^{r(m, \,p ;\, n, \,q ; \,i)} f \bullet_{i} g + \sum_{i=1}^{q} (-1)^{s(m, \,p ; \,n, \,q ;\, i)} f \bullet_{-i} g 	& \mbox{if\ } q>0; \\[5pt]
\sum_{i=1}^{m} (-1)^{r(m, \,p ;\, n, \,q ; \,i)} f \bullet_{i} g 	& \mbox{if\ } q=0,
\end{cases}		
\end{align*}
	where $r(m, p; n, q; i)$ and $s(m, p; n, q; i)$ are determined by
		\begin{align*} 
			r(m, p ; n, q ; i) := p + q + (i-1)(q-n-1)   &\mbox{ for } 1\leq i \leq m, \\
			s(m, p ; n, q ; i) := p + q + (i-1)(q-n-1)  &\mbox{ for } 1\leq i \leq q.
		\end{align*}
Finally, we are able to define a $k$-linear map $[f, g]_{\rm sg} \in C^{m+n-1} (A, \overline{\Omega}^{p+q}(A))$ as
		\begin{align*}
			[f, g]_{\rm sg} := f \bullet g -(-1)^{(m-p-1)(n-q-1)} g  \bullet f.
		\end{align*}
Wang \cite{wang} showed that the cup product $\smile_{\rm sg}$ and the bilinear map $[ \ , \ ]_{\rm sg}$ induce well-defined operators, 
	still denoted by $\smile_{\rm sg}$ and $[ \ , \ ]_{\rm sg}$, on a graded $k$-vector space
		\begin{align*}
			\bigoplus_{\substack{ m\,\in\,\mathbb{Z},\,p\,\in\,\mathbb{Z}_{\geq\,0}, \\ m+p\,\geq\,0}} \mathrm{Ext}_{A^{\rm e}}^{m+p}(A, \overline{\Omega}^{p}(A))
		\end{align*}
	with grading 
		\begin{align*}
		\left( \bigoplus_{m,\,p} \mathrm{Ext}_{A^{\rm e}}^{m+p}(A, \overline{\Omega}^{p}(A)) \right)^{i} 
			= \bigoplus_{\substack{ l\,\geq\,0, \\ i+l\,\geq\,0 }} \mathrm{Ext}_{A^{\rm e}}^{i+l}(A, \overline{\Omega}^{l}(A))
		\end{align*}
	for $i \in \mathbb{Z}$, which make it into a Gerstenhaber algebra. Furthermore, he proved that the two induced operators 
	$\smile_{\rm sg} $ and $[ \ , \ ]_{\rm sg}$ 
	are compatible with the connecting homomorphisms
 	$\theta_{m,\,p} : \mathrm{Ext}_{A^{\rm e}}^{m}(A, \overline{\Omega}^{p}(A)) \rightarrow \mathrm{Ext}_{A^{\rm e}}^{m+1}(A, \overline{\Omega}^{p+1}(A))$. Therefore, we have the following result.
%
%
	\begin{theo}[{\cite[Theorem 4.1]{wang}}] \label{thm_1}
		Let $A$ be a finite dimensional algebra over a field $k$. Then the graded $k$-vector space
			\begin{align*}
				\bigoplus_{m\,\in\,\mathbb{Z}} \  
					\lim_{\substack{ \longrightarrow \\ p\,\in\,{\rm I}_{(m)}}} \mathrm{Ext}_{A^{\rm e}}^{m+p}(A, \overline{\Omega}^{p}(A))
			\end{align*}
	 	equipped with the cup product $\smile_{\rm sg}$ and the Lie bracket $[\ ,\ ]_{\rm sg}$ is a Gerstenhaber algebra.
	\end{theo}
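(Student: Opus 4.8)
The plan is to derive the Gerstenhaber structure on the colimit from the explicit cochain-level formulas in two stages: first show that $\smile_{\rm sg}$ and $[\ ,\ ]_{\rm sg}$ descend to the groups $\Ext_{A^{\rm e}}^{m+p}(A, \overline{\Omega}^{p}(A))$ and are compatible with the connecting homomorphisms $\theta_{m+p,\,p}$, so that they induce operations on the colimit; then verify the Gerstenhaber axioms there.

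First I would establish well-definedness at the level of $\Ext$, i.e.\ that the cochain operations induce $k$-bilinear maps
\begin{align*}
\smile_{\rm sg} &: \Ext_{A^{\rm e}}^{m+p}(A, \overline{\Omega}^{p}(A)) \otimes \Ext_{A^{\rm e}}^{n+q}(A, \overline{\Omega}^{q}(A)) \longrightarrow \Ext_{A^{\rm e}}^{m+n+p+q}(A, \overline{\Omega}^{p+q}(A)),\\
{}[\ ,\ ]_{\rm sg} &: \Ext_{A^{\rm e}}^{m+p}(A, \overline{\Omega}^{p}(A)) \otimes \Ext_{A^{\rm e}}^{n+q}(A, \overline{\Omega}^{q}(A)) \longrightarrow \Ext_{A^{\rm e}}^{m+n+p+q-1}(A, \overline{\Omega}^{p+q}(A)).
\end{align*}
For the cup product this is the Leibniz rule relating $\delta(f \smile_{\rm sg} g)$ to $\delta f \smile_{\rm sg} g$ and $f \smile_{\rm sg} \delta g$, which holds because $\Phi_{p+q}$ is a morphism of $A$-bimodules, so that the verification reduces to the classical computation on the normalized bar resolution. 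For the bracket I would follow Gerstenhaber's original strategy: using the operation $f \bullet g$ defined above, establish a homotopy formula expressing $\delta(f \bullet g)$ in terms of $(\delta f) \bullet g$, $f \bullet (\delta g)$ and $\smile_{\rm sg}$-products, whence $[f,g]_{\rm sg} = f \bullet g - (-1)^{(m-p-1)(n-q-1)} g \bullet f$ maps cocycles to cocycles and coboundaries to coboundaries. The one genuinely new ingredient compared to the classical case is the bar differential $d_{p+q}$ occurring in $f \bullet_{\pm i} g$: one checks that the relevant element really lies in $\Im d_{p+q} = \overline{\Omega}^{p+q}(A)$, and that $d_{p+q-1} d_{p+q} = 0$ together with the interaction of $d_{\bullet}$ with the Hochschild differential $\delta^{\bullet}$ is exactly what produces the needed cocycle identity.

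Next I would verify compatibility with the connecting homomorphisms (\ref{Pre-Tate-eq1}): that $\theta(\alpha \smile_{\rm sg} \beta)$ equals $\theta(\alpha) \smile_{\rm sg} \beta$ up to sign (and, symmetrically, $\pm\,\alpha \smile_{\rm sg} \theta(\beta)$ modulo an element in the image of the other connecting map), and the analogous statement for $[\ ,\ ]_{\rm sg}$. Using the explicit representative of $\theta_{m+p,\,p}(f)$ recalled above --- which involves the sign $(-1)^{m+p}$ and the bar differential $d_{p+1}$ --- these reduce to identities among composites of the maps $d_{\bullet}$, $\pi_{p}^{(l)}$, $\pi_{p}^{(r)}$, $\pi_{p}^{(b)}$ and the various insertion maps, which hold on the nose. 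Granting this, $\smile_{\rm sg}$ and $[\ ,\ ]_{\rm sg}$ pass to the colimit, and by Proposition \ref{Pre-Tate-prop1} the space $\bigoplus_{m \in \mathbb{Z}} \varinjlim_{p} \Ext_{A^{\rm e}}^{m+p}(A, \overline{\Omega}^{p}(A))$ is identified with $\bigoplus_{m \in \mathbb{Z}} \underline{\Ext}_{A^{\rm e}}^{m}(A, A)$, carrying the grading stated in the theorem.

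Finally I would check the three Gerstenhaber axioms on the colimit. Unitality is immediate since $\Phi_{0} = \mathrm{id}$. For associativity and graded commutativity of $\smile_{\rm sg}$, and for the graded Jacobi identity and the Poisson rule relating $[\ ,\ ]_{\rm sg}$ to $\smile_{\rm sg}$, I would transport Gerstenhaber's arguments through the isomorphisms $\Phi_{p+q}$: graded commutativity from a $\cup_{1}$-type homotopy compatible with the $\Phi$'s, and the Jacobi and Poisson identities from the pre-Lie relation satisfied by $\bullet$ up to terms that vanish after applying $\delta$ or after passing to the colimit. Alternatively, one may exploit that $\Ext_{A^{\rm e}}^{r}(A, \overline{\Omega}^{0}(A)) \cong \HH^{r}(A)$ for $r \geq 1$ compatibly with all the operations, so that a cofinal part of each colimit already carries Gerstenhaber's structure on ordinary Hochschild cohomology, and then propagate the identities along the $\theta$'s. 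The main obstacle is precisely this last step: beyond the sign bookkeeping, one must show that inserting $d_{p+q}$ in the definition of $\bullet$ perturbs the classical pre-Lie and homotopy-commutativity identities only by terms that disappear in the colimit --- in other words, that the ``singular'' brace really behaves like Gerstenhaber's circle product once one stabilizes; the construction of a $\cup_{1}$-operation compatible with all the $\Phi_{p+q}$ (needed for graded commutativity of $\smile_{\rm sg}$) is the other delicate point.
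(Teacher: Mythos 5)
The paper does not prove this theorem: it is imported verbatim from Wang's work (cited as Theorem 4.1 of \cite{wang}), and the only ``proof'' offered in the text is the paragraph immediately preceding the statement, which records exactly the two facts your plan is organized around --- that $\smile_{\rm sg}$ and $[\ ,\ ]_{\rm sg}$ induce well-defined operators on $\bigoplus_{m,p}\Ext_{A^{\rm e}}^{m+p}(A,\overline{\Omega}^{p}(A))$ making it a Gerstenhaber algebra, and that both operators are compatible with the connecting homomorphisms $\theta_{m+p,\,p}$, so that the structure descends to the colimit. So your three-stage outline (descent to $\Ext$, compatibility with $\theta$, verification of the axioms) is the correct skeleton of the argument, and your degree bookkeeping ($\Ext^{m+n+p+q}$ for the product, $\Ext^{m+n+p+q-1}$ for the bracket, Tate degrees $m+n$ and $m+n-1$) is consistent with the paper's conventions.

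That said, what you have written is a roadmap rather than a proof. Every substantive step --- the Leibniz-type homotopy formula for $\delta(f\bullet g)$, the identities among $d_{\bullet}$, $\pi_{p}^{(l)}$, $\pi_{p}^{(r)}$, $\pi_{p}^{(b)}$ and the insertion maps that give compatibility with $\theta$, the stabilized pre-Lie relation, and the $\cup_{1}$-type operation compatible with all the $\Phi_{p+q}$ --- is asserted to ``hold on the nose'' or to ``reduce to'' the classical computation, and you explicitly flag the last two as unresolved obstacles. These computations constitute essentially all of the content of Wang's Section 4; without them the argument is a plausible plan, not a proof. One further caution about your alternative route via $\Ext_{A^{\rm e}}^{r}(A,\overline{\Omega}^{0}(A))\cong\HH^{r}(A)$: the term $p=0$ is the \emph{initial} object of the inductive system ${\rm I}_{(m)}$ for $m\geq 0$, not a cofinal one, and for $m<0$ there is no $p=0$ term at all, so the identities in negative Tate degrees cannot be obtained by propagating the classical Hochschild-cohomology identities forward along the $\theta$'s --- they must be verified directly on representatives involving $d_{p+q}$, which is precisely the part you have not carried out.
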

%
%
\begin{rem} {\rm
	The Gerstenhaber brackets on $\bigoplus_{m,\,p} \mathrm{Ext}_{A^{\rm e}}^{m+p}(A, \overline{\Omega}^{p}(A))$ 
	involving elements of degree zero are defined via the connecting homomorphisms 
		\[
			\theta_{0,\,*} : \mathrm{Ext}_{A^{\rm e}}^{0}(A, \overline{\Omega}^{*}(A)) \rightarrow \mathrm{Ext}_{A^{\rm e}}^{1}(A, \overline{\Omega}^{*+1}(A)),
		\]
	that is, for $f \in \mathrm{Ext}_{A^{\rm e}}^{m+p}(A, \overline{\Omega}^{p}(A))$
	and $\alpha \in \mathrm{Ext}_{A^{\rm e}}^{0}(A, \overline{\Omega}^{q}(A))$, we define
		\begin{align*}
			[f, \alpha]_{\rm sg} := [f, \theta_{0,\,q}(\alpha)]_{\rm sg}.
		\end{align*}
		}
	\end{rem}

\section{Frobenius algebras and complete resolutions} \label{Frobenius}
	Let $k$ be a field and $A$ a finite dimensional $k$-algebra of dimension $d$ over $k$, and let $\sigma$ be an algebra automorphism of $A$. 
	For any $A$-bimodule $M$, we denote by $M_{\sigma}$ the $A$-bimodule which is $M$ as a $k$-vector space and 
	whose $A$-bimodule structure is defined by $a \cdot m \cdot b := a m \sigma(b)$ for $m \in M_{\sigma}$ and $a, b \in A$.
	We also denote by $A^{\vee}$ a right $A^{\rm e}$-module $\Hom_{A^{\rm{e}}}({}_{A^{\rm e}}A, {}_{A^{\rm e}} A^{\rm e})$
	whose structure is given by the multiplication of $A^{\rm{e}}$ on the right hand side. 
	Note that we have an isomorphism of right $A^{\rm e}$-modules 
		\begin{align*}\lefteqn{}
			A^{\vee} 
			\xrightarrow{\sim}& 
			(A \otimes A)^{A}\\ 
			&:= \biggl\{ \sum_{i} x_{i} \otimes y_{i}  \bigg| \sum ax_{i} \otimes y_{i} = \sum x_{i} \otimes y_{i}a \mbox{\ for any\ } a \in A \biggr \};
			f \mapsto f(1),
		\end{align*}
	where a right $A^{\rm e}$-module structure of $(A \otimes A)^{A}$ is defined by the multiplication of $A^{\rm e}$ on the right hand side. 
	Recall that $A$ is a {\it Frobenius algebra} if there is an associative and non-degenerate bilinear form 
	$\langle \ ,\ \rangle : A \otimes A \rightarrow k$. 
	The associativity means that $\langle ab, c \rangle = \langle a, bc \rangle$ for all $a, b$ and $c \in A$. 
	If $(u_{i})_{i=1}^{d}$ is a $k$-basis of $A$, then there is a $k$-basis $(v_{i})_{i=1}^{d}$ of $A$ such that
	$\langle v_{i}, u_{j} \rangle = \delta_{ij}$ with $\delta_{ij}$ Kronecker's delta. In such a case, we call $(u_{i})_{i=1}^{d}, (v_{i})_{i=1}^{d}$ {\it dual bases} of $A$.
	There exists an algebra automorphism $\nu$, up to inner automorphism, of $A$ such that	
	$\langle a, b \rangle = \langle b, \nu(a) \rangle$ for all $a, b \in A$, and the automorphism $\nu$ is said  to be the
	{\it Nakayama automorphism} of $A$.
	In fact, we can write both the Nakayama automorphism $\nu$  and its inverse $\nu^{-1}$, explicitly:  for $x \in A$,
		\begin{align*}
			\nu(x) := \sum_{i=1}^{d} \langle x, v_{i} \rangle u_{i}, \quad \nu^{-1}(x) :=\sum_{i=1}^{d} \langle u_{i}, x \rangle v_{i}.
		\end{align*}
	Another definition of Frobenius algebras is that $A$ is isomorphic to $D(A)$ as right or as left $A$-modules.
	Here the left (right) $A$-module structure of $D(A)$ is defined by $(a f )(x) := f(xa)$ $(( f a )(x) := f(ax))$ for any $f \in D(A)$ and any $a \in A$.
	We can see that the bilinear form $\langle \ ,\ \rangle : A \otimes A \rightarrow k$ induces an isomorphism of left $A$-modules 
		\begin{align*}
				\phi : A \xrightarrow{\sim} D(A); \  a \mapsto \langle -, a \rangle.
		\end{align*} 
	Moreover, this isomorphism gives rise to an isomorphism of $A$-bimodules $A_{\nu} \xrightarrow{\sim} D(A)$.
	
	The first statement of the next lemma appears in \cite[Lemma 2.1.35]{EuSchedler2009}. 
	However, we prove it again in order to get the explicit form of the isomorphism below.
%
%
	\begin{lem} \label{Frobenius-lem1}
		Let $A$ be a finite dimensional Frobenius algebra. With the same notation above, we have the following assertions.
			\begin{enumerate}
				\item There is an isomorphism of right $A^{\rm e}$-bimodules $A_{\nu^{-1}} \cong A^{\vee}$. 
						\label{Frobenius-lem1-1}
				\item  If $(u_{i})_{i}, (v_{i})_{i}$ and $(u^{\prime}_{j})_{j}, (v^{\prime}_{j})_{j}$ are two dual bases of $A$, then we have
						$
						    \sum_{i} u_{i} \otimes v_{i} = \sum_{j} u^{\prime}_{j} \otimes v^{\prime}_{j}.
						$\label{Frobenius-lem1-2}
				\item An element $\sum_{i} u_{i} \otimes v_{i}$ of $A \otimes A$ has the following properties: 
							\begin{enumerate} 
								\item $\sum_{i} u_{i} \otimes v_{i} 	= \sum_{i} v_{i} \otimes \nu^{-1} (u_{i}) = \sum_{i} \nu (v_{i}) \otimes u_{i}$;
								\item $\sum_{i} au_{i}b \otimes v_{i} = \sum_{i} u_{i} \otimes \nu^{-1} (b) v_{i}a$ for any $a, b \in A$.
							\end{enumerate}
						\label{Frobenius-lem1-3}
			\end{enumerate}
	\end{lem}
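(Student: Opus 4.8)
The plan is to prove the three assertions in the order (2), (3), (1): statement (3) is deduced from (2), and statement (1) rests on the identities produced along the way in (3). Everything is built from two elementary expansion formulas that follow immediately from the duality $\langle v_{i}, u_{j} \rangle = \delta_{ij}$, namely that every $a \in A$ satisfies $a = \sum_{i} \langle v_{i}, a \rangle u_{i}$ (expand $a$ in the basis $(u_{i})_{i}$ and pair with $v_{i}$) and $a = \sum_{i} \langle a, u_{i} \rangle v_{i}$ (expand in $(v_{i})_{i}$ and pair with $u_{i}$), together with the relation $\langle \nu(a), \nu(b) \rangle = \langle a, b \rangle$, obtained by applying the defining identity $\langle a, b \rangle = \langle b, \nu(a) \rangle$ twice.

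For (2), given two dual bases $(u_{i})_{i}, (v_{i})_{i}$ and $(u^{\prime}_{j})_{j}, (v^{\prime}_{j})_{j}$, I would expand $u^{\prime}_{j} = \sum_{i} \langle v_{i}, u^{\prime}_{j} \rangle u_{i}$ by the first formula, substitute into $\sum_{j} u^{\prime}_{j} \otimes v^{\prime}_{j}$, and then recognize $\sum_{j} \langle v_{i}, u^{\prime}_{j} \rangle v^{\prime}_{j} = v_{i}$ by the second formula applied to the dual bases $(u^{\prime}_{j})_{j}, (v^{\prime}_{j})_{j}$; this collapses $\sum_{j} u^{\prime}_{j} \otimes v^{\prime}_{j}$ to $\sum_{i} u_{i} \otimes v_{i}$. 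For (3)(a), I would check that $(v_{i})_{i}$ together with $(\nu^{-1}(u_{i}))_{i}$, and $(\nu(v_{i}))_{i}$ together with $(u_{i})_{i}$, are again pairs of dual bases of $A$; indeed $\langle \nu^{-1}(u_{j}), v_{i} \rangle = \langle v_{i}, \nu(\nu^{-1}(u_{j})) \rangle = \langle v_{i}, u_{j} \rangle = \delta_{ij}$ and $\langle u_{j}, \nu(v_{i}) \rangle = \langle \nu(v_{i}), \nu(u_{j}) \rangle = \langle v_{i}, u_{j} \rangle = \delta_{ij}$, and then (2) gives both equalities at once. For (3)(b), I would compute directly: expanding $a u_{i} b = \sum_{j} \langle v_{j}, a u_{i} b \rangle u_{j}$ yields $\sum_{i} a u_{i} b \otimes v_{i} = \sum_{j} u_{j} \otimes \big( \sum_{i} \langle v_{j}, a u_{i} b \rangle v_{i} \big)$, while associativity and the Nakayama relation give $\langle v_{j}, a u_{i} b \rangle = \langle v_{j} a, u_{i} b \rangle = \langle u_{i} b, \nu(v_{j} a) \rangle = \langle u_{i}, b\, \nu(v_{j} a) \rangle$; hence the inner sum equals $\sum_{i} \langle u_{i}, b\, \nu(v_{j} a) \rangle v_{i} = \nu^{-1}\big( b\, \nu(v_{j} a) \big) = \nu^{-1}(b)\, v_{j} a$ by the explicit formula for $\nu^{-1}$, which is exactly the claim. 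In particular, taking $b = 1$ and $a = 1$ separately produces the two special identities $\sum_{i} a u_{i} \otimes v_{i} = \sum_{i} u_{i} \otimes v_{i} a$ and $\sum_{i} u_{i} b \otimes v_{i} = \sum_{i} u_{i} \otimes \nu^{-1}(b) v_{i}$, which are used in (1).

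For (1), I would define a $k$-linear map $\Psi : A_{\nu^{-1}} \to (A \otimes A)^{A}$ by $\Psi(a) := \sum_{i} u_{i} \otimes a v_{i}$, identifying $A^{\vee}$ with $(A \otimes A)^{A}$ via $f \mapsto f(1)$ as in the excerpt; under this identification the right $A^{\rm e}$-action on $(A \otimes A)^{A}$ unwinds to $\big( \sum_{j} x_{j} \otimes y_{j} \big) \cdot (c \otimes d^{\circ}) = \sum_{j} x_{j} c \otimes d y_{j}$. The first special identity shows that $\Psi(a)$ lies in $(A \otimes A)^{A}$, and the second shows that $\Psi$ is right $A^{\rm e}$-linear, since $\Psi\big( a \cdot (c \otimes d^{\circ}) \big) = \Psi\big( d a \nu^{-1}(c) \big) = \sum_{i} u_{i} \otimes (d a \nu^{-1}(c)) v_{i}$ coincides with $\sum_{i} u_{i} c \otimes d a v_{i} = \Psi(a) \cdot (c \otimes d^{\circ})$. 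Injectivity is immediate: if $\Psi(a) = 0$, then $a v_{i} = 0$ for all $i$ since $(u_{i})_{i}$ is linearly independent, whence $a = a \cdot 1 = 0$ because $(v_{i})_{i}$ spans $A$. For surjectivity one can either count dimensions --- $\dim_{k} A^{\vee} = \dim_{k} \Hom_{A^{\rm e}}(A, A^{\rm e}) = \dim_{k} D(A) = d$, using that $A^{\rm e}$ is a Frobenius algebra (see \cite[Corollary 3.3]{BerghJorgensen}) --- or exhibit the inverse $\sum_{j} x_{j} \otimes y_{j} \mapsto \sum_{j} \langle 1, x_{j} \rangle y_{j}$ and verify it is two-sided using the centrality condition defining $(A \otimes A)^{A}$.

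The only step that needs genuine care, rather than routine bookkeeping with dual bases, is part (1): one has to set up $\Psi$ so that it intertwines the $\nu^{-1}$-twist in the bimodule structure of $A_{\nu^{-1}}$ with the right $A^{\rm e}$-module structure transported to $(A \otimes A)^{A}$ from $A^{\vee}$, and then establish surjectivity. Parts (2) and (3) reduce to short computations with the bilinear form and the Nakayama automorphism.
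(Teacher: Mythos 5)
Your proposal is correct and follows essentially the same route as the paper: parts (2) and (3) rest on the dual-basis expansion formulas $x=\sum_i\langle x,u_i\rangle v_i$ (the paper packages this as injectivity of the map $\sum_i x_i\otimes y_i\mapsto[x\mapsto\sum_i\langle x,x_i\rangle y_i]$, while you compute directly, which is only a cosmetic difference), and your $\Psi(a)=\sum_i u_i\otimes av_i$ coincides, via (3)(b), with the paper's $\varphi(x)=\sum_i u_i\nu(x)\otimes v_i$ under the identification $A^{\vee}\cong(A\otimes A)^{A}$, with the same explicit inverse $\sum_j x_j\otimes y_j\mapsto\sum_j\langle 1,x_j\rangle y_j$. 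All the verifications you sketch (the transported right $A^{\rm e}$-action, $A^{\rm e}$-linearity via the special cases of (3)(b), and bijectivity) go through as stated.
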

	\begin{proof}
	For the statements (\ref{Frobenius-lem1-2}) and (\ref{Frobenius-lem1-3}), 
	consider the composition $\eta : A_{\nu^{-1}} \otimes A \rightarrow \Hom_{k}(A, A)$ of isomorphisms
		\[ 
			\xymatrix@R=1pt{
				A_{\nu^{-1}} \otimes A \ar[r] & D(A) \otimes A \ar[r] & \Hom_{k}(A, A) \\
				\sum_{i} x_{i} \otimes y_{i} \ar@{|->}[r] &\sum_{i} \langle -, x_{i} \rangle \otimes y_{i} \ar@{|->}[r] & 
				\big[x \mapsto \sum_{i} \langle x, x_{i} \rangle y_{i} \big].
		}\]
	Since $x = \sum_{i} \langle x, u_{i} \rangle v_{i}$ for any dual bases $(u_{i})_{i=1}^{d} , (v_{i})_{i=1}^{d}$ of $A$ and any $x \in A$, 
	the statements (\ref{Frobenius-lem1-2}) and (\ref{Frobenius-lem1-3}) follow from the injectivity of $\eta$. 
	On the other hand, we define 
			\begin{align*}\lefteqn{}
				&\varphi : A_{\nu^{-1}} \rightarrow A^{\vee};\ x \mapsto \big[a \mapsto \sum_{i} a u_{i} \nu(x) \otimes v_{i} \big], \\
				&\psi : A^{\vee} \rightarrow (A\otimes A)^{A} \rightarrow A_{\nu^{-1}}; \ 
				\alpha \mapsto \alpha(1_{A})= \sum_{i} x_{i} \otimes y_{i} \mapsto \sum_{i} \langle 1, x_{i} \rangle y_{i}. 
			\end{align*}
	Then we get $\varphi$ is a right $A^{\rm{e}}$-module homomorphism. Indeed, if $x \in A_{\nu^{-1}}$ and $a \otimes b^{\circ} \in A^{\rm{e}}$, 
	then we have 
			$\varphi(x\,(a \otimes b^{\circ})) 
			= \sum_{i} u_{i}\,\nu(b x) a \otimes v_{i} 
			= \sum_{i} u_{i} \nu(x) a \otimes b v_{i}
			= (\sum_{i} u_{i} \nu(x) \otimes v_{i})  (a \otimes b^{\circ}).$
	One can easily check that $\varphi \psi = \mathrm{id}_{A^{\vee}}$ and $\psi \varphi = \mathrm{id}_{A_{\nu^{-1}}}$. 
	\end{proof}
%

	As we remarked in Section \ref{subsection-Gorenstein}, 
	if $A$ is a Frobenius algebra $A$, then so is the enveloping algebra $A^{\rm e}$. 
	In particular, $A^{\rm e}$ is a Gorenstein algebra of Gorenstein dimension zero.
	Therefore, $A$ has a complete resolution  over $A^{\rm e}$. Note that we may take a projective resolution of $A$ over $A^{\rm e}$ as 
	the non-negative part of the complete resolution. 
	In fact, Nakayama \cite{Nakayama} constructed a complete resolution $\mathcal{T}$ of $A$ in the following way:
	we set $\mathcal{T}_{r} := \mathrm{Bar}_{r}(A) = A \otimes \overline{A}^{\otimes r} \otimes A$ for every $r \geq 0$ and 
	$\mathcal{T}_{-s} := D(\mathrm{Bar}_{s-1}(A))_{\nu^{-1}}$ for each $s \geq 1$.
	Then we get an augmented  exact sequence\\
	\begin{center}
		\scalebox{0.98}{	
			\xymatrix{
				\cdots \ar[r] &
				\mathcal{T}_{r} \ar[r]^-{d_{r}} &	
				\mathcal{T}_{r-1} \ar[r]	&	
				\cdots \ar[r]^-{d_{1}} &
				\mathcal{T}_{0} \ar[r]^-{d^{\prime}_{0}} \ar[d]_-{d_0}&
				\mathcal{T}_{-1} \ar[r]^-{d_{-1}} &
				\cdots \ar[r] & 
				\mathcal{T}_{-s} \ar[r]^-{d_{-s}} &
				\mathcal{T}_{-s-1} \ar[r]	&
				\cdots 
				\\
				&
				&
				&
				&
				A \ar[r]^-{\sim}_-{\phi}&
				D(A)_{\nu^{-1}}	\ar[u]_-{D(d_0)}&
				&
				&
				&
	}}\\
	\end{center}
	where we put
		\begin{align*}
			D(d_0)(f) = f d_0	\quad (f \in D(A)_{\nu^{-1}}),	\quad
			d^{\prime}_{0} = D(d_0) \phi d_0, \qquad
			d_{-s} (g) = g d_{s} \quad(g \in \mathcal{T}_{-s}).
		\end{align*}
	Sanada \cite[Lemma 1.1]{Sanada1992} proved that for any $A$-bimodule $M$ and any integer $r$, there is an isomorphism between  
	$\Hom_{A^{\rm e}}(\mathcal{T}_{r}, M)$ and $M_{\nu^{-1}} \otimes_{A^{\rm e}} \mathcal{T}_{-r-1}$ which is natural in $M$, so that
	each $\mathcal{T}_{r} \ (r \in \mathbb{Z})$ is projective over $A^{\rm e}$. 
	Furthermore, this isomorphism gives an isomorphism between the cochain complex $\Hom_{A^{\rm e}}(\mathcal{T}, M)$ and 
	the chain complex $M_{\nu^{-1}} \otimes_{A^{\rm e}} \mathcal{T}$.
	Therefore, the following cochain complex $(\mathcal{D}^{\bullet}(A, M), \widehat{d}^{\bullet})$ has the same cohomology groups as 
	$\Hom_{A^{\rm e}}(\mathcal{T}, M)$:
			\begin{align*}
				\cdots \rightarrow
				C_{2}(A, M_{\nu^{-1}}) \xrightarrow{\partial_{2}}
				C_{1}(A, M_{\nu^{-1}}) \xrightarrow{\partial_{1}}
				M_{\nu^{-1}} \xrightarrow{\mu} 				
				M \xrightarrow{\delta^{0}} 		
				C^{1}(A, M) \xrightarrow{\delta^{1}} 	
				C^{2}(A, M) \rightarrow \cdots,
			\end{align*}	
	where	we define $\mu : M_{\nu^{-1}} \rightarrow M$ by $\mu(m) := \sum_{i=1}^{d} u_{i} m v_{i}$ for $m \in M$ and set
		\begin{align*}
			\mathcal{D}^{r}(A, M) = \begin{cases}
										C^{r}(A, M)						& \mbox{if $r \geq 0$},  \\
										C_{-r-1}(A, M_{\nu^{-1}}) 		& \mbox{if $r \leq -1$,}  
									\end{cases} \quad
			\widehat{d}^{r} = \begin{cases}
										\delta^{r}		& \mbox{if $r \geq 0$},  \\ 
										\mu						& \mbox{if $r = -1$},  \\
										\partial_{-r-1}					& \mbox{if $r \leq -2$.}
									\end{cases}
		\end{align*}
	We give the explicit forms of the $0$-th and $(-1)$-th cohomology groups as follows:
		\begin{align*} \lefteqn{}
			\widehat{\mathrm{HH}}^{0}(A) \cong M^{A}/N_{A}(M), \quad
			\widehat{\mathrm{HH}}^{-1}(A) = {}_{N_{A}}M/ I_{A}(M),
		\end{align*}
	where we set
		\begin{align*} \lefteqn{}
			& M^{A} := \{ \, m \in M \ | \ a m = m a \mbox{ for all } a \in A \}, \\[5pt]
			&N_{A}(M) := \mathrm{Im}\, (\mu ) = \bigg \{ \,  \sum_{i} u_{i} m v_{i} \ | \ m \in M \bigg \}, \\
			&{}_{N_{A}}M := \{ \, m \in M \ | \ \sum_{i} u_{i} m v_{i} = 0 \}, \\
			&I_{A}(M) := \bigg \{ \, \sum_{i} ( m_{i} \nu^{-1} (a_{i}) -a_{i} m_{i} ) \mbox{ (finite sum) }	 | \ a_{i} \in A, m_{i} \in M \bigg \}.
		\end{align*}
	Note that for any $x \in A$, $\sum_{i}  u_{i} x v_{i} = 0$ holds if and only if $\sum_{i}  u_{i} \nu(x) v_{i} = 0$ holds.
%
%
	\begin{rem} \label{Frobenius-rem2}  {\rm
	If $M =A$, then $\widehat{\mathrm{HH}}^{0}(A)$ and $\widehat{\mathrm{HH}}^{-1}(A)$ are appeared in the following exact sequence:	
						\begin{align*}
							0 \rightarrow 
							\underline{\mathrm{Ext}}_{A^{\rm{e}}}^{-1}(A, A) \rightarrow
							A_{\nu^{-1}}  \otimes_{A^{\rm{e}}} A  \xrightarrow{\eta}
							\Hom_{A^{\rm{e}}} (A, A)  \rightarrow
							\underline{\mathrm{Ext}}_{A^{\rm{e}}}^{0}(A, A)  \rightarrow
							0,
						\end{align*}
	where the morphism $\eta(x \otimes_{A^{\rm{e}}} a)(b) = \sum_{i} b u_{i} \nu(x) a v_{i}$.
	}
	\end{rem}

	Suppose that $A$ is a finite dimensional self-injective $k$-algebra.
	Recall that $A$ is a {\it self-injective} algebra if $A$ is injective as a left and as a right $A$-module.  	
	Note that the enveloping algebra $A^{\rm{e}}$ is also a self-injective algebra. 
	Observe that if $A$ is a self-injective algebra, then all of the connecting homomorphisms (\ref{Pre-Tate-eq1})
		\[
			\theta_{m+p,\,p} : \mathrm{Ext}_{A^{\rm e}}^{m+p} (A, \overline{\Omega}^{p}(A)) \rightarrow \mathrm{Ext}_{A^{\rm e}}^{m+p+1} (A, \overline{\Omega}^{p+1}(A))
		\] 
		are isomorphisms except for the case $m+p=0$, so that we have an isomorphism	$\mathrm{Ext}_{A^{\rm e}}^{r+p} (A, \overline{\Omega}^{p}(A)) \cong \underline{\mathrm{Ext}}_{A^{\rm{e}}}^{r}(A, A)$
		for all $r, p \in \mathbb{Z}$ such that $p \geq 0$ and $r+p > 0$.
%
%
%
%
	We need  modification for the inductive system $\{ X_{p}^{(m)}, \theta_{m+p,\,p} \}_{ p\,\in\,{\rm I}_{(m)}} $ defined in Section 2.3. 
	Let us recall that
		\[
			\lim_{\substack{ \longrightarrow \\ p\,\in\,{\rm I}_{(m)}}}
					 \mathrm{Ext}_{A^{\rm e}}^{m+p}(A, \overline{\Omega}^{p}(A))
		\] 
	is the inductive limit of the inductive system 
	$\{ X_{p}^{(m)}, \theta_{m+p,\,p} \}_{ p\,\in\,{\rm I}_{(m)} }$
	of which the term $X_{p}^{(m)}$ is defined by 
	$X_{p}^{(m)} := \mathrm{Ext}_{A^{\rm e}}^{m+p} (A, \overline{\Omega}^{p}(A))$
	and whose morphism $\theta_{m+p,\,p}$ is the connecting homomorphism
	$\mathrm{Ext}_{A^{\rm e}}^{m+p} (A, \overline{\Omega}^{p}(A)) 
	\rightarrow 
	\mathrm{Ext}_{A^{\rm e}}^{m+p+1} (A, \overline{\Omega}^{p+1}(A))$. 
	Consider another inductive system 
		\[
			\{ Y_{p}^{(m)}, \varphi_{m+p,\,p} \}_{ p\,\in\,{\rm I}_{(m)}}
		\]
	of which the term $Y_{p}^{(m)}$ is the same as $X_{p}^{(m)}$
	and whose morphism $\varphi_{m+p, p}$ is given by
		\begin{align*} 
			\varphi_{m+p,\,p} := \begin{cases}
												(-1)^{m+i} \theta_{m+i,\,i} &\mbox{ if } p = i, \\
												(-1)^{m} \theta_{m+p,\,p} &\mbox{ if } p > i,
									\end{cases}
		\end{align*}
	where an integer $i \geq 0$ is the least one belonging to ${\rm I}_{(m)}$.
	Then we can readily see 
		\begin{align*} 
			\lim_{\substack{ \longrightarrow \\ p\,\in\,{\rm I}_{(m)}}} Y_{p}^{(m)} 
			\cong 
			\lim_{\substack{ \longrightarrow \\ p\,\in\,{\rm I}_{(m)}}}
					 \mathrm{Ext}_{A^{\rm e}}^{m+p}(A, \overline{\Omega}^{p}(A)).
		\end{align*} 
	We will utilize the inductive system 
	$\{ Y_{p}^{(m)}, \varphi_{m+p,\,p} \}_{p\,\in\,{\rm I}_{(m)}}$
	instead of $\{ X_{p}^{(m)}, \theta_{m+p,\,p} \}_{ p\,\in\,{\rm I}_{(m)} }$	and denote 
	\[\varphi_{m+p,\, p}^{q} := \varphi_{m+p+q-1,\, p+q-1} \circ \cdots \circ \varphi_{m+p,\, p}: \mathrm{Ext}_{A^{\rm e}}^{m+p} (A, \overline{\Omega}^{p}(A))  \rightarrow \mathrm{Ext}_{A^{\rm e}}^{m+p+q} (A, \overline{\Omega}^{p+q}(A)).\]
	Note that $\varphi_{m+p,\, p}^{1} = \varphi_{m+p,\, p}$.	

	The following is a special case of \cite[Corollary 6.4.1]{Buch}, which says that we have a description of the Tate-Hochschild cohomology by using $\mathrm{Ext}$ and $\mathrm{Tor}$ for a self-injective algebra.
%
%
	\begin{prop}
	\label{Frobenius-prop1} 
		Let $A$ be a finite dimensional self-injective $k$-algebra. Denote $A^{\vee} = \Hom_{A^{\rm{e}}}(A, A^{\rm{e}})$. 
		Then we have the following.
			\begin{enumerate}
				\item $\underline{\mathrm{Ext}}_{A^{\rm{e}}}^{r}(A, A) \cong \mathrm{Ext}_{A^{\rm{e}}}^{r}(A, A)$ for all $r \geq 1$. 
				\item $\underline{\mathrm{Ext}}_{A^{\rm{e}}}^{-r}(A, A) \cong \mathrm{Tor}^{A^{\rm{e}}}_{r-1}(A, A^{\vee})$ for all $r \geq 2$. 
				\item There exists an exact sequence of $k$-vector spaces
						\begin{align*}
							0 \rightarrow 
							\underline{\mathrm{Ext}}_{A^{\rm{e}}}^{-1}(A, A) \rightarrow
							A^{\vee} \otimes_{A^{\rm{e}}} A \xrightarrow{\eta}
							\Hom_{A^{\rm{e}}} (A, A)  \rightarrow
							\underline{\mathrm{Ext}}_{A^{\rm{e}}}^{0}(A, A)  \rightarrow
							0,
						\end{align*}
					where the morphism $\eta$ is given by $\eta((\sum_{i} x_{i} \otimes y_{i}) \otimes_{A^{\rm{e}}} a)(b) = \sum_{i} b x_{i} a y_{i}$. 
				\item $\underline{\mathrm{Ext}}_{A^{\rm e}}^{0}(A, A) = \underline{\Hom}_{A^{\rm e}}(A, A)$, 
						which is the set of $A$-bimodule homomorphisms from $A$ to $A$ 
						modulo those homomorphisms passing through projective $A$-bimodules.
			\end{enumerate}
	In particular, for $r \geq 2$ and $p \geq 1$, 	
	\begin{align*}  \lefteqn{}
		&\kappa_{-1,\,p} : 
			\underline{\mathrm{Ext}}_{A^{\rm{e}}}^{-1}(A, A) = \mathrm{Ker}\,(\eta) 	
				\xrightarrow{\sim} 	
			\mathrm{Ext}_{A^{\rm e}}^{p}(A, \overline{\Omega}^{p+1}(A)) \cong \underline{\mathrm{Ext}}_{A^{\rm{e}}}^{-1}(A, A), \\
		&\varphi_{0,\,0}^{p} : 
			\underline{\mathrm{Ext}}_{A^{\rm{e}}}^{0}(A, A) = \mathrm{Coker}\, (\eta)  	
				\xrightarrow{\sim} 
			\mathrm{Ext}_{A^{\rm e}}^{p}(A, \overline{\Omega}^{p}(A)) \cong \underline{\mathrm{Ext}}_{A^{\rm{e}}}^{0}(A, A), \\
		&\kappa_{r-1,\,p} : 
			\mathrm{Tor}_{r-1}^{A^{\rm{e}}}(A, A^{\vee}) 	\xrightarrow{\sim} 	\mathrm{Ext}_{A^{\rm e}}^{p}(A, \overline{\Omega}^{r+p}(A)) \cong \underline{\mathrm{Ext}}_{A^{\rm{e}}}^{-r}(A, A)
	\end{align*}	
	are defined, on the (co)chain level, as
	\begin{align*}  \lefteqn{}
		&\kappa_{-1,\,p} (\alpha \otimes_{A^{\rm e}} a) (\bb_{1,\,p}) = \sum_{i} d_{p+1}(x_{i} a \otimes \by_{i} \otimes \bb_{1,\,p} \otimes 1), \\
		&\varphi_{0,\,0}^{p} (f)(\bb_{1,\,p}) = d_{p}(f(1) \otimes \bb_{1,\,p} \otimes 1), \\
		&\kappa_{r-1,\,p} (\alpha \otimes_{A^{\rm e}} \ba_{1,\,r-1}) (\bb_{1,\,p}) = 
			\sum_{i} d_{r+p}(x_{i} \otimes\ba_{1,\,r-1} \otimes \by_{i} \otimes \bb_{1,\,p} \otimes 1),
	\end{align*}
	where	we write $\alpha(1) = \sum_{i} x_{i} \otimes y_{i}$. We denote $\varphi_{0,\,0}^{1}$ by $\varphi_{0,\,0}$.
	\end{prop}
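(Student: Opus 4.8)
The plan is to deduce parts (1)--(4) from Buchweitz's description \cite[Corollary 6.4.1]{Buch} of the singularity category, applied to the enveloping algebra $A^{\rm e}$ --- which is Frobenius, hence self-injective and Gorenstein of Gorenstein dimension zero, by \cite[Corollary 3.3]{BerghJorgensen} --- together with the identification $A^{\vee} = \Hom_{A^{\rm e}}(A, A^{\rm e}) \cong A_{\nu^{-1}}$ from Lemma \ref{Frobenius-lem1}\,(\ref{Frobenius-lem1-1}) and the coincidence $\underline{\mathrm{Ext}}_{A^{\rm e}}^{\bullet}(A, A) \cong \widehat{\HH}^{\bullet}(A)$ noted in Section \ref{subsection-Gorenstein}. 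Concretely, I would read (1)--(4) off the complete resolution $\mathcal{T}$ of $A$ over $A^{\rm e}$ recalled above: by Sanada's natural isomorphism \cite[Lemma 1.1]{Sanada1992}, $\Hom_{A^{\rm e}}(\mathcal{T}_{r}, A) \cong A_{\nu^{-1}} \otimes_{A^{\rm e}} \mathcal{T}_{-r-1}$, the Tate cochain complex $\Hom_{A^{\rm e}}(\mathcal{T}, A)$ is identified with $\mathcal{D}^{\bullet}(A, A)$, which agrees with the Hochschild cochain complex $C^{\bullet}(A, A)$ in degrees $\geq 0$ and with the Hochschild chain complex $C_{\bullet}(A, A_{\nu^{-1}})$ (suitably reindexed) in degrees $\leq -1$. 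Taking cohomology in a degree $r \geq 1$ thus only sees the cochain part, giving $\underline{\mathrm{Ext}}_{A^{\rm e}}^{r}(A, A) \cong \HH^{r}(A) = \Ext_{A^{\rm e}}^{r}(A, A)$, which is (1); taking homology in a degree $r-1 \geq 1$ only sees the chain part, giving $\underline{\mathrm{Ext}}_{A^{\rm e}}^{-r}(A, A) \cong \HH_{r-1}(A, A_{\nu^{-1}}) \cong \mathrm{Tor}_{r-1}^{A^{\rm e}}(A, A_{\nu^{-1}}) \cong \mathrm{Tor}_{r-1}^{A^{\rm e}}(A, A^{\vee})$ for $r \geq 2$, which is (2); and the tail $\cdots \to C_{1}(A, A_{\nu^{-1}}) \xrightarrow{\partial_{1}} A_{\nu^{-1}} \xrightarrow{\mu} A \xrightarrow{\delta^{0}} C^{1}(A, A) \to \cdots$ yields, after rewriting $A_{\nu^{-1}}$ as $A^{\vee}\otimes_{A^{\rm e}}A$ and $\mu$ as $\eta$ using Lemma \ref{Frobenius-lem1}, exactly the four-term exact sequence of (3), in accordance with Remark \ref{Frobenius-rem2}; its cokernel term is $\underline{\mathrm{Ext}}_{A^{\rm e}}^{0}(A, A) = \underline{\Hom}_{A^{\rm e}}(A, A)$, which is (4).

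The real content is the chain-level description of $\kappa_{-1,\,p}$, $\varphi_{0,\,0}^{p}$ and $\kappa_{r-1,\,p}$. Here I would use that, $A$ being self-injective, every connecting homomorphism $\theta_{m+p,\,p}$ is an isomorphism except when $m+p=0$, so that for $p \geq 1$ the group $\Ext_{A^{\rm e}}^{p}(A, \overline{\Omega}^{r+p}(A))$ is, via Proposition \ref{Pre-Tate-prop1}, another model of $\underline{\mathrm{Ext}}_{A^{\rm e}}^{-r}(A, A)$; by construction, the maps $\kappa_{r-1,\,p}$ (resp. $\kappa_{-1,\,p}$, $\varphi_{0,\,0}^{p}$) are the composites of the identification from part (2) (resp. part (3)) with a string of connecting isomorphisms $\varphi_{m+p,\,p}^{q}$, and one must verify that such a composite is represented on (co)chains by the stated formulas. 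To do so I would take a cycle represented by $\alpha \otimes_{A^{\rm e}} \ba_{1,\,r-1} \in A^{\vee} \otimes_{A^{\rm e}} \mathrm{Bar}_{r-1}(A)$, with $\alpha(1)=\sum_{i} x_{i}\otimes y_{i}$, transport it across Sanada's duality isomorphism and the identification $A^{\vee}\cong A_{\nu^{-1}}$, and then push it successively through the connecting homomorphisms, whose effect on a representing cochain $f$ is $f \mapsto (-1)^{m+p}\,d_{p+1}(f(-)\otimes -\otimes 1)$ by the Remark following Proposition \ref{Pre-Tate-prop1}. One then checks, by induction on the number of connecting homomorphisms applied, that the iterated bar differentials collapse into the single expression $d_{r+p}(x_{i}\otimes\ba_{1,\,r-1}\otimes\by_{i}\otimes\bb_{1,\,p}\otimes 1)$; the degenerate cases $r=1$ (empty $\ba_{1,\,r-1}$, the element $a$ of $\alpha\otimes_{A^{\rm e}}a$ absorbed so that $\alpha$ contributes $\sum_{i} x_{i}a\otimes y_{i}$) and $r=0$ (a class in $\mathrm{coker}\,\eta$ represented by $f\in\Hom_{A^{\rm e}}(A,A)$, with $f(1)$ central) then specialize to the formulas for $\kappa_{-1,\,p}$ and $\varphi_{0,\,0}^{p}$.

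The main obstacle I anticipate is precisely the sign bookkeeping in this last induction: one has to check that the accumulated signs --- the $(-1)^{m+i}$ and $(-1)^{m}$ built into the $\varphi$'s, the $(-1)^{m+p}$ inside each $\theta$, and the signs produced by the collapse $\overline{1}=0$ after each step --- cancel exactly, so that the final formulas carry no residual sign (indeed, the signs in the definitions of $\theta$ and $\varphi$ appear to be calibrated for this). In parallel one must carry out the routine verification that the displayed $k$-linear maps send cycles to cocycles and boundaries to coboundaries --- which follows from the simplicial identities for $d_{\bullet}$ together with the formulas for $\delta^{\bullet}$ and $\partial_{\bullet}$ --- so that they descend to the asserted isomorphisms. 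The remaining ingredients, namely the agreement of $\mathcal{T}$ with $\mathrm{Bar}_{\bullet}(A)$ in non-negative degrees, the low-degree computation of $\mathcal{D}^{\bullet}(A, A)$, and the properties of dual bases, are already available from Lemma \ref{Frobenius-lem1}, Remark \ref{Frobenius-rem2} and the preceding sections.
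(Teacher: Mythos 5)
The paper does not actually prove this proposition: it is stated as a special case of Buchweitz's \cite[Corollary 6.4.1]{Buch}, and the chain-level formulas for $\kappa_{-1,\,p}$, $\varphi_{0,\,0}^{p}$, $\kappa_{r-1,\,p}$ are attributed to Wang \cite[Remark 6.3]{wang}. Your proposal replaces these citations with a direct argument built on Nakayama's complete resolution $\mathcal{T}$, Sanada's duality $\Hom_{A^{\rm e}}(\mathcal{T}_{r},M)\cong M_{\nu^{-1}}\otimes_{A^{\rm e}}\mathcal{T}_{-r-1}$, and the identification $A^{\vee}\cong A_{\nu^{-1}}$; reading off (1)--(4) from the low-degree part of $\mathcal{D}^{\bullet}(A,A)$ in this way is correct and is exactly what Remark \ref{Frobenius-rem2} already encodes. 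Two caveats. First, the proposition is stated for self-injective algebras, whereas every tool you invoke (the bilinear form, the Nakayama automorphism, Lemma \ref{Frobenius-lem1}) is Frobenius-specific; your argument therefore proves only the Frobenius case. That is all the paper ever uses, but as written it does not establish the statement in its stated generality --- for that one must either run Buchweitz's argument with $A^{\vee}$ directly (gluing a projective resolution of $A$ to the $A^{\rm e}$-dual of a projective resolution of $A^{\vee}$), or remark that you are only proving the case needed.

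Second, and more substantively: the actual content of the "In particular" part is the assertion that the maps \emph{defined} by the displayed chain-level formulas realize the composite of the degree-$(-r)$ identification with the string of connecting isomorphisms $\varphi_{m+p,\,p}^{q}$, with no residual sign. You correctly identify this as the crux and propose an induction on the number of connecting homomorphisms, using the explicit description $f\mapsto(-1)^{m+p}d_{p+1}(f(-)\otimes-\otimes 1)$, but you do not carry it out. Since iterating $\theta$ produces nested bar differentials $d_{p+q}(\cdots d_{p+1}(\cdots)\cdots)$ rather than the single $d_{r+p}$ of the statement, the collapse step (using $d_{j}d_{j+1}=0$, or rather that $d_{j+1}(w)\otimes\bb\otimes 1$ and $d_{j+1}(w\otimes\bb\otimes 1)$ differ by terms that die in $\overline{\Omega}$ or are coboundaries) together with the cancellation of the signs $(-1)^{m+i}$, $(-1)^{m}$ and $(-1)^{m+p}$ is precisely where an error could hide, and is the one piece that genuinely requires computation. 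Likewise the well-definedness (cycles to cocycles, boundaries to coboundaries) is asserted rather than checked. So the proposal is a sound and essentially complete \emph{plan}, consistent with the paper's framework, but the computational core that the paper outsources to Wang remains to be executed.
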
 
%
%
	The third isomorphisms 
	$\kappa_{r-1,\,p} : \mathrm{Tor}_{r-1}^{A^{\rm{e}}}(A, A^{\vee}) \xrightarrow{\sim} \underline{\mathrm{Ext}}_{A^{\rm{e}}}^{-r}(A, A)$
	in Proposition \ref{Frobenius-prop1} are given by Wang \cite[Remark 6.3]{wang}.

	Algebras which we are interested in are Frobenius algebras, which are self-injective algebras. 
	Using Remark \ref{Frobenius-rem2} and Lemma \ref{Frobenius-lem1}, we obtain the following consequence of Proposition \ref{Frobenius-prop1}.
%
%
	\begin{cor} \label{Frobenius-cor1} 
		Let $A$ be a finite dimensional Frobenius $k$-algebra. Then there is an isomorphism
		 $\underline{\mathrm{Ext}}_{A^{\rm{e}}}^{r}(A, A) \cong \widehat{\mathrm{HH}}^{r}(A)$ for all $r \in \mathbb{Z}$.
	\end{cor}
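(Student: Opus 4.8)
The plan is to prove the isomorphism degree by degree, matching the description of $\underline{\mathrm{Ext}}_{A^{\rm e}}^{*}(A,A)$ furnished by Proposition \ref{Frobenius-prop1} against the description of $\widehat{\mathrm{HH}}^{*}(A)$ coming from the complete complex $(\mathcal{D}^{\bullet}(A,A),\widehat{d}^{\bullet})$ of Section \ref{Frobenius}, with Lemma \ref{Frobenius-lem1}(1) acting as the bridge between the two pictures (this is essentially the route already signposted in the sentence preceding the corollary). Three ranges of degrees need to be handled: $r\geq 1$, $r\leq -2$, and $r\in\{0,-1\}$.

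For $r\geq 1$ there is nothing beyond bookkeeping: Proposition \ref{Frobenius-prop1}(1) identifies $\underline{\mathrm{Ext}}_{A^{\rm e}}^{r}(A,A)$ with $\Ext_{A^{\rm e}}^{r}(A,A)$, while the isomorphisms recalled in Section \ref{subsection-Gorenstein} identify $\widehat{\mathrm{HH}}^{r}(A)$ with $\HH^{r}(A)\cong\Ext_{A^{\rm e}}^{r}(A,A)$ as well. For $r=-s$ with $s\geq 2$, I would first unwind the negative part of the complete complex: from the description of $(\mathcal{D}^{\bullet}(A,A),\widehat{d}^{\bullet})$ one reads off that $\widehat{\mathrm{HH}}^{-s}(A)$ is the degree-$(s-1)$ homology of $C_{\bullet}(A,A_{\nu^{-1}})$, that is, $\HH_{s-1}(A,A_{\nu^{-1}})\cong\mathrm{Tor}_{s-1}^{A^{\rm e}}(A,A_{\nu^{-1}})$ since $A$ is projective over $k$. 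Transporting along the isomorphism of right $A^{\rm e}$-modules $A_{\nu^{-1}}\cong A^{\vee}$ of Lemma \ref{Frobenius-lem1}(1) turns this into $\mathrm{Tor}_{s-1}^{A^{\rm e}}(A,A^{\vee})$, which Proposition \ref{Frobenius-prop1}(2) identifies with $\underline{\mathrm{Ext}}_{A^{\rm e}}^{-s}(A,A)$.

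The remaining case $r\in\{0,-1\}$ is the only one calling for genuine care, and is where I expect the main (admittedly modest) obstacle to lie. From the middle terms $C_{1}(A,A_{\nu^{-1}})\xrightarrow{\partial_{1}}A_{\nu^{-1}}\xrightarrow{\mu}A\xrightarrow{\delta^{0}}C^{1}(A,A)$ of the complete complex one extracts, as in Remark \ref{Frobenius-rem2}, a four-term exact sequence exhibiting $\widehat{\mathrm{HH}}^{-1}(A)$ and $\widehat{\mathrm{HH}}^{0}(A)$ as the kernel and cokernel of a map $\eta\colon A_{\nu^{-1}}\otimes_{A^{\rm e}}A\to\Hom_{A^{\rm e}}(A,A)$ with $\eta(x\otimes_{A^{\rm e}}a)(b)=\sum_{i}bu_{i}\nu(x)av_{i}$; the identification $A_{\nu^{-1}}/\Im\partial_{1}\cong A_{\nu^{-1}}\otimes_{A^{\rm e}}A$, of $\ker\delta^{0}$ with $\Hom_{A^{\rm e}}(A,A)$, and the inclusion $\Im\mu\subseteq\ker\delta^{0}$ all rest on Lemma \ref{Frobenius-lem1}(3). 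Proposition \ref{Frobenius-prop1}(3) supplies the parallel sequence exhibiting $\underline{\mathrm{Ext}}_{A^{\rm e}}^{-1}(A,A)$ and $\underline{\mathrm{Ext}}_{A^{\rm e}}^{0}(A,A)$ as the kernel and cokernel of $\eta'\colon A^{\vee}\otimes_{A^{\rm e}}A\to\Hom_{A^{\rm e}}(A,A)$ with $\eta'((\sum_{i}x_{i}\otimes y_{i})\otimes_{A^{\rm e}}a)(b)=\sum_{i}bx_{i}ay_{i}$. The key verification is that the isomorphism $\varphi\otimes_{A^{\rm e}}\id_{A}$ induced by the right $A^{\rm e}$-module isomorphism $\varphi\colon A_{\nu^{-1}}\to A^{\vee}$, $x\mapsto[\,a\mapsto\sum_{i}au_{i}\nu(x)\otimes v_{i}\,]$, of Lemma \ref{Frobenius-lem1}(1) intertwines the two, i.e. $\eta'\circ(\varphi\otimes_{A^{\rm e}}\id_{A})=\eta$; this is a short computation substituting $\varphi(x)(1)=\sum_{i}u_{i}\nu(x)\otimes v_{i}$ into the formula for $\eta'$. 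Granting it, one obtains an isomorphism between the two short exact sequences which is the identity on $\Hom_{A^{\rm e}}(A,A)$, and comparing kernels and cokernels gives $\widehat{\mathrm{HH}}^{-1}(A)\cong\underline{\mathrm{Ext}}_{A^{\rm e}}^{-1}(A,A)$ and $\widehat{\mathrm{HH}}^{0}(A)\cong\underline{\mathrm{Ext}}_{A^{\rm e}}^{0}(A,A)$, which together with the previous two cases completes the proof.
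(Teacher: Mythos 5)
Your proposal is correct and follows exactly the route the paper intends: the corollary is stated there as a direct consequence of Proposition \ref{Frobenius-prop1} combined with Remark \ref{Frobenius-rem2} and Lemma \ref{Frobenius-lem1}(\ref{Frobenius-lem1-1}), and your three-case argument (degrees $r\geq 1$ via $\HH^{r}$, degrees $r\leq -2$ via $\mathrm{Tor}_{-r-1}^{A^{\rm e}}(A,A_{\nu^{-1}})\cong\mathrm{Tor}_{-r-1}^{A^{\rm e}}(A,A^{\vee})$, and degrees $0,-1$ by matching the two four-term exact sequences through $\varphi\otimes_{A^{\rm e}}\id_{A}$) is precisely the bookkeeping the paper leaves implicit. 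The key computation $\eta'\circ(\varphi\otimes_{A^{\rm e}}\id_{A})=\eta$ checks out, since $\varphi(x)(1)=\sum_{i}u_{i}\nu(x)\otimes v_{i}$ turns $\sum_{i}bx_{i}ay_{i}$ into $\sum_{i}bu_{i}\nu(x)av_{i}$.
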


	Assume that $A$ is a finite dimensional Frobenius algebra. Then the $A$-bimodule isomorphism $A_{\nu^{-1}} \cong A^{\vee}$ gives an isomorphism 
	of complexes between $\mathcal{D}^{\bullet}(A, A)$ and the complex $\mathcal{C}^{\bullet}(A, A)$ defined by Wang \cite{wang2}
			\begin{align*}
				\cdots \rightarrow
				C_{2}(A, A^{\vee}) \xrightarrow{\partial_{2}}
				C_{1}(A, A^{\vee}) \xrightarrow{\partial_{1}}
				A^{\vee} \xrightarrow{\mu} 				
				A \xrightarrow{\delta^{0}} 		
				C^{1}(A, A) \xrightarrow{\delta^{1}} 	
				C^{2}(A, A) \rightarrow \cdots
			\end{align*}	
	whose negative part is the Hochschild chain complex $(C_{\bullet}(A, A^{\vee}), \partial_{\bullet})$ and of which the non-negative part
	is the Hochschild cochain complex $(C^{\bullet}(A, A ), \delta^{\bullet})$. Here the map $\mu : A^{\vee} \rightarrow A$ is defined by
	the multiplication of $A$, that is, $\mu(\alpha) = \sum_{i} x_{i} y_{i}$ for $\alpha \in A^{\vee}$ with $\alpha(1) = \sum_{i} x_{i} \otimes y_{i}$.

	Moreover, Wang \cite[Section 6.2]{wang2} defined a product on $\mathcal{C}^{*}(A, A)$, called {\it $\star$-product}, which extends 
	the cup product on $C^{*}(A, A)$ and the cap product between $C^{*}(A, A)$ and $C_{*}(A, A^{\vee})$. 
	Although the $\star$-product is not associative on $\mathcal{C}^{*}(A, A)$ in general, 
	the $\star$-product induces a graded commutative and associative product on $\Hh^{*}(\mathcal{C}^{\bullet}(A, A))$. 
	The following is the product 
		\[
			\star : \mathcal{D}^{*}(A, A) \otimes \mathcal{D}^{*}(A, A) \rightarrow \mathcal{D}^{*}(A, A)
		\]
	on $\mathcal{D}^{*}(A, A)$ via the isomorphism $\mathcal{D}^{\bullet}(A, A) \cong \mathcal{C}^{\bullet}(A, A)$:
	let $f \in C^{m}(A, A), g \in C^{n}(A, A)$ and $\alpha = a_{0} \otimes \ba_{1,\,p} \in C_{p}(A, A_{\nu^{-1}}),$
	$\beta = b_{0} \otimes \bb_{1,\,q} \in C_{q}(A, A_{\nu^{-1}}).$
		\begin{enumerate}
			\item ($m, n \geq 0$) $\star : C^{m}(A, A) \otimes C^{n}(A, A) \rightarrow C^{m+n}(A, A)$ is given by
						\begin{align*}\lefteqn{}
							&f \star g := f \smile g \ ;
						\end{align*} 
			\item ($m \geq 0, p \geq 0, p \geq m$)		
						\begin{enumerate}
							\item $\star : C_{p}(A, A_{\nu^{-1}}) \otimes C^{m}(A, A) \rightarrow C_{p-m}(A, A_{\nu^{-1}})$ is given by
								\begin{align*}\lefteqn{}
									&\alpha \star f := \alpha \frown f = a_{0} \nu^{-1}(f(\ba_{1,\,m})) \otimes \ba_{m+1,\,p} \ ;
								\end{align*}
							\item $\star : C^{m}(A, A) \otimes C_{p}(A, A_{\nu^{-1}}) \rightarrow C_{p-m}(A, A_{\nu^{-1}})$ is given by
								\begin{align*}\lefteqn{}
									&f \star \alpha := f(\ba_{p-m+1,\,p})a_{0} \otimes \ba_{1,\,p-m} \ ;
								\end{align*}
						\end{enumerate}
			\item ($m \geq 0, p \geq 0, p < m$)
						\begin{enumerate}
							\item $\star : C^{m}(A, A) \otimes C_{p}(A, A_{\nu^{-1}}) \rightarrow C^{m-p-1}(A, A)$ is given by
									\begin{align*}\lefteqn{}
										&(f \star \alpha) (\bb_{1,\,m-p-1}):= \sum_{i} f( \bb_{1,\,m-p-1} \otimes \overline{u_{i} \nu(a_{0})} \otimes \ba_{1,\,p}) v_{i} \ ;
									\end{align*}
							\item $\star : C_{p}(A, A_{\nu^{-1}}) \otimes C^{m}(A, A) \rightarrow C^{m-p-1}(A, A)$ is given by
									\begin{align*}\lefteqn{}
										&(\alpha \star f) (\bb_{1,\,m-p-1}):= \sum_{i} u_{i} \nu(a_{0}) f(\ba_{1,\,p} \otimes \bv_{i} \otimes \bb_{1,\,m-p-1} )\ ;
									\end{align*}
						\end{enumerate}
			\item ($p, q \geq 0$) $\star : C_{p}(A, A_{\nu^{-1}}) \otimes C_{q}(A, A_{\nu^{-1}}) \rightarrow C_{p+q+1}(A, A_{\nu^{-1}})$ is given by
					\begin{align*} \lefteqn{}
							& \alpha \star \beta 
							:= \sum_{i} v_{i} b_{0} \otimes \bb_{1,\,q} \otimes \overline{u_{i} \nu(a_{0})} \otimes \ba_{1,\,p}\ .
					\end{align*}
		\end{enumerate}
%
%

	Dual bases of $A$ are used in our definition of $\star$-product, but Lemma \ref{Frobenius-lem1} (\ref{Frobenius-lem1-2}) 
	shows that
		the $\star$-product does not depend the choice of dual bases of $A$.	

	We summarize the results in the following proposition.
%
%
	\begin{prop}[{\cite[Lemma 6.2, Propositions 6.5 and 6.9]{wang2}}]
			Let $A$ be a finite dimensional Frobenius algebra. Then the $\star$-product is compatible with the differential $\widehat{d}$ of the complex $\mathcal{D}(A, A)$. Moreover, the induced product on $\widehat{\mathrm{HH}}^{*}(A)$, still denoted by $\star$,
is graded commutative and associative. In particular, $\widehat{\mathrm{HH}}^{\bullet}(A)$ equipped with $\star$ is isomorphic to
$\underline{\mathrm{Ext}}_{A^{\rm{e}}}^{\bullet}(A, A)$ as graded algebras.
	\end{prop}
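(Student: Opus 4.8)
\emph{Sketch of proof.}
The plan is to deduce everything from Wang's results in \cite{wang2} by transport of structure along the isomorphism of complexes $\mathcal{D}^{\bullet}(A,A) \cong \mathcal{C}^{\bullet}(A,A)$ constructed above, which in turn comes from the $A$-bimodule isomorphism $A_{\nu^{-1}} \cong A^{\vee}$ of Lemma \ref{Frobenius-lem1}(\ref{Frobenius-lem1-1}). First I would check that, under this isomorphism, the case-by-case formulas for the $\star$-product on $\mathcal{D}^{\bullet}(A,A)$ displayed above really do correspond to Wang's $\star$-product on $\mathcal{C}^{\bullet}(A,A)$. Concretely, using the explicit map $\varphi\colon A_{\nu^{-1}} \to A^{\vee}$, $x \mapsto [a \mapsto \sum_{i} a u_{i}\nu(x) \otimes v_{i}]$, together with the identities $\sum_{i} u_{i} \otimes v_{i} = \sum_{i} v_{i} \otimes \nu^{-1}(u_{i})$ and $\sum_{i} a u_{i} b \otimes v_{i} = \sum_{i} u_{i} \otimes \nu^{-1}(b) v_{i} a$ of Lemma \ref{Frobenius-lem1}(\ref{Frobenius-lem1-3}), one rewrites the four types of products (the cup product, the two cap-type products, and the ``coproduct-type'' product in negative degrees) and verifies that the occurrences of $\nu$ and $\nu^{-1}$ appearing in them are exactly those forced by the identification $C_{p}(A, A^{\vee}) \cong C_{p}(A, A_{\nu^{-1}})$. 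Once this matching is in place, compatibility of $\star$ with $\widehat{d}$ on $\mathcal{D}^{\bullet}(A,A)$ is immediate from \cite[Lemma 6.2]{wang2}, and hence $\star$ descends to $\widehat{\mathrm{HH}}^{*}(A) = \Hh^{*}(\mathcal{D}^{\bullet}(A,A))$.

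For graded commutativity and associativity I would \emph{not} attempt a chain-level argument, since the $\star$-product is not associative on $\mathcal{D}^{\bullet}(A,A)$; instead I would identify the induced product with the Yoneda product $\smile_{\rm sg}$ on $\underline{\mathrm{Ext}}_{A^{\rm e}}^{\bullet}(A,A) = \bigoplus_{m} \varinjlim_{p} \Ext_{A^{\rm e}}^{m+p}(A, \overline{\Omega}^{p}(A))$, which is already graded commutative and associative by Theorem \ref{thm_1}. The underlying $k$-linear isomorphisms in each degree are provided by Corollary \ref{Frobenius-cor1}, equivalently by the isomorphisms $\varphi_{0,0}^{p}$ and $\kappa_{r-1,p}$ of Proposition \ref{Frobenius-prop1}, and the point is to check, on representatives and separately in the three bidegree ranges (both degrees $\geq 0$; one of each sign; both degrees $\leq -1$), that $\star$ is carried to $\smile_{\rm sg}$. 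In the non-negative range this is the classical comparison of the cup product with the Yoneda product; in the mixed and negative ranges one unwinds the explicit cochain formulas for $\kappa_{r-1,p}$ and $\varphi_{0,0}^{p}$ and for $\smile_{\rm sg}$ via the bimodule isomorphism $\Phi_{p+q}$, and verifies compatibility with the renormalized connecting maps $\varphi_{m+p,\,p}$ so that the comparison passes to the direct limit. Granting this, graded commutativity, associativity, and the isomorphism of graded algebras $\widehat{\mathrm{HH}}^{\bullet}(A) \cong \underline{\mathrm{Ext}}_{A^{\rm e}}^{\bullet}(A,A)$ all follow at once, recovering \cite[Propositions 6.5 and 6.9]{wang2}.

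The main obstacle is purely computational bookkeeping: tracking the signs and the exact placement of $\nu^{\pm 1}$ when translating between the $A^{\vee}$-model and the $A_{\nu^{-1}}$-model, particularly for the cap-type and coproduct-type products, and matching $\star$ with $\smile_{\rm sg}$ in negative degrees, where $\kappa_{r-1,p}$ and $\Phi_{p+q}$ must be unwound simultaneously. A secondary delicate point, relevant if one instead runs the Leibniz identity directly on the chain level, is the behaviour near the junction degrees $-1$ and $0$, where $\widehat{d}$ switches from the Hochschild coboundary $\delta^{0}$ to the ``norm'' map $\mu$ and then to the Hochschild boundary $\partial_{\bullet}$; there the derivation identity has to be verified by hand rather than read off from the Hochschild (co)chain complexes.
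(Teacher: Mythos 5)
Your proposal is correct and follows essentially the same route as the paper, which defines $\star$ on $\mathcal{D}^{\bullet}(A,A)$ precisely by transporting Wang's $\star$-product along the isomorphism $\mathcal{D}^{\bullet}(A,A)\cong\mathcal{C}^{\bullet}(A,A)$ induced by $A_{\nu^{-1}}\cong A^{\vee}$ and then cites \cite[Lemma 6.2, Propositions 6.5 and 6.9]{wang2} for compatibility with $\widehat{d}$, graded commutativity, associativity, and the algebra isomorphism with $\underline{\mathrm{Ext}}_{A^{\rm e}}^{\bullet}(A,A)$. Your additional bookkeeping (matching the four case formulas, and obtaining commutativity/associativity via the comparison with $\smile_{\rm sg}$ rather than on the chain level) is exactly the content of Wang's cited arguments.
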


\section{Decomposition of complete cohomology associated with the spectrum of the Nakayama automorphism}
\label{Decomposition}
%
%
Let us recall the subcomplexes of the (co)chain Hochschild complexes defined in \cite{Lam}. 
Let $A$ be a (not necessarily Frobenius) $k$-algebra and let $\sigma$ be an algebra automorphism of the algebra $A$.
Let $\Lambda$ be the set of eigenvalues of $\sigma$. Assume that $\Lambda \subset k$. 
We have $0_{A} \not \in \Lambda$ and $1_{A} \in \Lambda$ because $\sigma$ is a ring homomorphism.
Let
$\widehat{\Lambda} := \langle \Lambda \rangle$ be the submonoid of $k^{\times}$ generated by $\Lambda$. We denote by $A_{\lambda}$ 
the eigenspace $\mathrm{Ker}\,(\sigma -\lambda \,\mathrm{id} )$ associated with an eigenvalue $\lambda \in \Lambda$. 
For $\lambda \in \Lambda$, we write $\overline{A}_{\lambda} = A_{\lambda}$ for $\lambda \not = 1$ and 
$\overline{A}_{1} = A_{1} / (k \cdot 1_{A})$ for $\lambda = 1$, and for every $\mu \in \widehat{\Lambda}$ and every integer $r \geq 0$,
we put 
	\begin{align*}\lefteqn{}
		&C^{(\mu)}_{r}(A, A_{\sigma}) 
		:= \bigoplus_{\mu_{i} \in \Lambda,\,\prod \mu_{i} = \mu }
		A_{\mu_{0}} \otimes \bA_{\mu_{1}} \otimes \cdots \otimes \bA_{\mu_{r}}, \\
		&C^{r}_{(\mu)}(A, A) 
		:=
		\big\{ f \in C^{r}(A, A) 
		\big| 
		f(\bA_{\mu_{1}} \otimes \cdots \otimes \bA_{\mu_{r}}) \subset A_{\mu \mu_{1} \cdots \mu_{r}}, \mbox{ for any } \mu_{i} \in \Lambda \big\}.
	\end{align*}
Since $\sigma (x y) = \sigma ( x ) \sigma ( y )$ for $x, y \in A$, we have 
$A_{\lambda} \cdot A_{\lambda^{\prime}} \subset A_{\lambda \lambda^{\prime}}$ for $\lambda, \lambda^{\prime} \in \Lambda$.
It is understood that $A_{\lambda \lambda^{\prime}} = 0$ when $\lambda \lambda^{\prime} \not \in \Lambda$. 
Then these subspaces $C^{(\mu)}_{*}(A, A_{\sigma})$ and $C^{*}_{(\mu)}(A, A)$ are compatible with the differentials 
$\partial_{*}$ and $\delta^{*}$ of the complexes $(C_{\bullet}(A, A_{\sigma}), \partial_{\bullet})$ and $(C^{\bullet}(A, A), \delta^{\bullet})$, respectively.
Thus, we obtain subcomplexes $(C^{(\mu)}_{\bullet}(A, A_{\sigma}), \partial_{\bullet}^{(\mu)})$ and $(C^{\bullet}_{(\mu)}(A, A), \delta_{(\mu)}^{\bullet})$.
	Then we put
		\begin{align*}\lefteqn{}
			&\Hh_{r}^{(\mu)}(A, A_{\sigma}) := \Hh_{r}(C^{(\mu)}_{\bullet}(A, A_{\sigma}), \partial_{\bullet}^{(\mu)}), \\
			&\Hh_{(\mu)}^{r}(A, A) :=\Hh^{r}(C^{\bullet}_{(\mu)}(A, A), \delta_{(\mu)}^{\bullet}).
		\end{align*}
	Hence for all $r \geq 0$, we get morphisms of $k$-vector spaces $\Hh_{r}^{(\mu)}(A, A_{\sigma}) \rightarrow \Hh_{r}(A, A_{\sigma})$
	and $\Hh_{(\mu)}^{r}(A, A)$ $\rightarrow$ $\HH^{r}(A)$.
	Kowalzing and Kr\"{a}hmer \cite{NielUlrich2014} defined a $k$-linear map
		\[ 
			B_{r}^{\sigma} : C_{r}(A, A_{\sigma}) \rightarrow C_{r+1}(A, A_{\sigma})
		\] 
	by
		\[	
			B_{r}^{\sigma} (a_{0} \otimes \ba_{1,\,r} ) 
			= \sum_{i = 1}^{r+1} (-1)^{i r} 
			1 \otimes \ba_{i} \otimes \cdots \otimes \ba_{r} \otimes \ba_{0} \otimes \overline{\sigma (a_{1})} \otimes \cdots \otimes 
			\overline{\sigma (a_{i-1})}.
		\]	
	Let $T : C_{r}(A, A_{\sigma}) \rightarrow C_{r}(A, A_{\sigma})$ be the $k$-linear map defined by
	\[
		T(a_{0} \otimes \ba_{1,\,r} ) = \sigma(a_{0}) \otimes \overline{\sigma (a_{1})} \otimes \cdots \otimes \overline{\sigma (a_{r})}.
	\]
	A direct calculation shows that $\partial_{r+1} B_{r}^{\sigma} - B_{r-1}^{\sigma} \partial_{r} = (-1)^{r+1}(\mathrm{id} - T)$ for all $r \geq 0$.
%
%
%
%
	\begin{prop}[{\cite[Propositions 2.1, 2.2 and 2.5]{Lam}}] \label{Decomposition-prop1}
		The following three assertions hold.
		\begin{enumerate}
			\item For every $1 \not = \mu \in \widehat{\Lambda}$ and every $r \geq 0$, we get 
				\[	
					\Hh_{r}^{(\mu)}(A, A_{\sigma}) = 0.
				\]	
			\item For all $r \geq 0$, the restriction of the map $B_{r}^{\sigma} : C_{r}(A, A_{\sigma}) \rightarrow C_{r+1}(A, A_{\sigma})$ 
					to the subspaces $C_{*}^{(1)}(A, A_{\sigma})$ induces a twisted Connes operator
						\[	
							B^{\sigma}_{r} : \Hh_{r}^{(1)}(A, A_{ \sigma }) \rightarrow \Hh_{r +1}^{(1)}(A, A_{ \sigma }),
						\]	
					 and it satisfies $B^{\sigma}_{ r+1} B^{\sigma}_{ r } = 0$.
			\item If $\sigma$ is diagonalizable, then we have
						\[	
							 \Hh_{r}(A, A_{ \sigma }) \cong \Hh_{r}^{ (1) }(A, A_{ \sigma }) 
						\]	
					for $r \geq 0$.
		\end{enumerate}
	\end{prop}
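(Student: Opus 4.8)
The plan is to establish the three assertions in order, with everything resting on two elementary observations about the eigenvalue grading together with the homotopy identity $\partial_{r+1} B_r^\sigma - B_{r-1}^\sigma \partial_r = (-1)^{r+1}(\mathrm{id} - T)$ recorded just above. The first observation is that all three operators $\partial_\bullet$, $B_\bullet^\sigma$ and $T$ preserve the weight decomposition: for $\partial_\bullet$ this is already noted in the text, and for $B_r^\sigma$ one checks that every summand of $B_r^\sigma(a_0 \otimes \ba_{1,\,r})$ is, up to sign, obtained from $1 \otimes \ba_0 \otimes \ba_{1,\,r}$ by a cyclic rotation followed by applying $\sigma$ to the factors that have wrapped around; since $\sigma$ descends to $\bA$ and restricts to multiplication by $\lambda$ on $A_\lambda$ (hence on $\bA_\lambda$), and since $1_A \in A_1$, such an element again lies in the weight-$\mu$ part when the input does. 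The second observation is that on the summand $A_{\mu_0} \otimes \bA_{\mu_1} \otimes \cdots \otimes \bA_{\mu_r}$ of $C^{(\mu)}_r(A, A_\sigma)$ the operator $T = \sigma^{\otimes(r+1)}$ is multiplication by $\mu_0 \mu_1 \cdots \mu_r = \mu$. Thus each $(C^{(\mu)}_\bullet(A, A_\sigma), \partial_\bullet^{(\mu)})$ is a genuine subcomplex, $B_\bullet^\sigma$ restricts to it, and $T = \mu \cdot \mathrm{id}$ on it.

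For assertion (1), I would restrict the homotopy identity to $C^{(\mu)}_\bullet(A, A_\sigma)$ and apply it to a cycle $z \in C^{(\mu)}_r$: since $\partial_r z = 0$ this gives $(-1)^{r+1}(z - Tz) = \partial_{r+1}(B_r^\sigma z)$, so the class of $z - Tz = (1-\mu)z$ vanishes in $\Hh_r^{(\mu)}(A, A_\sigma)$. When $\mu \neq 1$ the scalar $1-\mu$ is a unit of $k$, hence $[z] = 0$ and $\Hh_r^{(\mu)}(A, A_\sigma) = 0$.

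For assertion (2), the bookkeeping above already shows that $B_r^\sigma$ maps $C^{(1)}_r(A, A_\sigma)$ into $C^{(1)}_{r+1}(A, A_\sigma)$; and since $T = \mathrm{id}$ on $C^{(1)}_\bullet$, the homotopy identity there degenerates to $\partial_{r+1} B_r^\sigma = B_{r-1}^\sigma \partial_r$, so $B_\bullet^\sigma$ is a chain map of degree $+1$ on $(C^{(1)}_\bullet(A, A_\sigma), \partial_\bullet)$ and descends to a twisted Connes operator $B_r^\sigma : \Hh_r^{(1)}(A, A_\sigma) \to \Hh_{r+1}^{(1)}(A, A_\sigma)$. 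For $B_{r+1}^\sigma B_r^\sigma = 0$ I would run the classical computation that the Connes operator squares to zero on the normalized complex: writing $B^\sigma$ in its standard factored form $(\mathrm{id} - t)\, s\, N$ with $t$ the twisted cyclic operator, $N = \mathrm{id} + t + \cdots + t^{r}$ its norm in degree $r$, and $s$ the extra degeneracy, one has $N(\mathrm{id} - t) = \mathrm{id} - t^{r+1} = \mathrm{id} - T$, which is zero on $C^{(1)}_\bullet$; the simplicial relations among $s$, $t$ and the face maps then force $B^\sigma B^\sigma = 0$ already on the chain level, a fortiori on homology.

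For assertion (3), diagonalizability gives $A = \bigoplus_{\lambda \in \Lambda} A_\lambda$ and, because $1_A \in A_1$, also $\bA = \bigoplus_{\lambda \in \Lambda} \bA_\lambda$; hence $C_r(A, A_\sigma) = A \otimes \bA^{\otimes r} = \bigoplus_{\mu \in \widehat{\Lambda}} C^{(\mu)}_r(A, A_\sigma)$, a finite direct sum since $\Lambda$ is finite. This decomposition is compatible with $\partial_\bullet$, so $C_\bullet(A, A_\sigma) = \bigoplus_{\mu} C^{(\mu)}_\bullet(A, A_\sigma)$ as complexes and therefore $\Hh_r(A, A_\sigma) = \bigoplus_{\mu \in \widehat{\Lambda}} \Hh_r^{(\mu)}(A, A_\sigma)$; by assertion (1) every summand with $\mu \neq 1$ vanishes, leaving $\Hh_r(A, A_\sigma) \cong \Hh_r^{(1)}(A, A_\sigma)$. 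The only step that is not pure bookkeeping with the grading and the given homotopy formula is the vanishing $B^\sigma B^\sigma = 0$ in (2); this is where I expect the main obstacle to lie, but it is entirely parallel to the untwisted case once one observes that $T$ acts as the identity on the weight-one subcomplex, so I do not anticipate genuine difficulty.
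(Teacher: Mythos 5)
Your argument is correct and is essentially the one used in the cited source \cite{Lam} (the paper itself only quotes this proposition without reproving it): the weight decomposition is preserved by $\partial_\bullet$, $B_\bullet^{\sigma}$ and $T$, the operator $T$ acts as the scalar $\mu$ on $C_\bullet^{(\mu)}(A,A_\sigma)$, and the homotopy identity $\partial_{r+1}B_r^{\sigma}-B_{r-1}^{\sigma}\partial_r=(-1)^{r+1}(\mathrm{id}-T)$ then yields (1), (2) and, via the eigenspace decomposition of $A$, (3). One small simplification for $B^{\sigma}_{r+1}B^{\sigma}_r=0$: on the normalized complex every summand of $B^{\sigma}_{r}(x)$ has $1_A$ in the module slot, and every summand of $B^{\sigma}_{r+1}$ applied to such an element carries the tensor factor $\overline{1_A}=0$ in $\bA$, so the square vanishes on the chain level without invoking the $(\mathrm{id}-t)s N$ factorization.
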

	The following is an easy consequence of Proposition \ref{Decomposition-prop1}. 
%
%
%
%
		\begin{cor} \label{Decomposition-cor2}
			If the algebra automorphism $\sigma$ of $A$ is diagonalizable, then so is its inverse $\sigma^{-1}$.
			Furthermore, if this is the case, then we have two twisted Connes operators
						\[	
							B^{\sigma}_{*} : \Hh_{*}^{(1)}(A, A_{ \sigma }) \rightarrow \Hh_{* +1}^{(1)}(A, A_{ \sigma }), \quad
							B^{\sigma^{-1}}_{*} : \Hh_{*}^{(1)}(A, A_{ \sigma^{-1}}) \rightarrow \Hh_{* +1}^{(1)}(A, A_{ \sigma^{-1}}).
						\]	
		\end{cor}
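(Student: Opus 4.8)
The plan is to deduce both claims from Proposition \ref{Decomposition-prop1}; the only real input is an elementary observation about eigenspaces. First I would verify that $\sigma^{-1}$ is diagonalizable. Since $\sigma$ is an automorphism of the finite-dimensional space $A$, it has no zero eigenvalue, so $0_{A} \notin \Lambda$ and each $\lambda \in \Lambda$ has an inverse $\lambda^{-1} \in k^{\times}$. Diagonalizability of $\sigma$ furnishes a decomposition $A = \bigoplus_{\lambda \in \Lambda} A_{\lambda}$ with $A_{\lambda} = \mathrm{Ker}\,(\sigma - \lambda\,\mathrm{id})$, and for $v \in A_{\lambda}$ the relation $\sigma(v) = \lambda v$ forces $\sigma^{-1}(v) = \lambda^{-1} v$, so $A_{\lambda} \subseteq \mathrm{Ker}\,(\sigma^{-1} - \lambda^{-1}\,\mathrm{id})$. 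Hence the same direct sum exhibits $A$ as a sum of eigenspaces of $\sigma^{-1}$, whose set of eigenvalues is $\{\lambda^{-1} \mid \lambda \in \Lambda\} \subseteq k$; in particular $\sigma^{-1}$ is a diagonalizable algebra automorphism of $A$ with spectrum in $k$, so all the constructions of Section \ref{Decomposition} --- in particular the subcomplexes $C^{(1)}_{\bullet}(A, A_{\sigma^{-1}})$ and the map $B^{\sigma^{-1}}_{\bullet}$ --- are available for it.

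For the second assertion I would simply apply Proposition \ref{Decomposition-prop1}(2) twice. Applied to $\sigma$ it gives the twisted Connes operator $B^{\sigma}_{*} : \Hh_{*}^{(1)}(A, A_{\sigma}) \rightarrow \Hh_{*+1}^{(1)}(A, A_{\sigma})$ with $B^{\sigma}_{*+1} B^{\sigma}_{*} = 0$; applied to $\sigma^{-1}$ --- which is legitimate precisely because, by the previous paragraph, $\sigma^{-1}$ satisfies the standing hypothesis that its eigenvalues lie in $k$ --- it gives $B^{\sigma^{-1}}_{*} : \Hh_{*}^{(1)}(A, A_{\sigma^{-1}}) \rightarrow \Hh_{*+1}^{(1)}(A, A_{\sigma^{-1}})$, which is the desired second operator. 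This finishes the proof.

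There is no real obstacle here, which is why the result is stated as a corollary; the only point requiring a moment's care is that the passage from $\sigma$ to $\sigma^{-1}$ preserves the two running hypotheses under which Proposition \ref{Decomposition-prop1} is stated, namely being an algebra automorphism (clear) and having spectrum contained in $k$ (established in the first paragraph). One could alternatively run the first step through minimal polynomials: $\sigma$ is diagonalizable over $k$ iff its minimal polynomial is a product of distinct linear factors over $k$, and the substitution $\lambda \mapsto \lambda^{-1}$, valid since $0$ is not a root, turns this property for $\sigma$ into the same property for $\sigma^{-1}$; but the eigenspace argument above is the most transparent and is the one I would write up.
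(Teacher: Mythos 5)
Your proof is correct and is exactly the argument the paper has in mind: the paper offers no written proof, simply calling the corollary an easy consequence of Proposition \ref{Decomposition-prop1}, and your two steps (the eigenspace decomposition of $\sigma$ also diagonalizes $\sigma^{-1}$ with inverted eigenvalues still lying in $k$, then apply Proposition \ref{Decomposition-prop1}(2) to each of $\sigma$ and $\sigma^{-1}$) are precisely the intended reasoning.
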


	From now on, we assume  $A$ to be a Frobenius algebra with the Nakayama automorphism $\nu$ of $A$. 
	Let $\Lambda = \{ \lambda_{1}, \ldots, \lambda_{t} \}$ be the set of distinct eigenvalues of the Nakayama automorphism $\nu$. 
	Suppose that $\Lambda \subset k$. Let $\widehat{\Lambda} := \langle \Lambda \rangle$ be the submonoid of $k^{\times}$ generated by $\Lambda$. 
    For any $\mu \in \widehat{\Lambda}$,
	we define a subspace $\mathcal{D}_{(\mu)}^{*}(A, A)$ of $\mathcal{D}^{*}(A, A)$ in the following way: for any $\mu \in \widehat{\Lambda}$,
		\begin{align*}
			\mathcal{D}_{(\mu)}^{r}(A, A) :=  \begin{cases}
														C_{(\mu)}^{r}(A, A) &\mbox{ if } r \geq 0, \\[5pt]
														C_{-r-1}^{(\mu)} (A, A_{\nu^{-1}}) &\mbox{ if } r \leq -1.
												  \end{cases}
		\end{align*}
%
%
	\begin{lem} \label{Decomposition-lem2}
			For any $\mu \in \widehat{\Lambda}$, the subspaces $\mathcal{D}_{(\mu)}^{*}(A, A)$ of $\mathcal{D}^{*}(A, A)$ are compatible 
			with the differentials $\widehat{d}^{*}$ of the complex $(\mathcal{D}^{\bullet}(A, A), \widehat{d}^{\bullet})$.
	\end{lem}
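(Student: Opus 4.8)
The plan is to reduce the whole statement to the single ``glueing'' differential $\widehat{d}^{-1}=\mu$. First I would observe that $(\mathcal{D}^{\bullet}(A,A),\widehat{d}^{\bullet})$ is assembled from the Hochschild cochain complex $(C^{\bullet}(A,A),\delta^{\bullet})$ in degrees $\geq 0$, from the Hochschild chain complex $(C_{\bullet}(A,A_{\nu^{-1}}),\partial_{\bullet})$ (suitably reindexed) in degrees $\leq -2$, and from the norm map $\widehat{d}^{-1}=\mu:\mathcal{D}^{-1}(A,A)=A_{\nu^{-1}}\to A=\mathcal{D}^{0}(A,A)$, $x\mapsto\sum_{i}u_{i}xv_{i}$, joining the two. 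The paragraph preceding Proposition~\ref{Decomposition-prop1} (following \cite{Lam}) already records that $C^{\bullet}_{(\mu)}(A,A)$ and $C^{(\mu)}_{\bullet}(A,A_{\nu^{-1}})$ are stable under $\delta^{\bullet}$ and $\partial_{\bullet}$ respectively, so $\widehat{d}^{r}(\mathcal{D}^{r}_{(\mu)}(A,A))\subseteq\mathcal{D}^{r+1}_{(\mu)}(A,A)$ is automatic for every $r\neq -1$; the entire content of the lemma therefore lies in the case $r=-1$.

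For $r=-1$, the plan is to unwind both sides under the identification $C_{0}(A,A_{\nu^{-1}})=A=C^{0}(A,A)$: then $\mathcal{D}^{-1}_{(\mu)}(A,A)=C^{(\mu)}_{0}(A,A_{\nu^{-1}})$ and $\mathcal{D}^{0}_{(\mu)}(A,A)=C^{0}_{(\mu)}(A,A)$ are the same subspace of $A$, namely the eigenspace on which $\nu$ acts by the scalar attached to $\mu$. Granting this, it suffices to show that the norm map $\mu$ commutes with $\nu$, i.e. $\nu(\mu(x))=\mu(\nu(x))$ for every $x\in A$; this forces $\mu$ to preserve each $\nu$-eigenspace, hence $\widehat{d}^{-1}=\mu$ carries $\mathcal{D}^{-1}_{(\mu)}(A,A)$ into $\mathcal{D}^{0}_{(\mu)}(A,A)$, which finishes the proof.

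To prove $\nu\mu=\mu\nu$, the key input is that the Nakayama automorphism preserves the bilinear form. I would first check $\langle c,1\rangle=\langle 1,\nu(c)\rangle=\langle \nu(c),\nu(1)\rangle=\langle \nu(c),1\rangle$ for all $c\in A$, using $\langle a,b\rangle=\langle b,\nu(a)\rangle$ and $\nu(1)=1$, and then deduce $\langle \nu(a),\nu(b)\rangle=\langle \nu(a)\nu(b),1\rangle=\langle \nu(ab),1\rangle=\langle ab,1\rangle=\langle a,b\rangle$ from associativity and multiplicativity of $\nu$. Consequently $(\nu(u_{i}))_{i}$ is a $k$-basis of $A$ with dual basis $(\nu(v_{i}))_{i}$, so Lemma~\ref{Frobenius-lem1}(\ref{Frobenius-lem1-2}) gives $\sum_{i}\nu(u_{i})\otimes\nu(v_{i})=\sum_{i}u_{i}\otimes v_{i}$ in $A\otimes A$; applying $y\otimes z\mapsto y\,\nu(x)\,z$ then yields $\nu(\mu(x))=\sum_{i}\nu(u_{i})\nu(x)\nu(v_{i})=\sum_{i}u_{i}\nu(x)v_{i}=\mu(\nu(x))$, as needed. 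I expect the only real obstacle to be the bookkeeping in the second paragraph, i.e. verifying that the definitions of $C^{(\mu)}_{0}(A,A_{\nu^{-1}})$ and $C^{0}_{(\mu)}(A,A)$ really do match up to one and the same $\nu$-eigenspace under the degree $-1$ identification; once that is pinned down the argument is short and, in particular, requires no diagonalizability hypothesis on $\nu$.
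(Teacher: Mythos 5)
Your proposal is correct and follows essentially the same route as the paper: reduce everything to the degree $-1$ differential $\widehat{d}^{-1}$ and verify that $\nu\bigl(\sum_{i}u_{i}xv_{i}\bigr)=\sum_{i}u_{i}\nu(x)v_{i}$, which forces $\widehat{d}^{-1}$ to preserve the $\nu$-eigenspace $A_{\mu}$. The only (harmless) difference is in how that identity is checked — you deduce it from $\nu$-invariance of the bilinear form and the basis-independence of $\sum_{i}u_{i}\otimes v_{i}$ (Lemma \ref{Frobenius-lem1}), whereas the paper expands $\nu(u_{i})=\sum_{j}\langle u_{i},v_{j}\rangle u_{j}$ directly.
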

%
%
	\begin{proof}
	It is sufficient to show that $\widehat{d}^{-1}(\mathcal{D}_{(\mu)}^{-1}(A, A)) \subset \mathcal{D}_{(\mu)}^{0}(A, A)$.
	If $x \in A_{\mu} = \mathcal{D}_{(\mu)}^{-1}(A, A)$, then we have 
		\begin{align*}\lefteqn{}
			\nu(\widehat{d}^{-1}(x))
			&= \sum_{i} \nu(u_{i}) \nu(x) \nu(v_{i})
			= \sum_{i,\,j} \langle u_{i}, v_{j} \rangle u_{j} \cdot \nu(x) \nu(v_{i})\\
			&= \sum_{j} u_{j} \nu(x) \nu\biggl(\sum_{i} \langle u_{i}, v_{j} \rangle v_{i}\biggr)
			= \sum_{j} u_{j} \nu(x) \nu(\nu^{-1}( v_{j} ))\\
		    &= \sum_{j} u_{j} \nu(x) v_{j}.
		\end{align*}
	Since $0 = (\nu - \mu\, \mathrm{id} ) (x) = \nu (x) - \mu x$, we get 
		\[	
			(\nu - \mu\,\mathrm{id} )(\widehat{d}^{-1}(x))
			= \nu (\widehat{d}^{-1}(x)) - \mu \,\widehat{d}^{-1}(x)
			= \sum_{j} u_{j} \nu(x) v_{j} - \mu \sum_{j} u_{j} x v_{j}
			= 0.
		\]
	Therefore, we have $\widehat{d}^{-1}(x) \in \mathcal{D}_{(\mu)}^{0}(A, A)$.
\end{proof}
%
%
	From Lemma \ref{Decomposition-lem2}, we obtain a subcomplex $(\mathcal{D}_{(\mu)}^{ \bullet }(A, A), \widehat{ d }^{ \bullet }_{(\mu)})$ 
	of $(\mathcal{D}^{\bullet}(A, A), \widehat{d}^{\bullet})$. Put 
		\[
			\widehat{\mathrm{HH}}_{(\mu)}^{r}(A) := \Hh^{r}(\mathcal{ D }_{(\mu)}^{ \bullet }(A, A), \widehat{d}^{\bullet}_{(\mu)})
		\]
	 for all $r \in \mathbb{Z}$.
	Hence we have a morphism of $k$-vector spaces $\widehat{\mathrm{HH}}_{(\mu)}^{r}(A) \rightarrow \widehat{\mathrm{HH}}^{r}(A)$ 
	for $r \in \mathbb{Z}$.
	Before starting with the next proposition, let us recall a well-known duality between Hochschild cohomology and Hochschild homology:
	there is an isomorphism $\Theta : D(C_{*}(A, A_{\nu}))$ $\rightarrow$ $C^{*}(A, A)$
	given by
	\begin{align*} \lefteqn{}
		D(C_{r}(A, A_{\nu})) &= \Hom (A_{ \nu } \otimes_{A^{ \rm{e}} } A \otimes \bA^{ \otimes r} \otimes A, k )\\
		&\cong \Hom_{A^{ \rm{e}} } ( A \otimes \bA^{ \otimes r} \otimes A, \Hom(A_{ \nu }, k) ) \\
		&\cong \Hom_{A^{ \rm{e}} } ( A \otimes \bA^{ \otimes r} \otimes A, A)
		= C^{r}(A, A),
	\end{align*}
	where $r \geq 0$ and the second isomorphism is induced by $A_{ \nu } \cong D(A)$.	
	Then $\Theta : D(C_{*}(A, A_{\nu}))$ $\rightarrow$ $C^{*}(A, A)$ is a morphism of complexes and hence induces a duality 
	$D(\Hh_{r} (A, A_{\nu}))$ $\cong$ $\HH^{r}(A)$. In fact, we can write $\Theta : D(C_{r}(A, A_{\nu}))$ $\rightarrow$ $C^{r}(A, A)$ and its inverse 
	$\Theta^{-1} : C^{r}(A, A) \rightarrow D(C_{r}(A, A_{\nu}))$ as follows: 
		\begin{align*} \lefteqn{}
			&\Theta : D(C_{r}(A, A_{\nu})) \rightarrow C^{r}(A, A); \quad \psi \mapsto \biggl[ \bb_{1,\,r } \mapsto \sum_{ j } \psi(u_{j} \otimes \bb_{1,\,r } ) v_{j} \biggr], \\
			&\Theta^{-1} : C^{r}(A, A) \rightarrow D(C_{r}(A, A_{\nu}));  \quad f \mapsto \bigl[ a_{0} \otimes \ba_{1,\,r} \mapsto \langle f(\ba_{1,\,r} ), a_{0} \rangle\bigr].
		\end{align*}
%
%
	\begin{prop} \label{Decomposition-prop2} 
	Let $A$ be a finite dimensional Frobenius $k$-algebra. 
	If the Nakayama automorphism $\nu$ of $A$ is diagonalizable, then three statements hold.
		\begin{enumerate}
			\item The isomorphism $\Theta : D(C_{\bullet}(A, A_{\nu})) \rightarrow C^{\bullet}(A, A)$ induces an isomorphism of complexes
						\[
							D(C_{\bullet}^{(\mu)}(A, A_{\nu})) \cong C_{(\mu^{-1})}^{\bullet}(A, A)
						\]
					for all $\mu \in \widehat{\Lambda}$.
					\label{Decomposition-prop2-1} 
			\item For  $r \in \mathbb{Z}$ and $\mu \not = 1 \in \widehat{\Lambda}$, 
					we get
						\begin{align*}
							\widehat{\mathrm{HH}}_{(\mu)}^{r}(A) = 0.
						\end{align*}
					\label{Decomposition-prop2-2} 
			\item For each $r \in \mathbb{Z}$, there exists an isomorphism of $k$-vector spaces
						\begin{align*}
							\widehat{\mathrm{HH}}_{(1)}^{r}(A) \cong \widehat{\mathrm{HH}}^{r}(A).
						\end{align*}			
					\label{Decomposition-prop2-3} 		
		\end{enumerate} 
	\end{prop}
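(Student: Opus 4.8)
The plan is to prove the three assertions in order, using (1) to establish (2) and reading off (3) from (2). The ingredient needed throughout is the interaction of the Frobenius form with the eigenspaces of $\nu$: from $\langle a, b \rangle = \langle b, \nu(a) \rangle$ one obtains, for $a \in A_{\lambda}$ and $b \in A_{\lambda'}$, that $\langle a, b \rangle = \lambda\lambda'\langle a, b \rangle$, so $\langle A_{\lambda}, A_{\lambda'} \rangle = 0$ unless $\lambda\lambda' = 1$. With non-degeneracy this forces $\Lambda = \Lambda^{-1}$ (hence $\widehat{\Lambda}$ is a group) and makes $\langle\ ,\ \rangle$ restrict to a perfect pairing $A_{\lambda} \otimes A_{\lambda^{-1}} \to k$, so dual bases $(u_{i})_{i}, (v_{i})_{i}$ of $A$ may be chosen homogeneous, with $u_{i} \in A_{\lambda_{i}}$ and $v_{i} \in A_{\lambda_{i}^{-1}}$.

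For (1), I would feed this into the explicit formulas for $\Theta$ and $\Theta^{-1}$. If $f \in C^{r}_{(\mu^{-1})}(A, A)$ and $a_{j} \in A_{\mu_{j}}$, then $f(\ba_{1,\,r}) \in A_{\mu^{-1}\mu_{1}\cdots\mu_{r}}$, so $\Theta^{-1}(f)(a_{0} \otimes \ba_{1,\,r}) = \langle f(\ba_{1,\,r}), a_{0} \rangle$ is zero unless $\mu_{0}\mu_{1}\cdots\mu_{r} = \mu$; thus $\Theta^{-1}(f)$ kills every weight component of $C_{r}(A, A_{\nu})$ but the one of weight $\mu$, i.e.\ $\Theta^{-1}(f) \in D(C_{r}^{(\mu)}(A, A_{\nu}))$. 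Conversely, for $\psi \in D(C_{r}^{(\mu)}(A, A_{\nu}))$ and $b_{j} \in A_{\mu_{j}}$, the $i$-th summand $\psi(u_{i} \otimes \bb_{1,\,r})\,v_{i}$ of $\Theta(\psi)(\bb_{1,\,r})$ survives only when $u_{i} \in A_{\mu(\mu_{1}\cdots\mu_{r})^{-1}}$, in which case $v_{i} \in A_{\mu^{-1}\mu_{1}\cdots\mu_{r}}$, so $\Theta(\psi) \in C^{r}_{(\mu^{-1})}(A, A)$. As $\Theta$ and $\Theta^{-1}$ are mutually inverse morphisms of complexes, these restrictions give an isomorphism of complexes $D(C_{\bullet}^{(\mu)}(A, A_{\nu})) \cong C^{\bullet}_{(\mu^{-1})}(A, A)$.

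For (2), recall that $\mathcal{D}_{(\mu)}^{\bullet}(A, A)$ is obtained by splicing, along $\widehat{d}^{-1}$, the subcomplex $C^{\bullet}_{(\mu)}(A, A)$ of the Hochschild cochain complex (in degrees $\geq 0$) with the shifted subcomplex $C_{\bullet}^{(\mu)}(A, A_{\nu^{-1}})$ of the Hochschild chain complex (in degrees $\leq -1$). Hence $\widehat{\mathrm{HH}}_{(\mu)}^{r}(A)$ is, for $r \geq 0$, a subquotient of $\Hh_{(\mu)}^{r}(A, A)$ (equal to it for $r \geq 1$, a quotient of $\Hh^{0}_{(\mu)}(A, A)$ for $r = 0$), and for $r = -s-1 \leq -1$ a subspace of $\Hh_{s}^{(\mu)}(A, A_{\nu^{-1}})$ (equal to it for $s \geq 1$). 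Now fix $\mu \neq 1$. Since $\nu^{-1}$ is diagonalizable by Corollary \ref{Decomposition-cor2}, Proposition \ref{Decomposition-prop1}\,(1) gives $\Hh_{s}^{(\mu)}(A, A_{\nu^{-1}}) = 0$ for all $s \geq 0$; and by (1), exactness of $D = \Hom_{k}(-, k)$, and Proposition \ref{Decomposition-prop1}\,(1) applied to $\nu$, one gets $\Hh_{(\mu)}^{r}(A, A) \cong D(\Hh_{r}^{(\mu^{-1})}(A, A_{\nu})) = 0$ for all $r \geq 0$ (note $\mu^{-1} \neq 1$). Therefore $\widehat{\mathrm{HH}}_{(\mu)}^{r}(A) = 0$ for every $r \in \mathbb{Z}$.

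For (3): since $A$ is finite dimensional and $\widehat{\Lambda}$ is a group, the eigenspace decompositions of $A$ and $\bA$ give in each degree a (degreewise finite) direct sum decomposition $\mathcal{D}^{r}(A, A) = \bigoplus_{\mu \in \widehat{\Lambda}} \mathcal{D}_{(\mu)}^{r}(A, A)$, compatible with $\widehat{d}$ by Lemma \ref{Decomposition-lem2}; so $\mathcal{D}^{\bullet}(A, A) = \bigoplus_{\mu \in \widehat{\Lambda}} \mathcal{D}_{(\mu)}^{\bullet}(A, A)$ as complexes, and taking cohomology gives $\widehat{\mathrm{HH}}^{r}(A) = \bigoplus_{\mu \in \widehat{\Lambda}} \widehat{\mathrm{HH}}_{(\mu)}^{r}(A) = \widehat{\mathrm{HH}}_{(1)}^{r}(A)$ by (2). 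I expect the main obstacle to be (1): keeping track of how $\Theta$ exchanges the weight $\mu$ on the chain side with the weight $\mu^{-1}$ on the cochain side, which hinges entirely on the orthogonality $\langle A_{\lambda}, A_{\lambda'} \rangle = 0$ for $\lambda\lambda' \neq 1$ and the resulting homogeneous dual bases. Once (1) is in hand, (2) and (3) are formal.
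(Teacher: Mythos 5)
Your proposal is correct and follows essentially the same route as the paper: the key input is that the Frobenius pairing vanishes on $A_{\lambda}\otimes A_{\lambda'}$ unless $\lambda\lambda'=1$ (this is the content of Lemma \ref{Decomposition-lem1}, which you rederive), which makes $\Theta^{\pm 1}$ exchange the weight-$\mu$ and weight-$\mu^{-1}$ components, after which (2) follows from Proposition \ref{Decomposition-prop1}(1) applied on both sides of the splice. The only small deviation is in (3): you obtain it uniformly from the degreewise direct-sum decomposition $\mathcal{D}^{\bullet}(A,A)=\bigoplus_{\mu}\mathcal{D}^{\bullet}_{(\mu)}(A,A)$ together with (2), whereas the paper uses the duality $D(\Hh_{r}^{(1)}(A,A_{\nu}))\cong \Hh_{(1)}^{r}(A,A)$ for $|r|\geq 1$ and invokes the eigenspace decomposition only in degrees $0$ and $-1$; both arguments are valid, and yours is arguably slightly cleaner.
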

%
%
	\begin{proof}
	It follows from Lemma \ref{Decomposition-lem1} below that the inverse of each eigenvalue $\lambda \in \Lambda$ is also an eigenvalue of 
	the Nakayama automorphism $\nu$ of $A$.
	Since $A$ is the (finite) direct sum of the eigenspaces $A_{ \lambda_{1} }, \ldots, A_{ \lambda_{t} }$, 
	we have $D(C_{r}(A, A_{\nu})) \cong \bigoplus_{\mu \in \widehat{\Lambda}} D(C_{r}^{(\mu)}(A, A_{\nu}))$ for all $r \geq 0$. 
%
%
	For the first statement, 
	it is sufficient to show that the inverse $\Theta^{-1} :  C^{\bullet}(A, A) \rightarrow D(C_{\bullet}(A, A_{\nu}))$ induces 
	an isomorphism  $C_{(\mu)}^{r}(A, A)$ $\cong$ $D(C_{r}^{(\mu^{-1})}(A, $ $A_{\nu}))$. 
	Since $\Theta^{-1}(f) \in D(C_{r}(A, A_{\nu}))$
	is a non-zero map for  $0 \not = f \in C_{(\mu)}^{r}(A, A)$,  there exist
	$\mu^{\prime} \in \widehat{\Lambda}$ and $a_{0} \otimes \ba_{1, r} \in C_{r}^{ (\mu^{\prime} )}(A, A_{\nu})$
	such that $\langle f(\ba_{1, r}), a_{0} \rangle \not = 0$, so that we get $(\mu \mu^{\prime} -1 ) \langle f(\ba_{1, r}), a_{0} \rangle = 0$ and hence 
	$\mu^{\prime} = \mu^{-1}$.  As a result, we have shown that if $ \lambda \in \widehat{\Lambda} $ with $ \lambda \not = \mu^{-1}$, 
	then the restriction of $\Theta^{-1}(f)$ to $C_{r}^{(\lambda)}(A, A_{\nu})$ is the zero map.
	Thus, we have a monomorphism 
		\begin{align*}
				\Theta_{(\mu)}^{-1} := \Theta^{-1} |_{ C_{(\mu)}^{r}(A, A) } : C_{(\mu)}^{r}(A, A) \rightarrow D(C_{r}^{(\mu^{-1})}(A, A_{\nu})).
		\end{align*}
	Furthermore, we get $\Theta_{(\mu)}^{-1}$ is surjective. Indeed, for any $\psi \in D(C_{r}^{(\mu^{-1})}(A, A_{\nu}))$,
	there exists $f \in C^{r}(A, A)$ such that $ \psi = \Theta^{-1}(f) $. Let $\mu_{1}, \ldots, \mu_{r} \in \Lambda$ and 
	$\bb_{1, r} \in \bA_{\mu_{1}} \otimes \cdots \otimes \bA_{\mu_{r}}$.
	It follows from $A = \bigoplus_{i} A_{\lambda_{i}}$ and $\psi |_{C_{r}^{(\lambda)}(A, A_{\nu})} = 0$ for all $\lambda \not = \mu^{-1}$ that
		\begin{align*}
			\langle f(\bb_{1, r}), a \rangle 
			= \langle \nu(f(\bb_{1, r})) , \nu(a) \rangle 	
			= \langle \nu(f(\bb_{1, r})) , (\mu_{1} \cdots \mu_{r})^{-1} \mu^{-1} a \rangle 
		\end{align*}
	for any $a \in A$.
	Consequently, we get
	$\nu(f(\bb_{1, r})) 
	= \mu_{1} \cdots \mu_{r} \mu f(\bb_{1, r})$ 
	and hence
	$f \in C_{ (\mu) }^{r}(A, A)$. 
	This shows that  $\Theta_{(\mu)}^{-1} : C_{(\mu)}^{r}(A, A) \rightarrow D(C_{r}^{(\mu^{-1})}(A, A_{\nu}))$ is surjective.

%
%
	For the second statement, let $r$ be an integer and 
	$\mu \in \widehat{\Lambda}$ 
	 such that 
	$ \mu \not = 1$.
	In the case  
	$r \leq -2$, 
	the desired result is a consequence of Proposition \ref{Decomposition-prop1} (1). If 
	$r \geq 1$, 
	then the first statement (\ref{Decomposition-prop2-1}) and Proposition \ref{Decomposition-prop1} (1) imply that there is an isomorphism
		\begin{align*}
				\widehat{\mathrm{HH}}_{(\mu)}^{r}(A) = \Hh_{(\mu)}^{r}(A, A) \cong D( \Hh_{r}^{(\mu^{-1})}(A, A_{\nu}) ) = 0.
		\end{align*}
	We also have 
	$\widehat{\mathrm{HH}}_{(\mu)}^{-1}(A) = 0$ and $\widehat{\mathrm{HH}}_{(\mu)}^{0}(A) = 0$
	because 
	$\widehat{\mathrm{HH}}_{(\mu)}^{-1}(A) \leq \Hh_{0}^{(\mu)}(A, A_{\nu^{-1}})$, and 
	$\widehat{\mathrm{HH}}_{(\mu)}^{0}(A)$ is a quotient space of $\Hh_{(\mu)}^{0}(A, A)$.

%
%
	For the last statement, let $r$ be an integer.
	For the case 
	$r \leq -2$, 
	the desired result is a consequence of Proposition \ref{Decomposition-prop1} (3). If 
	$r \geq 1$, 
	then the first statement (\ref{Decomposition-prop2-1}) and Proposition \ref{Decomposition-prop1} (1) yield that  there are isomorphisms
		\begin{align*}
				\widehat{\mathrm{HH}}^{r}(A) = \HH^{r}(A) 
				\cong D(\Hh_{r}(A, A_{\nu}))
				\cong D(\Hh_{r}^{(1)}(A, A_{\nu}))
				\cong \Hh_{(1)}^{r}(A, A) 
				= \widehat{\mathrm{HH}}_{(1)}^{r}(A).
		\end{align*}
	Since 
	$A_{\nu^{-1}} = \bigoplus_{i} A_{\lambda_{i}}$ 
	as $k$-vector spaces,
	the differential
	$\widehat{d}^{-1}$
	can be decomposed as 
	$\widehat{d}^{-1} = [ \widehat{d}_{\lambda_{1}}^{-1} \cdots \widehat{d}_{\lambda_{t}}^{-1} ]$,
	where
	$\widehat{d}_{\lambda_{j}}^{-1} : A_{\lambda_{j}} \rightarrow A$ 
	is the restriction of 
	$\widehat{d}^{-1}$ to $A_{\lambda_{j}}$.
	Then we have
		\begin{align*}
				\widehat{\mathrm{HH}}^{-1}(A) \cong \bigoplus_{1 \leq i \leq t} \widehat{\mathrm{HH}}_{(\lambda_{i})}^{-1}(A) = \widehat{\mathrm{HH}}_{(1)}^{-1}(A).
		\end{align*} 
	Similarly, we have 
	$\widehat{\mathrm{HH}}^{0}(A) \cong \widehat{\mathrm{HH}}_{(1)}^{0}(A)$. 
	This completes the proof. 
\end{proof}
%
%
%
	\begin{lem}[{\cite[Lemma 3.5]{Lam}}] \label{Decomposition-lem1}
		Let $A$ be a finite dimensional Frobenius $k$-algebra with the Nakayama automorphism $\nu$ diagonalizable. 
		Then we have the following statements.
	\begin{enumerate}
			\item For any 
					$\lambda \in \Lambda$, its inverse $\lambda^{-1}$ belongs to $\Lambda$.
			\item The isomorphism of $A$-bimodules $A_{\nu} \cong D(A)$ induces an isomorphism of vector spaces 
					$A_{\lambda} \cong D(A_{\lambda^{-1}})$ for any $\lambda \in \Lambda$.
	\end{enumerate} 
\end{lem}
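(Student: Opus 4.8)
The plan is to use the compatibility of the non-degenerate associative form $\langle \ ,\ \rangle$ with the eigenspace decomposition $A = \bigoplus_{\lambda \in \Lambda} A_{\lambda}$ of the diagonalizable Nakayama automorphism $\nu$, the key input being the defining relation $\langle a, b \rangle = \langle b, \nu(a) \rangle$.

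First I would record an \emph{orthogonality} property: for $\lambda, \mu \in \Lambda$, $x \in A_{\lambda}$ and $z \in A_{\mu}$, the relation gives $\langle z, x \rangle = \langle x, \nu(z) \rangle = \mu \langle x, z \rangle$ and $\langle x, z \rangle = \langle z, \nu(x) \rangle = \lambda \langle z, x \rangle$, so that $(1 - \lambda \mu) \langle z, x \rangle = 0$; hence $\langle A_{\mu}, A_{\lambda} \rangle = 0$ whenever $\lambda \mu \neq 1$. For part (1), fix $\lambda \in \Lambda$ and $0 \neq x \in A_{\lambda}$; by non-degeneracy there is $y \in A$ with $\langle y, x \rangle \neq 0$, and writing $y = \sum_{\mu \in \Lambda} y_{\mu}$ with $y_{\mu} \in A_{\mu}$, the orthogonality forces $\langle y, x \rangle = \langle y_{\lambda^{-1}}, x \rangle$, so $y_{\lambda^{-1}} \neq 0$, i.e. $A_{\lambda^{-1}} \neq 0$ and $\lambda^{-1} \in \Lambda$.

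For part (2), the orthogonality shows that the only non-vanishing block of $\langle \ ,\ \rangle$ involving $A_{\lambda}$ or $A_{\lambda^{-1}}$ is the pairing $A_{\lambda^{-1}} \otimes A_{\lambda} \to k$; running the argument of part (1) in each variable shows this restricted pairing is non-degenerate on both sides, hence perfect, so $x \mapsto \langle -, x \rangle|_{A_{\lambda^{-1}}}$ is an isomorphism $A_{\lambda} \xrightarrow{\sim} D(A_{\lambda^{-1}})$. It then remains to identify this with the restriction of the global $A$-bimodule isomorphism $\phi : A_{\nu} \xrightarrow{\sim} D(A)$, $a \mapsto \langle -, a \rangle$: since $A = \bigoplus_{\lambda} A_{\lambda}$ is a finite direct sum, $D(A) = \bigoplus_{\mu} D(A_{\mu})$, and the orthogonality says $\phi(A_{\lambda}) \subseteq D(A_{\lambda^{-1}})$, so $\phi$ is block-diagonal with respect to these decompositions (with blocks permuted by $\lambda \mapsto \lambda^{-1}$, a bijection of $\Lambda$ by part (1)), and each block is precisely the isomorphism just constructed.

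The computations here are all elementary; the only step needing a little care is the last one — checking that the perfect pairing $A_{\lambda^{-1}} \otimes A_{\lambda} \to k$ really is the block of $\phi$ and that the eigenspace decompositions of $A$ and of $D(A)$ are compatible. This bookkeeping, together with the use of part (1) to see that $\lambda \mapsto \lambda^{-1}$ permutes $\Lambda$, is the main — though quite mild — obstacle.
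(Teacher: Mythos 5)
Your argument is correct: the orthogonality relation $(1-\lambda\mu)\langle z,x\rangle=0$ derived from $\langle a,b\rangle=\langle b,\nu(a)\rangle$, together with diagonalizability, immediately gives both the closure of $\Lambda$ under inversion and the perfect pairing $A_{\lambda^{-1}}\otimes A_{\lambda}\to k$ identifying the blocks of $\phi:A_{\nu}\xrightarrow{\sim}D(A)$. The paper itself does not reprove this lemma (it is quoted from Lambre--Zhou--Zimmermann), and your proof is precisely the standard argument used there.
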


	Suppose that the Nakayama automorphism $\nu$ is diagonalizable. 
	For each 
	$\lambda_{i} \in 
	\Lambda = \{ \lambda_{1}, \ldots, \lambda_{t} \}$, we denote by $m_{i}$ its algebraic multiplicity.
	Then we have a $k$-basis
	$( u_{1}^{\lambda_{i}}, \ldots, u_{m_{i}}^{\lambda_{i}} )$
	of the eigenspace
	$A_{\lambda_{i}}$ 
	associated with 
	$\lambda_{i}$.
	Thus $d$-tuple
	$( u_{1}^{\lambda_{1}}, \ldots, u_{m_{1}}^{\lambda_{1}}, \ldots, u_{1}^{\lambda_{t}}, \ldots, u_{m_{t}}^{\lambda_{t}} )$
	forms a $k$-basis of $A$, and we obtain its dual basis
	$( v_{1}^{\lambda_{1}}, \ldots, v_{m_{1}}^{\lambda_{1}}, \ldots, v_{1}^{\lambda_{t}}, \ldots, v_{m_{t}}^{\lambda_{t}} )$
	of $A$ with respect to the bilinear form 	
	$\langle \ , \ \rangle$. 
	It follows from Lemma \ref{Decomposition-lem1} and 
	$\langle v_{k}^{\lambda_{i}}, u_{l}^{\lambda_{j}} \rangle = \delta_{i j} \delta_{k l}$
	that the dual basis vectors 
	$v_{1}^{\lambda_{i}}, \ldots, v_{m_{i}}^{\lambda_{i}}$
	belong to
	$A_{\lambda_{i}^{-1}}$
	for each 
	$\lambda_{i}$.
	We fix the dual bases 
	$ (u_{j}^{\lambda_{i}} )_{i,\, j}, ( v_{j}^{\lambda_{i}} )_{i,\, j} $
	of $A$.
	For simplifying the notation, we will write 
	$( u_{1}, \ldots ,u_{d} )$ and $( v_{1}, \ldots ,v_{d} )$
	for 
	$ ( u_{j}^{\lambda_{i}} )_{i,\, j} $ and $( v_{j}^{\lambda_{i}} )_{i,\, j} $ when there is no danger of confusion. 
%
%
%
%
	\begin{prop} \label{Decomposition-prop3}
		Let $A$ be a finite dimensional Frobenius $k$-algebra with the Nakayama automorphism $\nu$ diagonalizable. 
		For any 
		$\mu, \mu^{\prime} \in \widehat{\Lambda}$, 
		$\star : \mathcal{D}^{*}(A, A) \otimes \mathcal{D}^{*}(A, A) 
		\rightarrow
		\mathcal{D}^{*}(A, A)$
		induces the restrictions
		$\star_{\mu, \mu^{\prime}} : \mathcal{D}_{\mu}^{*}(A, A) \otimes \mathcal{D}_{\mu^{\prime}}^{*}(A, A) 
		\rightarrow
		\mathcal{D}_{\mu \mu^{\prime}}^{*}(A, A)$.
	\end{prop}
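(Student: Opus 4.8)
The plan is to prove this by a direct verification along the four cases (1)--(4) of the definition of $\star$, with the degree bookkeeping organized through an auxiliary $\widehat{\Lambda}$-grading on $A$. First I would regard $A = \bigoplus_{\lambda \in \Lambda} A_{\lambda}$ as a $\widehat{\Lambda}$-graded algebra; this is legitimate because $\nu$ is an algebra automorphism, so $A_{\lambda} \cdot A_{\lambda'} \subseteq A_{\lambda\lambda'}$ (with the convention $A_{\lambda\lambda'} = 0$ if $\lambda\lambda' \notin \Lambda$). Two elementary facts will be used throughout: \textbf{(a)} the maps $\nu$ and $\nu^{-1}$ preserve each homogeneous component $A_{\lambda}$, acting there by the scalars $\lambda$ and $\lambda^{-1}$, so applying $\nu^{\pm 1}$ does not change the degree of a homogeneous element; and \textbf{(b)} for the adapted dual bases $(u_{j}^{\lambda_{i}})_{i,j}$, $(v_{j}^{\lambda_{i}})_{i,j}$ fixed just before the statement, one has $u_{j}^{\lambda_{i}} \in A_{\lambda_{i}}$ and $v_{j}^{\lambda_{i}} \in A_{\lambda_{i}^{-1}}$ by Lemma \ref{Decomposition-lem1}, so that the ``Casimir'' element $\sum_{i} u_{i} \otimes v_{i}$ is homogeneous of degree $1$ in $A \otimes A$. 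Since the $\star$-product does not depend on the choice of dual bases, I am free to compute with these adapted ones.

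Next I would translate the defining subspaces into this language: for $r \geq 0$, $C_{(\mu)}^{r}(A, A)$ is precisely the space of homogeneous $k$-linear maps $\overline{A}^{\otimes r} \to A$ of degree $\mu$ (where $\overline{A}_{\lambda}$ is placed in degree $\lambda$), and for $s \geq 0$, $C_{s}^{(\mu)}(A, A_{\nu^{-1}})$ is the homogeneous component of degree $\mu$ of the Hochschild chain complex $C_{\bullet}(A, A_{\nu^{-1}})$ --- here one uses that $A_{\nu^{-1}}$ is a graded $A$-bimodule, since by \textbf{(a)} the twisting automorphism $\nu^{-1}$ is graded. With this dictionary the cohomological degrees in $\mathcal{D}^{*}$ already add correctly in every case of the definition of $\star$, so what remains is to check that, case by case, the $\star$-product of an element of $\widehat{\Lambda}$-degree $\mu$ and an element of $\widehat{\Lambda}$-degree $\mu'$ has $\widehat{\Lambda}$-degree $\mu\mu'$.

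The verification then splits as follows. Case (1), where $f \star g = f \smile g$, is immediate from $A_{\lambda} \cdot A_{\lambda'} \subseteq A_{\lambda\lambda'}$. Cases (2a) and (2b) follow from the same fact together with \textbf{(a)}, applied to the factor $\nu^{-1}(f(\cdots))$. The substantive cases are (3a), (3b) and (4), in which a dual basis pair $(u_{i}, v_{i})$ together with $\nu$ intervene. Writing $u_{i} \in A_{\rho_{i}}$ and hence $v_{i} \in A_{\rho_{i}^{-1}}$, one evaluates the $\widehat{\Lambda}$-degree of the $i$-th summand and sees the factor $\rho_{i}$ contributed by $u_{i}$ cancel against the factor $\rho_{i}^{-1}$ contributed by $v_{i}$: either directly, when $u_{i}$ and $v_{i}$ lie in distinct tensor slots of the output, as in case (4); or after passing through the homogeneous map $f$ (of degree $\mu$ or $\mu'$), when one of $u_{i}, v_{i}$ is fed into an argument of $f$ and the other multiplies the value of $f$, as in cases (3a), (3b). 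In each instance the surviving degree is the product of the degrees of the two inputs, which is exactly $\mu\mu'$.

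The only real difficulty I foresee is organizational: carrying the degree bookkeeping faithfully through the intricate formulas of cases (3) and (4), and normalizing the grading conventions on the non-negative part $C^{*}_{(\mu)}(A, A)$ and on the negative part $C^{(\mu)}_{\bullet}(A, A_{\nu^{-1}})$ of $\mathcal{D}^{*}_{(\mu)}$ so that ``degree $\mu$'' has the same meaning on both sides --- it is this normalization, together with the adaptedness of the dual bases in \textbf{(b)}, that makes the cancellations land on $\mu\mu'$ rather than on some inverse twist of it. Once \textbf{(a)}, \textbf{(b)} and the adapted dual bases are fixed, each of the cases reduces to a short and routine computation, so I would not spell them all out in full.
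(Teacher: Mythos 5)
Your proposal is correct and follows essentially the same route as the paper: the paper fixes the adapted dual bases $(u_j^{\lambda_i})$, $(v_j^{\lambda_i})$ of eigenvectors, verifies case (3)(a) explicitly by applying $\nu$ to $f\star\alpha$ evaluated on homogeneous inputs and checking the eigenvalue comes out as $\mu\mu'\mu_1\cdots\mu_{m-p-1}$ (the $\lambda_i$ from $u_j^{\lambda_i}$ cancelling against the $\lambda_i^{-1}$ from $v_j^{\lambda_i}$, exactly your cancellation), and declares the remaining cases similar. Your $\widehat{\Lambda}$-graded-algebra packaging is a tidy reformulation of the same eigenvalue bookkeeping rather than a different argument.
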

%
%
%
%
	\begin{proof}
		We only show that the $\star$-product $\star$ restricts to the subcomplexes in the cases (3) (i). 
		Proofs of the other cases are similar to the proof of the case (3) (i).
		Let 
		$\mu, \mu^{\prime} \in \widehat{\Lambda}$ be arbitrary and $m, p \in \mathbb{Z}$ such that $m \geq 0, p \geq 0$ and $ p > m$,
		and let
		$f \in C_{(\mu)}^{m}(A, A)$ 
		and 
		$\alpha = a_{0} \otimes \ba_{1,\,p} 
		\in 
		A_{\mu_{0}^{\prime}} \otimes \bA_{\mu_{1}^{\prime}} \otimes \cdots \otimes \bA_{\mu_{p}^{\prime}} 
		\subset
		C_{p}^{(\mu^{\prime})}(A, A_{\nu^{-1}})$ 
		with $\prod \mu_{i}^{\prime} = \mu^{\prime}$.
		We claim that 
			\begin{align*}\lefteqn{}
					(f \star \alpha) (\bb_{1,\,m-p-1})
					= \sum_{\substack{1 \leq i \leq t, \\ 1 \leq j \leq m_{i}}} 
					f( \bb_{1, \,m-p-1} \otimes \overline{u_{j}^{\lambda_{i}} \nu(a_{0})} \otimes \ba_{1, \,p}) v_{j}^{\lambda_{i}} 
					\in A_{\mu \mu^{\prime} \mu_{1} \cdots \mu_{m-p-1} }
			\end{align*}
		holds for any 
		$\bb_{1,\, m-p-1} \in \bA_{\mu_{1}} \otimes \cdots \otimes \bA_{\mu_{m-p-1}}$,
		where the $\mu_{i} $ are elements of $\Lambda$.
		Indeed, we have
			\begin{align*} \lefteqn{}
					&\nu ( \sum_{i, j} 
					f( \bb_{1, \,m-p-1} \otimes \overline{u_{j}^{\lambda_{i}} \nu(a_{0})} \otimes \ba_{1, \,p}) v_{j}^{\lambda_{i}} )\\
					&=
					\sum_{i, j} 
					\nu ( f( \bb_{1, \,m-p-1} \otimes \overline{u_{j}^{\lambda_{i}} \nu(a_{0})} \otimes \ba_{1, \,p})) \nu ( v_{j}^{\lambda_{i}} )\\
					&=
					\sum_{i, j}
					\mu \mu_{1} \cdots \mu_{m-p-1} \lambda_{i} \mu^{\prime} 
					f( \bb_{1, \,m-p-1} \otimes \overline{u_{j}^{\lambda_{i}} \nu(a_{0})} \otimes \ba_{1,\, p}) \lambda_{i}^{-1} v_{j}^{\lambda_{i}} \\
					&=
					\mu \mu^{\prime} \mu_{1} \cdots \mu_{m-p-1} 
					\sum_{i, j}
					f( \bb_{1,\, m-p-1} \otimes \overline{u_{j}^{\lambda_{i}} \nu(a_{0})} \otimes \ba_{1,\, p}) v_{j}^{\lambda_{i}}
			\end{align*}
		and therefore $f \star \alpha \in C_{(\mu \mu^{\prime})}^{m-p-1}(A, A)$.
\end{proof}
%
%
%
%
Put 
$\star_{1} := \star_{1, 1} : 
\mathcal{D}_{(1)}^{\bullet}(A, A) \otimes \mathcal{D}_{(1)}^{\bullet}(A, A) 
\rightarrow
\mathcal{D}_{(1)}^{\bullet}(A, A)$.
Then we have the following result.
%
%
%
%
	\begin{cor} \label{Decomposition-cor1}
		Let $A$ be a finite dimensional Frobenius $k$-algebra. 
		Then
		$(\widehat{\mathrm{HH}}_{(1)}^{\bullet}(A), \star_{1})$ 
		is a graded commutative algebra.
		Furthermore, if the Nakayama automorphism $\nu$ of $A$ is diagonalizable, then
		$(\widehat{\mathrm{HH}}_{(1)}^{\bullet}(A), \star_{1})$ 
		is isomorphic to 
		$(\widehat{\mathrm{HH}}^{\bullet}(A), \star)$
		as graded algebras. 
	\end{cor}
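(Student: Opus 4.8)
The plan is to build both assertions on a single observation: by Proposition~\ref{Decomposition-prop3}, applied with $\mu=\mu'=1$, the $\star$-product of $\mathcal{D}^{\bullet}(A,A)$ carries $\mathcal{D}_{(1)}^{\bullet}(A,A)\otimes\mathcal{D}_{(1)}^{\bullet}(A,A)$ into $\mathcal{D}_{(1)}^{\bullet}(A,A)$, while by Lemma~\ref{Decomposition-lem2} the latter is a subcomplex of $(\mathcal{D}^{\bullet}(A,A),\widehat{d}^{\bullet})$. Since $\star$ is compatible with $\widehat{d}$ (the proposition recalled at the end of Section~\ref{Frobenius}; see \cite[Propositions~6.5 and~6.9]{wang2}), its restriction $\star_{1}=\star_{1,1}$ is compatible with $\widehat{d}^{\bullet}_{(1)}$, hence descends to a $k$-bilinear product, still written $\star_{1}$, on $\widehat{\mathrm{HH}}_{(1)}^{\bullet}(A)=\Hh^{\bullet}(\mathcal{D}_{(1)}^{\bullet}(A,A),\widehat{d}^{\bullet}_{(1)})$. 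As $\nu(1_{A})=1_{A}$, we have $1_{A}\in A_{1}=\mathcal{D}_{(1)}^{0}(A,A)$, and a direct check with the formulas defining $\star$ gives $1_{A}\star\xi=\xi=\xi\star 1_{A}$ for every homogeneous $\xi\in\mathcal{D}^{\bullet}(A,A)$, so $[1_{A}]$ is a two-sided unit for $\star_{1}$. Finally, the inclusion $\iota\colon\mathcal{D}_{(1)}^{\bullet}(A,A)\hookrightarrow\mathcal{D}^{\bullet}(A,A)$ is a unital morphism of complexes, and $\star_{1}$ is by definition the restriction of $\star$; hence $\iota$ induces a unital, product-preserving morphism $\iota_{*}\colon(\widehat{\mathrm{HH}}_{(1)}^{\bullet}(A),\star_{1})\to(\widehat{\mathrm{HH}}^{\bullet}(A),\star)$.

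For the first assertion, it remains to prove that $\star_{1}$ is graded commutative and associative. Here I would revisit Wang's proof \cite{wang2} of the corresponding properties of $\star$ on $\widehat{\mathrm{HH}}^{\bullet}(A)$: there $x\star y-(-1)^{|x||y|}y\star x$ and $(x\star y)\star z-x\star(y\star z)$ are exhibited as coboundaries by chain homotopies assembled from the $\star$-operations, the differentials $d_{p}$ of the normalized bar resolution, and the dual-basis sums $m\mapsto\sum_{i}u_{i}mv_{i}$. Each of these operations respects the $\widehat{\Lambda}$-grading of $\mathcal{D}^{\bullet}(A,A)$: for the $\star$-operations this is Proposition~\ref{Decomposition-prop3}; for the bar differentials it follows exactly as in Lemma~\ref{Decomposition-lem2}; and for the dual-basis sums it follows from the identity $\nu(\sum_{i}u_{i}mv_{i})=\sum_{i}u_{i}\nu(m)v_{i}$ established in the proof of Lemma~\ref{Decomposition-lem2}, together with Lemma~\ref{Frobenius-lem1} (\ref{Frobenius-lem1-2}). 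Consequently, when $x,y,z$ lie in $\mathcal{D}_{(1)}^{\bullet}(A,A)$, all intermediate terms and all homotopies again lie in $\mathcal{D}_{(1)}^{\bullet}(A,A)$; hence the two identities hold already in this subcomplex and pass to cohomology. Together with the unit $[1_{A}]$, this makes $(\widehat{\mathrm{HH}}_{(1)}^{\bullet}(A),\star_{1})$ a graded commutative algebra. I expect the only non-formal point to be precisely this verification that Wang's homotopies are built out of $\widehat{\Lambda}$-graded operations and therefore restrict; that is the main obstacle.

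For the second assertion, assume $\nu$ is diagonalizable. Then $A=\bigoplus_{i}A_{\lambda_{i}}$, so in each degree $\mathcal{D}^{r}(A,A)$ splits as $\bigoplus_{\mu\in\widehat{\Lambda}}\mathcal{D}_{(\mu)}^{r}(A,A)$, and by Lemma~\ref{Decomposition-lem2} this splitting is compatible with $\widehat{d}$; hence $\mathcal{D}^{\bullet}(A,A)=\bigoplus_{\mu\in\widehat{\Lambda}}\mathcal{D}_{(\mu)}^{\bullet}(A,A)$ as complexes. By Proposition~\ref{Decomposition-prop2} (\ref{Decomposition-prop2-2}) the summands with $\mu\neq1$ are acyclic, so the inclusion $\iota$ is a quasi-isomorphism, and therefore $\iota_{*}$ is a degreewise isomorphism, in accordance with Proposition~\ref{Decomposition-prop2} (\ref{Decomposition-prop2-3}). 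Since $\iota_{*}$ is also unital and product-preserving, it is an isomorphism of graded algebras $(\widehat{\mathrm{HH}}_{(1)}^{\bullet}(A),\star_{1})\xrightarrow{\sim}(\widehat{\mathrm{HH}}^{\bullet}(A),\star)$; in particular this recovers the first assertion in the diagonalizable case.
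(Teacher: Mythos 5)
Your argument is correct and follows the route the paper intends for this corollary (which it states without an explicit proof): Proposition \ref{Decomposition-prop3} together with Lemma \ref{Decomposition-lem2} gives the well-defined restricted product $\star_{1}$ on $\widehat{\mathrm{HH}}_{(1)}^{\bullet}(A)$, and Proposition \ref{Decomposition-prop2} makes the inclusion a quasi-isomorphism, hence $\iota_{*}$ an isomorphism of graded algebras onto $(\widehat{\mathrm{HH}}^{\bullet}(A),\star)$. The one step you flag as open --- that Wang's commutativity and associativity homotopies restrict to $\mathcal{D}_{(1)}^{\bullet}(A,A)$ --- is only needed if one wants the first assertion outside the setting where Proposition \ref{Decomposition-prop3} applies; in the diagonalizable case your observation that the injective, product-preserving map $\iota_{*}$ transports graded commutativity and associativity back from $(\widehat{\mathrm{HH}}^{\bullet}(A),\star)$ already settles it, which is effectively how the paper reads.
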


\section{BV structure on the complete cohomology} \label{BV}
%
%
%
%

	Let us recall the definition of Batalin-Vilkovisky algebras.
		\begin{defi} \label{def_1}
			A graded commutative algebra 
			$(\mathcal{H}^{\bullet} = \bigoplus_{r \in \mathbb{Z}} \mathcal{H}^{r}, \smile) $ 
			with $1 \in \mathcal{H}^{0}$ is a {\it Batalin-Vilkovisky algebra} (BV algebra, for short)
			if there exists an operator 
			$\Delta_{*}  : \mathcal{H}^{*} \rightarrow \mathcal{H}^{*-1}$ of degree $-1$ such that:
				\begin{enumerate} 
				\renewcommand{\labelenumi}{(\roman{enumi})}
						\item $\Delta_{r-1} \Delta_{r} = 0$ for any $r \in \mathbb{Z}$;
						\item $\Delta_{0}(1) = 0$;
						\item For homogeneous elements $\alpha, \beta$ and $\gamma$ in $\mathcal{H}^{\bullet}$,
						\begin{align*} \lefteqn{}
									\Delta(\alpha \smile \beta \smile \gamma ) 
									=&\ 
									\Delta(\alpha \smile \beta) \smile \gamma
									+(-1)^{|\alpha|} \alpha \smile \Delta( \beta \smile \gamma )\\
									&+(-1)^{|\beta| (|\alpha|-1)} \beta \smile \Delta( \alpha \smile \gamma ) 
									-\Delta( \alpha ) \smile \beta \smile \gamma\\
									&- (-1)^{|\alpha|} \alpha \smile \Delta( \beta ) \smile \gamma 
									- (-1)^{|\alpha| + |\beta| } \alpha \smile \beta \smile \Delta( \gamma ),
							\end{align*}
						where  		
						$|\alpha|$ denotes the degree of a homogeneous element $\alpha \in \mathcal{H}^{\bullet}$. 
				\end{enumerate}
	\end{defi}
%
%
%
%
	\begin{rem} \label{BV-rem1}  {\rm
		For each BV algebra $(\mathcal{H}^{\bullet}, \smile, \Delta)$, we can associate a graded Lie bracket $[\ , \ ]$ of degree $-1$ 
		as 
			\begin{align*}
				[\alpha, \beta] := (-1)^{|\alpha| |\beta| +|\alpha|+|\beta|} \left (
										(-1)^{ |\alpha| +1 } \Delta( \alpha \smile \beta ) 
										+(-1)^{ |\alpha| } \Delta( \alpha ) \smile \beta
										+ \alpha \smile \Delta( \beta ) \right ),
			\end{align*} 
		where $\alpha, \beta$ are homogeneous elements of $\mathcal{H}^{\bullet}$.
		The equation is said to be the BV identity. 
		It follows from \cite[Proposition 1.2]{Getzler94} that the bracket $[\ , \ ]$ above makes 
		$(\mathcal{H}^{\bullet}, \smile, [\ , \ ])$
		into a Gerstenhaber algebra. 
		}
	\end{rem}
%
%
%
%

	Recall that a {\it symmetric algebra} $A$ is a Frobenius algebra with a non-degenerate bilinear form 
	$\langle\ , \ \rangle : A \otimes A \rightarrow k$
	satisfying 
	$\langle a, b \rangle = \langle b, a \rangle$ for all $a, b \in A$.
	Wang gave the following result for symmetric algebras.
%
%
	\begin{theo}[{\cite[Corollary 6.21]{wang}}] \label{BV-theo1} 
		Let $A$ be a finite dimensional symmetric $k$-algebra.
		Then the complete cohomology ring $(\widehat{\mathrm{HH}}^{\bullet}(A), \star)$ is a BV algebra together with an operator 
		$\widehat{\Delta}_{*} : \widehat{\mathrm{HH}}^{*}(A) \rightarrow \widehat{\mathrm{HH}}^{*-1}(A)$ defined by
			\begin{align*}
				\widehat{\Delta}_{r} = \begin{cases}
												\Delta_{r} & \mbox{if} \ r \geq 1, \\
												0 & \mbox{if} \ r = 0, \\
												(-1)^{r} \ B_{-r-1} & \mbox{if} \ r \leq -1,
											\end{cases}
			\end{align*}
		where 
		$B_{r}$ is the Connes operator defined by
			\begin{align*}
				B_{r}( a_{0} \otimes \ba_{1,\, r} ) 
				= \sum_{i = 1}^{r+1} (-1)^{ i r} 1 \otimes \ba_{i,\,r} \otimes \ba_{0} \otimes \ba_{1,\,i-1}
			\end{align*}
		for any $a_{0} \otimes \ba_{1,\,r} \in C_{r}(A, A_{\nu^{-1}})$, and 
		$\Delta_{r}$ defined in \cite{Tradler} is the dual of the Connes operator $B_{r-1}$, which is equivalent to saying that $\Delta_{r}$ is given by a formula
			\begin{align*}
				\langle \Delta_{r}(f)(\ba_{1,\, r-1}), a_{r} \rangle 
				= \sum_{i = 1}^{r} (-1)^{ i(r-1)} \langle f(\ba_{i,\, r-1} \otimes \ba_{r} \otimes \ba_{1, \,i-1}), 1 \rangle
			\end{align*} 
		for any $f \in C^{r}(A, A)$.
		In particular, the restrictions $\widehat{\mathrm{HH}}^{\geq 0}(A)$ and $\widehat{\mathrm{HH}}^{\leq 0}(A)$  are BV subalgebras of $\widehat{\mathrm{HH}}^{\bullet}(A)$.
	\end{theo}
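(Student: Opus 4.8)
The plan is to exhibit the candidate operator $\widehat{\Delta}$ already at the level of the complex $(\mathcal{D}^{\bullet}(A, A), \widehat{d}^{\bullet})$ and then check the three axioms of Definition~\ref{def_1}. Since $A$ is symmetric, its Nakayama automorphism is the identity, so $A_{\nu^{-1}} = A$ and $\mathcal{D}^{\bullet}(A, A)$ is just the Hochschild cochain complex $C^{\bullet}(A, A)$ in degrees $\geq 0$, the Hochschild chain complex $C_{\bullet}(A, A)$ in degrees $\leq -1$, and the norm map $\mu \colon A \to A$, $\mu(a) = \sum_{i} u_{i} a v_{i}$, at the seam. First I would verify that $\widehat{\Delta}$ commutes with $\widehat{d}^{\bullet}$ up to sign, so that it descends to a degree $-1$ operator $\widehat{\Delta}_{*}$ on $\widehat{\HH}^{\bullet}(A)$.

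For that step, on the subcomplex $C^{\bullet}(A, A)$ the operator $\Delta$ is a chain map up to sign by Tradler's construction (it is the dual of the Connes operator), and on the subcomplex $C_{\bullet}(A, A)$ the Connes operator $B$ satisfies $\partial B + B\partial = 0$ on the normalized chains; the factor $(-1)^{r}$ in the definition of $\widehat{\Delta}_{r}$ for $r \leq -1$ serves precisely to reconcile these two sign conventions. Only the behaviour at the seam remains: $\widehat{\Delta}_{0} = 0$ disposes of the node $\mu$ on one side, while on the other two sides one computes directly that $\partial_{1}B_{0} = 0$ (since $B_{0}(a) = 1 \otimes \overline{a}$ and $\partial_{1}(1 \otimes \overline{a}) = a - a = 0$) and that $\Delta_{1}\delta^{0} = 0$; the latter is exactly where symmetry enters, because $\langle \Delta_{1}(\delta^{0}(a)), b \rangle = \langle ab - ba, 1 \rangle = \langle a, b \rangle - \langle b, a \rangle = 0$ for all $a, b \in A$. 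Hence $\widehat{\Delta}_{*}$ is well defined on $\widehat{\HH}^{\bullet}(A)$.

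Axiom~(i), $\widehat{\Delta}_{r-1}\widehat{\Delta}_{r} = 0$, reduces to $\Delta^{2} = 0$ on $\widehat{\HH}^{\geq 1}(A)$ (Tradler), to $B^{2} = 0$ on $\widehat{\HH}^{\leq -1}(A)$, and the residual compositions vanish because $\widehat{\Delta}_{0} = 0$ and $\widehat{\Delta}_{-2}\widehat{\Delta}_{-1} = \pm B_{1}B_{0} = 0$; axiom~(ii), $\widehat{\Delta}_{0}(1) = 0$, is immediate. The heart of the matter is axiom~(iii), the seven-term BV identity for $\widehat{\Delta}$ and $\star$. Rather than a brute-force case analysis I would use the standard fact underlying Remark~\ref{BV-rem1}: for a graded commutative algebra equipped with a square-zero degree $-1$ operator $\widehat{\Delta}$ with $\widehat{\Delta}(1) = 0$, the seven-term identity holds if and only if the bracket attached to $\widehat{\Delta}$ by the BV identity of Remark~\ref{BV-rem1} is a graded biderivation of the product. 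So it suffices to identify that bracket on $\widehat{\HH}^{\bullet}(A)$ with a genuine Gerstenhaber bracket. Via the isomorphism $(\widehat{\HH}^{\bullet}(A), \star) \cong (\underline{\mathrm{Ext}}_{A^{\rm e}}^{\bullet}(A, A), \smile_{\rm sg})$ of graded algebras recalled in Section~\ref{Frobenius}, together with Theorem~\ref{thm_1}, which furnishes the Gerstenhaber bracket $[\ ,\ ]_{\rm sg}$, one is reduced to showing that the $\widehat{\Delta}$-bracket agrees with $[\ ,\ ]_{\rm sg}$ under this isomorphism: on the nonnegative part this is Tradler's theorem (that $\Delta$ generates the Gerstenhaber bracket) combined with the compatibility of $[\ ,\ ]_{\rm sg}$ with the usual Gerstenhaber bracket, and the mixed and all-nonpositive degrees are matched by comparing the explicit formulas for $\star$ (the four families of formulas in Section~\ref{Frobenius}), for $B$, and for the relevant summands of $\bullet$ entering the definition of $[\ ,\ ]_{\rm sg}$. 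I expect this last identification — tracking the bracket across the positive–negative seam, with the attendant sign bookkeeping and the interplay of the cap-product-type formulas for $\star$ — to be the main obstacle.

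Finally, for the closing assertion: $\widehat{\HH}^{\geq 0}(A)$ is closed under $\star$ (a cup product of cochains of nonnegative degree has nonnegative degree) and under $\widehat{\Delta}$ (Tradler's $\Delta$ preserves nonnegative degrees and $\widehat{\Delta}_{0} = 0$), so it is the BV subalgebra $(\HH^{\bullet}(A), \smile, \Delta)$; likewise $\widehat{\HH}^{\leq 0}(A)$ is closed under $\star$ (by the family $C_{p} \star C_{q} \subseteq C_{p+q+1}$ together with the remaining products involving degree $0$) and under $\widehat{\Delta}$, hence is a BV subalgebra of $\widehat{\HH}^{\bullet}(A)$.
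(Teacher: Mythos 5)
Your plan is sound and is essentially the strategy this paper itself follows: note that Theorem \ref{BV-theo1} is stated here only as a quotation of Wang's Corollary 6.21, with no proof given, but the proof of the generalization Theorem \ref{BV-theo2} (which contains the symmetric case, since $\nu=\mathrm{id}$ is diagonalizable and the $(1)$-component is everything) proceeds exactly as you propose: one reduces the seven-term identity to showing that the bracket attached to $\widehat{\Delta}$ by the BV identity of Remark \ref{BV-rem1} is a biderivation, and one establishes this by identifying that bracket with $[\ ,\ ]_{\rm sg}$ under the graded algebra isomorphism $(\widehat{\mathrm{HH}}^{\bullet}(A),\star)\cong(\underline{\mathrm{Ext}}_{A^{\rm e}}^{\bullet}(A,A),\smile_{\rm sg})$ and invoking Theorem \ref{thm_1}. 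Your seam checks ($\partial_{1}B_{0}=0$ and $\Delta_{1}\delta^{0}=0$ via symmetry of the form) and the treatment of axioms (i)--(ii) and of the two BV subalgebras are all correct.

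The one place where your outline understates the difficulty is the step you yourself flag as the main obstacle. The identification of the $\widehat{\Delta}$-bracket with $[\ ,\ ]_{\rm sg}$ in the mixed and all-negative degree regions is \emph{not} obtained by directly matching the formulas for $\star$, $B$ and the $\bullet_{i}$-summands: on the chain level the two sides differ by explicit coboundaries $\delta(\ast)$ and images under the connecting maps $\varphi^{q}_{m+p,\,p}$, and producing those homotopy terms (and controlling the signs through the comparison maps $\kappa_{\ast,\ast}$ of Proposition \ref{Frobenius-prop1}) is precisely the content of Propositions \ref{prop-6.19}, \ref{prop6.17}, \ref{prop-6.20}, \ref{prop-6.21} and Remark \ref{remark_1}. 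So the proposal is a correct skeleton, but the region-by-region computation modulo coboundaries is the actual substance of the proof and would still have to be carried out.
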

%
%
%
%
	\begin{rem} \label{BV-rem2}  {\rm
Let $A$ be a finite dimensional symmetric $k$-algebra.
		It follows from Remark \ref{BV-rem1} that the BV differential $\widehat{\Delta}$ in Theorem \ref{BV-theo1} gives rise to a Lie bracket $\{\ ,\ \}$ (of degree $-1$)
		defined by 
			\[
				\{ \alpha , \beta \} 
				:=  (-1)^{|\alpha| |\beta| +|\alpha|+|\beta|}  \left (
					(-1)^{|\alpha| +1} \widehat{\Delta}( \alpha \smile \beta ) 
					+(-1)^{ |\alpha| } \widehat{\Delta}( \alpha ) \smile \beta
					+ \alpha \smile \widehat{\Delta}(\beta) \right )
			\]
		for any homogeneous elements $\alpha, \beta \in \widehat{\mathrm{HH}}^{\bullet}(A)$. 
		Moreover, the Gerstenhaber algebra $(\widehat{\mathrm{HH}}^{\bullet}(A), $ $\star,$ $\{\ ,\ \})$ is isomorphic to 
		$(\underline{\mathrm{Ext}}_{A^{\rm e}}^{\bullet}(A, A),$ $\smile_{{\rm sg}},$ $[\ ,\ ]_{{\rm sg}})$ 
		as Gerstenhaber algebras. 
		}
	\end{rem}
%
%
%
%
	In the rest of this section, we will show the following result on Frobenius algebras whose Nakayama automorphisms are diagonalizable.

\begin{theo} \label{BV-theo2}
Let $A$ be a finite dimensional Frobenius $k$-algebra. 	If the Nakayama automorphism $\nu$ is diagonalizable, 
then the graded commutative ring 
$(\widehat{\mathrm{HH}}_{(1)}^{\bullet}(A), \star_{1} )$
is a BV algebra together with an operator 
$\widehat{\Delta}_{*} : \widehat{\mathrm{HH}}_{(1)}^{*}(A) \rightarrow \widehat{\mathrm{HH}}_{(1)}^{*-1}(A)$ defined by
\begin{align*}
\widehat{\Delta}_{r} 
= 
\begin{cases}
\Delta^{\nu}_{r} & \mbox{if} \ r \geq 1, \\
0 & \mbox{if} \ r = 0, \\
(-1)^{i} \ B_{-r-1}^{\nu^{-1}} & \mbox{if} \ r \leq -1,
\end{cases}
\end{align*}
where $B_{r}^{\nu^{-1}}$ is the twisted Connes operator defined by
\begin{align*}
B_{r}^{\nu^{-1}}( a_{0} \otimes \ba_{1,\,r} ) 
= \sum_{i = 1}^{r+1} 
(-1)^{ i r} 
1 \otimes \ba_{i,\,r} \otimes \ba_{0} \otimes \overline{\nu^{-1} (a_{1})} \otimes \cdots \otimes \overline{\nu^{-1} (a_{i-1})}
\end{align*} 
for any $a_{0} \otimes \ba_{1,\,r} \in C_{r}(A, A_{\nu^{-1}})$, and $\Delta^{\nu}_{r}$ defined in \cite{Lam} is the dual of the twisted 
Connes operator $B_{r-1}^{\nu}$, which is equivalent to saying that $\Delta^{\nu}_{r}$ is given by a formula
\begin{align*}
\langle \Delta_{r}^{\nu}(f)(\ba_{1,\,r-1}), a_{r} \rangle 
= 
\sum_{i = 1}^{r} (-1)^{ i(r-1)} 
\langle f(\ba_{i,\,r-1} \otimes \ba_{r} \otimes \overline{\nu (a_{1})} \otimes \cdots \otimes \overline{\nu (a_{i-1})} ), 1 \rangle
\end{align*} 
for any $f \in C^{r}(A, A)$.
In particular, the restrictions $\widehat{\mathrm{HH}}^{\geq 0}(A)$ and $\widehat{\mathrm{HH}}^{\leq 0}(A)$  are BV subalgebras of $\widehat{\mathrm{HH}}^{\bullet}(A)$.
\end{theo}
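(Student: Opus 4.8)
The plan is to transport the whole problem to the graded commutative algebra $(\widehat{\mathrm{HH}}_{(1)}^{\bullet}(A), \star_{1})$, which by Corollary \ref{Decomposition-cor1} is isomorphic to $(\widehat{\mathrm{HH}}^{\bullet}(A), \star)$ since $\nu$ is diagonalizable, and moreover to $\underline{\mathrm{Ext}}_{A^{\rm e}}^{\bullet}(A,A)$ with its Gerstenhaber structure $(\smile_{\rm sg}, [\ ,\ ]_{\rm sg})$ from Theorem \ref{thm_1}. The argument then has four steps: (i) assemble the piecewise data into a well-defined operator $\widehat{\Delta}$ of degree $-1$ on $\widehat{\mathrm{HH}}_{(1)}^{\bullet}(A)$; (ii) check $\widehat{\Delta}\circ\widehat{\Delta}=0$ and $\widehat{\Delta}(1)=0$; (iii) show that $\widehat{\Delta}$ generates the bracket $[\ ,\ ]_{\rm sg}$ through the BV formula of Remark \ref{BV-rem1}; (iv) deduce the seven-term identity of Definition \ref{def_1}(iii) by a formal argument.

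For step (i), on the non-negative part one recalls that $\Delta^{\nu}$ is, via the duality isomorphism $\Theta$, the transpose of the twisted Connes operator $B^{\nu}$ on $C_{\bullet}(A, A_{\nu})$; by Proposition \ref{Decomposition-prop1}(2), $B^{\nu}$ is a chain map on $C_{\bullet}^{(1)}(A, A_{\nu})$, and by Proposition \ref{Decomposition-prop2}(1) the duality identifies $C_{(1)}^{\bullet}(A,A)$ with $D(C_{\bullet}^{(1)}(A, A_{\nu}))$, so $\Delta^{\nu}$ descends to $\Hh_{(1)}^{\bullet}(A, A)$. On the non-positive part, $\nu^{-1}$ is again diagonalizable by Corollary \ref{Decomposition-cor2}, so $B^{\nu^{-1}}$ preserves $C_{\bullet}^{(1)}(A, A_{\nu^{-1}})$; on this subcomplex the twisting operator $T$ acts as the identity (the relevant product of eigenvalues equals $1$), so the homotopy identity $\partial_{r+1}B_{r}^{\nu^{-1}} - B_{r-1}^{\nu^{-1}}\partial_{r} = (-1)^{r+1}(\id - T)$ degenerates to $\partial_{r+1}B_{r}^{\nu^{-1}} = B_{r-1}^{\nu^{-1}}\partial_{r}$ there, and $B^{\nu^{-1}}$ descends to $\widehat{\mathrm{HH}}_{(1)}^{\leq -1}(A)$. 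It remains to glue across degrees $-1,0,1$: one has $\widehat{\Delta}_{0}=0$ by fiat; the composite $\partial_{1}B_{0}^{\nu^{-1}}$ already vanishes on $\mathcal{D}_{(1)}^{-1}(A,A)=A_{1}$; and $\Delta_{1}^{\nu}$ sends a $1$-cocycle into $\ker\delta^{0}=\mathcal{D}_{(1)}^{0}(A,A)$. That $\widehat{\Delta}$ has degree $-1$ is clear. Step (ii) is then routine: $\widehat{\Delta}(1)=\widehat{\Delta}_{0}(1)=0$, and $\widehat{\Delta}\circ\widehat{\Delta}=0$ splits into the non-negative part (transpose of $B^{\nu}B^{\nu}=0$ on $\Hh_{\bullet}^{(1)}$), the non-positive part ($B^{\nu^{-1}}B^{\nu^{-1}}=0$, the same result applied to $\nu^{-1}$), and the compositions through degree $0$, which vanish because $\widehat{\Delta}_{0}=0$; there is no composition crossing directly from the strictly positive to the strictly negative range.

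Step (iii) is the heart of the proof and the point at which our argument diverges from \cite{Lam}. We must show that for homogeneous $\alpha,\beta\in\widehat{\mathrm{HH}}_{(1)}^{\bullet}(A)$ the class $[\alpha,\beta]_{\rm sg}$ coincides with $(-1)^{|\alpha||\beta|+|\alpha|+|\beta|}\bigl((-1)^{|\alpha|+1}\widehat{\Delta}(\alpha\star_{1}\beta)+(-1)^{|\alpha|}\widehat{\Delta}(\alpha)\star_{1}\beta+\alpha\star_{1}\widehat{\Delta}(\beta)\bigr)$ under the identification of $\widehat{\mathrm{HH}}_{(1)}^{\bullet}(A)$ with $\underline{\mathrm{Ext}}_{A^{\rm e}}^{\bullet}(A,A)$ carrying $\star_{1}$ to $\smile_{\rm sg}$. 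This is verified on (co)chain representatives by producing explicit null-homotopies, splitting into cases by the signs of $|\alpha|$ and $|\beta|$: when both degrees are non-negative, the identity reduces to Lambre--Zhou--Zimmermann's result for $\mathrm{HH}^{\bullet}(A)$ together with the fact (from \cite{wang2}) that $\star_{1}$ and $[\ ,\ ]_{\rm sg}$ restrict to the usual cup product and Gerstenhaber bracket in non-negative degrees; the case of two negative degrees is treated by a dual computation using the $\star$-product formula in case (4) and the terms $f\bullet_{-i}g$; and the mixed cases follow by direct manipulation of the $\star$-product formulas in cases (2) and (3) against $f\bullet_{i}g$ and $f\bullet_{-i}g$, with Lemma \ref{Frobenius-lem1} used to move dual bases past the Nakayama automorphism. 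I expect this case-by-case chain-level comparison, and especially the mixed-degree cases, to be the main obstacle; the duality $\Theta$ and the symmetry between $B^{\nu}$ and $B^{\nu^{-1}}$ should substantially reduce the bookkeeping.

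Once step (iii) is in place, step (iv) is formal. By Theorem \ref{thm_1} the bracket $[\ ,\ ]_{\rm sg}$ is a Gerstenhaber bracket, hence in particular a biderivation for $\smile_{\rm sg}$; by step (iii) the bracket generated by $\widehat{\Delta}$ is therefore a biderivation for $\star_{1}$; and it is standard (cf.\ \cite{Getzler94} and Remark \ref{BV-rem1}) that, for a degree $-1$ square-zero operator annihilating the unit, being a differential operator of order $\leq 2$, i.e.\ satisfying Definition \ref{def_1}(iii), is equivalent to the generated bracket obeying the graded Leibniz rule. Hence $\widehat{\Delta}$ is a BV differential on $(\widehat{\mathrm{HH}}_{(1)}^{\bullet}(A),\star_{1})$ and the induced Lie bracket is $[\ ,\ ]_{\rm sg}$. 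Finally, $\star_{1}$ preserves both $\widehat{\mathrm{HH}}^{\geq 0}(A)$ and $\widehat{\mathrm{HH}}^{\leq 0}(A)$, both contain the unit, and $\widehat{\Delta}$ preserves each of them, the case of $\widehat{\mathrm{HH}}^{\leq 0}(A)$ using precisely that $\widehat{\Delta}_{0}=0$, so both are BV subalgebras, as asserted.
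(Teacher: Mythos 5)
Your proposal follows essentially the same route as the paper: identify $(\widehat{\mathrm{HH}}_{(1)}^{\bullet}(A),\star_{1})$ with $(\underline{\mathrm{Ext}}_{A^{\rm e}}^{\bullet}(A,A),\smile_{\rm sg})$, verify by a degree-by-degree chain-level comparison that the operator $\widehat{\Delta}$ generates $[\ ,\ ]_{\rm sg}$ via the BV formula, and then obtain the seven-term identity formally from the Leibniz rule for $[\ ,\ ]_{\rm sg}$, with $\widehat{\Delta}^{2}=0$ and $\widehat{\Delta}(1)=0$ coming from the definition. Be aware that your step (iii), which you correctly flag as the main obstacle, is exactly where the paper's real work lies (its Propositions on the mixed-degree and doubly negative cases, carried out by explicit null-homotopies against the maps $\kappa_{*,\,*}$ and $\varphi_{*,\,*}$), so a complete write-up would still require executing those computations.
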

	\begin{rem}  {\rm
		Each of the components $\Delta_{*}^{\nu}$ and $B_{*}^{\nu^{-1}}$ is defined on the chain level.
		Corollary \ref{Decomposition-cor2} and Lemma \ref{Decomposition-lem1} imply that we can lift 
		the two components $\Delta_{*}^{\nu}$ and $B_{*}^{\nu^{-1}}$ to the cohomology level when we restrict them to $\mathcal{D}_{(1)}^{*}(A, A)$. 
		}
	\end{rem}
%
%
	Using the isomorphism $\widehat{\mathrm{HH}}_{(1)}^{\bullet}(A) \cong \widehat{\mathrm{HH}}^{\bullet}(A)$ 
	appeared in Corollary \ref{Decomposition-cor1}, 
	we have our main result.

		\begin{cor} \label{BV-cor2} 
			Let $A$ be a finite dimensional Frobenius $k$-algebra. If the Nakayama automorphism of $A$ is diagonalizable, then
			the complete cohomology ring $\widehat{\mathrm{HH}}^{\bullet}(A)$ is a BV algebra.
		\end{cor}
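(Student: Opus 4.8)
The plan is to obtain Corollary \ref{BV-cor2} from Theorem \ref{BV-theo2} by transporting the BV structure along the algebra isomorphism of Corollary \ref{Decomposition-cor1}. First I would record that, under the hypothesis that $\nu$ is diagonalizable, Corollary \ref{Decomposition-cor1} yields an isomorphism of graded $k$-algebras
\[
\Psi : (\widehat{\mathrm{HH}}_{(1)}^{\bullet}(A), \star_{1}) \xrightarrow{\ \sim\ } (\widehat{\mathrm{HH}}^{\bullet}(A), \star),
\]
coming from the inclusion of the subcomplex $\mathcal{D}_{(1)}^{\bullet}(A, A) \hookrightarrow \mathcal{D}^{\bullet}(A, A)$ together with the compatibility of the $\star$-product with this inclusion established in Proposition \ref{Decomposition-prop3}. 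On the other side, Theorem \ref{BV-theo2} supplies a degree $-1$ operator $\widehat{\Delta}$ on $\widehat{\mathrm{HH}}_{(1)}^{\bullet}(A)$ making $(\widehat{\mathrm{HH}}_{(1)}^{\bullet}(A), \star_{1}, \widehat{\Delta})$ a BV algebra.

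Next I would set $\widetilde{\Delta} := \Psi \circ \widehat{\Delta} \circ \Psi^{-1}$, a degree $-1$ operator on $\widehat{\mathrm{HH}}^{\bullet}(A)$, and check the three axioms of Definition \ref{def_1} for the triple $(\widehat{\mathrm{HH}}^{\bullet}(A), \star, \widetilde{\Delta})$. Each axiom is inherited from the corresponding one for $\widehat{\Delta}$ since $\Psi$ is a unital isomorphism of graded commutative algebras: conjugation by $\Psi$ gives $\widetilde{\Delta}_{r-1}\widetilde{\Delta}_{r} = \Psi\,\widehat{\Delta}_{r-1}\widehat{\Delta}_{r}\,\Psi^{-1} = 0$ and $\widetilde{\Delta}_{0}(1) = \Psi(\widehat{\Delta}_{0}(\Psi^{-1}(1))) = \Psi(\widehat{\Delta}_{0}(1)) = 0$, while applying $\Psi$ to the defining identity (iii) for $\widehat{\Delta}$ and using $\Psi(x \star_{1} y) = \Psi(x) \star \Psi(y)$ reproduces identity (iii) for $\widetilde{\Delta}$. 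This establishes the claim.

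There is no genuine obstacle here: the statement is a formal consequence of the two results quoted, and all the substantive content --- exhibiting the BV differential via the (twisted) Connes operator and Lambre-Zhou-Zimmermann's operator $\Delta^{\nu}$, and verifying the BV identities --- has already been carried out at the level of $\widehat{\mathrm{HH}}_{(1)}^{\bullet}(A)$ in Theorem \ref{BV-theo2}. If one wished, one could instead describe $\widetilde{\Delta}$ directly on the complex $\mathcal{D}^{\bullet}(A, A)$, so that the operator $\widehat{\Delta}$ of Theorem \ref{BV-theo2} becomes literally its restriction to $\mathcal{D}_{(1)}^{\bullet}(A, A)$; but this refinement is not needed for the corollary as stated.
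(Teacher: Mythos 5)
Your proposal is correct and coincides with the paper's own argument: the paper likewise deduces Corollary \ref{BV-cor2} from Theorem \ref{BV-theo2} by transporting the BV operator along the graded algebra isomorphism $(\widehat{\mathrm{HH}}_{(1)}^{\bullet}(A), \star_{1}) \cong (\widehat{\mathrm{HH}}^{\bullet}(A), \star)$ of Corollary \ref{Decomposition-cor1}, leaving the routine conjugation check implicit. Your explicit verification of the three BV axioms under conjugation by $\Psi$ is just a spelled-out version of that same step.
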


In order to prove Theorem \ref{BV-theo2}, we claim that the bilinear map 
		\[
			\{ \ , \ \} : \widehat{\mathrm{HH}}_{(1)}^{m}(A) \otimes \widehat{\mathrm{HH}}_{(1)}^{n}(A) 
			\rightarrow \widehat{\mathrm{HH}}_{(1)}^{m+n-1}(A)\ (m, n \in \mathbb{Z})
		\]
defined by 
			\[
				\{ \alpha , \beta \} 
				:=  (-1)^{|\alpha| |\beta| +|\alpha|+|\beta|}  \left (
					(-1)^{|\alpha| +1} \widehat{\Delta}( \alpha \star_{1} \beta )
					+(-1)^{ |\alpha| } \widehat{\Delta}( \alpha ) \star_{1} \beta  
					+ \alpha \star_{1} \widehat{\Delta}(\beta) \right )
			\] 
for any  $\alpha \otimes \beta \in \widehat{\mathrm{HH}}_{(1)}^{m}(A) \otimes \widehat{\mathrm{HH}}_{(1)}^{n}(A)$
commutes with the Gerstenhaber bracket 
		\[
		    [\ ,\ ]_{\rm sg} : \underline{\mathrm{Ext}}_{A^{\rm e}}^{m}(A, A) \otimes \underline{\mathrm{Ext}}_{A^{\rm e}}^{n}(A, A) 
			\rightarrow \underline{\mathrm{Ext}}_{A^{\rm e}}^{m+n-1}(A, A).
		\]

\begin{figure}[h] 
 \centering
\input{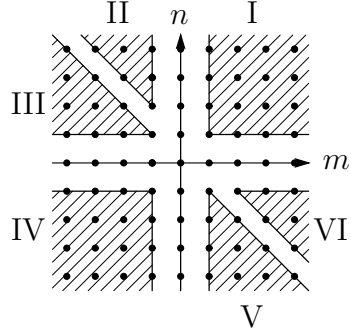}
    \caption{A plane with six regions}
    \label{fig_1} 
\end{figure} 
By considering whether $m+n-1$ is negative or not together with Figure \ref{fig_1} and using the anti-commutativity of the Gerstenhaber bracket $[\ ,\ ]_{\rm sg}$, we see that it suffices to show our claim for a pair $(m, n)$ of integers $m \leq n$ satisfying one of the following conditions:
\begin{enumerate}
    \item $(m, n)$ is on the lines $m=0$ or $n=0$.
    \item $(m, n)$ belongs to the regions I, IV, V or VI.
\end{enumerate}
Thus our claim can be divided into the five cases Propositions \ref{prop-6.19}, \ref{prop6.17}, \ref{prop-6.20} and \ref{prop-6.21}  and  Remark \ref{remark_1}.
In particular, Propositions \ref{prop-6.19}, \ref{prop6.17}, \ref{prop-6.20} and \ref{prop-6.21} prove our claim for the pairs in the regions VI, V, IV and I, respectively.
Further, we consider the  case (1) in Remark \ref{remark_1}.
Among the four propositions, we prove only the first one.
We also remark that, in the following propositions, the appearing integers $m$ and $n$ are independent of the above argument.

%
%
%
%
	\begin{prop} \label{prop-6.19} 
		Let $A$ be a finite dimensional Frobenius $k$-algebra with the Nakayama automorphism $\nu$ of $A$ diagonalizable, 
		and let $m, n$ be integers such that $m > n \geq 1$, so $m-n-1 \geq 0$.
		Then we have the following commutative diagram. 
	\[
		\xymatrix@C=40pt{
			\widehat{\mathrm{HH}}_{(1)}^{m}(A)  \otimes \widehat{\mathrm{HH}}_{(1)}^{-n}(A)  \ar[r]^-{\{ \ ,\ \}}  \ar[d]^{\cong}  
			&
			\widehat{\mathrm{HH}}_{(1)}^{m-n-1}(A) \ar[d]_{\cong} \\ 
			\mathrm{Ext}_{A^{\rm e}}^{m}(A, A) \otimes \mathrm{Tor}_{n-1}^{A^{\rm e}}(A, A_{\nu^{-1}})  \ar[d]_{\mathrm{id} \otimes \kappa_{n-1, 1} }^{\cong}
			& 
			\mathrm{Ext}_{A^{\rm e}}^{m-n-1}(A, A) \ar[d]^{\varphi_{m-n-1, 0}^{n+1}}_{\cong} \\
			\mathrm{Ext}_{A^{\rm e}}^{m}(A, A) \otimes \mathrm{Ext}_{A^{\rm e}}^{1}(A, \overline{\Omega}^{n+1} (A)) \ar[r]^-{ [\ ,\ ]_{\rm{sg}}  } \ar[d]^{\cong}
			&
			\mathrm{Ext}_{A^{\rm e}}^{m}(A, \overline{\Omega}^{n+1} (A)) \ar[d]_{\cong} \\
			\underline{\mathrm{Ext}}_{A^{\rm e}}^{m}(A, A) \otimes \underline{\mathrm{Ext}}_{A^{\rm e}}^{-n}(A, A) \ar[r]^-{ [\ ,\ ]_{\rm{sg}}  }   & 
			\underline{\mathrm{Ext}}_{A^{\rm e}}^{m-n-1}(A, A),
			}\]
	where $\{ \ , \ \} : \mathcal{D}_{(1)}^{m}(A, A) \otimes \mathcal{D}_{(1)}^{-n}(A, A) \rightarrow \mathcal{D}_{(1)}^{m-n-1}(A, A)$ is defined by
		\begin{align} 
			\{ f, z \} 
			&= (-1)^{ |f| |z| +|f| +|z| } \left (
				(-1)^{|f|+1} \Delta^{\nu}(f \star_{1} z) 
				+ (-1)^{|f|} \Delta^{\nu}(f) \star_{1} z 
				+ (-1)^{|z|} f  \star_{1} B^{\nu^{-1}}(z) \right ) \nonumber \\[3pt]
			&= (-1)^{ |f| |z| +|f| +|z| } \left (
				(-1)^{|f|+1} \widehat{\Delta}(f \star_{1} z) 
				+ (-1)^{|f|} \widehat{\Delta}(f) \star_{1} z
				+  f \star_{1} \widehat{\Delta}(z) \right ) \nonumber 
		\end{align}
 	for $f \otimes z \in \mathcal{D}_{(1)}^{m}(A, A)  \otimes \mathcal{D}_{(1)}^{-n}(A, A)$.
 \end{prop}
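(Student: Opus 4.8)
The plan is to establish commutativity of the large rectangle in two stages. The \emph{lower} square, comparing $[\ ,\ ]_{\rm sg}$ on representatives in $\mathrm{Ext}_{A^{\rm e}}^{*}(A,\overline{\Omega}^{*}(A))$ with $[\ ,\ ]_{\rm sg}$ on $\underline{\mathrm{Ext}}_{A^{\rm e}}^{*}(A,A)$, commutes because $\smile_{\rm sg}$ and $[\ ,\ ]_{\rm sg}$ are compatible with the connecting homomorphisms and hence descend to the inductive limit (the discussion preceding Theorem \ref{thm_1}, together with Propositions \ref{Pre-Tate-prop1} and \ref{Frobenius-prop1}); here one also uses that $\kappa_{n-1,1}(z)$ represents the image of the class of $z$ under the isomorphism $\mathrm{Tor}_{n-1}^{A^{\rm e}}(A,A_{\nu^{-1}})\cong\underline{\mathrm{Ext}}_{A^{\rm e}}^{-n}(A,A)$. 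The real content is the \emph{upper} square, i.e.\ the assertion that for $f\otimes z\in\mathcal{D}_{(1)}^{m}(A,A)\otimes\mathcal{D}_{(1)}^{-n}(A,A)$ representing cohomology classes, the cocycles $\varphi_{m-n-1,0}^{n+1}(\{f,z\})$ and $[f,\kappa_{n-1,1}(z)]_{\rm sg}$ in $C^{m}(A,\overline{\Omega}^{n+1}(A))$ are cohomologous. By bilinearity we may take $z=a_{0}\otimes\ba_{1,\,n-1}$, and Lemma \ref{Frobenius-lem1}\,(\ref{Frobenius-lem1-1}) identifies the corresponding element of $A^{\vee}$ as the one sending $1$ to $\sum_{i}u_{i}\nu(a_{0})\otimes v_{i}$; hence
\[
\kappa_{n-1,1}(z)(\bb_{1})=\sum_{i}d_{n+1}\bigl(u_{i}\nu(a_{0})\otimes\ba_{1,\,n-1}\otimes\bv_{i}\otimes\bb_{1}\otimes 1\bigr),
\]
which makes the right-hand side completely explicit once Wang's $\bullet$-operation is unwound.

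Next I would expand both sides at the (co)chain level. Since $m>n$, the class $\{f,z\}$ sits in non-negative degree $m-n-1$, so only the $\star$-product cases (1) and (3)(i) enter: $\widehat{\Delta}(f)\star_{1}z$ and $f\star_{1}\widehat{\Delta}(z)$ are of the form ``feed the chain $z$, respectively its twisted cyclic permutation $B^{\nu^{-1}}(z)$, into $f$, sum over the dual bases and contract with the $v_{i}$'s'', while $\widehat{\Delta}(f\star_{1}z)=\Delta^{\nu}(f\star_{1}z)$ performs the twisted cyclic permutation of $\Delta^{\nu}$ after the cap; applying $\varphi_{m-n-1,0}^{n+1}$ then postcomposes with the iterated connecting homomorphism built from the bar differentials $d_{1},\dots,d_{n+1}$, up to the sign $(\pm1)^{n+1}$ coming from the definition of $\varphi_{m+p,p}$. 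On the other side $f$ has bidegree $(m,0)$ and $g:=\kappa_{n-1,1}(z)$ has bidegree $(1,n+1)$, so Wang's formula reads
\[
[f,g]_{\rm sg}=\sum_{i=1}^{m}\pm\,(f\bullet_{i}g)+\sum_{i=1}^{n+1}\pm\,(f\bullet_{-i}g)-(-1)^{(m-1)(n+1)}(g\bullet_{1}f),
\]
the point being that the second coordinate $p=0$ of $f$ kills all $g\bullet_{-i}f$ terms. Each $f\bullet_{\pm i}g$ and $g\bullet_{1}f$ is a four-fold composite ending in $d_{n+1}$; substituting the explicit $g$ above and pushing $\pi^{(l)},\pi^{(r)},\pi^{(b)}$ and the bar differentials through, every summand becomes a sum over the dual bases of $d_{*}$ applied to a tensor in which $f$ and the $\ba_{j}$'s have been interleaved.

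The heart of the argument is the term-by-term comparison of these two expansions. I expect the $n+1$ summands $f\bullet_{-i}g$, which insert $f$ among the ``old'' factors $\ba_{j}$ (i.e.\ inside the part of $g$ produced by $\kappa$), to reproduce --- after a reindexing of the sum and the Nakayama identities $\sum_{i}u_{i}a\otimes v_{i}=\sum_{i}u_{i}\otimes\nu^{-1}(a)v_{i}$, $\sum_{i}u_{i}\otimes v_{i}=\sum_{i}\nu(v_{i})\otimes u_{i}$ of Lemma \ref{Frobenius-lem1}\,(\ref{Frobenius-lem1-3}) --- exactly $\varphi_{m-n-1,0}^{n+1}$ applied to the two genuinely cyclic pieces $(-1)^{m+1}\Delta^{\nu}(f\star_{1}z)$ and $(-1)^{|z|}f\star_{1}B^{\nu^{-1}}(z)$ of $\{f,z\}$. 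The remaining $m+1$ summands $f\bullet_{i}g$ $(1\le i\le m)$ together with $g\bullet_{1}f$ should, after commuting the internal bar differential $d_{n+1}$ of $g$ past the insertion of $f$ --- which by the Leibniz-type expansion of $d$ splits off a main term matching $\varphi_{m-n-1,0}^{n+1}\bigl((-1)^{m}\Delta^{\nu}(f)\star_{1}z\bigr)$ plus boundary error terms --- combine into a single coboundary $\widehat{d}^{\,m-n-2}_{(1)}(h)$ for an explicit $h\in\mathcal{D}_{(1)}^{m-n-2}(A,A)$, so that the two cocycles are cohomologous (and possibly equal after a judicious choice of $h$). That $\{f,z\}$ descends to cohomology, and that $\varphi_{m-n-1,0}^{n+1}(\{f,z\})$ lands in the subcomplex $C^{*}_{(1)}$, follows from the fact that $\widehat{\Delta}$ and $\star_{1}$ restrict to chain maps on $\mathcal{D}_{(1)}^{\bullet}(A,A)$ --- Proposition \ref{Decomposition-prop1}\,(2), Corollary \ref{Decomposition-cor2}, Proposition \ref{Decomposition-prop3}, and the compatibility of $\star$ with $\widehat{d}$ cited from \cite{wang2}. \textbf{The main obstacle} will be precisely this matching: disentangling Wang's multi-case $\bullet$-operation from the $\star_{1}$-plus-(twisted) Connes expansion so that the discrepancy is manifestly exact, and verifying that the numerous sign conventions --- the exponents $r(m,p;n,q;i)$, $s(m,p;n,q;i)$ of \cite{wang}, the signs in $\widehat{\Delta}$ from \cite{Lam}, and the signs accumulated by $\varphi_{m+p,p}$ and by the isomorphism $\Phi_{p+q}$ of \cite{wang2} --- conspire to cancel.
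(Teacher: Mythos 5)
Your overall strategy coincides with the paper's: formal commutativity of the bottom square via the compatibility of $[\ ,\ ]_{\rm sg}$ with the connecting maps, then a chain-level comparison of $\varphi_{m-n-1,\,0}^{n+1}(\{f,z\})$ with $[f,\kappa_{n-1,\,1}(z)]_{\rm sg}$ up to coboundary; your explicit representative $\kappa_{n-1,\,1}(z)(\bb_{1})=\sum_{i}d_{n+1}(u_{i}\nu(a_{0})\otimes\ba_{1,\,n-1}\otimes\bv_{i}\otimes\bb_{1}\otimes 1)$ and your observation that the terms $g\bullet_{-i}f$ vanish because $f$ has second coordinate $p=0$ are both correct. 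The gap is that the proposal stops exactly where the content of the proposition begins, and the term-matching you announce as a guide for the computation is inverted. The summands $f\bullet_{-i}g$, which insert $\overline{f}$ among the factors $\ba_{1,\,n-1}\otimes\bv_{j}$ produced by $\kappa$, are the ones whose shape is $u_{j}\nu a_{0}\otimes\ba_{1,\,i-1}\otimes\overline{f}(\ba_{i,\,n-1}\otimes\bv_{j}\otimes\cdots)\otimes\cdots$, i.e.\ $f$'s argument list begins partway through the cycle of the $a$'s; these correspond (after an explicit coboundary correction) to $\Delta^{\nu}(f)\star_{1}z$, since it is precisely the cyclic rotation inside $\Delta^{\nu}(f)$ that makes $f$ start at $\ba_{i}$. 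Dually, the $m$ summands $f\bullet_{i}g$, in which $f$ swallows the whole block $\overline{u_{j}\nu a_{0}}\otimes\ba_{1,\,n-1}\otimes\bv_{j}$ inserted at position $i$ among the $\bb$'s, split at $i=m-n$ into two groups that match $\Delta^{\nu}(f\star_{1}z)$ (block entirely inside $f$, the $\bb$'s rotated) and $f\star_{1}B^{\nu^{-1}}(z)$ (block pushed past the end of $f$'s arguments). Your assignment — the cyclic pieces to $\bullet_{-i}$ and $\Delta^{\nu}(f)\star_{1}z$ to $\bullet_{i}$ — is the reverse, so following the plan as written would stall at the matching stage.

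Moreover, all three groups, not only the $f\bullet_{i}g$ group, differ from their targets by nontrivial coboundaries that must be exhibited explicitly: in each case one needs $\delta$ of concrete cochains built from double dual-basis sums of the form $\sum_{j,k}\langle f(\cdots\otimes\overline{u_{j}\nu a_{0}}\otimes\ba_{1,\,n-1}\otimes\bv_{j}\otimes\cdots\otimes\bu_{k}\otimes\overline{\nu}^{\otimes i-1}),1\rangle v_{k}$ precomposed with powers of the cyclic shift, and the leftover terms produced by commuting $d_{n+1}$ past the insertion of $f$ do not collapse into a single exact correction but cancel pairwise across the three groups. Since the whole assertion is this identity of cocycles up to coboundary, and neither the correcting cochains nor the (sign-sensitive) cancellations are produced, the proof is not yet established.
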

%
%
\begin{proof}
It follows from the definition of the induced Gerstenhaber bracket on $\underline{\mathrm{Ext}}_{A^{\rm e}}^{\bullet}(A, A)$ that the bottom square is commutative (see Theorem \ref{thm_1}).
	It remains to show the commutativity of the top diagram. 
	It suffices to prove that a formula
		\begin{align} \label{prop-6.19-eq1}
			([\ ,\ ]_{\rm sg} (\mathrm{id} \otimes \kappa_{n-1,\,1}))(f \otimes z) = \varphi_{m-n-1,\,0}^{n+1}( \{ \ ,\ \}(f \otimes z))			
		\end{align}	
	holds in $\mathrm{Ext}_{A^{\rm e}}^{m}(A, \overline{\Omega}^{n+1}(A))$ 
	for $f \in \mathrm{Ker}\, \delta_{(1)}^{m}$ and $z := a_{0} \otimes \ba_{1,\,n-1}  \in \mathrm{Ker}\, \partial_{n-1}^{(1)}$.
	Denote by $\overline{f}$ the composition of $f : \overline{A}^{\otimes m} \rightarrow A$ with the canonical epimorphism 
	$\pi : A \rightarrow \overline{A}$. 
	For the right hand side of  the formula (\ref{prop-6.19-eq1}), we have, for $\overline{b}_{1,\,m} \in \overline{A}^{\otimes m}$,
		\begin{eqnarray} \label{eq113}
				\lefteqn{
				\varphi_{m-n-1,\,0}^{n+1} ( \{ f, z \})(\bb_{1,\,m}) } \nonumber \\
			&=&
				(-1)^{mn+n+1} \varphi_{m-n-1,\,0}^{n+1} (  \Delta^{\nu}(f \frown z) ) (\bb_{1,\,m})         
				+(-1)^{mn+n}  \varphi_{m-n-1,\,0}^{n+1}  (   \Delta^{\nu}(f) \frown z ) (\bb_{1,\,m})		  
				   \nonumber  \\[2pt] 
			&& 
				+(-1)^{mn+m} \varphi_{m-n-1,\,0}^{n+1} (   f \frown \mathrm{B}^{\nu^{-1}}(z)) (\bb_{1,\,m})  \nonumber \\ 
			&=&
				\sum_{j,\,k} \sum_{i=1}^{n-m} (-1)^{i(n-m+1)+n+1} 
				d( \langle u_{k} \nu a_{0} f( \ba_{1,\,n-1} \otimes \bv_{k} \otimes \bb_{i,\,m-n-1} \otimes \bu_{j} \otimes \overline{\nu b}_{1,\,i-1}), 1 \rangle 
				v_{j}   \nonumber \\ 
			&& \otimes \ \bb_{m-n,\,m} \otimes 1) \nonumber \\  
			&&
				+\sum_{j,\,k} \sum_{i=1}^{n} (-1)^{i(m+1)} 
				d( u_{j} \nu a_{0} \langle f( \ba_{i,\,n-1} \otimes \bv_{j} \otimes \bb_{1,\,m-n-1} \otimes \bu_{k} \otimes \nuina_{1,\,i-1}), 1 \rangle v_{k} 
				\nonumber \\ 
			&& \otimes \  \bb_{m-n,\,m} \otimes 1)  \nonumber \\  
			&&
				+\sum_{j,\,k} \sum_{i=1}^{m-n} (-1)^{(i+n)(m+1)} 
				d( u_{j} \nu a_{0} \langle f( \bb_{i,\,m-n-1} \otimes \bu_{k} \otimes \nuina_{1,\,n-1} \otimes \overline{ \nu v}_{j} \otimes 
				\overline{ \nu b}_{1,\,i-1}), 1 \rangle v_{k}   \nonumber \\ 
			&&  \otimes \ \bb_{m-n,\,m} \otimes 1) 
				 \nonumber \\  
			&&
				+\sum_{j} \sum_{i=1}^{n} (-1)^{(m+i)(n+1)}
   				d( f(\bb_{1,\,m-n-1} \otimes \bu_{j} \otimes \ba_{i,\,n-1} \otimes \ba_{0} \otimes \overline{ \nu^{-1} a}_{1,\,i-1}) v_{j} \otimes \bb_{m-n,\,m} 
				\otimes 1)
				 \nonumber   
		\end{eqnarray}
	in $ \overline{\Omega}^{n+1}(A)$. 
	On the other hand, for the left hand side of the formula (\ref{prop-6.19-eq1}), we get
		\begin{eqnarray} \label{eq112}
				\lefteqn{ 
				[f, \kappa_{n-1,\,1} (z)]_{\rm sg}(\bb_{1,\,m}) } \nonumber \\
			&=&
				(f \bullet \kappa_{n-1,\,1} (z) -(-1)^{(m-1)(-n-1)} \kappa_{n-1,\,1} (z) \bullet f ) (\bb_{1,\,m}) \nonumber \\
			&=& 
				\sum_{j} \sum_{i=1}^{m-n} (-1)^{(n+1)(i+1)} 
				d( f( \bb_{1,\,i-1} \otimes \overline{u_{j} \nu a_{0}} \otimes \ba_{1,\,n-1} \otimes \bv_{j} \otimes \bb_{i,\,m-n-1}) \otimes \bb_{m-n,\,m} 
				\otimes 1)
				 \label{6.19-5-1-m-n}  \\ 
			&&
				+\sum_{j} \sum_{i=m-n+1}^{m} (-1)^{(n+1)(i+1)} 
				d( f( \bb_{1,\,i-1} \otimes \overline{u_{j} \nu a_{0}}  \otimes \ba_{1,\,m-i}) \otimes \ba_{m-i+1,\,n-1} \otimes \bv_{j} \otimes \bb_{i,\,m} 
				\otimes 1) 
				\label{6.19-5-m-n+1-m} \nonumber  \\  
			&&
				+\sum_{j} \sum_{i=1}^{n}(-1)^{i(m+1)}
				d( u_{j} \nu a_{0} \otimes \ba_{1,\,i-1} \otimes \overline{f}(\ba_{i,\,n-1} \otimes \bv_{j} \otimes \bb_{1,\,m-n+i-1}) \otimes \bb_{m-n+i,\,m} 
				\otimes 1). \nonumber  \\
				\label{6.19-4}  
		\end{eqnarray}
	 We will transform $[f, \kappa_{n-1,\,1} (z)](\bb_{1,\,m})$ to $\varphi_{m-n-1,\,0}^{n+1} ( \{ f, z \})(\bb_{1,\,m})$ 
	in $\overline{\Omega}^{n+1}(A)$, using some boundaries. 
   	
	First, we deform $(\ref{6.19-5-1-m-n})$.
	A direct calculation shows 
	\begin{eqnarray*} 
			\lefteqn{ 
			\sum_{j,\,k} \sum_{i=1}^{n-m} (-1)^{i(n-m+1)+n+1} 
			d( \langle u_{k} \nu a_{0} f( \ba_{1,\,n-1} \otimes \bv_{k} \otimes \bb_{i,\,m-n-1} \otimes \bu_{j} \otimes \overline{\nu b}_{1,\,i-1}), 1 \rangle v_{j} 
			\otimes \bb_{m-n, m} } \nonumber \\
		&& \otimes \ 1)  \\ 
		&& \hspace{-30pt} 
			+\sum_{j,\,k} \sum_{i=1}^{m-n} (-1)^{(i+n)(m+1)} 
			d( u_{j} \nu a_{0} \langle f( \bb_{i,\,m-n-1} \otimes \bu_{k} \otimes \nuina_{1,\,n-1} \otimes \overline{ \nu v}_{j} \otimes
			\overline{ \nu b}_{1,\,i-1}), 1 \rangle v_{k} \otimes  \bb_{m-n,\,m} \nonumber \\
		&&  \otimes \ 1)  \\   
		&=&
			\sum_{j} \sum_{i=1}^{m-n-2} \sum_{l=i+2}^{m-n} (-1)^{ i(m+1)+(n+1)l+1} 
			\varphi_{m-n-1,\,0}^{n-1} \bigg ( 
			\delta (( \sum_{j,\,k} \,
			\langle f( \mathrm{id}_{\bA}^{\otimes l-i-1} \otimes \overline{u_{j} \nu a_{0}}  \otimes \ba_{1,\,n-1} \otimes \bv_{j} 
			 \nonumber \\
		&&
			 \otimes \  \mathrm{id}_{\bA}^{\otimes m-n-l} \otimes \bu_{k} \otimes \overline{\nu}^{\otimes i-1}), 1 \rangle v_{k}) \circ t^{i-1}) \bigg ) 
			(\bb_{1,\,m})  
			\nonumber \\ 
		&&
			+\sum_{j,\,k} \sum_{i=1}^{m-n-1} (-1)^{ i(m+1)+(n+1)(i+1)+1} 
			\varphi_{m-n-1,\,0}^{n-1}  \bigg ( 
			\delta ((\sum_{j,\,k} \,
			\langle f( \overline{u_{j} \nu a_{0}}  \otimes \ba_{1,\,n-1} \otimes \bv_{j}     \nonumber \\
		&& \otimes \  \mathrm{id}_{\bA}^{\otimes m-n-i-1} \otimes \bu_{k} \otimes \overline{\nu}^{\otimes i-1}), 1 \rangle v_{k}) \circ t^{i-1} ) \bigg ) 
			(\bb_{1,\,m})
			 \nonumber \\
		&&
			+\sum_{j} \sum_{i=1}^{m-n} (-1)^{(n+1)(i+1)} 
			d( f( \bb_{1,\,i-1} \otimes \overline{u_{j} \nu a_{0}}  \otimes \ba_{1,\,n-1} \otimes \bv_{j} \otimes \bb_{i,\,m-n-1}) \otimes \bb_{m-n,\,m} \otimes 1),
	\end{eqnarray*}
	where the $k$-linear map 
		$t : \overline{A}^{\otimes m-n-2} \rightarrow \overline{A}^{\otimes m-n-2}$
	is given by 
			$t(\bb_{1,\,m-n-2}) = \bb_{2,\,m-n-2} \otimes\,\bb_{1}$
	for $\bb_{1,\,m-n-2} \in \overline{A}^{\otimes m-n-2}$. In particular, we have $t^{i-1}(\bb_{1,\,m-n-2}) = \bb_{i,\,m-n-2} \otimes \bb_{1,\,i-1}$.
	Note that the two maps 
		\[ 
			\varphi_{m-n-1,\,0}^{n-1} \left ( 
			\delta (( \sum_{j,\,k} \,
			\langle f( \overline{u_{j} \nu a_{0}} \otimes \ba_{1,\,n-1} \otimes \bv_{j} \otimes \mathrm{id}_{\bA}^{\otimes m-n-i-1} \otimes \bu_{k} \otimes 
					\overline{\nu}^{\otimes i-1}), 1 \rangle v_{k}) \circ t^{i-1}) \right ),   
		\]		
		\[
			\varphi_{m-n-1,\,0}^{n-1} \left (  
			\delta (( \sum_{j,\,k} \,
			\langle f( \mathrm{id}_{\bA}^{\otimes l-i-1} \otimes \overline{u_{j} \nu a_{0}}  \otimes \ba_{1,\,n-1} \otimes \bv_{j} 
			\otimes \mathrm{id}_{\bA}^{\otimes m-n-l} \otimes \bu_{k} \otimes \overline{\nu}^{\otimes i-1}), 1 \rangle v_{k}) \circ t^{i-1}) \right )  
		\] 
	are zero  in $\mathrm{Ext}_{A^{\rm e}}^{m}(A, \overline{\Omega}^{n+1} (A))$. 
	Hence, we have
		\begin{eqnarray}\label{eq116}
				\lefteqn{ 
				\sum_{j} \sum_{i=1}^{m-n} (-1)^{(n+1)(i+1)} 
				d( f( \bb_{1,\,i-1} \otimes \overline{u_{j} \nu a_{0}}  \otimes \ba_{1,\,n-1} \otimes \bv_{j} \otimes \bb_{i,\,m-n-1}) \otimes \bb_{m-n,\,m} 
				\otimes 1)}
				\nonumber \\ 
			&=&
				\sum_{j} \sum_{i=1}^{m-n-2} \sum_{l=i+2}^{m-n} (-1)^{ i(m+1)+(n+1)l} 
				\varphi_{m-n-1,\,0}^{n-1} \bigg ( 
				\delta (( \sum_{j,\,k} \,
				\langle f( \mathrm{id}_{\bA}^{\otimes l-i-1} \otimes \overline{u_{j} \nu a_{0}}  \otimes \ba_{1,\,n-1} \otimes \bv_{j} 
				 \nonumber \\
			&&
				\otimes \ \mathrm{id}_{\bA}^{\otimes m-n-l} \otimes
				 \bu_{k} \otimes \overline{\nu}^{\otimes i-1}), 1 \rangle v_{k}) 
				\circ
				t^{i-1}) \bigg ) (\bb_{1,\,m})  \nonumber \\ 
			&&
				+\sum_{j,\,k} \sum_{i=1}^{m-n-1} (-1)^{ i(m+1)+(n+1)(i+1)} 
				\varphi_{m-n-1,\,0}^{n-1}  \bigg ( 
				\delta ((\sum_{j,\,k} \,
				\langle f( \overline{u_{j} \nu a_{0}} \otimes \ba_{1,\,n-1} \otimes \bv_{j}    \nonumber \\
			&& \otimes \ \mathrm{id}_{\bA}^{\otimes m-n-i-1} \otimes \bu_{k} \otimes \overline{\nu}^{\otimes i-1}), 1 \rangle v_{k}) \circ t^{i-1}) \bigg )
				 (\bb_{1,\,m})
				 \nonumber \\
			&&
				+\sum_{j,\,k} \sum_{i=1}^{n-m} (-1)^{i(n-m+1)+n+1} 
				d( \langle u_{k} \nu a_{0} f( \ba_{1,\,n-1} \otimes \bv_{k} \otimes \bb_{i,\,m-n-1} \otimes \bu_{j} \otimes \overline{\nu b}_{1,\,i-1}), 1 \rangle v_{j} 
				\nonumber \\  
			&& \otimes \  \bb_{m-n, m} \otimes 1) \nonumber \\  
			&&
				+\sum_{j,\,k} \sum_{i=1}^{m-n} (-1)^{(i+n)(m+1)} 
				d( u_{j} \nu a_{0} \langle f( \bb_{i,\,m-n-1} \otimes \bu_{k} \otimes \nuina_{1,\,n-1} \otimes \overline{ \nu v}_{j} \otimes
				\overline{ \nu b}_{1,\,i-1}), 1 \rangle v_{k}  \nonumber \\ 
			&& \otimes \  \bb_{m-n,\,m} \otimes 1).   
\end{eqnarray}
	Secondly, we deform $(\ref{6.19-5-m-n+1-m})$.
	A direct calculation shows 
	\begin{eqnarray*} 
			\lefteqn{ 
			\sum_{j} \sum_{i=0}^{n-1} \sum_{l=i+1}^{n} (-1)^{ n(m+i+1)+(n+1)(l-i+1)} 
			\delta (d( f( \mathrm{id}_{\bA}^{\otimes m-n+i-1} \otimes \bu_{j} \otimes \ba_{n+i-l+1,\,n-1} \otimes \ba_{0} 
			\otimes \overline{\nu^{-1} a}_{1,\,n-l})}
			\nonumber  \\
		&& \hspace{-30pt}	
			\otimes \  \nuina_{n-l+1,\,n+i-l} \otimes \bv_{j} \otimes \mathrm{id}_{\bA}^{\otimes n-i} \otimes 1)) (\bb_{1,\,m}) 
			\nonumber \\ 
		&=&
			\sum_{j} \sum_{i=m-n+1}^{m} (-1)^{(n+1)(i+1)} 
			d( f( \bb_{1,i-1} \otimes \overline{u_{j} \nu a}_{0} \otimes \ba_{1,\,m-i}) \otimes \ba_{m-i+1,\,n-1} \otimes \bv_{j} \otimes \bb_{i,\,m} \otimes 1) 
			 \nonumber \\  
		&&
			+\sum_{j} \sum_{i=1}^{n} (-1)^{(m+i)(n+1)+1}
			d( f(\bb_{1,\,m-n-1} \otimes \bu_{j} \otimes \ba_{i,\,n-1} \otimes \ba_{0} \otimes \overline{ \nu^{-1} a}_{1,\,i-1}) v_{j} \otimes \bb_{m-n,\,m} 
			\otimes 1)
			 \nonumber \\ 
		&&
			+\sum_{j} \sum_{i=0}^{n-1} (-1)^{m(n+1)+i+1}
			d( f( \bb_{1,\,m-n+i} \otimes \bu_{k} \otimes \nuina_{i+1,\,n-1}) \nu a_{0} \otimes \ba_{1,\,i} \otimes \bv_{k} \otimes \bb_{m-n+i+1,\,m} 
			\nonumber \\  
			&& \otimes \ 1). 
			\nonumber \\
		\end{eqnarray*}
	Hence, we have
		\begin{eqnarray} \label{eq114}
			\lefteqn{ 
			\sum_{j} \sum_{i=m-n+1}^{m} (-1)^{(n+1)(i+1)} 
			d( f( \bb_{1,i-1} \otimes \overline{u_{j} \nu a}_{0} \otimes \ba_{1,\,m-i}) \otimes \ba_{m-i+1,\,n-1} \otimes \bv_{j} \otimes \bb_{i,\,m} \otimes 1) }
			 \nonumber \\  
		&=&
			\sum_{j} \sum_{i=0}^{n-1} \sum_{l=i+1}^{n} (-1)^{ n(m+i+1)+(n+1)(l-i+1)} 
			\delta (d( f( \mathrm{id}_{\bA}^{\otimes m-n+i-1} \otimes \bu_{j} \otimes \ba_{n+i-l+1,\,n-1} \otimes \ba_{0} 
			\nonumber  \\
		&&	\otimes \ \overline{\nu^{-1} a}_{1,\,n-l}) \otimes \nuina_{n-l+1,\,n+i-l} \otimes \bv_{j} \otimes \mathrm{id}_{\bA}^{\otimes n-i} \otimes 1)) 
			(\bb_{1,\,m}) 
			\nonumber \\ 
		&&
			+\sum_{j} \sum_{i=1}^{n} (-1)^{(m+i)(n+1)}
			d( f(\bb_{1,\,m-n-1} \otimes \bu_{j} \otimes \ba_{i,\,n-1} \otimes \ba_{0} \otimes \overline{ \nu^{-1} a}_{1,\,i-1}) v_{j} \otimes \bb_{m-n,\,m} 
			\otimes 1)
			 \nonumber \\ 
		&&
			+\sum_{j} \sum_{i=0}^{n-1} (-1)^{m(n+1)+i}
			d( f( \bb_{1,\,m-n+i} \otimes \bu_{k} \otimes \nuina_{i+1,\,n-1}) \nu a_{0} \otimes \ba_{1,\,i} \otimes \bv_{k} \otimes \bb_{m-n+i+1,\,m} 
			\nonumber  \\
		&&	\otimes \ 1). 
			\nonumber \\[-10pt]
\end{eqnarray}
	Finally, we deform $(\ref{6.19-4})$.
	A direct calculation shows 
		\begin{eqnarray*} 
			\lefteqn{ 
			\sum_{j} \sum_{i=0}^{n-1} \sum_{l=i+1}^{n} (-1)^{ n(m+i+1)+(n+1)(l-i+1)}
			\delta( d( u_{j} \nu a_{0} \otimes \nuina_{1,\,i} \otimes \langle f( \nuina_{n+i-l+1,\,n-1} \otimes \bv_{j} 
			}  \nonumber \\			 			 
		&&  \hspace{-28pt}	
			\otimes \ \mathrm{id}_{\bA}^{\otimes m-n+i-1} \otimes \bu_{k} \otimes \ba_{i+1,\,n-l+i}), 1 \rangle \bv_{k} \otimes 
			\mathrm{id}_{\bA}^{\otimes n-i} \otimes 1))(\bb_{1,\,m}) 
			\nonumber \\ 
		&=&
			\sum_{j} \sum_{i=1}^{n} (-1)^{i(m+1)}
  			d( u_{j} \nu a_{0} \otimes \ba_{1,\,i-1} \otimes \overline{f}(\ba_{i,\,n-1} \otimes \bv_{j} \otimes \bb_{1,\,m-n+i-1}) \otimes \bb_{m-n+i,\,m} 
			\otimes 1)\nonumber \\ \nonumber \\ 
		&&
			+\sum_{j,\,k} \sum_{i=1}^{n} (-1)^{i(m+1)+1} 
			d( u_{j} \nu a_{0} \langle f( \ba_{i,\,n-1} \otimes \bv_{j} \otimes \bb_{1,\,m-n-1} \otimes \bu_{k} \otimes \nuina_{1,\,i-1}), 1 \rangle v_{k} 
			\nonumber  \\
		&& \otimes \ \bb_{m-n,\,m} \otimes 1)  \nonumber \\  
		&&
			+\sum_{j} \sum_{i=0}^{n-1} (-1)^{m(n+1)+i}
			d( f( \bb_{1,\,m-n+i} \otimes \bu_{k} \otimes \nuina_{i+1,\,n-1}) \nu a_{0} \otimes \ba_{1,\,i} \otimes \bv_{k} \otimes \bb_{m-n+i+1,\,m} 
			\nonumber  \\
		&& \otimes \ 1). 
		\end{eqnarray*}
	Thus, we get
		\begin{eqnarray} \label{eq115}
				\lefteqn{ 
  				\sum_{j} \sum_{i=1}^{n} (-1)^{i(m+1)}
  				d( u_{j} \nu a_{0} \otimes \ba_{1,\,i-1} \otimes \overline{f}(\ba_{i,\,n-1} \otimes \bv_{j} \otimes \bb_{1,\,m-n+i-1}) \otimes \bb_{m-n+i,\,m} 
				\otimes 1) }
				\nonumber \\ 
			&=&
				\sum_{j} \sum_{i=0}^{n-1} \sum_{l=i+1}^{n} (-1)^{ n(m+i+1)+(n+1)(l-i+1)}
				\delta( d( u_{j} \nu a_{0} \otimes \nuina_{1,\,i} \otimes \langle f( \nuina_{n+i-l+1,\,n-1} \otimes \bv_{j}   \nonumber \\ 
			&& \otimes \ \mathrm{id}_{\bA}^{\otimes m-n+i-1} \otimes \bu_{k} \otimes \ba_{i+1,\,n-l+i}), 1 \rangle \bv_{k} \otimes \mathrm{id}_{\bA}^{\otimes n-i} \otimes 1))
				(\bb_{1,\,m}) 
				\nonumber \\ 
			&&
				+\sum_{j,\,k} \sum_{i=1}^{n} (-1)^{i(m+1)} 
				d( u_{j} \nu a_{0} \langle f( \ba_{i,\,n-1} \otimes \bv_{j} \otimes \bb_{1,\,m-n-1} \otimes \bu_{k} \otimes \nuina_{1,\,i-1}), 1 \rangle v_{k} \otimes 
				\bb_{m-n,\,m} \nonumber  \\
			&& \otimes 1)  \nonumber \\  
			&&
				+\sum_{j} \sum_{i=0}^{n-1} (-1)^{m(n+1)+i+1}
				d( f( \bb_{1,\,m-n+i} \otimes \bu_{k} \otimes \nuina_{i+1,\,n-1}) \nu a_{0} \otimes \ba_{1,\,i} \otimes \bv_{k} \otimes \bb_{m-n+i+1,\,m} 
				\nonumber  \\
			&& \otimes 1). 
				\nonumber \\
		\end{eqnarray}
	Combining $(\ref{eq114}), (\ref{eq115})$ and $(\ref{eq116})$, we obtain a formula
		\begin{align*}
			[f, \kappa_{n-1,\,1} (z)]_{\rm sg}(\bb_{1,\,m}) + \delta(*)(\bb_{1,\,m}) + \varphi_{m-n-1,\,0}^{n-1}(\delta(*))(\bb_{1,\,m})
			= 
			\varphi_{m-n-1,\,0}^{n+1} ( \{ f, z \})(\bb_{1,\,m})
		\end{align*}
	in $\overline{\Omega}^{n+1}(A)$ for all $\bb_{1,\,m} \in \bA^{\otimes m}$ and therefore
		\begin{align*}
			[ f, \kappa_{n-1,\,1} (z)]_{\rm sg} 
			= 
			\varphi_{m-n-1,\,0}^{n+1}( \{ f, z \})
		\end{align*}
	in $\mathrm{Ext}_{A^{\rm e}}^{m}(A, \overline{\Omega}^{n+1}(A))$ for $f \in \mathrm{Ker}\, \delta_{(1)}^{m}$ and 
	$z = a_{0} \otimes \ba_{1,\,n-1}  \in \mathrm{Ker}\, \partial_{n-1}^{(1)}$. This completes the proof of the statement.
\end{proof}
%
%

%
%
%
	\begin{prop} \label{prop6.17} 
	Let $A$ be a finite dimensional Frobenius $k$-algebra with the Nakayama automorphism $\nu$ of $A$ diagonalizable, 
	and let $m, n$ be integers such that $ n \geq m \geq 1$, so $m-n-1 < 0$.
	Then we have a commutative diagram
		\[\xymatrix@C=40pt{
			\widehat{\mathrm{HH}}_{(1)}^{m}(A) \otimes \widehat{\mathrm{HH}}_{(1)}^{-n}( A ) \ar[r]^-{\{ \ ,\ \}} \ar[d]^{\cong} 
			&
			\widehat{\mathrm{HH}}_{(1)}^{m-n-1}(A) \ar[d]_{\cong} 
			\\ 
			\mathrm{Ext}_{A^{\rm e}}^{m}(A, A) \otimes \mathrm{Tor}_{n-1}^{A^{\rm e}}(A, A_{\nu^{-1}}) \ar[d]_{\mathrm{id} \otimes \kappa_{n-1, 1} }^{\cong}
			& 
			\mathrm{Tor}_{n-m}^{A^{\rm e}}(A, A_{\nu^{-1}}) \ar[d]^{\kappa_{n-m, m} }_{\cong} 
			\\
			\mathrm{Ext}_{A^{\rm e}}^{m}(A, A) \otimes \mathrm{Ext}_{A^{\rm e}}^{1}(A, \overline{\Omega}^{n+1} (A)) \ar[r]^-{ [\ ,\ ]_{\rm{sg}} } \ar[d]^{\cong}
						&
			\mathrm{Ext}_{A^{\rm e}}^{m}(A, \overline{\Omega}^{n+1} (A)) \ar[d]_{\cong} 
			\\
			\underline{\mathrm{Ext}}_{A^{\rm e}}^{m}(A, A) \otimes \underline{\mathrm{Ext}}_{A^{\rm e}}^{-n}(A, A) \ar[r]^-{ [\ ,\ ]_{\rm{sg}} } & \underline{\mathrm{Ext}}_{A^{\rm e}}^{m-n-1}(A, A),
			}\]
	where  $\{ \ , \ \} : \mathcal{D}_{(1)}^{m}(A, A) \otimes \mathcal{D}_{(1)}^{-n}(A, A) \rightarrow \mathcal{D}_{(1)}^{m-n-1}(A, A)$ is defined by
		\begin{align} 
			\{ f, z \} 
			&= (-1)^{ |f| |z| +|f| +|z| } \left (
				(-1)^{ |z|+1 } B^{\nu^{-1}}(f \star_{1} z) 
				+ (-1)^{ |f| } \Delta^{\nu}(f) \star_{1} z 
				+ (-1)^{ |z| } f \star_{1} B^{\nu^{-1}}(z) \right ) \nonumber \\[3pt]
			&= (-1)^{ |f| |z| +|f| +|z| } \left (
				(-1)^{ |f|+1 } \widehat{\Delta}(f \star_{1} z) 
				+ (-1)^{ |f| } \widehat{\Delta}(f) \star_{1} z
				+ f \star_{1} \widehat{\Delta}(z) \right ) \nonumber 
		\end{align}
	for $f \otimes z \in \mathcal{D}_{(1)}^{m}(A, A) \otimes \mathcal{D}_{(1)}^{-n}(A, A)$.
	\end{prop}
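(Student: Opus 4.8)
The plan is to mimic the proof of Proposition \ref{prop-6.19}, adapting every step to the range $n\geq m\geq 1$, where $m-n-1<0$ so that the output lands in the negative part of the complex. As in that proof, the bottom square of the diagram commutes by the very definition of the Gerstenhaber bracket induced on $\underline{\mathrm{Ext}}_{A^{\rm e}}^{\bullet}(A,A)$ (Theorem \ref{thm_1}), and the unlabelled vertical maps are the isomorphisms of Proposition \ref{Frobenius-prop1} and Corollary \ref{Frobenius-cor1}; hence everything reduces to the commutativity of the top rectangle. I would reformulate the latter as the cochain-level identity
\[
[f,\kappa_{n-1,\,1}(z)]_{\rm sg}=\kappa_{n-m,\,m}(\{f,z\})\qquad\text{in }\ \mathrm{Ext}_{A^{\rm e}}^{m}(A,\overline{\Omega}^{n+1}(A)),
\]
to be verified for $f\in\mathrm{Ker}\,\delta_{(1)}^{m}$ and $z=a_{0}\otimes\ba_{1,\,n-1}\in\mathrm{Ker}\,\partial_{n-1}^{(1)}$; plugging in the explicit cochain formulas for $\kappa_{n-1,\,1}$ and $\kappa_{n-m,\,m}$ from Proposition \ref{Frobenius-prop1}, this becomes an equality of two elements of $\overline{\Omega}^{n+1}(A)$ evaluated on an arbitrary $\bb_{1,\,m}\in\bA^{\otimes m}$, to be established modulo boundaries coming from the bar differentials $d_{\bullet}$ and $\delta^{\bullet}$.

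First I would expand the left-hand side $[f,\kappa_{n-1,\,1}(z)]_{\rm sg}=f\bullet\kappa_{n-1,\,1}(z)-(-1)^{(m-1)(-n-1)}\kappa_{n-1,\,1}(z)\bullet f$ using the definition of the circle product $\bullet$ from Section \ref{Preliminaries}, together with the formula for $\kappa_{n-1,\,1}(z)$ (which brings in the dual bases $(u_{j})$, $(v_{j})$ of Lemma \ref{Frobenius-lem1}), and regroup the result into the expected families of terms. Then I would expand the right-hand side $\kappa_{n-m,\,m}(\{f,z\})$ from the three summands of $\{f,z\}$, namely $B^{\nu^{-1}}(f\star_{1}z)$, $\Delta^{\nu}(f)\star_{1}z$ and $f\star_{1}B^{\nu^{-1}}(z)$. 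This is where the situation differs genuinely from Proposition \ref{prop-6.19}: since $m\leq n$, the product $f\star_{1}z$ now lies in the negative part $C_{n-1-m}(A,A_{\nu^{-1}})$ of $\mathcal{D}_{(1)}^{\bullet}(A,A)$ (when $n>m$; the limiting case $n=m$, where $f\star_{1}z$ sits in degree $0$ and the first summand degenerates, has to be isolated), so that $\widehat{\Delta}$ acts on $f\star_{1}z$ as the twisted Connes operator $B^{\nu^{-1}}$ rather than as $\Delta^{\nu}$, and the products $\Delta^{\nu}(f)\star_{1}z$, $f\star_{1}B^{\nu^{-1}}(z)$ are the cap-type products (case (2) of the $\star$-product), not the case-(3) products used for Proposition \ref{prop-6.19}. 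Substituting the explicit formulas for $B^{\nu^{-1}}$, $\Delta^{\nu}$, these cap products and $\kappa_{n-m,\,m}$, and using Lemma \ref{Frobenius-lem1} to rewrite the sums over the dual bases, yields an explicit sum of $d_{\bullet}$-images.

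Finally I would match the two expansions term by term, absorbing the discrepancies into boundaries exactly as in Proposition \ref{prop-6.19}: after cyclically reorganizing the Connes-type sums (with the shift operators $t$ and their powers), several of the intermediate pieces turn out to be of the form $\delta(*)$ or $\varphi_{m-n-1,\,0}^{n-1}(\delta(*))$, hence zero in $\mathrm{Ext}_{A^{\rm e}}^{m}(A,\overline{\Omega}^{n+1}(A))$, and what remains is precisely the desired identity; this gives the commutativity of the top rectangle and therefore of the whole diagram. I expect the chief obstacle to be purely computational — keeping the signs $r(m,p;n,q;i)$, $s(m,p;n,q;i)$ and the Nakayama twists under control, and identifying the right null-homotopies $*$ at each step — while a smaller but real point is carrying out the boundary case $n=m$ compatibly with the general argument. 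An alternative would be to try to transport Proposition \ref{prop-6.19} across the duality $\Theta\colon D(C_{\bullet}(A,A_{\nu}))\xrightarrow{\sim}C^{\bullet}(A,A)$ of Proposition \ref{Decomposition-prop2}, under which $B^{\nu}$ is dual to $\Delta^{\nu}$, but reconciling this duality with the $\star$-product and with $[\ ,\ ]_{\rm sg}$ seems to need essentially the same amount of bookkeeping, so the direct computation along the lines of Proposition \ref{prop-6.19} looks like the cleanest route.
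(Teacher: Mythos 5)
Your proposal is correct and follows exactly the route the paper intends: the paper proves only Proposition \ref{prop-6.19} in detail and explicitly leaves Proposition \ref{prop6.17} to the analogous computation, which is precisely what you outline. You also correctly identify the points where the adaptation is genuinely different --- the reduction to the cochain identity $[f,\kappa_{n-1,\,1}(z)]_{\rm sg}=\kappa_{n-m,\,m}(\{f,z\})$ in $\mathrm{Ext}_{A^{\rm e}}^{m}(A,\overline{\Omega}^{n+1}(A))$, the fact that $f\star_{1}z$ now lies in the chain (negative) part so that $\widehat{\Delta}$ acts as $B^{\nu^{-1}}$ and the relevant $\star$-products are the cap-type ones, and the degenerate boundary case $n=m$.
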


%
%
%
%
%
	\begin{prop} \label{prop-6.20}		
		Let $A$ be a finite dimensional Frobenius $k$-algebra with the Nakayama automorphism $\nu$ of $A$ diagonalizable,
		and let $m, n$ be integers such that $m \geq 1$ and $n \geq 1$. 
		Then we have the following commutative diagram:
			\[
			\xymatrix@C=40pt{
				\widehat{\mathrm{HH}}_{(1)}^{-m}( A ) \otimes \widehat{\mathrm{HH}}_{(1)}^{-n}( A ) 
					\ar[r]^-{\{ \ ,\ \}}  \ar[d]^{\cong}  	&
				\widehat{\mathrm{HH}}_{(1)}^{-m-n-1}( A ) \ar[d]_{\cong} \\ 
				\mathrm{Tor}_{m-1}^{A^{\rm e}}(A, A_{\nu^{-1}}) \otimes \mathrm{Tor}_{n-1}^{A^{\rm e}} (A, A_{\nu^{-1}})  
					\ar[d]_{\kappa_{m-1, 1} \otimes \kappa_{n-1, 1} }^{\cong}    & 
				\mathrm{Tor}_{m+n}^{A^{\rm e}}(A, A_{\nu^{-1}}) 
					\ar[d]^{\kappa_{m+n, 1}}_{\cong} \\
				\mathrm{Ext}_{A^{\rm e}}^{1}(A, \overline{\Omega}^{m+1} (A)) \otimes \mathrm{Ext}_{A^{\rm e}}^{1}(A, \overline{\Omega}^{n+1} (A)) 
					\ar[r]^-{ [\ ,\ ]_{\rm{sg}}  } \ar[d]^{\cong}		&
				\mathrm{Ext}_{A^{\rm e}}^{1}(A, \overline{\Omega}^{m+n+2} (A)) 
					\ar[d]_{\cong} \\
				\underline{\mathrm{Ext}}_{A^{\rm e}}^{-m}(A, A) \otimes \underline{\mathrm{Ext}}_{A^{\rm e}}^{-n}(A, A) 
					\ar[r]^-{ [\ ,\ ]_{\rm{sg}}  }   &  
				\underline{\mathrm{Ext}}_{A^{\rm e}}^{-m-n-1}(A, A),
			}\]
	where $\{ \ , \ \} : \mathcal{D}_{(1)}^{-m}(A, A) \otimes \mathcal{D}_{(1)}^{-n}(A, A) \rightarrow \mathcal{D}_{(1)}^{-m-n-1}(A, A)$ is defined by
		\begin{align} 
		\{ w, z \} &= (-1)^{ |w| |z| +|w| +|z| } \left (
						(-1)^{|z|+1} B^{\nu^{-1}}(w \star_{1} z) 
						+  B^{\nu^{-1}}(w) \star_{1} z 
						+ (-1)^{|z|}  w \star_{1} B^{\nu^{-1}}(z) \right ) \nonumber \\[3pt]
					&= (-1)^{ |w| |z| +|w| +|z| } \left (
						(-1)^{|w|+1}  \widehat{\Delta} (w \star_{1} z) 
						+ (-1)^{|w|}  \widehat{\Delta} (w) \star_{1} z
						+  w \star_{1} \widehat{\Delta} (z) \right ) \nonumber 
		\end{align}
 	for $w \otimes z \in \mathcal{D}_{(1)}^{-m}(A, A) \otimes \mathcal{D}_{(1)}^{-n}(A, A)$.
 \end{prop}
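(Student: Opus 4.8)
The plan is to argue exactly as in the proof of Proposition~\ref{prop-6.19}. First, the bottom square of the diagram commutes by the definition of the induced Gerstenhaber bracket $[\ ,\ ]_{\rm sg}$ on $\underline{\mathrm{Ext}}_{A^{\rm e}}^{\bullet}(A,A)$ and the compatibility of $[\ ,\ ]_{\rm sg}$ with the connecting homomorphisms (see Theorem~\ref{thm_1}); the vertical isomorphisms come from Corollary~\ref{Frobenius-cor1}, Proposition~\ref{Frobenius-prop1} and Corollary~\ref{Decomposition-cor1}, while the hypothesis that $\nu$ is diagonalizable is what makes $\widehat{\Delta}$, and hence $\{\ ,\ \}$, well defined on $\widehat{\mathrm{HH}}_{(1)}^{\bullet}(A)$ (Corollary~\ref{Decomposition-cor2} and Lemma~\ref{Decomposition-lem1}). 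It therefore remains to show that the top rectangle commutes, and by naturality of all the maps this reduces to the cochain-level identity
\[
[\kappa_{m-1,\,1}(w),\,\kappa_{n-1,\,1}(z)]_{\rm sg} = \kappa_{m+n,\,1}(\{w,z\})
\]
in $\mathrm{Ext}_{A^{\rm e}}^{1}(A,\overline{\Omega}^{m+n+2}(A))$, to be verified for cycles $w = a_{0}\otimes\ba_{1,\,m-1}\in\mathrm{Ker}\,\partial_{m-1}^{(1)}$ and $z = b_{0}\otimes\bb_{1,\,n-1}\in\mathrm{Ker}\,\partial_{n-1}^{(1)}$, where on the left $w$ and $z$ are regarded as elements of $C_{m-1}(A,A^{\vee})$ and $C_{n-1}(A,A^{\vee})$ via the isomorphism $A_{\nu^{-1}}\cong A^{\vee}$ of Lemma~\ref{Frobenius-lem1}.

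I would then expand both sides of this identity, evaluated on $\bc_{1}\in\bA$, as elements of $\overline{\Omega}^{m+n+2}(A)$. For the right-hand side one substitutes into the three-term expression for $\{w,z\}$ in the statement the explicit formulas for the twisted Connes operator $B^{\nu^{-1}}$ and for the chain--chain $\star$-product (case~(4) of the definition of $\star$), obtains a sum of terms indexed by the cyclic shifts from $B^{\nu^{-1}}$ and by the fixed dual bases $(u_{i})_{i},(v_{i})_{i}$, and applies the chain-level formula for $\kappa_{m+n,\,1}$ from Proposition~\ref{Frobenius-prop1}. For the left-hand side, $\kappa_{m-1,\,1}(w)$ and $\kappa_{n-1,\,1}(z)$ are cochains of cohomological degree $1$ with values in $\overline{\Omega}^{m+1}(A)$ and $\overline{\Omega}^{n+1}(A)$, so that by the definition of the operations $\bullet_{i},\bullet_{-i}$ the bracket $[\kappa_{m-1,\,1}(w),\,\kappa_{n-1,\,1}(z)]_{\rm sg}$ unwinds into a single $\bullet_{1}$-term together with $n+1$ terms $\bullet_{-i}$ ($1\le i\le n+1$), plus the contributions of the reversed product (a $\bullet_{1}$-term and $m+1$ terms $\bullet_{-i}$, $1\le i\le m+1$); composing each of these with the bar differential $d_{m+n+2}$ turns it into an explicit element of $\overline{\Omega}^{m+n+2}(A)$.

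Finally I would reconcile the two expansions. As in Proposition~\ref{prop-6.19} they do not agree on the nose but only modulo coboundaries: one manufactures correcting terms $\delta(*)$ and their images under suitable composites $\varphi_{*,\,0}^{*}$ of shifted connecting homomorphisms, all of which vanish in $\mathrm{Ext}_{A^{\rm e}}^{1}(A,\overline{\Omega}^{m+n+2}(A))$, and then groups the insertion terms according to whether the bar differential breaks an argument of $\kappa_{m-1,\,1}(w)$ or of $\kappa_{n-1,\,1}(z)$ at an interior or at a boundary position. Collecting everything one should arrive at
\[
[\kappa_{m-1,\,1}(w),\,\kappa_{n-1,\,1}(z)]_{\rm sg}(\bc_{1}) + \delta(*)(\bc_{1}) + \varphi_{*,\,0}^{*}(\delta(*))(\bc_{1}) = \kappa_{m+n,\,1}(\{w,z\})(\bc_{1})
\]
in $\overline{\Omega}^{m+n+2}(A)$, hence the required equality in $\mathrm{Ext}_{A^{\rm e}}^{1}(A,\overline{\Omega}^{m+n+2}(A))$; by the reduction above the proposition follows.

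I expect this last reconciliation to be the main obstacle. Here \emph{both} arguments lie in negative degree, so the chain--chain $\star$-product --- with its two dual-basis summations of the shape $\sum_{i}v_{i}b_{0}\otimes\cdots\otimes\overline{u_{i}\nu(a_{0})}\otimes\cdots$ --- enters the analysis more than once, and the cyclic shifts produced by $B^{\nu^{-1}}$ have to be handled simultaneously on $w$, on $z$ and on $w\star_{1}z$; keeping the signs $r(m,p;n,q;i)$, $s(m,p;n,q;i)$ straight while matching all of these and checking that the several families of coboundary corrections telescope correctly is the delicate part. The degenerate cases $m=1$ or $n=1$, where some index ranges collapse and $\widehat{\mathrm{HH}}_{(1)}^{-1}(A)$ must be treated through the kernel description $\mathrm{Ker}(\eta)$ of Proposition~\ref{Frobenius-prop1}, should be handled separately but present no new difficulty.
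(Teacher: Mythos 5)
Your proposal follows exactly the strategy the paper intends for this case: the paper proves only Proposition \ref{prop-6.19} in full and declares the remaining cases (including this one) analogous, and your outline is precisely that analogue --- reduce to the cochain-level identity $[\kappa_{m-1,\,1}(w),\kappa_{n-1,\,1}(z)]_{\rm sg}=\kappa_{m+n,\,1}(\{w,z\})$ for cycles, expand via the explicit formulas for $B^{\nu^{-1}}$, the chain--chain $\star$-product and the $\bullet_{\pm i}$ operations (with the correct index ranges, since both cochains have cohomological degree $1$), and reconcile modulo coboundaries and images under the $\varphi$'s. Your degree bookkeeping and your flagging of the $m=1$, $n=1$ boundary cases via $\mathrm{Ker}(\eta)$ are consistent with the paper, so this is essentially the same approach.
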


The following is a consequence of Lambre-Zhou-Zimmermann.
%
%
%
%
	\begin{prop}[{\cite[Corollary 3.8]{Lam}}]  \label{prop-6.21}		
		Let $A$ be a finite dimensional Frobenius $k$-algebra with the Nakayama automorphism $\nu$ of $A$ diagonalizable, 
		and let $m, n$ be integers such that $m > 0$ and $n > 0$. 
		Then we have the following commutative diagram:
			\[
			\xymatrix@C=40pt{
				\widehat{\mathrm{HH}}_{(1)}^{m}( A ) \otimes \widehat{\mathrm{HH}}_{(1)}^{n}( A ) 
					\ar[r]^-{\{ \ ,\ \}}  \ar[d]^{\cong}  	&
				\widehat{\mathrm{HH}}_{(1)}^{m+n-1}( A ) \ar[d]_{\cong} \\ 
				\mathrm{Ext}_{A^{\rm e}}^{m}(A, A) \otimes \mathrm{Ext}_{A^{\rm e}}^{n}(A, A)
					\ar[d]_{ }^{\cong}  \ar[r]^-{[\ ,\ ]}     & 
				\mathrm{Ext}_{A^{\rm e}}^{m+n-1}(A, A)
					\ar[d]^{ }_{\cong}  \\
				\underline{\mathrm{Ext}}_{A^{\rm e}}^{m}(A, A) \otimes \underline{\mathrm{Ext}}_{A^{\rm e}}^{n}(A, A) 
					\ar[r]^-{[\ ,\ ]_{\rm sg}}   &  
				\underline{\mathrm{Ext}}_{A^{\rm e}}^{m+n-1}(A, A),
			}\]
	where $[\ ,\ ]$ is the Gerstenhaber bracket on Hochschild cohomology and 
	$\{ \ , \ \} : \mathcal{D}_{(1)}^{m}(A, A) \otimes \mathcal{D}_{(1)}^{n}(A, A) \rightarrow \mathcal{D}_{(1)}^{m+n-1}(A, A)$ is defined by
	\begin{align*} 
		\{ f, g \} &= (-1)^{ |f| |g| +|f| +|g| } \left (
						(-1)^{|f|+1} \Delta^{\nu}(f \star_{1} g) 
						+ (-1)^{|f|}  \Delta^{\nu}(f) \star_{1} g 
						+ f \star_{1} \Delta^{\nu}(g) \right )  \\[3pt]
					&= (-1)^{ |f| |g| +|f| +|g| } \left (
						(-1)^{|f|+1}  \widehat{\Delta} (f \star_{1} g) 
						+ (-1)^{|f|}  \widehat{\Delta} (f) \star_{1} g
						+  f \star_{1} \widehat{\Delta} (g) \right )  
	\end{align*}
 for $f \otimes g \in \mathcal{D}_{(1)}^{m}(A, A) \otimes \mathcal{D}_{(1)}^{n}(A, A)$.
 \end{prop}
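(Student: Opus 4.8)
The plan is to reduce the whole diagram to the classical Hochschild situation, where the analogue of the top square is exactly Lambre-Zhou-Zimmermann's result, and then to glue it to Wang's identification of the positive part of Tate-Hochschild cohomology with ordinary Hochschild cohomology as Gerstenhaber algebras.

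First I would observe that when $m \geq 1$ and $n \geq 1$, and hence $m+n-1 \geq 1$, the non-negative part of the complex $\mathcal{D}_{(1)}^{\bullet}(A, A)$ is by definition the subcomplex $C^{\bullet}_{(1)}(A, A)$ of the Hochschild cochain complex $C^{\bullet}(A, A)$; on this part the product $\star_{1}$ coincides with the cup product $\smile$ and the operator $\widehat{\Delta}$ coincides with $\Delta^{\nu}$, both of which restrict to $C^{\bullet}_{(1)}(A, A)$ by the remark following Theorem \ref{BV-theo2}. Consequently the bracket $\{\ ,\ \}$ of the statement is, on $C^{\bullet}_{(1)}(A, A)$, precisely the bracket produced from $\Delta^{\nu}$ and $\smile$ via the BV identity of Remark \ref{BV-rem1}.

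Next I would use Proposition \ref{Decomposition-prop2} (3), together with the natural isomorphism $\widehat{\mathrm{HH}}^{r}(A) \cong \mathrm{HH}^{r}(A)$ valid for $r \geq 1$, to obtain isomorphisms $\widehat{\mathrm{HH}}_{(1)}^{r}(A) \cong \mathrm{HH}^{r}(A)$ in each positive degree, induced by the inclusion $C^{\bullet}_{(1)}(A, A) \hookrightarrow C^{\bullet}(A, A)$; these are compatible with $\star_{1}$ and $\smile$ by Corollary \ref{Decomposition-cor1}. Since, by \cite[Corollary 3.8]{Lam}, the bracket generated on $\mathrm{HH}^{\bullet}(A)$ by $\Delta^{\nu}$ through the BV identity is the Gerstenhaber bracket $[\ ,\ ]$, transporting this equality along the above isomorphisms gives the commutativity of the top square. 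For the bottom square I would invoke Wang's construction recalled in Section \ref{Preliminaries}: for $r \geq 1$ the isomorphism $\mathrm{Ext}^{r}_{A^{\rm e}}(A, A) \cong \underline{\mathrm{Ext}}^{r}_{A^{\rm e}}(A, A)$ of Proposition \ref{Frobenius-prop1} (1) is the canonical map from $\mathrm{Ext}^{r}_{A^{\rm e}}(A, \overline{\Omega}^{0}(A))$ into the Tate-Hochschild inductive limit, and by Theorem \ref{thm_1} (together with the compatibility of $[\ ,\ ]_{\rm sg}$ with the connecting homomorphisms $\theta_{*,\,*}$ and the fact that $[\ ,\ ]_{\rm sg}$ on the $p=0$ summand is the classical Hochschild bracket) this map intertwines $[\ ,\ ]$ with $[\ ,\ ]_{\rm sg}$. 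Stacking the two commuting squares proves the proposition.

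The only genuine external input here is \cite[Corollary 3.8]{Lam}; everything else is the bookkeeping of checking that, in positive degrees, the inclusion of the $(1)$-subcomplex, the identification $\widehat{\mathrm{HH}}^{\bullet}(A) \cong \mathrm{HH}^{\bullet}(A)$, and the comparison maps into the Tate-Hochschild inductive limit are all morphisms of Gerstenhaber (respectively graded commutative) algebras, which has already been recorded in the preceding sections. I expect the part that needs the most care to be the sign bookkeeping: one must verify that the signs in the BV-identity formula defining $\{\ ,\ \}$ agree with the conventions used by Lambre-Zhou-Zimmermann and by Wang, so that the transported brackets are genuinely equal rather than differing by an overall sign.
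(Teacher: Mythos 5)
Your proposal is correct and matches the paper's treatment: the paper states Proposition \ref{prop-6.21} as a direct consequence of \cite[Corollary 3.8]{Lam} (which gives the top square, since in positive degrees $\star_{1}$ is the cup product on $C^{\bullet}_{(1)}(A,A)$ and $\widehat{\Delta}=\Delta^{\nu}$), with the bottom square coming from Wang's identification in Theorem \ref{thm_1} of the classical Gerstenhaber bracket with $[\ ,\ ]_{\rm sg}$ on the non-negative part. The bookkeeping you describe (inclusion of the $(1)$-subcomplex, the isomorphisms of Proposition \ref{Decomposition-prop2} and Proposition \ref{Frobenius-prop1}) is exactly what the paper leaves implicit.
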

%
%
	\begin{rem} \label{remark_1} {\rm
		We have to consider the case of either $m = 0$ or $n = 0$. If $m \geq 0$ and $n = 0$, then we will prove that there is a commutative diagram
			\[
			\xymatrix@C=40pt{
				\widehat{\mathrm{HH}}_{(1)}^{m}(A) \otimes \widehat{\mathrm{HH}}_{(1)}^{0}(A) 
					\ar[r]^-{\{ \ ,\ \}}  \ar[d]^{\cong}  	&
				\widehat{\mathrm{HH}}_{(1)}^{m-1}(A) \ar[d]_{\cong} \\ 
				\mathrm{Ext}_{A^{\rm e}}^{m}(A, A) \otimes \widehat{\mathrm{HH}}^{0}(A)
					\ar[d]_{ \mathrm{id} \otimes \varphi_{0,\,0}}^{\cong}   & 
				\mathrm{Ext}_{A^{\rm e}}^{m-1}(A, A)
					\ar[d]_{\cong}^{ \varphi_{m-1, 0}}  \\ 
				\mathrm{Ext}_{A^{\rm e}}^{m}(A, A) \otimes \mathrm{Ext}_{A^{\rm e}}^{1}(A, \overline{\Omega}^{1}(A))
					\ar[d]_{ }^{\cong}  \ar[r]^-{[\ ,\ ]_{\rm sg}}  & 
				\mathrm{Ext}_{A^{\rm e}}^{m}(A, \overline{\Omega}^{1}(A))
					\ar[d]^{ }_{\cong}  \\
				\underline{\mathrm{Ext}}_{A^{\rm e}}^{m}(A, A) \otimes \underline{\mathrm{Ext}}_{A^{\rm e}}^{0}(A, A) 
					\ar[r]^-{[\ ,\ ]_{\rm sg}}   &  
				\underline{\mathrm{Ext}}_{A^{\rm e}}^{m-1}(A, A),
			}\]
		where the vertical isomorphism $\varphi_{0,\,0} : \widehat{\mathrm{HH}}^{0}(A) \rightarrow \mathrm{Ext}_{A^{\rm e}}^{1}(A, \overline{\Omega}^{1}(A))$ is defined in
		Proposition \ref{Frobenius-prop1} and $\{ \ , \ \}$ is defined by
				\begin{align*} 
					\{ f, g \} &= (-1)^{ |f| } \left (
									(-1)^{|f|+1} \Delta^{\nu}(f \star_{1} g) 
									+(-1)^{|f|}  \Delta^{\nu}(f) \star_{1} g  \right ) \nonumber 
				\end{align*}
 		for $f \otimes g \in \mathcal{D}_{(1)}^{m}(A, A) \otimes \mathcal{D}_{(1)}^{0}(A, A)$.
		We must show that 
				\begin{align*} 
					\varphi_{m-1, 0}(\{ \ , \ \} (f \otimes g )) = ([\ , \ ]_{\rm sg} (\mathrm{id} \otimes \varphi_{0,\,0})) (f \otimes g )
				\end{align*}
		in $\mathrm{Ext}_{A^{\rm e}}^{m}(A, \overline{\Omega}^{1}(A))$ for $f \otimes g \in \mathrm{Ker}\, \delta_{(1)}^{m} \otimes \mathrm{Ker}\, \delta_{(1)}^{0}$.
		A direct calculation shows that we have 
			\begin{align*}
					[f, \varphi_{0,\,0}(g)]_{\rm sg} = \varphi_{m-1,\,0}([f, g])
			\end{align*}
		as maps, where $[\ , \ ]$ is the Gerstenhaber bracket on Hochschild cohomology. 
		It follows from \cite[Corollary 3.8]{Lam} that 
		$[f, g] = -\Delta^{\nu}(f \star_{1} g) +  \Delta^{\nu}(f) \star_{1} g$ 
		in $\mathrm{Ext}_{A^{\rm e}}^{m-1}(A, A)$.
		As a result, we obtain a formula in $\mathrm{Ext}_{A^{\rm e}}^{m}(A, \overline{\Omega}^{1}(A))$:
			\begin{align*}
					[f, \varphi_{0,\,0}(g)]_{\rm sg} 
						= \varphi_{m-1,\,0}([f, g]) 
						= \varphi_{m-1,\,0}(-\Delta^{\nu}(f \star_{1} g) +  \Delta^{\nu}(f) \star_{1} g) 
						=  \varphi_{m-1,\,0}(\{ f, g \}).
			\end{align*}
		Similarly, one can prove our claim in the other case $m=0$ and $n \geq 0$.
	}
	\end{rem}

	We are now able to prove Theorem \ref{BV-theo2}.
%
%
	\begin{proof}[{\bf Proof of Theorem \ref{BV-theo2}}]
		It follows from Propositions \ref{prop-6.19}, \ref{prop6.17}, \ref{prop-6.20} and \ref{prop-6.21} and Remark \ref{remark_1} that we have the following commutative diagram
		 	\[\xymatrix@C=40pt{
 				\widehat{\mathrm{HH}}_{(1)}^{m}(A) \otimes \widehat{\mathrm{HH}}_{(1)}^{-n}( A ) \ar[r]^-{\{ \ ,\ \}}  \ar[d]^{\cong}  
				&
				\widehat{\mathrm{HH}}_{(1)}^{m-n-1}(A) \ar[d]_{\cong} 
				\\ 	
				\underline{\mathrm{Ext}}_{A^{\rm e}}^{m}(A, A) \otimes 
				\underline{\mathrm{Ext}}_{A^{\rm e}}^{-n}(A, A) \ar[r]^-{ [\ ,\ ]_{\rm{sg}} }   &  \underline{\mathrm{Ext}}_{A^{\rm e}}^{m-n-1}(A, A),
			}\]
where $m, n$ are arbitrary integers.
Since $(\underline{\mathrm{Ext}}_{A^{\rm e}}^{\bullet}(A, A), \smile_{\rm sg}, [\ ,\ ]_{\rm sg})$ is a Gerstenhaber algebra, we have 
\begin{align*} \lefteqn{}
[f, g \smile_{{\rm sg}} h]_{{\rm sg}} = [f, g]_{{\rm sg}} \smile_{{\rm sg}} h +(-1)^{(|f|-1) |g| } g \smile_{{\rm sg}} [f, h]_{{\rm sg}}
\end{align*}
for arbitrary homogeneous elements $f, g$ and $h \in \underline{\mathrm{Ext}}_{A^{\rm e}}^{\bullet}(A, A)$.
Since we have proved that $[\ ,\ ]_{\rm sg}$ commutes with $\{\ ,\ \}$, using
the defining formula for $\{\ ,\ \}$ and the formula
\begin{align*} \lefteqn{}
\{ f, g \star_{1} h \} =& (-1)^{r} \left(
						(-1)^{|f|+1} \widehat{\Delta}(f \star_{1} g \star_{1} h) + (-1)^{|f|}  \widehat{\Delta}(f) \star_{1} g \star_{1} h
						+ f \star_{1} \widehat{\Delta}(g \star_{1} h) \right)
\end{align*}
with $ r = |f|(|g| + |h|) +|f| +|g| + |h|$, we obtain
\begin{align*} \lefteqn{}
\widehat{\Delta}(f \star_{1} g \star_{1} h ) 
=&\     
\widehat{\Delta}(f \star_{1} g) \star h
+(-1)^{|f|} f \star_{1} \widehat{\Delta}( g \star_{1} h )  
+(-1)^{|g| (|f|-1)} g \star_{1} \widehat{\Delta}( f \star_{1} h ) \\
&- \widehat{\Delta}( f ) \star_{1} g \star_{1} h 
1)^{|f|} f \star_{1} \widehat{\Delta}( g ) \star_{1} h 
1)^{|f| + |g| } f \star_{1} g \star_{1} \widehat{\Delta}( h )
\end{align*}
for arbitrary homogeneous elements $f, g$ and $h \in \widehat{\mathrm{HH}}_{(1)}^{\bullet}(A)$. 
Finally, by the definition of the operator $\widehat{\Delta}$, we get			$\widehat{\Delta}^{2} = 0$ and $\widehat{\Delta}_{0}(1) = 0.$
	\end{proof}
	
	\begin{rem}  {\rm
		Recall that the Nakayama automorphism $\nu$ of $A$ is {\it semisimple} if the map 
		$\nu \otimes \mathrm{id}_{\overline{k}} : A \otimes \overline{k} \rightarrow A \otimes \overline{k}$ is diagonalizable over 
		 the algebraic closure $\overline{k}$ of $k$. 
		The results of Lambre-Zhou-Zimmermann \cite[Section 4]{Lam} and an easy calculation imply that 
		the complete cohomology ring of a Frobenius algebra is a BV algebra when the Nakayama automorphism is semisimple.
		}
	\end{rem}


\section{Examples} \label{Examples}
	Throughout this section, we assume that $k$ is an algebraically closed field whose characteristic $\mathrm{char}\,k$ is $p$. 
	Lambre-Zhou-Zimmermann \cite{Lam} showed that there are many examples of Frobenius algebras with diagonal Nakayama automorphisms. 
	This section is devoted to computing the graded commutative ring structure and 
	the BV structure of the complete cohomology for three certain self-injective Nakayama algebras whose Nakayama automorphisms are diagonalizable.
	Lambre-Zhou-Zimmermann \cite{Lam} also gave an useful and combinatorial criterion to check that the Nakayama automorphism is diagonalizable:
	let $A = kQ/I$ be the algebra given by a quiver with relations. Let $Q_{0}$ be the set of vertices in $Q$. 	
	It is well-known that we can choose a $k$-basis $\mathcal{B}$ of $A$ such that
	$\mathcal{B}$ contains a $k$-basis of the socle of the right regular $A$-module $A$. Suppose that $A$ is a Frobenius algebra. It follows from
	\cite[Proposition 2.8]{HolmZimm} that we can construct an associative and non-degenerate bilinear form 
	$\langle \ ,\ \rangle : A \otimes A \rightarrow k$ by defining $\langle a, b \rangle := tr(ab)$ for $a, b \in A$, where $tr : A \rightarrow k$ is given by
		\begin{align*}
			 {tr}(p)= \begin{cases}
						1	& \mbox{if } p \in \mathcal{B} \cap \mathrm{soc}\, A_{A}, \\
						0	& \mbox{otherwise.}
					\end{cases}
		\end{align*}
	Suppose that $\mathcal{B}$ satisfies two additional conditions:
		\begin{enumerate}
		\renewcommand{\labelenumi}{(\roman{enumi})}
			\item For any two paths $p, q \in \mathcal{B}$, there exist a path $r \in \mathcal{B}$ and a constant $\lambda \in k$ such that
					$p \cdot q = \lambda r$ in $A$;
			\item For every path $p \in \mathcal{B}$, there uniquely exists a path $p^{\prime} \in \mathcal{B}$ such that 
					$0 \not = p \cdot p^{\prime}  \in \mathrm{soc}\, A_{A}$.
		\end{enumerate}
	
		\begin{criterion}[{\cite[Criterion 5.1]{Lam}}] \label{examples-criterion1}
			Under the situation above, assume that $k$ is an algebraically closed field of characteristic zero or of characteristic $p$ larger than 
			the number of arrows of $Q$. Then the Nakayama automorphism of $A$ associated with the bilinear form 
			$\langle \ ,\ \rangle : A \otimes A \rightarrow k$	given above is diagonalizable over $k$.
		\end{criterion}
	Suppose that $A = kQ/I$ is a self-injective Nakayama algebra. 
	It is known that the ordinary quiver $Q$ of $A$ is a cyclic quiver with $|Q_{0}| = s$, and an admissible ideal $I$ of	$kQ$
	is of the form $R_{Q}^{N}$, where $R_{Q}$ is the arrow ideal of $kQ$ and $N \geq 2$.  
	Obviously, we can take a $k$-basis $\mathcal{B}$ of $A$ consisting of paths contains a $k$-basis of $\mathrm{soc}\, A_{A}$. 
	Since any indecomposable projective $A$-module is uniserial, $\mathcal{B}$ satisfies the two condition (i) and (ii). 
	 Hence, we can rewrite Criterion \ref{examples-criterion1} as follows:
%
%
		\begin{criterion} \label{examples-criterion2}
			Let $A= kQ/R_{Q}^{N}$ be a self-injective Nakayama algebra. 
			If the characteristic of $k$ is zero or $p$ larger than the number of arrows of $Q$, 
			then the Nakayama automorphism of $A$ is diagonalizable over $k$.
		\end{criterion}
%
%
		\begin{rem}  \label{examples-remark1}  {\rm
			If $A= kQ/R_{Q}^{N}$ is a self-injective Nakayama algebra, then the exponent $N$ does not affect
			Criterion \ref{examples-criterion2}, and only the number of arrows of $Q$ is important.
		}
		\end{rem}
	We will compute BV algebras of Nakayama algebras $A = kQ/R_{Q}^{N}$ with $|Q_{0}| =s$ for three cases.
%
%
	\subsection{The case $s =2, N=2$.}
	 Let $Q$ be a quiver 
		\[
			\xymatrix{
				1 \ar@<0.5ex>[r]^-{\alpha_{1}} & 2 \ar@<0.5ex>[l]^-{\alpha_{2}}.
			}			
		\]
	Consider the algebra  $A := kQ / R_{Q}^{2}$. Thus, $A$ is a self-injective Nakayama algebra and, moreover, a truncated algebra.
	It follows from Criterion \ref{examples-criterion2} that the Nakayama automorphism of $A$ is diagonalizable if and only if $\mathrm{char}\,k \not = 2$.
	Thus, we suppose that $\mathrm{char}\,k \not = 2$. 
	Note that we need the assumption on $\mathrm{char}\,k$ only if  we construct BV differential.
	However, we  assume that  $\mathrm{char}\,k \not = 2$ in advance.
	We denote by $e_{i}$ the primitive idempotent of $A$ corresponding to a vertex $i$ of $Q$ such that 
	$e_{i} \alpha_{i} e_{i+1} = \alpha_{i}$ holds, where we regard the subscripts $i$ of $e_{i}$ and $\alpha_{i}$ modulo 2.	
   	Take a $k$-basis $\mathcal{B} = (u_{1}, u_{2}, u_{3}, u_{4}) = (e_{1}, e_{2}, \alpha_{1}, \alpha_{2})$ of $A$. 
	Clearly, it contains a $k$-basis $\{ \alpha_{1}, \alpha_{2} \}$ of $\mathrm{soc}\, A_{A}$. We hence get an associative and non-degenerate bilinear form 
	$\langle \ ,\ \rangle : A \otimes A \rightarrow k$ and the dual basis 
	$\mathcal{B}^{*} = (v_{1}, v_{2}, v_{3}, v_{4}) = (\alpha_{2}, \alpha_{1}, e_{1}, e_{2})$ of $A$ 
	such that $\langle v_{i} ,u_{j} \rangle = \delta_{ij}$, where $\delta_{ij}$ denotes Kronecker's delta. 
	Under the basis $\mathcal{B}$, the representation matrix of the Nakayama automorphism $\nu$ of $A$ is 
		\begin{align*}
			\begin{pmatrix}
				0&1&0&0\\
				1&0&0&0\\
				0&0&0&1\\
				0&0&1&0
			\end{pmatrix}
		\end{align*}
	and is similar to a diagonal matrix
		\begin{align*}
			\begin{pmatrix}
				1&0&0&0\\
				0&1&0&0\\
				0&0&-1&0\\
				0&0&0&-1
			\end{pmatrix}.
		\end{align*}
	Moreover, we have a decomposition $A = A_{1} \oplus A_{-1} $ of $A$ by two $k$-vector spaces
		\begin{align*} \lefteqn{}
			&A_{1} = \mathrm{Ker}\, (\nu-\mathrm{id}) = k\,1_{A} \oplus k\,(\alpha_{1} + \alpha_{2}), \\
			&A_{-1} = \mathrm{Ker}\, (\nu+\mathrm{id}) = k\,(e_{1} -e_{2}) \oplus k\,(\alpha_{1} - \alpha_{2}).
		\end{align*}
	
	Let us recall that a set $\{ Ae_{i} \otimes e_{j}A \, |\,  i, j \in Q_{0} \}$ is a complete set of pairwise non-isomorphic finitely generated 
	indecomposable projective 
	$A$-bimodules, and we denote by $P(i, j)$ the indecomposable projective $A$-bimodule $Ae_{i} \otimes e_{j}A$.
	It follows from \cite{Bardzell} that a minimal projective resolution $\mathcal{P}_{\bullet}$ of $A$ as an $A$-bimodule is
	an exact sequence
		\begin{align*}			
			\cdots 
			\rightarrow	P_{2r+1} 
			\xrightarrow{\phi_{2r+1}}	P_{2r} 
			\xrightarrow{\phi_{2r}}	P_{2r-1} 
			\rightarrow	\cdots 
			\rightarrow	P_{1} 
			\xrightarrow{\phi_{1}}	P_{0} 
			\xrightarrow{\phi_0}	A 
			\rightarrow	0,		
		\end{align*}
	where 
		\begin{align*}
			 P_{n} := \begin{cases}
						P(1, 2) \oplus P(2, 1)	& \mbox{if $n$ is odd},  \\
						P(1, 1) \oplus P(2, 2)	& \mbox{if $n$ is even}
					\end{cases}
		\end{align*}
		and $A$-bimodule homomorphisms $\phi_{*} : P_{*} \rightarrow P_{*-1}$ are defined as follows:
		\begin{align*} \lefteqn{}
			&\phi_0(e_{i} \otimes e_{i}) = e_{i}; \quad\\
			&\phi_{2r}(e_{i} \otimes e_{i}) = e_{i} \otimes \alpha_{i+1} + \alpha_{i} \otimes e_{i}; \quad\\
			&\phi_{2r+1}(e_{i} \otimes e_{i+1}) = \alpha_{i} \otimes e_{i+1} -e_{i} \otimes \alpha_{i}.
		\end{align*}
	For a finite dimensional $k$-vector space $V$ and a $k$-basis $\mathcal{B}$ of $V$, given a basis vector $p \in \mathcal{B}$, we denote by $p^{*}$ 
	the $k$-linear map $V \rightarrow k$ sending $q \in \mathcal{B}$ to 1 if $q = p$ and to $0$ otherwise.
	Applying the exact functor $D = \Hom(-, k)$ to $\mathcal{P}_{\bullet}$ and 
	twisting each term of $D(\mathcal{P}_{\bullet})$ by the automorphism $\nu^{-1}$ on the right hand side, we get an exact sequence
	$D(\mathcal{P}_{\bullet})_{\nu^{-1}}$ as follows:
		\begin{align*}			
			0	\rightarrow
			D(A)_{\nu^{-1}}	\xrightarrow{D(\phi_0)}
			\cdots \rightarrow
			D(P_{2r-1})_{\nu^{-1}} \xrightarrow{D(\phi_{2r})}	
			D(P_{2r})_{\nu^{-1}} \xrightarrow{D(\phi_{2r+1})}
			D(P_{2r+1})_{\nu^{-1}} \rightarrow
			\cdots,					
		\end{align*}
	where
		\begin{align*}
			 D(P_{n})_{\nu^{-1}} = \begin{cases}
										A ( \alpha_{2} \otimes \alpha_{2})^{*} A \oplus A ( \alpha_{1} \otimes \alpha_{1})^{*} A	& \mbox{if $n$ is odd},  \\
										A ( \alpha_{2} \otimes \alpha_{1})^{*} A \oplus A ( \alpha_{1} \otimes \alpha_{2})^{*} A 	& \mbox{if $n$ is even}
									\end{cases}				
		\end{align*}
	and $A$-bimodule homomorphisms $D(\phi_{*}) : D(P_{*-1})_{\nu^{-1}} \rightarrow D(P_{*})_{\nu^{-1}}$ are defined as follows:
		\begin{align*} \lefteqn{}
			&D(\phi_0)(\langle -, 1_{A} \rangle) 
				= \alpha_{1}( \alpha_{2} \otimes \alpha_{1})^{*} +( \alpha_{2} \otimes \alpha_{1})^{*}  \alpha_{1}
					+\alpha_{2}( \alpha_{1} \otimes \alpha_{2})^{*}+( \alpha_{1} \otimes \alpha_{2})^{*}  \alpha_{2}; \\
			&D(\phi_{2r})( (\alpha_{i} \otimes \alpha_{i})^{*} ) 
				= \alpha_{i+1} (\alpha_{i} \otimes \alpha_{i+1})^{*} + (\alpha_{i+1} \otimes \alpha_{i})^{*} \alpha_{i} ; \\
			&D(\phi_{2r+1})((\alpha_{i} \otimes \alpha_{i+1})^{*} ) 
				=  (\alpha_{i} \otimes \alpha_{i})^{*} \alpha_{i+1} -\alpha_{i+1} (\alpha_{i+1} \otimes \alpha_{i+1})^{*}.
		\end{align*} 
	Therefore, we obtain an exact sequence $X_{\bullet}$
		\[
			\xymatrix{				
				\cdots	\ar[r]		& 
				P_{2}	\ar[r]^-{\phi_{2}}		&
				P_{1}	\ar[r]^-{\phi_{1}}		&
				P_{0}	\ar[r]^-{\mu}	\ar[d]_-{\phi_0}		&
				D(P_{0})_{\nu^{-1}} \ar[r]^-{D(\phi_{1})}		&
				D(P_{1})_{\nu^{-1}} \ar[r]^-{D(\phi_{2})}		&
				D(P_{2})_{\nu^{-1}} \ar[r]	&
				\cdots 	\\
				&
				&
				&
				A 	\ar[r]^-{\cong}	&
				D(A)_{\nu^{-1}}	\ar[u]_-{D(\phi_0)}		\ar@{}[lu]|{\circlearrowright}	&
				&
				&
			}
		\]
	of which the composition $\mu$ is defined by
		\begin{align*} \lefteqn{}
			&\mu(e_{i} \otimes e_{i}  ) 
				=  \alpha_{i} \otimes e_{i} +\alpha_{i+1} \otimes e_{i+1}
		\end{align*} 		
	and whose term $P_{n}$ is of degree $n \geq 0$.	
	Observe that there are $A$-bimodule isomorphisms
		\begin{align*} \lefteqn{}			
			D(P(i, j))_{\nu^{-1}} &= D( {}_{\nu^{-1}} Ae_{i} \otimes e_{j}A )
									\cong \Hom( e_{j}A, D({}_{\nu^{-1}}Ae_{i})) \\
									&\cong D(e_{j}A) \otimes D(Ae_{i})_{\nu^{-1}}
									\cong Ae_{j+1} \otimes e_{i+1}A_{\nu^{-1}} \\
									&\cong Ae_{j+1} \otimes e_{i} A
									=P(j+1, i),				
		\end{align*} 
	where the fourth isomorphism is induced by the $A$-bimodule isomorphism $A_{\nu} \cong D(A)$ and the fact that $\nu = \nu^{-1}$. 
	Since $A^{\rm e}$ is injective as an $A$-bimodule, the contravariant functor $\Hom_{A^{\rm e}}(-, A^{\rm e})$ is exact, 
	so that the exact sequence $X_{\bullet}$
	is a complete resolution of $A$. 
	Before applying the functor $\Hom_{A^{\rm e}}(-, A)$ to $X_{\bullet}$, we notice that 
	there are isomorphisms
		\begin{align*} \lefteqn{}			
			\Hom_{A^{\rm e}}(D(P(i, j))_{\nu^{-1}}, A)
				&\cong \Hom_{A^{\rm e}}(D(P(i, j)), D(A))
				\cong	D(A \otimes_{A^{\rm e}} D(P(i, j))) \\
				&\cong \Hom_{A^{\rm e}}(A, P(i, j))
				\cong	\Hom_{A^{\rm e}}(A, A^{\rm e}) \otimes_{A^{\rm e}} P(i, j) \\
				&\cong	A_{\nu^{-1}} \otimes_{A^{\rm e}} P(i, j)				
		\end{align*} 
	for any $i, j \in Q_{0}$. Using these isomorphisms, we have the following commutative diagram with exact rows:\\[-20pt]
	\begin{center}
		\scalebox{0.9}{	
			\xymatrix@!C=168pt{
				\Hom_{A^{\rm e}}(D(P_{2r+1})_{\nu^{-1}}, A) \ar[r]^-{\Hom (D(\phi_{2r+1}), A)} \ar[d]^{\cong} &
				\Hom_{A^{\rm e}}(D(P_{2r})_{\nu^{-1}}, A) \ar[r]^-{\Hom (D(\phi_{2r}), A)} \ar[d]^{\cong} &
				\Hom_{A^{\rm e}}(D(P_{2r-1})_{\nu^{-1}}, A) \ar[d]^{\cong}  \\
				A_{\nu^{-1}} \otimes_{A^{\rm e}}  P_{2r+1} \ar[r]^-{\mathrm{id} \otimes  \phi_{2r+1}} & 
				A_{\nu^{-1}} \otimes_{A^{\rm e}}  P_{2r} \ar[r]^-{\mathrm{id} \otimes  \phi_{2r}} \ar@{}[lu]|{\circlearrowright}	&
				A_{\nu^{-1}} \otimes_{A^{\rm e}}  P_{2r-1}  \ar@{}[lu]|{\circlearrowright},	
			}}\\[10pt]
	\end{center}
	where the $A$-bimodules $A_{\nu^{-1}} \otimes_{A^{\rm e}}  P_{*}$ are given by
		\begin{align*} \lefteqn{}
			&A_{\nu^{-1}} \otimes_{A^{\rm e}}  P_{2r} 
				= k\,( e_{2} \otimes_{A^{\rm e}} e_{1} \otimes e_{2}  ) \oplus k\,(e_{1} \otimes_{A^{\rm e}} e_{2} \otimes e_{1} ); \\
			&A_{\nu^{-1}} \otimes_{A^{\rm e}}  P_{2r+1} 
				=  k\,( \alpha_{2} \otimes_{A^{\rm e}} e_{1} \otimes e_{1}  ) \oplus k\,(\alpha_{1} \otimes_{A^{\rm e}} e_{2} \otimes e_{2} ),
		\end{align*} 	
	and the $k$-linear maps $\mathrm{id} \otimes_{A^{\rm e}} \phi_{*}$ are given by
		\begin{align*} \lefteqn{}
			&\mathrm{id} \otimes  \phi_{2r}( \alpha_{i} \otimes_{A^{\rm e}} e_{i} \otimes e_{i} ) 
				= 0; \\
			&\mathrm{id} \otimes \phi_{2r+1}(e_{i} \otimes_{A^{\rm e}} e_{i+1} \otimes e_{i}  ) 
				=  \alpha_{i} \otimes_{A^{\rm e}} e_{i} \otimes e_{i} -\alpha_{i+1} \otimes_{A^{\rm e}} e_{i+1} \otimes e_{i+1}.
		\end{align*} 		
		 Hence, the complex $\Hom_{A^{\rm e}}(X_{\bullet}, A)$ can be identified with a complex 
	\begin{align*}\lefteqn{}
		\cdots \rightarrow 
		A_{\nu^{-1}} \otimes_{A^{\rm e}}  P_{1}  &\xrightarrow{\mathrm{id} \otimes \phi_{1}}
		A_{\nu^{-1}} \otimes_{A^{\rm e}}  P_{0}\\ &\xrightarrow{\Hom (\mu, A)}
		\Hom_{A^{\rm e}}(P_{0}, A) \xrightarrow{\Hom (\phi_{1}, A)}
		\Hom_{A^{\rm e}}(P_{1}, A) \rightarrow \cdots
	\end{align*}
	of which the remaining terms and differentials are given by
		\begin{align*} \lefteqn{}
			 &\Hom_{A^{\rm e}}(P_{n}, A)
				\cong \begin{cases}
						 e_1 Ae_2 \oplus e_2 Ae_1 = k\,\alpha_{1} \oplus k\,\alpha_{2}	& \mbox{if $n$ is odd}, \\
						e_1 Ae_1\oplus e_2 Ae_2	 = k\,e_{1} \oplus k\,e_{2}				& \mbox{if $n$ is even};
					\end{cases} \\	
			&\Hom_{A^{\rm e}}(\phi_{2r+1}, A)(e_{i} ) 
				=  \alpha_{i+1} -\alpha_{i}; \\	
			&\Hom_{A^{\rm e}}(\phi_{2r}, A)( \alpha_{i} ) 
				= 0 ;  \\
			&\Hom_{A^{\rm e}}(\mu, A)( \alpha_{i} \otimes_{A^{\rm e}} e_{i} \otimes e_{i}  ) 
				= 0
		\end{align*} 
	and whose term $\Hom_{A^{\rm e}}(P_{n}, A)$ is of degree $n \geq 0$.	

	Therefore, the complete cohomology groups $\widehat{\mathrm{HH}}^{*}(A)$ are given as follows: for $n \geq 0$
		\begin{align} 
			 &\widehat{\mathrm{HH}}^{n}(A) \label{eq_1}
				= \begin{cases}
						 k\,\overline{\alpha_{1}}  	&  \mbox{if $n$ is odd}, \\
						 k\,\overline{1_{A}} 			& \mbox{if $n$ is even};
					\end{cases}	\\[5pt] 
			 &\widehat{\mathrm{HH}}^{-n}(A) \label{eq_2}
				= \begin{cases}			 
						k\,\overline{\alpha_{1} \otimes_{A^{\rm e}} e_{1} \otimes e_{1}} 					& \mbox{if $n$ is odd}, \\
						k\,\overline{ e_{1} \otimes_{A^{\rm e}} e_{2} \otimes e_{1} + e_{2} \otimes_{A^{\rm e}} e_{1} \otimes e_{2}}	& \mbox{if $n >0$ is even}.
					\end{cases}	
		\end{align} 	
	Observe that we have $\widehat{\mathrm{HH}}^{0}(A) = \HH^{0}(A)$ and $\widehat{\mathrm{HH}}^{-1}(A) = \Hh_{0}(A, A_{\nu^{-1}})$.

%
%
%
%
	From now on, we fix a $k$-basis 
	    \[
	        (u_{1}, u_{2}, u_{3}, u_{4}) = (1, \alpha_{1}+\alpha_{2}, e_{1} -e_{2}, \alpha_{1}-\alpha_{2})
	    \] 
    of $A$ consisting of eigenvectors 
	associated with the eigenvalues of the diagonalizable Nakayama automorphism $\nu$ of $A$. 
	Then we have its dual basis 
	    \[
	        (v_{1}, v_{2}, v_{3}, v_{4}) = \bigl((1/2)(\alpha_{1}+\alpha_{2}), 1/2, (1/2)(\alpha_{1}-\alpha_{2}), (1/2)(e_{1}-e_{2}) \bigr)
	    \] 
	of $A$.
	Following \cite{AmesCaglieroTirao}, we will construct comparison morphisms between the minimal projective resolution 
	$\mathcal{P}_{\bullet}$ and the normalized bar resolution $\mathrm{Bar}_{\bullet}(A)$ of $A$ 
	(cf.\,\cite{RedondoRoman2018} for monomial algebras in general). 
	Let ${\bf F}_{0} $ be the canonical inclusion $P_{0} \hookrightarrow A \otimes A$, and for each $n >0$, we define 
	${\bf F}_{n} : P_{n} \rightarrow A \otimes \bA^{\otimes n} \otimes A$ in the following way:
	if $n = 2r $, then let
		\begin{align*} 
			 {\bf F}_{2r}(e_{i} \otimes e_{i} )
				=  1 \otimes \overbrace{\balpha_{i} \otimes \balpha_{i+1} \otimes \cdots \otimes \balpha_{i} \otimes \balpha_{i+1}}^{2r} \otimes1,			 
		\end{align*} 	
	where $\balpha_{i}$ and $\balpha_{i+1}$ appear each other. 
	If $n = 2r+1$, then let
		\begin{align*} 
			 {\bf F}_{2r+1}(e_{i} \otimes e_{i+1} )
				=  1 \otimes  
					\overbrace{\balpha_{i} \otimes \balpha_{i+1} \otimes \cdots \otimes \balpha_{i} \otimes \balpha_{i+1} \otimes \balpha_{i}}^{2r+1}  
					\otimes 1.			 
		\end{align*} 
	On the other hand, let ${\bf G}_{0}$ be the canonical projection $A \otimes A \rightarrow P_{0}$, and for each $n >0$, 
	${\bf G}_{n} : A \otimes \bA^{\otimes n} \otimes A \rightarrow P_{n}$ is given as follows:
	if $n = 2r $, then let
		\begin{align*} 
			 {\bf G}_{2r}(w)
			= \begin{cases}
					e_{i} \otimes e_{i} & 
					\mbox{if $w = 1 \otimes \balpha_{i} \otimes \balpha_{i+1} \otimes \cdots \otimes \balpha_{i} \otimes \balpha_{i+1} \otimes 1 $},  
					\\						
					0 & \mbox{otherwise}.
					\end{cases}	
		\end{align*} 	
	If $n = 2r+1 $, then let
		\begin{align*} 
			 {\bf G}_{2r+1}(w)
			= \begin{cases}
					e_{i} \otimes e_{i+1} & 
					\mbox{if 
					$w = 1 \otimes \balpha_{i} \otimes \balpha_{i+1} \otimes \cdots \otimes \balpha_{i} \otimes \balpha_{i+1} \otimes \balpha_{i} \otimes 1 $},  
					\\						
					0 & \mbox{otherwise}.
					\end{cases}	
		\end{align*} 	
	One can easily check that ${\bf F}$ and ${\bf G}$ are comparison morphisms.
%
%
%
%
	Using these comparison morphisms and the definition of the $\star$-product $\star$, we have the following result.
%
%
		\begin{prop} \label{examples-prop1}
			For every $i \in \mathbb{Z}$, the $n$-th complete cohomology group $\widehat{\mathrm{HH}}^{n}(A)$ of $A$ is of dimension one, and 
			the complete cohomology ring $(\widehat{\mathrm{HH}}^{\bullet}(A), \star)$ is isomorphic to
				\begin{align*}  \lefteqn{}
					k[\alpha, \beta, \gamma] /\langle \alpha \gamma - 1, \beta^2 \rangle
				\end{align*}
			with $| \alpha | = 2, |\beta|= 1$ and $|\gamma| = -2$, where $\alpha$, $\beta$ and $\gamma$ correspond to   $\overline{1_{A}} \in \widehat{\mathrm{HH}}^{2}(A)$ in  (\ref{eq_1}), $\overline{\alpha_1} \in \widehat{\mathrm{HH}}^{1}(A)$ in  (\ref{eq_1}) and $\overline{ e_{1} \otimes_{A^{\rm e}} e_{2} \otimes e_{1} + e_{2} \otimes_{A^{\rm e}} e_{1} \otimes e_{2}} \in \widehat{\mathrm{HH}}^{-2}(A)$ in  (\ref{eq_2}), respectively.
		\end{prop}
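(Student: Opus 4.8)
The plan is to read off the ring structure from the additive computation already carried out in (\ref{eq_1})--(\ref{eq_2}), using only two inputs: graded commutativity of $\star$, which makes the relation $\beta^{2}=0$ automatic because $\mathrm{char}\,k\neq 2$, and one explicit computation of a mixed $\star$-product, which produces the relation $\alpha\gamma=1$. Everything else is then a formal consequence of one-dimensionality of $\widehat{\mathrm{HH}}^{n}(A)$ in each degree.

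First I would record that $\dim_{k}\widehat{\mathrm{HH}}^{n}(A)=1$ for every $n\in\mathbb{Z}$: for $n\neq 0$ this is exactly (\ref{eq_1}) and (\ref{eq_2}), while for $n=0$ one has $\widehat{\mathrm{HH}}^{0}(A)=\mathrm{HH}^{0}(A)=Z(A)=k\cdot 1_{A}$, an immediate check for the four-dimensional algebra $A$. Write $\alpha\in\widehat{\mathrm{HH}}^{2}(A)$, $\beta\in\widehat{\mathrm{HH}}^{1}(A)$ and $\gamma\in\widehat{\mathrm{HH}}^{-2}(A)$ for the classes named in the statement.

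Since $(\widehat{\mathrm{HH}}^{\bullet}(A),\star)$ is graded commutative and $\mathrm{char}\,k\neq 2$, the odd-degree class $\beta$ satisfies $\beta\star\beta=-\beta\star\beta$, hence $\beta\star\beta=0$; this is the relation $\beta^{2}=0$. Next I would compute $\alpha\star\gamma\in\widehat{\mathrm{HH}}^{0}(A)$ directly inside $\mathcal{D}^{\bullet}(A,A)$. Via the comparison morphisms $\mathbf{F},\mathbf{G}$, the class $\alpha$ is represented by the $2$-cochain $f_{\alpha}\in C^{2}(A,A)$ with $f_{\alpha}(\overline{\alpha_{1}}\otimes\overline{\alpha_{2}})=e_{1}$, $f_{\alpha}(\overline{\alpha_{2}}\otimes\overline{\alpha_{1}})=e_{2}$ and $f_{\alpha}=0$ on the remaining basis tensors of $\overline{A}^{\otimes 2}$; the class $\gamma$, after transporting through $\mathrm{id}_{A_{\nu^{-1}}}\otimes\mathbf{F}_{1}$ and the standard identification $A_{\nu^{-1}}\otimes_{A^{\rm e}}\mathrm{Bar}_{1}(A)\cong A_{\nu^{-1}}\otimes\overline{A}$, is represented by the $1$-chain $e_{1}\otimes\overline{\alpha_{2}}+e_{2}\otimes\overline{\alpha_{1}}\in C_{1}(A,A_{\nu^{-1}})=\mathcal{D}^{-2}(A,A)$. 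Feeding these into the case of the $\star$-product with $m\geq 0$, $p\geq 0$, $p<m$, whose value on a $1$-chain $\varphi=a_{0}\otimes\overline{a_{1}}$ is the element $\sum_{i}f\bigl(\overline{u_{i}\nu(a_{0})}\otimes\overline{a_{1}}\bigr)v_{i}\in A$, specialized to $m=2$ and $p=1$, and evaluating against the eigenbasis $(u_{i})_{i}$ and its dual $(v_{i})_{i}$ fixed before the proposition, a short calculation yields $f_{\alpha}\star\gamma=e_{1}+e_{2}=1_{A}$, that is, $\alpha\star\gamma=\overline{1_{A}}$.

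It remains to assemble these. From $\alpha\star\gamma=1$ the class $\alpha$ is a unit in $\widehat{\mathrm{HH}}^{\bullet}(A)$ with $\alpha^{-1}=\gamma$, so $\star$-multiplication by $\alpha$ is bijective in every degree; combined with $\beta\neq 0$, $\beta^{2}=0$ and one-dimensionality this forces $\widehat{\mathrm{HH}}^{2r}(A)=k\,\alpha^{r}$ and $\widehat{\mathrm{HH}}^{2r+1}(A)=k\,(\alpha^{r}\star\beta)$ for all $r\in\mathbb{Z}$. Hence the graded $k$-algebra homomorphism $k[\alpha,\beta,\gamma]/\langle\alpha\gamma-1,\beta^{2}\rangle\to\widehat{\mathrm{HH}}^{\bullet}(A)$ sending the generators to these classes is well defined (the relations hold), surjective, and bijective in each degree, which is the assertion. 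The main obstacle is the one explicit computation of $f_{\alpha}\star\gamma$: one must transport $\gamma$ correctly from the minimal complete resolution $X_{\bullet}$ to a genuine Hochschild $1$-chain and keep track of the twist by $\nu$ and of the dual basis inside the $\star$-product formula. This is routine but it is the single place where an error would propagate, and it is precisely what pins down the normalization $\alpha\gamma=1$ rather than merely $\alpha\gamma\in k^{\times}$, the latter being already forced by the $2$-periodicity of $X_{\bullet}$.
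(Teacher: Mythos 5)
Your proposal is correct and follows essentially the same route the paper intends (the paper gives no written proof beyond ``use the comparison morphisms and the definition of $\star$''): transport the generators of (\ref{eq_1}) and (\ref{eq_2}) along $\mathbf{F}$ and $\mathbf{G}$, compute the one essential product $\alpha\star\gamma=\overline{1_A}$ via case (3)(i) of the $\star$-product, and deduce the rest from graded commutativity, $\mathrm{char}\,k\neq 2$, and one-dimensionality of each $\widehat{\mathrm{HH}}^{n}(A)$. Your chain-level representatives $f_{\alpha}$ and $e_{1}\otimes\overline{\alpha_{2}}+e_{2}\otimes\overline{\alpha_{1}}$ and the resulting value $f_{\alpha}\star\gamma=e_{1}+e_{2}=1_{A}$ check out (and by Lemma \ref{Frobenius-lem1}(\ref{Frobenius-lem1-2}) the answer is independent of the choice of dual bases), so the argument is complete.
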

%
%
		\begin{rem}  \label{examples-remark4}	 {\rm	
			As we have seen before, the complete cohomology groups $\widehat{\mathrm{HH}}^{n}(A)$ with $n \geq 0$ of $A$ coincide with 
			the Hochschild cohomology
			groups $\HH^{n}(A)$ of $A$. Hence, the Hochschild cohomology ring $(\HH^{\bullet}(A), \smile)$ of $A$ is a subring of the complete 
			cohomology ring $(\widehat{\mathrm{HH}}^{\bullet}(A), \star)$.
		}
		\end{rem}
%
%
	\begin{rem}	  \label{examples-remark2} {\rm
			We have another description of the complete cohomology ring above as follows:
				\begin{align*}  \lefteqn{}
					k[\alpha, \beta, \alpha^{-1}] /\langle \beta^2 \rangle
				\end{align*}
			where $|\alpha| = 2, |\beta|= 1$ and $| \alpha ^{-1}| = -2$. Therefore, we will write $\alpha^{-1} $ for $\gamma$.
	}
	\end{rem}
%
%
%
%
	Following our main result, we now construct a BV operator $\widehat{\Delta}_{i} : \widehat{\mathrm{HH}}^{i}(A) \rightarrow \widehat{\mathrm{HH}}^{i-1}(A)$ for all $i \in \mathbb{Z}$.
	It follows from Proposition \ref{examples-prop1} that 
	\[\widehat{\mathrm{HH}}^{2l}(A) = k\,\alpha^{l} \quad \mbox{and} \quad \widehat{\mathrm{HH}}^{2l+1}(A) = k\,\beta\alpha^{l} \]
	for all $l \in \mathbb{Z}$.
	Note that the number of the generators contained in the basis element of $\widehat{\mathrm{HH}}^{i}(A)$ is at least $3$ except for $-4 \leq i \leq 4$.
	Thus, one can use the operators $\widehat{\Delta}_{i} : \widehat{\mathrm{HH}}^{i}(A) \rightarrow \widehat{\mathrm{HH}}^{i-1}(A)$ for $-4 \leq i \leq 4$ and the formulas in Definition \ref{def_1} to obtain the remaining operators $\widehat{\Delta}_{*} : \widehat{\mathrm{HH}}^{*}(A) \rightarrow \widehat{\mathrm{HH}}^{*-1}(A)$.
	From this point of view, it suffices to construct  $\widehat{\Delta}_{i}$ only for $i = -4, -2, -1, 1, 2, 3, 4$.
	We will show a way of constructing $\widehat{\Delta}_{1}$ and $\widehat{\Delta}_{-1}$. The others can be constructed in a similar way. 
	Let us recall that every complete cohomology group has a decomposition associated with the product of eigenvalues and in particular,
	except for the cohomology associated with the product of eigenvalues equal to $1_{A}$, the other vanish. 
	Moreover, the BV operator defined on the chain level can be lifted to the cohomology level when we restrict it to the subcomplex associated with the product of eigenvalues equal to $1_{A}$.

	We first compute $\widehat{\Delta}_{1} : \widehat{\mathrm{HH}}^{1}(A) \rightarrow \widehat{\mathrm{HH}}^{0}(A)$. Consider a diagram
		\[\xymatrix{
			\Hom_{A^{\rm e}}(P_{1}, A)	\ar[d]_-{\Hom_{A^{\rm e}}({\bf G}_{1}, A)}	\ar[r]^{\widehat{\Delta}_{1} } & \Hom_{A^{\rm e}}(P_{0}, A)	\\
			\Hom(\bA, A)	\ar[d]_-{\cong}	&	 A 	\ar[u]_-{\Hom_{A^{\rm e}}({\bf F}_{0}, A)}\\
			D(A_{\nu} \otimes \bA)	\ar[r]^-{D(B_{0}^{\nu})}	&	 D(A_{\nu}) 	\ar@{}[luu]|{\circlearrowright}	\ar[u]_-{\cong}.	\\
		}\]
	
	We know $\widehat{\mathrm{HH}}^{1}(A) = k \,\overline{\alpha_{1}}$ and hence deal with only $\alpha_{1}$.
	Put 
		\[
			f_{\alpha_{1}} := \Hom_{A^{\rm e}}({\bf G}_{1}, A)(\alpha_{1}), \quad 
			f_{u_{2}} := \Hom_{A^{\rm e}}({\bf G}_{1}, A)(u_{2}), \quad
			f_{u_{4}} := \Hom_{A^{\rm e}}({\bf G}_{1}, A)(u_{4}).
		\] 
	Namely, each of $f_{\alpha_{1}}, f_{u_{2}}$ and $f_{u_{4}}$ sends $\bx \in \bA $ with $x \in \mathcal{B}$ to
		\begin{align*} 
			 f_{\alpha_{1}}(\bx)
			= \begin{cases}
					\alpha_{1} & 
					\mbox{if $\bx = \overline{\alpha_{1}} $},  
					\\						
					0 & \mbox{otherwise},
					\end{cases} \quad	
			 f_{u_{2}}(\bx)
			= \begin{cases}
					\alpha_{1} & 
					\mbox{if $\bx = \overline{\alpha_{1}} $},  
					\\				
					\alpha_{2} & 
					\mbox{if $\bx = \overline{\alpha_{2}} $},  
					\\						
					0 & \mbox{otherwise},
					\end{cases}	 \quad	
			 f_{u_{4}}(\bx)
			= \begin{cases}
					\alpha_{1} & 
					\mbox{if $\bx = \overline{\alpha_{1}} $},  
					\\				
					-\alpha_{2} & 
					\mbox{if $\bx = \overline{\alpha_{2}} $},  
					\\						
					0 & \mbox{otherwise}.
					\end{cases}
		\end{align*} 	
	 Then we have $f_{\alpha_{1}} = (1/2)\,f_{u_{2}}+(1/2)\,f_{u_{4}}, f_{u_{2}} \in C_{(1)}^{1}(A, A)$ and $f_{u_{4}} \in C_{(-1)}^{1}(A, A)$.
	Since it is sufficient to only consider the image of $(1/2)f_{u_{2}}$,
	a direct computation shows that 
		\[
		\widehat{\Delta}_{1}(\beta) = 	\widehat{\Delta}_{1}(\overline{\alpha_{1}}) = (1/2)\,\overline{1_{A}} =  1/2
		\]
		in $\widehat{\mathrm{HH}}^{0}(A)$. 
	On the other hand, consider a diagram
		\[\xymatrix{
			A_{\nu^{-1}} \otimes_{A^{\rm e}} P_{0}	\ar[d]_-{\mathrm{id} \otimes_{A^{\rm e}} {\bf F}_{0}}	\ar[r]^{\widehat{\Delta}_{-1} } & A_{\nu^{-1}} \otimes_{A^{\rm e}} P_{1}	\\
			A_{\nu^{-1}} 	\ar[r]^-{-B_{0}^{\nu^{-1}}}	&	 A_{\nu^{-1}} \otimes \bA	\ar@{}[lu]|{\circlearrowright}	\ar[u]_-{\mathrm{id} \otimes_{A^{\rm e}} {\bf G}_{1}}.	
		}\]
	We know that $\widehat{\mathrm{HH}}^{-1}(A) = k\, \overline{\alpha_{1} \otimes_{A^{\rm e}} e_{2} \otimes e_{1}}$ holds and hence handle 
	$\alpha_{1} \otimes_{A^{\rm e}} e_{2} \otimes e_{1}$. The element 
	$(\mathrm{id} \otimes_{A^{\rm e}} {\bf F}_{0})(\alpha_{1} \otimes_{A^{\rm e}} e_{2} \otimes e_{1}) = \alpha_{1}$ can be decomposed as 
	$\alpha_{1} = (1/2)\,u_{2} +(1/2)\,u_{4}$ in $A_{\nu^{-1}}$, where $u_{2} \in C_{0}^{(1)}(A, A_{\nu^{-1}})$ and $u_{4} \in C_{0}^{(-1)}(A, A_{\nu^{-1}})$.
	Thus, a direct calculation gives us the formula 
		\[
			\widehat{\Delta}_{-1}(\overline{\alpha_{1} \otimes_{A^{\rm e}} e_{2} \otimes e_{1}}) 
			=
			(-1/2)\,\overline{e_{1} \otimes_{A^{\rm e}} e_{2} \otimes e_{1} + e_{2} \otimes_{A^{\rm e}} e_{1} \otimes e_{2} }
		\]
	 in $\widehat{\mathrm{HH}}^{-2}(A)$.
	Thus, we have $\widehat{\Delta}_{-1}(\alpha^{-1} \beta )$ $=$ $	(-1/2)\, \alpha^{-1}$. 
Combining the formulas in Definition \ref{def_1}, we have the following result.

%
%
	\begin{prop} \label{examples-prop2}
	The nonzero BV differentials $\widehat{\Delta}_{*}$ on $\widehat{\mathrm{HH}}^{\bullet}(A)$ are 
				\begin{align*}  \lefteqn{}
					\widehat{\Delta}_{2n+1}(\alpha^{n} \beta ) = ((2n+1)/2)\, \alpha^{n}  
				\end{align*} 	
	with $n \in \mathbb{Z}$. In particular, the nonzero Gerstenhaber brackets are induced by
			\begin{align*}  \lefteqn{}
				&\{ \alpha, \beta \} = \alpha, \quad \{ \beta, \alpha^{-1} \} = \alpha^{-1}. 
			\end{align*} 				
	\end{prop}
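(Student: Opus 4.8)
The plan is to deduce everything from the Batalin--Vilkovisky structure on $\widehat{\mathrm{HH}}^{\bullet}(A)$ provided by Theorem~\ref{BV-theo2} and Corollary~\ref{BV-cor2}, combined with the explicit description $(\widehat{\mathrm{HH}}^{\bullet}(A),\star)\cong k[\alpha,\beta,\alpha^{-1}]/\langle\beta^{2}\rangle$ of Proposition~\ref{examples-prop1}. Since $\widehat{\mathrm{HH}}^{2l}(A)=k\,\alpha^{l}$ and $\widehat{\mathrm{HH}}^{2l+1}(A)=k\,\beta\alpha^{l}$ for every $l\in\mathbb{Z}$, the whole operator $\widehat{\Delta}$ is encoded in the scalars $d_{l},c_{l}\in k$ defined by $\widehat{\Delta}(\alpha^{l})=d_{l}\,\beta\alpha^{l-1}$ and $\widehat{\Delta}(\beta\alpha^{l})=c_{l}\,\alpha^{l}$, and the assertion to be proved amounts to $d_{l}=0$ and $c_{l}=(2l+1)/2$ for all $l$. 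Note at once that $\widehat{\Delta}_{0}=0$ because $\widehat{\mathrm{HH}}^{0}(A)=k\cdot 1$ and $\widehat{\Delta}_{0}(1)=0$. First I would compute $\widehat{\Delta}_{i}$ directly for the finitely many degrees $-4\le i\le 4$; as the text indicates, this together with the BV axioms already pins down $\widehat{\Delta}$ in every degree.

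The direct computations are carried out exactly as for the two cases $\widehat{\Delta}_{1}(\beta)=\tfrac12$ and $\widehat{\Delta}_{-1}(\overline{\alpha_{1}\otimes_{A^{\rm e}}e_{2}\otimes e_{1}})=-\tfrac12\,\alpha^{-1}$ treated above. Concretely: lift a generator of $\widehat{\mathrm{HH}}^{i}(A)$ through the comparison morphisms ${\bf F},{\bf G}$ to a cochain in $C^{i}(A,A)$ (if $i>0$) or a chain in $C_{-i-1}(A,A_{\nu^{-1}})$ (if $i<0$); decompose it according to the eigenspace decomposition $A=A_{1}\oplus A_{-1}$ of the Nakayama automorphism and keep only the summand lying in the subcomplex $\mathcal{D}^{\bullet}_{(1)}(A,A)$ (this step is indispensable, since $\Delta^{\nu}$ and $B^{\nu^{-1}}$ descend to cohomology only after restriction to $\mathcal{D}^{\bullet}_{(1)}(A,A)$, cf.\ Corollary~\ref{Decomposition-cor2} and the remark following Theorem~\ref{BV-theo2}); apply $\Delta^{\nu}$ or $B^{\nu^{-1}}$; and push the result back to $\Hom_{A^{\rm e}}(P_{\bullet},A)$. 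Exploiting the $2$-periodicity of the complete resolution $X_{\bullet}$, I expect these computations to give $\widehat{\Delta}(\alpha)=0$, $\widehat{\Delta}(\alpha^{2})=0$, $\widehat{\Delta}(\beta\alpha)=\tfrac32\,\alpha$, together with the further values $\widehat{\Delta}(\alpha^{-1})=0$, $\widehat{\Delta}(\alpha^{-2})=0$, $\widehat{\Delta}(\beta\alpha^{-1})=-\tfrac12\,\alpha^{-1}$; in particular $d_{0}=d_{1}=d_{2}=0$ and $c_{0}=\tfrac12$, $c_{1}=\tfrac32$.

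Next I would propagate these values to all degrees by means of the three--term identity of Definition~\ref{def_1}(iii) for the $\star$-product. Applied to the triple $(\alpha^{l-1},\alpha,\alpha)$ and simplified using $\widehat{\Delta}(\alpha)=0$, it produces a linear recursion expressing $\widehat{\Delta}(\alpha^{l+1})$ through $\widehat{\Delta}(\alpha^{l})$, $\widehat{\Delta}(\alpha^{l-1})$ and $\widehat{\Delta}(\alpha^{2})$; since $\widehat{\Delta}(1)=\widehat{\Delta}(\alpha)=\widehat{\Delta}(\alpha^{2})=0$, induction in both directions forces $\widehat{\Delta}(\alpha^{l})=0$, i.e.\ $d_{l}=0$, for every $l\in\mathbb{Z}$. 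Applied to the triple $(\alpha^{l-1},\beta,\alpha)$ and simplified using $\widehat{\Delta}(\alpha^{l-1})=\widehat{\Delta}(\alpha^{l})=\widehat{\Delta}(\alpha)=0$, $\widehat{\Delta}(\beta)=\tfrac12$ and the already computed $\widehat{\Delta}(\beta\alpha)=\tfrac32\,\alpha$, the same identity gives $c_{l}=c_{l-1}+1$ for all $l\in\mathbb{Z}$, whence $c_{l}=l+\tfrac12=(2l+1)/2$. This establishes $\widehat{\Delta}_{2l}=0$ and $\widehat{\Delta}_{2l+1}(\alpha^{l}\beta)=\tfrac{2l+1}{2}\,\alpha^{l}$, as claimed. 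Finally, substituting these formulas into the BV identity of Remark~\ref{BV-rem1} one computes the induced Gerstenhaber bracket on basis elements, obtaining $\{\alpha^{a},\alpha^{b}\}=0$, $\{\alpha^{a},\beta\alpha^{b}\}=a\,\alpha^{a+b}$ and $\{\beta\alpha^{a},\beta\alpha^{b}\}=(a-b)\,\beta\alpha^{a+b}$; in particular $\{\alpha,\beta\}=\alpha$ and, via graded anticommutativity, $\{\beta,\alpha^{-1}\}=\alpha^{-1}$, and every bracket is generated from these two through the Leibniz rule.

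The main obstacle is the block of explicit chain-level computations of the low-degree operators $\widehat{\Delta}_{i}$. One must transport $B^{\nu^{-1}}$ and its dual $\Delta^{\nu}$ accurately through the comparison morphisms ${\bf F},{\bf G}$ relating the normalized bar resolution $\mathrm{Bar}_{\bullet}(A)$ and the minimal complete resolution $X_{\bullet}$, keep careful track of the eigenspace decomposition so as never to leave $\mathcal{D}^{\bullet}_{(1)}(A,A)$, and check non-vanishing of the resulting classes; the sign bookkeeping in the three--term identity, although routine, also needs care. Once these finitely many base values are in hand, the remainder of the argument is purely formal.
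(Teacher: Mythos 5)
Your proposal is correct and follows essentially the same route as the paper: direct chain-level computation of $\widehat{\Delta}_{i}$ for the finitely many degrees $-4\le i\le 4$ (via the comparison morphisms ${\bf F},{\bf G}$ and the restriction to $\mathcal{D}^{\bullet}_{(1)}(A,A)$), followed by propagation to all degrees through the seven-term identity of Definition \ref{def_1}(iii) and extraction of the brackets from the BV identity. Your explicit recursions $d_{l+1}=2d_{l}-d_{l-1}$ and $c_{l}=c_{l-1}+1$ check out and merely make precise the paper's remark that the remaining operators are obtained by "combining the formulas in Definition \ref{def_1}."
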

%
%
		\begin{rem}  \label{examples-remark5}	  {\rm	
			Since the non-negative part $\widehat{\mathrm{HH}}^{\geq 0}(A)$ of the complete cohomology $\widehat{\mathrm{HH}}^{\bullet}(A)$ is the Hochschild cohomology of $A$,
			the non-negative BV differential $\widehat{\Delta}_{\geq 0}$ gives rise to a BV differential on the Hochschild cohomology ring of $A$, which means that
			there is a non-trivial example for our main theorem and for the theorem of Lambre-Zhou-Zimmermann \cite[Theorem 4.1]{Lam}.
		}
		\end{rem} 
%
%
%
%
	\subsection{The case $s =3, N=2$.}		
	Let $Q$ be a quiver 	 
		\[\xymatrix@ur{
			3 \ar[r]^-{\alpha_{3}}  & 1 \ar[d]^-{\alpha_{1}} \\
			 & 2 \ar[ul]^-{\alpha_{2}}&
		}\]
	and $A$ the algebra $kQ/R_{Q}^{2}$. 
	It follows from Criterion \ref{examples-criterion2} and the fact that a primitive root of a polynomial $x^{3} - 1$ is not equal to $1 \in k$ when 
	$\mathrm{char}\,k = 2$ that the Nakayama automorphism of $A$ is diagonalizable if and only if $\mathrm{char}\,k \not = 3$.
	Hence we assume that $\mathrm{char}\,k \not = 3$.
	We see that $A$ is a self-injective Nakayama algebra of which the representation matrix of the Nakayama
	automorphism $\nu$ is 
		\begin{align*}
			\begin{pmatrix}
				0&0&1&0&0&0\\
				1&0&0&0&0&0\\
				0&1&0&0&0&0\\
				0&0&0&0&0&1\\
				0&0&0&1&0&0\\
				0&0&0&0&1&0\\
			\end{pmatrix}
		\end{align*}
	under a $k$-basis $(e_{1}, e_{2}, e_{3}, \alpha_{1}, \alpha_{2}, \alpha_{3})$ of $A$. This matrix is similar to a diagonal matrix
		\begin{align*}
			\begin{pmatrix}
				1&0&0&0&0&0\\
				0&1&0&0&0&0\\
				0&0&\omega&0&0&0\\
				0&0&0&\omega&0&0\\
				0&0&0&0&\omega^{2}&0\\
				0&0&0&0&0&\omega^{2}\\
			\end{pmatrix}
		\end{align*}
	where the element $\omega \in k$ is one of roots of a polynomial $x^{2} +x +1$.
	Moreover, we can decompose $A = A_{1} \oplus A_{\omega} \oplus A_{\omega^{2}}$, where
		\begin{align*} \lefteqn{}
			&A_{1} 
				= \mathrm{Ker}\, (\nu-\mathrm{id}) 
				= k\,1_{A} \oplus k\,(\sum_{i=1}^{3} \alpha_{i}),	\\
			&A_{\omega} 
				= \mathrm{Ker}\, (\nu-\omega\, \mathrm{id}) 
				= k\,(\omega^{2} e_{1} +\omega e_{2} +e_{3}) \oplus k\,(\omega^{2} \alpha_{1} +\omega \alpha_{2} +\alpha_{3}),	\\
			&A_{\omega^{2}} 
				= \mathrm{Ker}\, (\nu-\omega^{2}\, \mathrm{id}) 
				= k\,(\sum_{i=1}^{3} \omega^{i} e_{i}) \oplus k\,(\sum_{i=1}^{3} \omega^{i} \alpha_{i}).
		\end{align*}
	Let $l \geq 0$ be an integer. In a similar way to the first example, we have a complete resolution of $A$ as follows:
		\[
			\xymatrix{							
				\cdots	\ar[r]		& 
				P_{2}	\ar[r]^-{\phi_{2}}		&
				P_{1}	\ar[r]^-{\phi_{1}}		&
				P_{0}	\ar[r]^-{\mu}	\ar[d]_-{\phi_0}		&
				D(P_{0})_{\nu^{-1}} \ar[r]^-{D(\phi_{1})}		&
				D(P_{1})_{\nu^{-1}} \ar[r]^-{D(\phi_{2})}		&
				D(P_{2})_{\nu^{-1}} \ar[r]	&
				\cdots 	\\
				&
				&
				&
				A 	\ar[r]^-{\cong}	&
				D(A)_{\nu^{-1}}	\ar[u]_-{D(\phi_0)}		\ar@{}[lu]|{\circlearrowright}	&
				&
				&
			}
		\]
	 where each of the $P_{n}$ and the $D(P_{n})_{\nu^{-1}}$ is given by
		\begin{align*} \lefteqn{}
			P_{n} &= \begin{cases}
						\bigoplus_{i=1}^{3} P(i, i)	&  \mbox{if $n = 3l $,} \\	
						\bigoplus_{i=1}^{3} P(i, i+1)	& \mbox{if $n = 3l+1$,} \\	
						\bigoplus_{i=1}^{3} P(i, i+2)	& \mbox{if $n = 3l+2$,} \\	
					\end{cases} \\
			D(P_{n})_{\nu^{-1}} &= \begin{cases}
										\bigoplus_{i=1}^{3} A (\alpha_{i} \otimes \alpha_{i+1})^{*} A	& \mbox{if $n = 3l $,} \\	
										\bigoplus_{i=1}^{3} A (\alpha_{i} \otimes \alpha_{i+2})^{*} A	& \mbox{if $n = 3l+1 $,} \\	
										\bigoplus_{i=1}^{3} A (\alpha_{i} \otimes \alpha_{i})^{*} A		& \mbox{if $n = 3l+2 $,} \\	
									\end{cases}
		\end{align*}
	 each $A$-bimodule homomorphism $\phi_{*} : P_{*} \rightarrow P_{*-1}$ given by
		\begin{align*}   \lefteqn{}
			&\phi_{6l+1}(e_{i} \otimes e_{i+1}) = \alpha_{i} \otimes e_{i+1} -e_{i} \otimes \alpha_{i};
			\quad	
			\phi_{6l+2}(e_{i} \otimes e_{i+2}) = e_{i} \otimes \alpha_{i+1} +\alpha_{i} \otimes e_{i+2}; 	\\
			&\phi_{6l+3}(e_{i} \otimes e_{i}) = \alpha_{i} \otimes e_{i} -e_{i} \otimes \alpha_{i+2};
			\quad	
			\phi_{6l+4}(e_{i} \otimes e_{i+1}) = e_{i} \otimes \alpha_{i} +\alpha_{i} \otimes e_{i+1}; 	\\
			&\phi_{6l+5}(e_{i} \otimes e_{i+2}) = \alpha_{i} \otimes e_{i+2} -e_{i} \otimes \alpha_{i+1};
			\quad
			\phi_{6l+6}(e_{i} \otimes e_{i}) = e_{i} \otimes \alpha_{i+2} +\alpha_{i} \otimes e_{i},
		\end{align*}
	 each $A$-bimodule homomorphism $D(\phi_{*}) : D(P_{*-1})_{\nu^{-1}} \rightarrow D(P_{*})_{\nu^{-1}}$ given by
		\begin{align*}  \lefteqn{}	
			&D(\phi_{6l+1})( (\alpha_{i} \otimes \alpha_{i+1})^{*} ) 
				= (\alpha_{i+2} \otimes \alpha_{i+1})^{*} \alpha_{i} -\alpha_{i+2}(\alpha_{i} \otimes \alpha_{i+2})^{*}; \\
			&D(\phi_{6l+2})( (\alpha_{i} \otimes \alpha_{i+2})^{*} ) 
				= \alpha_{i} (\alpha_{i} \otimes \alpha_{i})^{*} + (\alpha_{i+2} \otimes \alpha_{i+2})^{*} \alpha_{i};	\\		
			&D(\phi_{6l+3})( (\alpha_{i} \otimes \alpha_{i})^{*} ) 
				= (\alpha_{i+2} \otimes \alpha_{i})^{*} \alpha_{i} -\alpha_{i+1}(\alpha_{i} \otimes \alpha_{i+1})^{*}; \\
			&D(\phi_{6l+4})( (\alpha_{i} \otimes \alpha_{i+1})^{*} ) 
				= \alpha_{i+2} (\alpha_{i} \otimes \alpha_{i+2})^{*} + (\alpha_{i+2} \otimes \alpha_{i+1})^{*} \alpha_{i+2};	\\		
			&D(\phi_{6l+5})( (\alpha_{i} \otimes \alpha_{i+2})^{*} ) 
				= (\alpha_{i+2} \otimes \alpha_{i+2})^{*} \alpha_{i} -\alpha_{i}(\alpha_{i} \otimes \alpha_{i})^{*};	\\
			&D(\phi_{6l+6})( (\alpha_{i} \otimes \alpha_{i})^{*} ) 
				= \alpha_{i+1} (\alpha_{i} \otimes \alpha_{i+1})^{*} + (\alpha_{i+2} \otimes \alpha_{i})^{*} \alpha_{i},	
		\end{align*}
	the $A$-bimodule homomorphism $\phi_0 : P_{0} \rightarrow A$ given by the multiplication of $A$, 
	the $A$-bimodule homomorphism $D(\phi_0) : D(A)_{\nu^{-1}} \rightarrow D(P_{0})_{\nu^{-1}}$ given by 
		\begin{align*}  \lefteqn{}
			D(\phi_0) (\langle-, 1 \rangle)
				= \sum_{i=1}^{3} \alpha_{i+1} (\alpha_{i} \otimes \alpha_{i+1})^{*} +(\alpha_{i+2} \otimes \alpha_{i})^{*} \alpha_{i},
		\end{align*}
	and the composition $\mu : P_{0} \rightarrow D(P_{0})_{\nu^{-1}}$ given by
		\begin{align*}  \lefteqn{}
			\mu(e_{i} \otimes e_{i})
				= \alpha_{i} (\alpha_{i+2} \otimes \alpha_{i})^{*} +(\alpha_{i+1} \otimes \alpha_{i+2})^{*} \alpha_{i-1}.
		\end{align*}
	A complex which is used to compute complete cohomology groups $\widehat{\mathrm{HH}}^{*}(A)$ is a complex
		\begin{align*}\lefteqn{}
			\cdots \rightarrow 
			A_{\nu^{-1}} \otimes_{A^{\rm e}}  P_{1}  &\xrightarrow{\mathrm{id} \otimes \phi_{1}}
			A_{\nu^{-1}} \otimes_{A^{\rm e}}  P_{0}\\ &\xrightarrow{\Hom (\mu, A)}
			\Hom_{A^{\rm e}}(P_{0}, A) \xrightarrow{\Hom (\phi_{1}, A)}
			\Hom_{A^{\rm e}}(P_{1}, A) \rightarrow \cdots
		\end{align*}
	of which the terms and the nonzero differentials of the non-negative part are determined by
		\begin{align*} \lefteqn{}
			 &\Hom_{A^{\rm e}}(P_{n}, A)
				\cong \begin{cases}
						\bigoplus_{i=1}^{3} e_{i} A e_{i}			&  \mbox{if $n = 3l $,} \\	
						\bigoplus_{i=1}^{3} e_{i} A e_{i+1}		&  \mbox{if $n = 3l+1 $,} \\	
						 0											&  \mbox{if $n = 3l+2 $,} \\	
					\end{cases}	\\
			&\Hom (\phi_{6l+1}, A)( e_{i} ) = \alpha_{i+2} -\alpha_{i};  \quad
			\Hom (\phi_{6l+4}, A)(e_{i} ) 	=  \alpha_{i} +\alpha_{i+2}	
		\end{align*} 
	and that of the negative part are given by
		\begin{align*} \lefteqn{}
			&A_{\nu^{-1}} \otimes_{A^{\rm e}} P_{n}
				= \begin{cases}
						0																					& \mbox{if $n = 3l $,} \\	
						\bigoplus_{i=1}^{3} \alpha_{i+1} \otimes_{A^{\rm e}} e_{i} \otimes e_{i+1}		& \mbox{if $n = 3l+1 $,} \\	
						\bigoplus_{i=1}^{3} e_{i+2} \otimes_{A^{\rm e}} e_{i} \otimes e_{i+2}				& \mbox{if $n = 3l+2 $,} \\	
					\end{cases}	\\[5pt]	 			
			&\mathrm{id} \otimes \phi_{6l+2}( e_{i+2} \otimes_{A^{\rm e}} e_{i} \otimes e_{i+2} ) 
				= \alpha_{i+2} \otimes_{A^{\rm e}} e_{i+1} \otimes e_{i+2} +\alpha_{i+1} \otimes_{A^{\rm e}} e_{i} \otimes e_{i+1};	\\
			&\mathrm{id} \otimes \phi_{6l+5}(e_{i} \otimes_{A^{\rm e}} e_{i+1} \otimes e_{i}) 
				= \alpha_{i+2} \otimes_{A^{\rm e}} e_{i+1} \otimes e_{i+2} -\alpha_{i+1} \otimes_{A^{\rm e}} e_{i} \otimes e_{i+1}.
		\end{align*} 
	Here the term $\Hom_{A^{\rm e}}(P_{n}, A)$ is of degree $n \geq 0$. 
	Note that the two morphisms $\Hom (\phi_{6l+4}, A)$ and $\mathrm{id} \otimes \phi_{6l+2}$ are isomorphisms when $\mathrm{char}\,k \not = 2$.
	We can see that the complete cohomology groups $\widehat{\mathrm{HH}}^{*}(A)$ of $A$ 
	are divided into two cases: for $l \geq 0$,
		\begin{enumerate}
			\item $\mathrm{char}\,k \not = 2, 3$
						\begin{align*} \lefteqn{} 
							 &\widehat{\mathrm{HH}}^{n}(A)
								= \begin{cases}
										 k\,\overline{1_{A}}  			& \mbox{if $n \equiv 0 \pmod{6} $,} \\	
										 k\,\overline{\alpha_{1}} 		& \mbox{if $n \equiv 1 \pmod{6} $,} \\
										0								& \mbox{otherwise,} \\
									\end{cases}	\\			 &\widehat{\mathrm{HH}}^{-n}(A)
						= \begin{cases}
                                k\,\overline{\alpha_{2} \otimes_{A^{\rm e}} e_{1} \otimes e_{2} }			 		& \mbox{if  $n \equiv 5 \pmod{6}$,} \\
								k\,\overline{\sum_{i=1}^{3} e_{i+2} \otimes_{A^{\rm e}} e_{i} \otimes e_{i+2} } 	& \mbox{if $n \equiv 0 \pmod{6}, n \geq 1,$} \\
										0								& \mbox{otherwise,} \\
									\end{cases}	
						\end{align*} 
			\item $\mathrm{char}\,k = 2$
						\begin{align*} \lefteqn{}
							 &\widehat{\mathrm{HH}}^{n}(A)
								= \begin{cases}
										 k\,\overline{1_{A}}  			& \mbox{if $n = 3l $,} \\	
										 k\,\overline{\alpha_{1}} 		& \mbox{if $n = 3l+1$,} \\
										0								& \mbox{if $n = 3l+2$;} \\
									\end{cases}	\\
							 &\widehat{\mathrm{HH}}^{-n}(A)
								= \begin{cases}
										0																					& \mbox{if $n = 3l+1$,} \\
										k\,\overline{\alpha_{2} \otimes_{A^{\rm e}} e_{1} \otimes e_{2} }	 				& \mbox{if  $n = 3l+2$,} \\
										k\,\overline{\sum_{i=1}^{3} e_{i+2} \otimes_{A^{\rm e}} e_{i} \otimes e_{i+2} } 	& \mbox{if  $n = 3l+3$.} \\
									\end{cases}	
						\end{align*} 
		\end{enumerate}	
	As can be seen, the complete cohomology groups have the period six if $\mathrm{char}\,k \not = 2, 3$ and the period three if $\mathrm{char}\,k = 2$.
	We omit the constructions of two comparison morphisms between the minimal projective resolution  and the normalized bar resolution of $A$.
	 However, they are constructed in a similar way to the first example. 
	We have the graded commutative ring structure and the BV structure on the complete cohomology of $A$.
%
%
		\begin{prop} \label{examples-prop3}
			If $\mathrm{char}\,k \not = 2, 3$, then the complete cohomology ring $(\widehat{\mathrm{HH}}^{\bullet}(A), \star)$ is isomorphic to
				\begin{align*}  \lefteqn{}
					k[\alpha, \beta, \alpha^{-1}] /\langle \beta^{2} \rangle
				\end{align*}
			where $| \alpha | = 6, |\beta|= 1$ and $| \alpha ^{-1}| = -6$.
			Further, if this is the case, then the nonzero BV differentials $\widehat{\Delta}_{*}$ on $\widehat{\mathrm{HH}}^{\bullet}(A)$ are 
				\begin{align*}  \lefteqn{}
					\widehat{\Delta}_{6l+1}(\alpha^{l} \beta ) = ((6l+1)/3) \,\alpha^{l}, \quad	\widehat{\Delta}_{-6l-5}(\alpha^{-l-1} \beta ) = ((-6l-5)/3)\, \alpha^{-l-1} 
				\end{align*} 	
			with $l \geq 0$. In particular, the nonzero Gerstenhaber brackets are induced by
				\begin{align*}  \lefteqn{}
					&\{ \alpha, \beta \} = 2 \alpha, \quad \{ \beta, \alpha^{-1} \} = 2 \alpha^{-1}.
				\end{align*} 					 
		\end{prop}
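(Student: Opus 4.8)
The proof will follow the scheme of the first example (Propositions \ref{examples-prop1} and \ref{examples-prop2}), now with the $6$-periodic minimal projective resolution $\mathcal{P}_{\bullet}$ and the complete resolution displayed above; recall that the groups $\widehat{\mathrm{HH}}^{\bullet}(A)$ have already been computed and, for $\mathrm{char}\,k \neq 2, 3$, are one-dimensional in each degree $n \equiv 0, 1 \pmod{6}$ and zero otherwise. For the ring structure I would first build comparison morphisms ${\bf F}\colon \mathcal{P}_{\bullet} \to \mathrm{Bar}_{\bullet}(A)$ and ${\bf G}\colon \mathrm{Bar}_{\bullet}(A) \to \mathcal{P}_{\bullet}$ as in the first example and in \cite{AmesCaglieroTirao}: on the $n$-th term, ${\bf F}_{n}$ sends the canonical generator of the $i$-th indecomposable summand $P(i, i+n)$ of $P_{n}$ to $1 \otimes \balpha_{i} \otimes \balpha_{i+1} \otimes \cdots \otimes 1$, the tensor of the arrows along the length-$n$ path starting at the vertex $i$, while ${\bf G}_{n}$ collapses such a tensor to the corresponding idempotent tensor and kills every other basis element of $A \otimes \bA^{\otimes n} \otimes A$. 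Write $\alpha$, $\beta$, $\gamma$ for the classes of $\overline{1_{A}} \in \widehat{\mathrm{HH}}^{6}(A)$, $\overline{\alpha_{1}} \in \widehat{\mathrm{HH}}^{1}(A)$ and $\overline{\sum_{i=1}^{3} e_{i+2} \otimes_{A^{\rm e}} e_{i} \otimes e_{i+2}} \in \widehat{\mathrm{HH}}^{-6}(A)$. Transporting the $\star$-product of Section \ref{Frobenius} through ${\bf F}$ and ${\bf G}$, a direct calculation gives $\alpha \star \gamma = 1$ in $\widehat{\mathrm{HH}}^{0}(A)$, shows that $\alpha$ acts invertibly with period $6$ (so $\alpha^{l}$ spans $\widehat{\mathrm{HH}}^{6l}(A)$ and $\alpha^{l} \star \beta$ spans $\widehat{\mathrm{HH}}^{6l+1}(A)$ for every $l \in \mathbb{Z}$), and yields $\beta^{2} = 0$, forced by graded commutativity since $\mathrm{char}\,k \neq 2$; hence $\gamma = \alpha^{-1}$ and $(\widehat{\mathrm{HH}}^{\bullet}(A), \star) \cong k[\alpha, \beta, \alpha^{-1}]/\langle \beta^{2} \rangle$ with $|\alpha| = 6$, $|\beta| = 1$.

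For the BV differential I would invoke Theorem \ref{BV-theo2} and Corollary \ref{Decomposition-cor1}: under $\widehat{\mathrm{HH}}^{\bullet}(A) \cong \widehat{\mathrm{HH}}_{(1)}^{\bullet}(A)$, the operator $\widehat{\Delta}$ is computed on $\mathcal{D}_{(1)}^{\bullet}(A, A)$ by the dual $\Delta^{\nu}$ of the twisted Connes operator $B^{\nu}$ in positive degrees and by $(-1)^{i} B^{\nu^{-1}}$ in negative degrees. As in the first example it is enough to evaluate $\widehat{\Delta}$ on two low-degree classes. Retaining only the eigenvalue-$1$ component $\tfrac{1}{3}(\alpha_{1} + \alpha_{2} + \alpha_{3})$ of $\alpha_{1}$ (the $\omega$- and $\omega^{2}$-components dying on cohomology by Proposition \ref{Decomposition-prop1}), the diagram
\[\xymatrix{
\Hom_{A^{\rm e}}(P_{1}, A) \ar[d]_-{\Hom_{A^{\rm e}}({\bf G}_{1}, A)} \ar[r]^{\widehat{\Delta}_{1}} & \Hom_{A^{\rm e}}(P_{0}, A) \\
\Hom(\bA, A) \ar[d]_-{\cong} & A \ar[u]_-{\Hom_{A^{\rm e}}({\bf F}_{0}, A)} \\
D(A_{\nu} \otimes \bA) \ar[r]^-{D(B_{0}^{\nu})} & D(A_{\nu}) \ar@{}[luu]|{\circlearrowright} \ar[u]_-{\cong}
}\]
gives $\widehat{\Delta}_{1}(\beta) = \tfrac{1}{3}\,\overline{1_{A}} = \tfrac{1}{3}$ in $\widehat{\mathrm{HH}}^{0}(A)$, and the analogous diagram in negative degrees (built from $-B_{4}^{\nu^{-1}}$ and the comparison morphisms in degrees $4$ and $5$) gives $\widehat{\Delta}_{-5}(\alpha^{-1}\beta) = -\tfrac{5}{3}\,\alpha^{-1}$ in $\widehat{\mathrm{HH}}^{-6}(A)$.

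It then remains to bootstrap to all degrees. Since $6l-1 \equiv 5 \pmod 6$ we have $\widehat{\mathrm{HH}}^{6l-1}(A) = 0$, so $\widehat{\Delta}$ vanishes on every power $\alpha^{l}$; writing $\widehat{\Delta}(\alpha^{l} \star \beta) = c_{l}\alpha^{l}$ and applying the seven-term identity of Definition \ref{def_1}(iii) to the triple $\alpha, \alpha^{l-1}, \beta$ gives a relation $c_{l} = c_{l-1} + \kappa$ with $\kappa$ independent of $l$, and the base values $c_{0} = \tfrac{1}{3}$, $c_{-1} = -\tfrac{5}{3}$ force $\kappa = 2$. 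Hence $c_{l} = \tfrac{6l+1}{3}$ for every $l \in \mathbb{Z}$, that is $\widehat{\Delta}_{6l+1}(\alpha^{l}\beta) = \tfrac{6l+1}{3}\,\alpha^{l}$ and, rewriting the negative range, $\widehat{\Delta}_{-6l-5}(\alpha^{-l-1}\beta) = \tfrac{-6l-5}{3}\,\alpha^{-l-1}$ for $l \geq 0$, all other components of $\widehat{\Delta}$ being zero; substituting these into the BV identity of Remark \ref{BV-rem1} produces the nonzero Gerstenhaber brackets $\{\alpha, \beta\} = 2\alpha$ and $\{\beta, \alpha^{-1}\} = 2\alpha^{-1}$. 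The main obstacle will be the two cochain-level evaluations of $\widehat{\Delta}_{1}$ and $\widehat{\Delta}_{-5}$: one must carry out the $\nu$-eigenspace decomposition of the relevant elements of $A$, keep careful track of all signs coming from the twisted Connes operators $B^{\nu}, B^{\nu^{-1}}$ and from the chain-level formula for $\widehat{\Delta}$, and check that ${\bf F}, {\bf G}$ are genuine chain maps in this $6$-periodic situation — the combinatorics being heavier than for $s = 2$ because of the three eigenvalues $1, \omega, \omega^{2}$ and the longer period.
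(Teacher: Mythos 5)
Your proposal is correct and follows essentially the same route as the paper, which itself only sketches this example by referring back to the $s=2$ case: compute the groups from the $6$-periodic complete resolution, transport the $\star$-product through comparison morphisms to get $k[\alpha,\beta,\alpha^{-1}]/\langle\beta^{2}\rangle$, evaluate $\widehat{\Delta}$ on the chain level in the two lowest nontrivial degrees via the eigenvalue-$1$ components (giving $\widehat{\Delta}_{1}(\beta)=1/3$ and $\widehat{\Delta}_{-5}(\alpha^{-1}\beta)=-5/3\,\alpha^{-1}$), and propagate to all degrees with the seven-term identity of Definition \ref{def_1}. Your explicit recursion $c_{l}=c_{l-1}+2$ and the resulting bracket computations agree with the stated formulas.
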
	
%
%
	\begin{prop} \label{examples-prop4}
			If $\mathrm{char}\,k = 2$, then the complete cohomology ring $(\widehat{\mathrm{HH}}^{\bullet}(A), \star)$ is isomorphic to
				\begin{align*}  \lefteqn{}
					k[\alpha, \beta, \alpha^{-1}] /\langle \beta^{2} \rangle
				\end{align*}
			where $|\alpha| = 3, |\beta| = 1$ and $ |\alpha^{-1}| = -3$.
			Further, if this is the case, then the nonzero BV differentials on $\widehat{\mathrm{HH}}^{\bullet}(A)$ are
				\begin{align*}  \lefteqn{}
					\widehat{\Delta}_{6l+1}(\alpha^{2l} \beta ) = \alpha^{2l}, \quad	\widehat{\Delta}_{-3l-2}(\alpha^{-l-1} \beta ) = \alpha^{-l-1} 
				\end{align*} 	
			with $l \geq 0$. In particular, the nonzero Gerstenhaber brackets are induced by
				\begin{align*}  \lefteqn{}
					&\{ \alpha, \beta \} =  \alpha.
				\end{align*} 					 
	\end{prop}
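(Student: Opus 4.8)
The plan is to compute, in order, the graded commutative ring $(\widehat{\mathrm{HH}}^{\bullet}(A),\star)$, the BV differential $\widehat{\Delta}$ furnished by Theorem \ref{BV-theo2}, and then the Gerstenhaber brackets, which will fall out of the BV identity of Remark \ref{BV-rem1}. Throughout, the hypothesis $\mathrm{char}\,k=2$ trivialises every sign, so the real content is the combinatorial bookkeeping around the three arrows $\alpha_1,\alpha_2,\alpha_3$ and the eigenspace decomposition $A=A_1\oplus A_\omega\oplus A_{\omega^2}$ (with $\omega$ a primitive cube root of unity), which is also what forces the period $3$, rather than period $6$, behaviour of $\widehat{\mathrm{HH}}^{n}(A)$ already visible in the calculation above.

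First I would fix generators. From the description of $\widehat{\mathrm{HH}}^{n}(A)$ obtained above in the case $\mathrm{char}\,k=2$, each $\widehat{\mathrm{HH}}^{n}(A)$ is one-dimensional when $n\not\equiv 2\pmod 3$ and vanishes otherwise; write $\alpha$, $\beta$, $\alpha^{-1}$ for fixed generators of $\widehat{\mathrm{HH}}^{3}(A)$, $\widehat{\mathrm{HH}}^{1}(A)$ and $\widehat{\mathrm{HH}}^{-3}(A)$, together with explicit cocycle/chain representatives read off the complete resolution above. Since $\widehat{\mathrm{HH}}^{2}(A)=0$ we get $\beta\star\beta=0$ at once. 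To finish the ring structure it suffices to prove $\alpha\star\alpha^{-1}=1$ in $\widehat{\mathrm{HH}}^{0}(A)=k\cdot\overline{1_A}$. For this I would construct comparison morphisms $\mathbf{F}_{\bullet},\mathbf{G}_{\bullet}$ between the minimal projective resolution (extended to the complete resolution) and the normalized bar resolution of $A$ exactly as in the $s=2$ example, transport the representatives of $\alpha$ and $\alpha^{-1}$ through them, and evaluate the $\star$-product via the region-$(3)$ formulas. Once $\alpha\star\alpha^{-1}=1$, the element $\alpha$ is invertible, so $\alpha\star(-)$ shifts degree by $3$ isomorphically; together with $\widehat{\mathrm{HH}}^{0}=k\cdot 1$, $\widehat{\mathrm{HH}}^{1}=k\beta$, $\widehat{\mathrm{HH}}^{2}=0$ and graded commutativity this shows the natural map $k[\alpha,\beta,\alpha^{-1}]/\langle\beta^{2}\rangle\to\widehat{\mathrm{HH}}^{\bullet}(A)$ is an isomorphism of graded algebras.

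Next I would compute the BV differential. By Theorem \ref{BV-theo2} all computations may be made on the subcomplex $\mathcal{D}_{(1)}^{\bullet}(A,A)$ of eigenvalue-product $1$, with $\widehat{\Delta}$ given by $\Delta^{\nu}$ in positive degrees and by the twisted Connes operator $B^{\nu^{-1}}$ in negative degrees, the relations $\widehat{\Delta}^{2}=0$ and $\widehat{\Delta}_{0}(1)=0$ being guaranteed by the theorem. For degree reasons $\widehat{\Delta}_{n}$ lands in $\widehat{\mathrm{HH}}^{n-1}(A)$, which is nonzero only when $n\equiv 1\pmod 3$; hence the only possibly nonzero components are $\widehat{\Delta}_{3l+1}:\widehat{\mathrm{HH}}^{3l+1}(A)=k\,\alpha^{l}\beta\to\widehat{\mathrm{HH}}^{3l}(A)=k\,\alpha^{l}$. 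I would compute the base cases $\widehat{\Delta}_{1}(\beta)$ and $\widehat{\Delta}_{-2}(\alpha^{-1}\beta)$ directly — pull the representatives back through $\mathbf{G}_{\bullet}$, decompose into $\nu$-eigencomponents, discard all but the $\nu$-invariant part, apply the explicit formula for $\Delta^{\nu}_{1}$, respectively $B^{\nu^{-1}}_{1}$, and push forward through $\mathbf{F}_{\bullet}$ — which is precisely the scheme used for $\widehat{\Delta}_{1}$ and $\widehat{\Delta}_{-1}$ in the $s=2$ example. I would then propagate to every $\widehat{\Delta}_{3l+1}$ by induction on $|l|$, multiplying by $\alpha$ or by $\alpha^{-1}$ and invoking the third defining relation in Definition \ref{def_1} together with the already-established ring structure and the Leibniz rule for the induced bracket. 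Finally the Gerstenhaber brackets are read off from the BV identity of Remark \ref{BV-rem1} applied to $\alpha\star\beta$ and $\beta\star\alpha^{-1}$, all other brackets vanishing by degree.

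The main obstacle will be the explicit construction of the comparison morphisms $\mathbf{F}_{\bullet},\mathbf{G}_{\bullet}$ for this three-vertex quiver and the ensuing (co)chain-level computations of $\star$, $\Delta^{\nu}$ and $B^{\nu^{-1}}$: even though characteristic $2$ removes the signs, one must still carry the three arrows and the $\omega$-eigendecomposition consistently through every step, and it is exactly here that the characteristic intervenes — the maps $\mathrm{id}\otimes\phi_{6l+2}$ and $\Hom(\phi_{6l+4},A)$ becoming isomorphisms in characteristic $2$ is what collapses the period from $6$ to $3$ and thereby changes both the ring and the pattern of nonzero $\widehat{\Delta}$'s relative to the characteristic $\neq 2,3$ case of Proposition \ref{examples-prop3}.
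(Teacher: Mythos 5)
Your plan reproduces the scheme the paper itself uses (and largely omits) for this example: comparison morphisms to compute the $\star$-product, $\beta^{2}=0$ from $\widehat{\mathrm{HH}}^{2}(A)=0$, invertibility of $\alpha$ from $\alpha\star\alpha^{-1}=1$, and then two chain-level base cases for $\widehat{\Delta}$ propagated through the BV identity. The ring-structure half is sound. The difficulty is in the final propagation step: it cannot terminate at the formulas printed in the proposition, because those formulas are mutually inconsistent with the BV axioms that your propagation invokes. Write $\widehat{\Delta}_{3m+1}(\alpha^{m}\beta)=c_{m}\alpha^{m}$ for $m\in\mathbb{Z}$ (all other components vanish for degree reasons, as you observe, and $\widehat{\Delta}$ kills every power of $\alpha$ since its image would lie in a degree $\equiv 2 \bmod 3$). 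In characteristic $2$ the identity of Remark \ref{BV-rem1} gives $\{\beta,\alpha^{m}\}=(c_{0}+c_{m})\alpha^{m}$, and the Leibniz rule applied to $\alpha^{m}\star\alpha^{n}$ yields $c_{m+n}=c_{m}+c_{n}+c_{0}$, so $c_{m}$ depends only on the parity of $m$; in particular $c_{-1}=c_{1}$. The proposition asserts $c_{0}=1$ and $c_{-1}=1$ (the case $l=0$ of $\widehat{\Delta}_{-3l-2}$) while simultaneously asserting $c_{1}=0$ (degree $4$ is absent from the list $6l+1$) — impossible. Equivalently and more directly: from $0=\{\beta,\alpha\star\alpha^{-1}\}=\{\beta,\alpha\}\star\alpha^{-1}+\alpha\star\{\beta,\alpha^{-1}\}$ and the claimed $\{\alpha,\beta\}=\{\beta,\alpha\}=\alpha$ one gets $\{\beta,\alpha^{-1}\}=\alpha^{-1}$, whereas the claimed values $\widehat{\Delta}_{-2}(\alpha^{-1}\beta)=\alpha^{-1}$, $\widehat{\Delta}_{1}(\beta)=1$, $\widehat{\Delta}_{-3}(\alpha^{-1})=0$ force $\{\beta,\alpha^{-1}\}=\alpha^{-1}+\alpha^{-1}+0=0$.

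Consequently your two base cases do determine everything, but whichever value the chain-level computation of $\widehat{\Delta}_{-2}$ returns, the outcome differs from the statement: if $\widehat{\Delta}_{-2}(\alpha^{-1}\beta)=0$ then $c_{m}=1$ for $m$ even and $c_{m}=0$ for $m$ odd, so the nonzero differentials are exactly $\widehat{\Delta}_{6l+1}(\alpha^{2l}\beta)=\alpha^{2l}$ for all $l\in\mathbb{Z}$ (on the negative side only in degrees $-6l-5$, applied to $\alpha^{-2l-2}\beta$, in parallel with Proposition \ref{examples-prop3}) and $\{\alpha,\beta\}=\alpha$; if instead $\widehat{\Delta}_{-2}(\alpha^{-1}\beta)=\alpha^{-1}$ then $c_{m}=1$ for all $m$, every $\widehat{\Delta}_{3l+1}$ is nonzero, and $\{\alpha,\beta\}=0$. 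You therefore need to carry out the chain-level computations of $\widehat{\Delta}_{-2}$ and, as a cross-check, $\widehat{\Delta}_{4}$ explicitly rather than aiming at the printed target, and you should record that the negative-degree formula $\widehat{\Delta}_{-3l-2}(\alpha^{-l-1}\beta)=\alpha^{-l-1}$ in the statement cannot be correct as written.
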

%
%
%
%
	\subsection{The case $s =3, N=3$.}
	Let $Q$ be a quiver 
		\[\xymatrix@ur{
			3 \ar[r]^-{\alpha_{3}}  & 1 \ar[d]^-{\alpha_{1}} \\
			 & 2 \ar[ul]^-{\alpha_{2}}&
		}\]
	and $A$ the algebra $kQ/R_{Q}^{3}$. 
	It follows from Criterion \ref{examples-criterion2} and Remark \ref{examples-remark1} that the Nakayama automorphism of $A$ is diagonalizable 
	if and only if $\mathrm{char}\,k \not = 3$.
	Hence, we assume that $\mathrm{char}\,k \not = 3$.
	We see that $A$ is a self-injective Nakayama algebra of which the representation matrix of the Nakayama
	automorphism $\nu$ is 
		\begin{align*}
			\begin{pmatrix}
				0&1&0&0&0&0&0&0&0\\
				0&0&1&0&0&0&0&0&0\\
				1&0&0&0&0&0&0&0&0\\
				0&0&0&0&1&0&0&0&0\\
				0&0&0&0&0&1&0&0&0\\
				0&0&0&1&0&0&0&0&0\\
				0&0&0&0&0&0&0&1&0\\
				0&0&0&0&0&0&0&0&1\\
				0&0&0&0&0&0&1&0&0
			\end{pmatrix}
		\end{align*}
	under a $k$-basis 
	$(e_{1}, e_{2}, e_{3}, \alpha_{1}, \alpha_{2}, \alpha_{3}, \alpha_{1} \alpha_{2}, \alpha_{2} \alpha_{3}, \alpha_{3} \alpha_{1})$ of $A$. 
	This matrix is similar to a diagonal matrix
		\begin{align*}
			\begin{pmatrix}
				1&0&0&0&0&0&0&0&0\\
				0&1&0&0&0&0&0&0&0\\
				0&0&1&0&0&0&0&0&0\\
				0&0&0&\omega&0&0&0&0&0\\
				0&0&0&0&\omega&0&0&0&0\\
				0&0&0&0&0&\omega&0&0&0\\
				0&0&0&0&0&0&\omega^{2}&0&0\\
				0&0&0&0&0&0&0&\omega^{2}&0\\
				0&0&0&0&0&0&0&0&\omega^{2}\\
			\end{pmatrix}
		\end{align*}
	where the element $\omega \in k$ is one of roots of a polynomial $x^{2} +x +1$.
	Moreover, we can decompose $A = A_{1} \oplus A_{\omega} \oplus A_{\omega^{2}}$, where
		\begin{align*} \lefteqn{}
			&A_{1} 
				= \mathrm{Ker}\, (\nu-\mathrm{id}) 
				= k\,1_{A} \oplus k\,(\sum_{i=1}^{3} \alpha_{i}) \oplus k\,(\sum_{i=1}^{3} \alpha_{i} \alpha_{i+1}), \\
			&A_{\omega} 
				= \mathrm{Ker}\, (\nu-\omega\, \mathrm{id}) 
				= k\,(\sum_{i=1}^{3} \omega^{i} e_{i}) \oplus k\,(\sum_{i=1}^{3} \omega^{i} \alpha_{i}) \oplus k\,(\sum_{i=1}^{3} \omega^{i} \alpha_{i} \alpha_{i+1}), \\
			&A_{\omega^{2}} 
				= \mathrm{Ker}\, (\nu-\omega^{2} \,\mathrm{id})\\ 
			&\hspace{19pt} 
				= k\,(\omega^{2} e_{1} +\omega e_{2} +e_{3}) \oplus k\,(\omega^{2} \alpha_{1} +\omega \alpha_{2} +\alpha_{3}) \oplus 
					k\,(\omega^{2} \alpha_{1} \alpha_{2}  +\omega \alpha_{2} \alpha_{3} +\alpha_{3} \alpha_{1}).
		\end{align*}
	In a similar way to the first example, we have a complete resolution of $A$ as follows:
		\[
			\xymatrix{						
				\cdots	\ar[r]		& 
				P_{2}	\ar[r]^-{\phi_{2}}		&
				P_{1}	\ar[r]^-{\phi_{1}}		&
				P_{0}	\ar[r]^-{\mu}	\ar[d]_-{\phi_0}		&
				D(P_{0})_{\nu^{-1}} \ar[r]^-{D(\phi_{1})}		&
				D(P_{1})_{\nu^{-1}} \ar[r]^-{D(\phi_{2})}		&
				D(P_{2})_{\nu^{-1}} \ar[r]	&
				\cdots 	\\
				&
				&
				&
				A 	\ar[r]^-{\cong}	&
				D(A)_{\nu^{-1}}	\ar[u]_-{D(\phi_0)}		\ar@{}[lu]|{\circlearrowright}	&
				&
				&
			}
		\]
	 where each of the $P_{n}$ and the $D(P_{n})_{\nu^{-1}}$ is given by
		\begin{align*} \lefteqn{}
			P_{n} &= \begin{cases}
						\bigoplus_{i=1}^{3} P(i, i+1)	& \mbox{if $n$ is odd,} \\
						\bigoplus_{i=1}^{3} P(i, i)	& \mbox{if $n$ is even;} 
					\end{cases} \\
			D(P_{n})_{\nu^{-1}} &= \begin{cases}
										\bigoplus_{i=1}^{3} A (\alpha_{i} \alpha_{i+1} \otimes \alpha_{i} \alpha_{i+1} )^{*} A	& \mbox{if $n$ is odd,} \\
										\bigoplus_{i=1}^{3} A (\alpha_{i} \alpha_{i+1} \otimes \alpha_{i+2} \alpha_{i+3})^{*} A	& \mbox{if $n$ is even,} 
									\end{cases}
		\end{align*}
	 each $A$-bimodule homomorphism $\phi_{*} : P_{*} \rightarrow P_{*-1}$ given by
		\begin{align*}  \lefteqn{}
			&\phi_{2r+1}(e_{i} \otimes e_{i+1}) = \alpha_{i} \otimes e_{i+1} -e_{i} \otimes \alpha_{i}; \\
			&\phi_{2r}(e_{i} \otimes e_{i}) = e_{i} \otimes \alpha_{i+1} \alpha_{i+2} + \alpha_{i} \otimes \alpha_{i+2} +\alpha_{i} \alpha_{i+1} \otimes e_{i},
		\end{align*}
	 each $A$-bimodule homomorphism $D(\phi_{*}) : D(P_{*-1})_{\nu^{-1}} \rightarrow D(P_{*})_{\nu^{-1}}$ given by
		\begin{align*}  \lefteqn{}
			D(\phi_{2r+1})&((\alpha_{i} \alpha_{i+1} \otimes \alpha_{i+2} \alpha_{i+3})^{*}) \\
			    &= (\alpha_{i+2} \alpha_{i} \otimes \alpha_{i+2} \alpha_{i} )^{*} \alpha_{i+1} 
					-\alpha_{i+1}(\alpha_{i} \alpha_{i+1} \otimes \alpha_{i} \alpha_{i+1} )^{*}; \\
			D(\phi_{2r})&( (\alpha_{i} \alpha_{i+1} \otimes \alpha_{i} \alpha_{i+1} )^{*} ) \\
				&= \alpha_{i+2} \alpha_{i+3} (\alpha_{i} \alpha_{i+1} \otimes \alpha_{i+2} \alpha_{i+3} )^{*} 
					+ \alpha_{i+2} (\alpha_{i-1} \alpha_{i} \otimes \alpha_{i+1} \alpha_{i+2} )^{*} \alpha_{i-2}\\
					&\hspace{5.2cm} +(\alpha_{i-2} \alpha_{i-1} \otimes \alpha_{i} \alpha_{i+1} )^{*} \alpha_{i-3} \alpha_{i-2},
		\end{align*}
		the $A$-bimodule homomorphism $\phi_0 : P_{0} \rightarrow A$ given by the multiplication of $A$, 
		the $A$-bimodule homomorphism $D(\phi_0) : D(A)_{\nu^{-1}} \rightarrow D(P_{0})_{\nu^{-1}}$ given by 
		\begin{align*}  \lefteqn{}
			D(\phi_0) (\langle-, 1 \rangle)
				=&\ \sum_{i=1}^{3} \big( \alpha_{i} \alpha_{i+1} (\alpha_{i-2} \alpha_{i-1} \otimes \alpha_{i} \alpha_{i+1})^{*} 
					+\alpha_{i+1} (\alpha_{i-2} \alpha_{i-1} \otimes \alpha_{i} \alpha_{i+1})^{*} \alpha_{i} \\
				  &+(\alpha_{i-2} \alpha_{i-1} \otimes \alpha_{i} \alpha_{i+1})^{*} \alpha_{i-3} \alpha_{i-2} \big),
		\end{align*}
	and the composition $\mu : P_{0} \rightarrow D(P_{0})_{\nu^{-1}}$ given by
		\begin{align*}  \lefteqn{}
			\mu(e_{i} \otimes e_{i})
				=&\  \alpha_{i} \alpha_{i+1} (\alpha_{i-2} \alpha_{i-1} \otimes \alpha_{i} \alpha_{i+1} )^{*} 
					+(\alpha_{i-1} \alpha_{i} \otimes \alpha_{i+1} \alpha_{i+2})^{*} \alpha_{i-2} \alpha_{i-1} \\
					&+ \alpha_{i} (\alpha_{i-3} \alpha_{i-2} \otimes \alpha_{i-1} \alpha_{i} )^{*} \alpha_{i-4}.
		\end{align*}
	Moreover, a complex which is used to compute complete cohomology groups is a complex
	\begin{align*}\lefteqn{}
		\cdots \rightarrow 
		A_{\nu^{-1}} \otimes_{A^{\rm e}}  P_{1}  &\xrightarrow{\mathrm{id} \otimes \phi_{1}}
		A_{\nu^{-1}} \otimes_{A^{\rm e}}  P_{0}\\ &\xrightarrow{\Hom (\mu, A)}
		\Hom_{A^{\rm e}}(P_{0}, A) \xrightarrow{\Hom (\phi_{1}, A)}
		\Hom_{A^{\rm e}}(P_{1}, A) \rightarrow \cdots
	\end{align*}
	of which the terms and the nonzero differentials are determined by
		\begin{align*} \lefteqn{}
			 &\Hom_{A^{\rm e}}(P_{n}, A)
				\cong \begin{cases}
						\bigoplus_{i=1}^{3} k\,\alpha_{i}  	& \mbox{if $n$ is odd,} \\
						\bigoplus_{i=1}^{3} k\,e_{i}	& \mbox{if $n$ is even};
					\end{cases}	\\
			&A_{\nu^{-1}} \otimes_{A^{\rm e}} P_{n}
				= \begin{cases}
						\bigoplus_{i=1}^{3} k\, e_{i+1} \otimes_{A^{\rm e}} e_{i} \otimes e_{i+1}&\mbox{if $n$ is odd,} \\
						\bigoplus_{i=1}^{3} k\,\alpha_{i} \otimes_{A^{\rm e}} e_{i} \otimes e_{i}&\mbox{if $n$ is even};
					\end{cases}	\\
			&\Hom(\phi_{2r+1}, A)(e_{i} ) 
				=  \alpha_{i+1} -\alpha_{i}; \\
			&\mathrm{id} \otimes \phi_{2r+1}(e_{i} \otimes_{A^{\rm e}} e_{i+1} \otimes e_{i}  ) 
				=  \alpha_{i} \otimes_{A^{\rm e}} e_{i} \otimes e_{i} -\alpha_{i+1} \otimes_{A^{\rm e}} e_{i+1} \otimes e_{i+1}
		\end{align*} 
	and whose term $\Hom_{A^{\rm e}}(P_{n}, A)$ is of degree $n \geq 0$. Therefore, we have, for $n \geq 0$,
		\begin{align*} \lefteqn{}
			 &\widehat{\mathrm{HH}}^{n}(A)
				= \begin{cases}
						 k\,\overline{\alpha_{1}}  	& \mbox{if $n$ is odd,} \\
						 k\,\overline{1_{A}} & \mbox{if $n$ is even};
					\end{cases}	 
			\quad \\
			& \widehat{\mathrm{HH}}^{-n}(A)
				= \begin{cases}			
						k\,\overline{\alpha_{1} \otimes_{A^{\rm e}} e_{1} \otimes e_{1}} & \mbox{if $n$ is odd,} \\
						k\,\overline{ \sum_{i=1}^{3} e_{i+1} \otimes_{A^{\rm e}} e_{i} \otimes e_{i+1}}	& \mbox{if $n >0$ is even}.
					\end{cases}
		\end{align*} 
	 We omit the description of comparison morphisms between the minimal projective resolution and the normalized bar resolution of $A$, because
	 it is not easy to write the two comparison morphisms.
	 However, a direct calculation shows the graded commutative ring structure and the BV structure on the complete cohomology of $A$.
%
%
		\begin{prop} \label{examples-prop5}
			The complete cohomology ring $(\widehat{\mathrm{HH}}^{\bullet}(A), \star)$ is isomorphic to
				\begin{align*}  \lefteqn{}
					k[\alpha, \beta, \alpha^{-1}] /\langle \beta^2 \rangle
				\end{align*}
			where $| \alpha | = 2, |\beta|= 1$ and $| \alpha ^{-1}| = -2$. 
			Moreover, the nonzero BV differentials on $\widehat{\mathrm{HH}}^{\bullet}(A)$ are
				\begin{align*}  \lefteqn{}
					\widehat{\Delta}_{2l+1}(\alpha^{l} \beta ) = ((3l+2)/3) \alpha^{l}, \quad	
					\widehat{\Delta}_{-2l-1}(\alpha^{-l-1} \beta ) = \begin{cases}
																		(-1/3)\, \alpha^{-1} & \mbox{ if $l =0$,} \\
																		((-3l-2)/3)\, \alpha^{-l-1} & \mbox{ if $l \not =0$}
																	\end{cases}
				\end{align*} 	
			with $l \geq 0$. In particular, the nonzero Gerstenhaber brackets are induced by
				\begin{align*}  \lefteqn{}
					&\{ \alpha, \beta \} = \alpha, \quad 	\{ \beta, \alpha^{-1} \} = \alpha^{-1}.
				\end{align*} 				
		\end{prop}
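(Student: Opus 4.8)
The computations preceding the statement already exhibit $\widehat{\mathrm{HH}}^{n}(A)$ as one-dimensional for every $n\in\mathbb{Z}$. I would first identify the ring. Put $\alpha:=\overline{1_{A}}\in\widehat{\mathrm{HH}}^{2}(A)$, $\beta:=\overline{\alpha_{1}}\in\widehat{\mathrm{HH}}^{1}(A)$ and $\gamma:=\overline{\sum_{i=1}^{3}e_{i+1}\otimes_{A^{\rm e}}e_{i}\otimes e_{i+1}}\in\widehat{\mathrm{HH}}^{-2}(A)$. The displayed complete resolution is $2$-periodic with the same bimodule maps $\phi_{2r},\phi_{2r+1}$ in every period, so $\Omega^{2}_{A^{\rm e}}(A)\cong A$ and hence $A\cong A[2]$ in $\mathcal{D}_{\rm sg}(A^{\rm e})$; this isomorphism is an invertible element of the one-dimensional space $\underline{\mathrm{Ext}}_{A^{\rm e}}^{2}(A,A)\cong\widehat{\mathrm{HH}}^{2}(A)=k\alpha$, so $\alpha$ is invertible, and a direct computation with the $\star$-product (case (3)(i)) gives $\alpha\star\gamma=\overline{1_{A}}$, the unit of $\widehat{\mathrm{HH}}^{0}(A)$; write $\gamma=\alpha^{-1}$. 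The class $\beta$ is represented by the Hochschild $1$-cocycle $\delta$, namely the derivation of $A$ with $\delta(\alpha_{1})=\alpha_{1}$ and $\delta(\alpha_{2})=\delta(\alpha_{3})=0$; its image lies in $k\alpha_{1}+k\alpha_{1}\alpha_{2}+k\alpha_{3}\alpha_{1}$, and every product of two vectors from this set vanishes in $A$, so $\delta\smile\delta$ is the zero cochain and $\beta\star\beta=0$. Since $\star$ is unital, associative and graded commutative and each graded piece is one-dimensional, $\{\alpha^{l},\ \beta\star\alpha^{l}:l\in\mathbb{Z}\}$ is a $k$-basis, giving $(\widehat{\mathrm{HH}}^{\bullet}(A),\star)\cong k[\alpha,\beta,\alpha^{-1}]/\langle\beta^{2}\rangle$ with $|\alpha|=2$, $|\beta|=1$, $|\alpha^{-1}|=-2$.

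For the BV structure I would use Theorem \ref{BV-theo2}: $\widehat{\Delta}$ is Lambre-Zhou-Zimmermann's operator $\Delta^{\nu}$ in positive degrees, $\widehat{\Delta}_{0}=0$, and $\widehat{\Delta}_{i}=(-1)^{i}B^{\nu^{-1}}_{-i-1}$ for $i\le-1$. Following the pattern of the first example, it suffices to compute $\widehat{\Delta}$ directly on the one-dimensional pieces of degree $-4\le i\le4$: for each such class I would lift a cocycle representative to $C^{\bullet}(A,A)$ or $C_{\bullet}(A,A_{\nu^{-1}})$ along comparison morphisms, project it onto its $\nu$-eigenspace components and keep the one in $\mathcal{D}_{(1)}^{\bullet}(A,A)$ (the others being acyclic by Proposition \ref{Decomposition-prop2}), apply the defining sum of $\Delta^{\nu}$ or $B^{\nu^{-1}}$, and push back to the minimal resolution; this yields, among others, $\widehat{\Delta}_{1}(\beta)=\tfrac{2}{3}\,\overline{1_{A}}$, $\widehat{\Delta}_{-1}(\alpha^{-1}\beta)=-\tfrac{1}{3}\,\alpha^{-1}$, $\widehat{\Delta}_{3}(\alpha\beta)=\tfrac{5}{3}\,\alpha$, $\widehat{\Delta}_{-3}(\alpha^{-2}\beta)=-\tfrac{5}{3}\,\alpha^{-2}$, and the vanishing of $\widehat{\Delta}$ on $\alpha^{\pm1},\alpha^{\pm2}$. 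From these, the three-term BV identity of Definition \ref{def_1}\,(iii) (applied to factorizations $\alpha^{l}\beta=\alpha\star\alpha\star(\alpha^{l-2}\beta)$ and the like) determines $\widehat{\Delta}$ in all remaining degrees, giving $\widehat{\Delta}_{2l+1}(\alpha^{l}\beta)=\tfrac{3l+2}{3}\,\alpha^{l}$ and the stated $\widehat{\Delta}_{-2l-1}(\alpha^{-l-1}\beta)$. Note that the degree $-1$ value $-\tfrac{1}{3}\alpha^{-1}$ is genuinely exceptional — it is not the $l=0$ case of the $l\ge1$ formula — which reflects $\widehat{\Delta}_{0}=0$; this is why the negative side must be anchored at degrees $-1$ and $-3$. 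Finally, since $\alpha$ and $\beta$ have positive degree, $\{\alpha,\beta\}$ is the ordinary Hochschild Gerstenhaber bracket, equal to $\alpha$ by a short bracket-of-a-derivation calculation, and by the Leibniz rule together with invertibility of $\alpha$ one gets $\{\beta,\alpha^{-1}\}=\alpha^{-1}$; all other brackets of generators vanish, in agreement with Remark \ref{BV-rem1}.

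The main obstacle is the explicit construction of the comparison morphisms ${\bf F}\colon\mathcal{P}_{\bullet}\to\mathrm{Bar}_{\bullet}(A)$ and ${\bf G}\colon\mathrm{Bar}_{\bullet}(A)\to\mathcal{P}_{\bullet}$, which are needed for every chain-level computation above (the eight values of $\widehat{\Delta}$, the normalization $\alpha\star\gamma=\overline{1_{A}}$, and the bracket $[\alpha,\beta]$). For $N=3$ these are awkward to write down in closed form: $P_{2}$ is generated in path-length $3$, each $\phi_{2r}$ has three summands, and ${\bf F}_{2r}$, as well as ${\bf G}$ on an arbitrary normalized tensor, are considerably more intricate than in the truncated $N=2$ cases, so the sums defining $\Delta^{\nu}$ and $B^{\nu^{-1}}$ on the transported representatives are long and one must carefully track the coboundary corrections used to reduce a (co)cycle back to the chosen generator. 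One further bookkeeping point: every ingredient is given by the same combinatorial formula for any algebraically closed $k$ with $\mathrm{char}\,k\ne3$, and since the complete cohomology groups do not degenerate in positive characteristic here, one may, if convenient, carry out the rational-coefficient computations over $\mathbb{Q}(\zeta_{3})$ and then specialize.
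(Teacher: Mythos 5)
Your overall strategy --- read off the additive structure from the displayed complex, get invertibility of $\alpha$ from the $2$-periodicity of the minimal resolution, kill $\beta^{2}$ by noting that all products of $\alpha_{1},\alpha_{1}\alpha_{2},\alpha_{3}\alpha_{1}$ vanish, then anchor $\widehat{\Delta}$ by direct chain-level computation in degrees $-4\le i\le 4$ and propagate with the BV identity --- is exactly the scheme the paper carries out for the first example and merely asserts (``a direct calculation shows'') for this one, so on the level of method you and the paper agree, and the ring-theoretic part of your outline is sound.

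There is, however, a concrete inconsistency among the anchor values you assert, and it is one your own propagation step would have exposed, because the anchors in degrees $\pm 3$ are already determined by those in degrees $\pm1$ and $\pm2$. Grant $\widehat{\Delta}_{1}(\beta)=\tfrac{2}{3}$, $\widehat{\Delta}(\alpha^{\pm1})=\widehat{\Delta}(\alpha^{\pm2})=0$ and $\{\alpha,\beta\}=\alpha$. Antisymmetry gives $\{\beta,\alpha\}=-\alpha$; the Leibniz rule applied to $\alpha\star\alpha^{-1}=1$ gives $\{\beta,\alpha^{-1}\}=\alpha^{-1}$, and applied to $\alpha^{-1}\star\alpha^{-1}$ (Koszul sign $(-1)^{(|\beta|-1)|\alpha^{-1}|}=1$) it gives $\{\beta,\alpha^{-2}\}=2\alpha^{-2}$. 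Feeding this into the BV identity of Remark \ref{BV-rem1},
\[
2\alpha^{-2}=\{\beta,\alpha^{-2}\}
=(-1)^{|\beta||\alpha^{-2}|+|\beta|+|\alpha^{-2}|}\bigl(\widehat{\Delta}(\alpha^{-2}\star\beta)-\widehat{\Delta}(\beta)\star\alpha^{-2}\bigr)
=-\widehat{\Delta}(\alpha^{-2}\beta)+\tfrac{2}{3}\,\alpha^{-2},
\]
forces $\widehat{\Delta}_{-3}(\alpha^{-2}\beta)=-\tfrac{4}{3}\,\alpha^{-2}$, not $-\tfrac{5}{3}\,\alpha^{-2}$. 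Iterating, the BV identity together with your degree $\pm1,\pm2$ anchors yields the uniform formula $\widehat{\Delta}_{2n+1}(\alpha^{n}\beta)=\tfrac{3n+2}{3}\,\alpha^{n}$ for every $n\in\mathbb{Z}$, so there is no exceptional value at degree $-1$ at all --- exactly as in the first example, where $\tfrac{2n+1}{2}$ holds for every $n$. Your rationalization of the ``exception'' via $\widehat{\Delta}_{0}=0$ does not hold up: $\widehat{\Delta}_{0}$ only acts on $k\cdot 1$ and $\widehat{\Delta}(1)=0$ in any BV algebra, and the same constraint is present in the first example without producing an exception. Since the value $-\tfrac{5}{3}$ (and the displayed $\tfrac{-3l-2}{3}$ for $l\ge1$) is incompatible with the Leibniz rule for the derived bracket, hence with Getzler's theorem and Theorem \ref{BV-theo2}, either your direct computation in degree $-3$ or your other anchors must be revisited; the only mutually consistent completion of your data is the uniform formula above.
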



\section*{Acknowledgments} 
    The authors would like to express our gratitude to the referee for a careful reading of the paper and for helpful comments and suggestions.
	The authors also would like to thank Professor Guodong Zhou for giving us many valuable suggestions and discussions for improvement.
	Third author is grateful to Professor Zhengfang Wang for interesting suggestions and comments.
	First author's research was partially supported by JSPS Grant-in-Aid for Young Scientists (B) 17K14175.	
	Second author's research was partially supported by JSPS Grant-in-Aid for Scientific Research (C) 17K05211.

%
%
%
%
   
\end{document}